\newcommand{\C}{\mathbb{C}}
\newcommand{\R}{\mathbb{R}}
\newcommand{\cH}{\mathcal{H}}
\newcommand{\N}{\mathbb{N}}
\renewcommand{\Re}{\operatorname{Re}}
\newcommand{\bs}{\backslash}
\newcommand{\s}{\operatorname{s}}
\newcommand{\kL}{\mathfrak{k}}
\newcommand{\mL}{\mathfrak{m}}
\newcommand{\aL}{\mathfrak{a}}
\newcommand{\gL}{\mathfrak{g}}
\newcommand{\pL}{\mathfrak{p}}
\newcommand{\nf}{\mathfrak{n}}
\newcommand{\hL}{\mathfrak{h}}
\newcommand{\fP}{\mathfrak{P}}
\newcommand{\cE}{\mathcal{E}}
\newcommand{\Arg}{\operatorname{arg}}
\newcommand{\rk}{\operatorname{rk}}
\newcommand{\Gl}{\operatorname{GL}}
\newcommand{\Id}{\operatorname{Id}}
\newcommand{\Spin}{\operatorname{Spin}}
\newcommand{\Ind}{\operatorname{Ind}}
\newcommand{\SU}{\operatorname{SU}}
\newcommand{\Ad}{\operatorname{Ad}}
\newcommand{\SO}{\operatorname{SO}}
\newcommand{\Iim}{\operatorname{Im}}
\newcommand{\CC}{\operatorname{C}}
\newcommand{\vol}{\operatorname{vol}}
\newcommand{\Tr}{\operatorname{Tr}}
\newcommand{\tr}{\operatorname{tr}}
\newcommand{\End}{\operatorname{End}}
\newcommand{\Real}{\operatorname{Re}}
\newcommand{\spec}{\operatorname{spec}}
\newcommand{\rel}{\operatorname{reg}}
\newcommand{\sym}{\operatorname{sym}}
\newtheorem {thrm}{Theorem}[section]
\newtheorem {prop}[thrm] {Proposition}
\newtheorem {lem}[thrm] {Lemma}
\theoremstyle{definition}
\newtheorem{defn}[thrm] {Definition}
\theoremstyle{remark}
\newtheorem {bmrk}[thrm] {Remark}
\begin{document}
\title[]
{Analytic torsion of complete hyperbolic manifolds of finite volume}
\date{\today}

\author{Werner M\"uller}
\address{Universit\"at Bonn\\
Mathematisches Institut\\
Endenicher Allee 60\\
D -- 53115 Bonn, Germany}
\email{mueller@math.uni-bonn.de}

\author{Jonathan Pfaff}
\address{Universit\"at Bonn\\
Mathematisches Institut\\
Endenicher Alle 60\\
D -- 53115 Bonn, Germany}
\email{pfaff@math.uni-bonn.de}

\keywords{analytic torsion, hyperbolic manifolds}
\subjclass{Primary: 58J52, Secondary: 11M36}

\begin{abstract}
In this paper we define the analytic torsion for a complete
oriented hyperbolic manifold of finite volume. It depends on 
a representation of the fundamental group. For manifolds of odd
dimension, we study the asymptotic
behavior of the analytic torsion with respect to certain sequences of 
representations obtained by restriction of irreducible representations of
the group of isometries of the hyperbolic space to the fundamental group. 
\end{abstract}
\maketitle
\section{Introduction}
Let $X$ be an oriented hyperbolic manifold of dimension
$d$. Let $G=\Spin(d,1)$, $K=\Spin(d)$. Then there exists a discrete,
torsion free subgroup $\Gamma\subset G$ such that
$X=\Gamma\backslash\mathbb{H}^{d}$,
where $\mathbb{H}^{d}\cong G/K$ is the d-dimensional hyperbolic space.
First assume that $X$ is compact and $d=2n+1$. 
Let $\tau$ be an irreducible finite
dimensional representation of $G$. Restrict $\tau$ to $\Gamma$ and let $E_\tau$
be the associated flat vector-bundle over $X$. By \cite{MM} one can equip 
$E_\tau$ with a canonical metric, called admissible metric. Let $T_X(\tau)$
be the Ray-Singer analytic torsion with respect to the hyperbolic metric of
$X$ and the admissible metric in $E_\tau$ (see \cite{RS}, \cite{Mu3}). In
\cite{MP} we introduced special sequences $\tau(m)$, $m\in\N$,  of irreducible
representations of $G$ and we  studied the asymptotic behavior of 
$T_X(\tau(m))$ as $m\to\infty$. The representations $\tau(m)$ are defined as 
follows.
Fix natural numbers $\tau_{1}\geq\tau_{2}\geq\dots\geq\tau_{n+1}$. For
$m\in\mathbb{N}$ let $\tau(m)$ be the finite-dimensional irreducible 
representation of $G$ with highest weight 
$(\tau_{1}+m,\dots,\tau_{n+1}+m)$ (see \cite[p. 365]{GW}).
By Weyl's dimension formula there exists a 
constant $C>0$ such that
\begin{equation}\label{weyldim}
\dim(\tau(m))=Cm^{\frac{n(n+1)}{2}}+O(m^{\frac{n(n+1)}{2}-1}),\quad
m\to\infty. 
\end{equation}
One of the main results of \cite{MP} is the following asymptotic formula:
There exists a constant $C(n)>0$, which depends only on $n$, such that
\begin{equation}\label{asymp1}
-\log T_{X}(\tau(m))=C(n)\vol(X)m\cdot\dim(\tau(m))+O(m^{\frac{n(n+1)}{2}})
\end{equation}
as $m\longrightarrow\infty$. The $3$-dimensional case was first treated in
\cite{Mu2}. This result has been used in \cite{MaM} to study
the growth of torsion in the cohomology of arithmetic hyperbolic 3-manifolds.

The main goal of the present paper is to extend \eqref{asymp1} to complete
oriented hyperbolic manifolds  of finite volume. 
Let $\Gamma\bs\mathbb{H}^d$ be such a manifold. 
To simplify some of the considerations we will assume that $\Gamma$ satisfies
the following condition: For every $\Gamma$-cuspidal parabolic subgroups 
$P=M_PA_PN_P$ of $G$ we have
\begin{equation}\label{a1}
\Gamma\cap P=\Gamma\cap N_P.
\end{equation}
We note that this condition is satisfied, if $\Gamma$ is ``neat'', which means 
that the group generated by the eigenvalues of any $\gamma\in\Gamma$ contains 
no roots of unity $\ne1$. We need \eqref{a1} to
eliminate some technical difficulties related to the Selberg trace
formula.

The first problem is to define the
analytic torsion for non-compact hyperbolic manifolds of finite volume. 
The Laplace operator $\Delta_p(\tau)$ on $E_\tau$-valued $p$-forms has then a
continuous spectrum and therefore, the heat operator $\exp(-t\Delta_p(\tau))$ 
is not trace class. So the usual zeta function regularization can not be used
to define the analytic torsion in this case. To overcome this problem we use
a regularization of the trace of the heat operator which is similar to the
$b$-trace of Melrose \cite{Me}. This kind of regularization was also used 
by Park \cite{Pa} in the case of unitary representations of $\Gamma$.

The regularization of the trace of the heat operator is defined as follows. 
Chopping off the cusps at sufficiently high level $Y>Y_0$, we get a compact
submanifold $X(Y)\subset X$ with boundary $\partial X(Y)$. Let
$K^{p,\tau}(t,x,y)$
be the kernel of the heat operator $\exp(-t\Delta_p(\tau))$. Then it follows 
that there exists $\alpha(t)\in\R$ such that 
$\int_{X(Y)} \tr K^{p,\tau}(x,x,t)\,dx -
\alpha(t)\log Y$ has a limit as $Y\to\infty$. Then we put
\begin{equation}\label{regtrace1}
\Tr_{\rel}\left(e^{-t\Delta_p(\tau)}\right):=\lim_{Y\to\infty}\left(
\int_{X(Y)}\tr K^{p,\tau}(t,x,x)\,dx-\alpha(t)\log Y\right).
\end{equation}
We note that one can also use relative traces as in \cite{Mu3} to regularize
the trace of the heat operator. The methods are closely related.

It turns out that the right hand side of \eqref{regtrace1} equals the spectral 
side of the Selberg trace formula applied to the heat operator 
$\exp(-t\Delta_p(\tau))$. Using 
the Selberg trace formula, it follows that 
$\Tr_{\rel}\left(e^{-t\Delta_p(\tau)}\right)$ has asymptotic expansions as 
$t\to +0$ and as $t\to\infty$. This permits to define the spectral zeta
function. Let $(\tau_1,\dots,\tau_{n+1})$ be the highest weight of $\tau$.
If $\tau_{n+1}\neq0$, then it follows that 
$\Tr_{\rel}\left(e^{-t\Delta_p(\tau)}\right)$ is exponentially decreasing as 
$t\to\infty$. In this case the definition of the zeta function is
simplified. It is given by
\begin{equation}\label{speczeta}
\zeta_{p}(s;\tau):=\frac{1}{\Gamma(s)}\int_{0}^{\infty}t^{s-1}
\Tr_{\rel}(e^{-t\Delta_p(\tau)})\,dt.
\end{equation}
The integral converges absolutely and uniformly on compact subsets of 
the half-plane $\Re(s)>d/2$ and admits a meromorphic continuation
to $\C$ which is regular at $s=0$. In analogy to the compact case we now 
define the 
analytic torsion $T_X(\tau)\in\R^+$ with respect to  $E_{\tau}$ by
\begin{equation}\label{def-tor3}
T_X(\tau):=\exp{\left(\frac{1}{2}\sum_{p=1}^{d}(-1)^pp\frac{d}{ds}
\zeta_{p}(s;\tau)\big|_{s=0}\right)}.
\end{equation}
Now we can state the main result of this paper.
\begin{thrm}\label{theo1}
Let $X=\Gamma\bs\mathbb{H}^{2n+1}$ be a $(2n+1)$-dimensional, complete, 
oriented, hyperbolic manifold of finite volume. Assume that $\Gamma$ satisfies
\eqref{a1}.
There exists a constant $C(n)>0$ which depends only on $n$, such that we have
\begin{align*}
\log
T_{X}(\tau(m))=-C(n)\vol(X)m\cdot\dim(\tau(m))+O\left(m^{\frac{
n(n+1)}{2}}\log{m}\right)
\end{align*}
as $m\longrightarrow\infty$.
\end{thrm}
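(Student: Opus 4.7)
The plan is to compute $\log T_X(\tau(m))$ by expressing each $\frac{d}{ds}\zeta_p(s;\tau(m))|_{s=0}$ as a Mellin-transform integral of $\Tr_{\rel}(e^{-t\Delta_p(\tau(m))})$, and then applying the Selberg trace formula to replace this regularized trace by its geometric side. Granted the discussion following \eqref{regtrace1}, the regularized trace equals the spectral side, which by the trace formula decomposes as
\begin{equation*}
\Tr_{\rel}\bigl(e^{-t\Delta_p(\tau(m))}\bigr) = I(t;m,p) + H(t;m,p) + U(t;m,p) + T_{\rm cusp}(t;m,p),
\end{equation*}
where $I$ is the identity orbital integral, $H$ is the sum over semisimple (hyperbolic) $\Gamma$-conjugacy classes, $U$ is the finite sum of unipotent/parabolic orbital integrals coming from $\Gamma\cap N_P$, and $T_{\rm cusp}$ collects the weighted orbital and Eisenstein (scattering) contributions from each $\Gamma$-cuspidal parabolic $P=M_PA_PN_P$. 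Assumption \eqref{a1} eliminates the mixed-type contributions in $U$ and $T_{\rm cusp}$ and makes the Eisenstein piece a one-dimensional spectral integral per cusp.

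The identity and hyperbolic pieces are handled exactly as in the compact case of \cite{MP}. The identity term $I(t;m,p)$ is $\vol(X)$ times the Plancherel integral against the trace of $e^{-t\Delta_p(\tau(m))}$ on the hyperbolic space $\mathbb{H}^{2n+1}$; combining the alternating sum over $p$ yields, after the Mellin/derivative at $s=0$, the $L^2$-torsion of $\mathbb{H}^{2n+1}$ with coefficients in $\tau(m)$, and this is precisely the quantity for which \cite{MP} established the leading asymptotic $-C(n)\vol(X)\,m\cdot\dim(\tau(m))+O(m^{n(n+1)/2})$. The hyperbolic piece $H(t;m,p)$ is an absolutely convergent sum over closed geodesics whose contribution to the derivative at $s=0$ is bounded uniformly in $m$ by $O(m^{n(n+1)/2})$ using the exponential decay of the orbital integrals combined with the polynomial (in $m$) pointwise bounds on the test function obtained from the explicit Harish-Chandra Plancherel density on $\Spin(2n+1,1)$.

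For the cusp contributions I will treat each $\Gamma$-cuspidal $P$ separately. The unipotent part $U(t;m,p)$ is a finite linear combination of orbital integrals on $N_P$ evaluated on the heat kernel; by a Fourier-transform computation on $A_P$ its Mellin transform and $s=0$-derivative are controlled by the value and first derivative of a Plancherel-type integral, which contributes at worst $O(m^{n(n+1)/2})$. The Eisenstein/scattering part of $T_{\rm cusp}(t;m,p)$ has the schematic form
\begin{equation*}
\int_{\R} \Omega_p(\lambda;m,t)\,\bigl(\mathrm{tr}\,C(-i\lambda)^{-1}C'(-i\lambda)\bigr)\,d\lambda + \tfrac{1}{4}\,\mathrm{tr}\,C(0)\cdot h_{p,m,t}(0),
\end{equation*}
where $C(s)$ is the scattering matrix and $\Omega_p$ is the trace of the heat kernel on the induced principal series. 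Using that $C(i\lambda)$ is unitary on the critical line and Maass--Selberg style bounds on its logarithmic derivative, one reduces the estimate to bounding a one-dimensional integral against a Gaussian-times-polynomial whose polynomial factor grows in $m$ like $m^{n(n+1)/2}$; the Mellin manipulation followed by evaluation of the derivative at $s=0$ produces the extra $\log m$ factor, which is the origin of the error $O(m^{n(n+1)/2}\log m)$ announced in the theorem.

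The main obstacle is precisely the $s=0$-derivative of the Eisenstein contribution: the usual trick in the compact case of exploiting rapid decay of $h_{p,m,t}$ at $t\to 0$ is unavailable because one must now integrate in $t$ against a non-compactly-supported spectral density and keep track of both endpoints of the Mellin integral simultaneously. I will therefore split the Mellin integral at $t=1$, use the small-$t$ asymptotic expansion of $T_{\rm cusp}(t;m,p)$ (whose coefficients are polynomials in $m$) to analytically continue the small-$t$ piece and identify the regular value at $s=0$, and use the uniform polynomial-in-$m$ bound on the spectral integrand together with the rapid $t\to\infty$ decay of the principal series heat trace on the large-$t$ piece. Combining these estimates with the cancellation of polynomial-in-$m$ prefactors in the alternating sum over $p$ yields the claimed bound and completes the proof of Theorem~\ref{theo1}.
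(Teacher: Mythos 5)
Your overall plan --- decompose the regularized trace via the Selberg trace formula into identity, hyperbolic, and cusp-related terms, recover the $L^2$-torsion asymptotics from the identity term using \cite{MP}, and estimate the remaining terms --- matches the shape of the paper's argument, but there is a substantive misreading of where the $\Gamma$-dependent scattering matrix lives and, consequently, of where the $\log m$ comes from. In the paper the scattering terms (the operators $\mathbf{C}(\sigma,\nu,\lambda)$ built from the Eisenstein series of $\Gamma$) are \emph{added} to $\Tr(e^{-tA^d_\nu})$ in the very \emph{definition} of $\Tr_{\rel}$ (see \eqref{regtrace2}), so that by Theorem \ref{Spurf} the regularized trace equals the purely geometric side $I+H+T+\mathcal{I}+J$, in which the $\Gamma$-dependent scattering matrix does not appear at all. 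The $\log m$ is produced entirely by the non-invariant parabolic term $J$ (Proposition~\ref{PropJ}), which is expressed through the standard Knapp--Stein intertwining operators $J_{\bar{P}_0|P_0}(\sigma,z)$ and their $c$-functions $c_\nu(\sigma:z)$; these are explicit ratios of $\Gamma$-functions (\eqref{cfunktion}), independent of $\Gamma$, and the $\log m$ arises from the elementary computation $\frac{d}{ds}\zeta_c(s)/\Gamma(s)\big|_{s=0}=-2\log(c+j)$ in Lemma~\ref{Ratcontr} summed over $O(1)$ indices and multiplied by $\dim(\nu)\sim m^{n(n+1)/2}$.

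Your proposal instead places the scattering matrix $C(-i\lambda)^{-1}C'(-i\lambda)$ on the geometric side and proposes to control it by ``Maass--Selberg style bounds'' uniformly in $m$. This cannot work as stated: the scattering matrix of $\Gamma$ for a highest weight $m$ is not controlled by any general a priori estimate that is uniform in $m$, and no such bound exists in the literature; the paper's definition is designed precisely to move this term to the other side of the equation so that no such estimate is needed. You are also missing two technical ingredients that are essential to making the estimates go through: (i) Kostant's theorem on $\bar{\mathfrak{n}}$-cohomology, used in Proposition~\ref{auxk} to rewrite $k_t^\tau$ as a short alternating sum of ``building block'' kernels $h_t^{\sigma_{\tau,k}}$, which is what allows the Mellin transforms of each parabolic term to be computed explicitly; and (ii) Hoffmann's explicit Fourier transform of the invariant weighted orbital integral $\mathcal{I}$ (Theorem~\ref{FTorbint}), which is needed to show that $\mathcal{M}\mathcal{I}(\tau(m))=O(m^{n(n+1)/2})$ --- note that the cancellation \eqref{altdim} from Kostant's theorem is crucial here to suppress the $\log m$ that would otherwise appear in $\mathcal{I}$. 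Without these, neither the $\mathcal{I}$ estimate nor the $J$ estimate can be carried out, and the claimed error term cannot be justified.
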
 
This result generalizes \eqref{asymp1} to the finite volume case.
The constant $C(n)$ in Theorem \ref{theo1} equals the constant $C(n)$ occurring
in \eqref{asymp1} and can be computed explicitly from the Plancherel
polynomials. It equals
\begin{equation}\label{cn}
C(n)=\frac{(-1)^{n-1} 2^{(n+1)n/2}n!}{2\pi^n\prod_{0\le i<j\le n}(j+i)}.
\end{equation}
We also consider the $L^2$-torsion $T^{(2)}_X(\tau)$. Although $X$ is
noncompact,
it can be defined as in the compact case \cite{Lo}. It can be computed using
the results of \cite{MP}. First of all, we show that there  exists a
polynomial $P_\tau(m)$ of degree $n(n+1)/2+1$ such that
\begin{equation}\label{l2tor3}
\log T^{(2)}_X(\tau(m))=\vol(X)P_\tau(m).
\end{equation}
The polynomial is obtained from the Plancherel polynomials. Its leading term
can be determined as in \cite{MP} and we obtain
\begin{equation}\label{l2tor5}
\log{T_X^{(2)}(\tau(m))}=-C(n)\vol(X)m\cdot\dim(\tau(m))+O(m^{\frac{n(n+1)}{2}}
).
\end{equation}
Compared with Theorem \ref{theo1} we obtain the following Theorem.
\begin{thrm}\label{Theorem2}
Let $X=\Gamma\bs\mathbb{H}^{2n+1}$ be a $(2n+1)$-dimensional complete, oriented,
hyperbolic manifold of finite volume. Assume that $\Gamma$ satisfies \eqref{a1}.
Then we have
\begin{align*}
\log{T_{X}(\tau(m))}=\log{T_X^{(2)}(\tau(m))}+O(m^{\frac{n(n+1)}{2}}\log{m})
\end{align*}
as $m\to\infty$. 
\end{thrm}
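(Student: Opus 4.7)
The plan is direct: Theorem~\ref{Theorem2} is a comparison statement, and since both the leading-order asymptotic of $\log T_X(\tau(m))$ and that of $\log T_X^{(2)}(\tau(m))$ have already been (or will be) produced in the paper, the theorem reduces to a one-line subtraction. Concretely, Theorem~\ref{theo1} asserts
\[
\log T_X(\tau(m)) = -C(n)\vol(X)\,m\cdot\dim(\tau(m)) + O(m^{n(n+1)/2}\log m),
\]
while \eqref{l2tor5} asserts
\[
\log T_X^{(2)}(\tau(m)) = -C(n)\vol(X)\,m\cdot\dim(\tau(m)) + O(m^{n(n+1)/2}).
\]

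Therefore, I would simply subtract the second identity from the first. The leading terms $-C(n)\vol(X)\,m\cdot\dim(\tau(m))$ cancel exactly (crucially, with the \emph{same} constant $C(n)$ given by \eqref{cn}), and the two error terms combine to
\[
\log T_X(\tau(m)) - \log T_X^{(2)}(\tau(m)) = O(m^{n(n+1)/2}\log m) + O(m^{n(n+1)/2}),
\]
where the second $O$-term is absorbed into the first. This yields exactly the asymptotic claimed in Theorem~\ref{Theorem2}.

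The substantive content, of course, lies elsewhere: the main obstacle is the proof of Theorem~\ref{theo1} itself, which requires controlling the regularized trace \eqref{regtrace1} via the spectral side of the Selberg trace formula, and separately establishing the polynomial identity \eqref{l2tor3} for the $L^2$-torsion through the Plancherel polynomials as in~\cite{MP}. The nontrivial input that makes Theorem~\ref{Theorem2} meaningful is the matching of leading coefficients; without identical $C(n)$ on both sides, the difference would be of order $m^{n(n+1)/2+1}$ rather than $m^{n(n+1)/2}\log m$. Once Theorem~\ref{theo1} and \eqref{l2tor5} are in hand, no further argument is needed.
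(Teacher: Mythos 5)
Your subtraction argument is arithmetically valid given Theorem~\ref{theo1} and \eqref{l2tor5}, but it reverses the logical order of the paper. In the paper, Theorem~\ref{Theorem2} is in fact the more primitive statement: by \eqref{mellin} one has the exact identity $\log T^{(2)}_X(\tau(m))=\frac{1}{2}\mathcal{M}I(\tau(m))$, so
\[
\log T_X(\tau(m))-\log T^{(2)}_X(\tau(m))
=\tfrac{1}{2}\bigl(\mathcal{M}H(\tau(m))+\mathcal{M}T(\tau(m))+\mathcal{M}\mathcal{I}(\tau(m))+\mathcal{M}J(\tau(m))\bigr),
\]
and Propositions~\ref{PropH}, \ref{propT}, \ref{propI}, \ref{PropJ} bound each summand by $O(m^{n(n+1)/2}\log m)$. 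This gives Theorem~\ref{Theorem2} directly, with no need to invoke the explicit leading coefficient $C(n)$ on either side; the cancellation of leading terms is structural, not coincidental. Theorem~\ref{theo1} is then a corollary of Theorem~\ref{Theorem2} combined with the asymptotic of $\log T^{(2)}_X$ from Proposition~\ref{Idkontr}. Your route, by contrast, treats Theorem~\ref{theo1} as the prerequisite and then requires observing that the constants on the two sides agree; that observation is correct but is an artifact of your presentation rather than a genuine ingredient, and the paper's route avoids it. Both arguments rest on the same substantive estimates, so the difference is one of bookkeeping rather than mathematical content.
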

Next we turn to the even-dimensional case. First
recall that for a compact manifold of even dimension, the analytic torsion is 
always equal to 1 (see \cite{RS}, \cite[Proposition 1.7]{MP}). This is
not true anymore in the noncompact case. Park \cite[Theorem 1.4]{Pa}
has computed the analytic torsion of a unitary representation of $\Gamma$ in
even dimensions. His formula shows that in the noncompact case, 
the analytic torsion in even dimensions is not trivial in general.
Nevertheless, the torsion has still 
a rather simple behavior as shown by the next proposition. For a hyperbolic 
manifold of finite volume $X$, denote by
$\kappa(X)$ the number of cusps of $X$. Let $\hL$ be the standard Cartan 
subalgebra of $\gL$ and let $\Lambda(G)\subset \hL_\C^*$ be the highest 
weight lattice. For $\lambda\in\Lambda(G)$ let $\tau_\lambda$ be the 
corresponding irreducible representation of $G$.
\begin{prop}\label{toreven}
There exists a function $\Phi\colon \Lambda(G)\to \R$ such that for every
even-dimensional complete oriented hyperbolic manifold $X$ of finite
volume one has
\[
\log T_X(\tau_\lambda)=\kappa(X)\Phi(\lambda),\quad \lambda\in\Lambda(G).
\]
\end{prop}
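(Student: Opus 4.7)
\emph{Proof proposal.} My plan is to apply the Selberg trace formula to the regularized heat trace, split it into identity, hyperbolic, and cuspidal contributions, and show that in even dimensions only the cuspidal contributions survive the alternating sum defining $\log T_X(\tau_\lambda)$. Since by construction $\Tr_{\rel}(e^{-t\Delta_p(\tau_\lambda)})$ equals the spectral side of the Selberg trace formula, it coincides with the geometric side:
\[
\Tr_{\rel}(e^{-t\Delta_p(\tau_\lambda)}) = I_p(t,\tau_\lambda) + H_p(t,\tau_\lambda) + U_p(t,\tau_\lambda),
\]
where $I_p$ is the identity (volume) contribution, $H_p$ the sum over nontrivial semisimple conjugacy classes (closed geodesics), and $U_p$ the sum of cuspidal parabolic contributions. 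Under hypothesis \eqref{a1}, each cusp contributes only through $\Gamma\cap N_P$, and after $G$-conjugation every cusp contributes identically, so
\[
U_p(t,\tau_\lambda) = \kappa(X)\,u_p(t,\tau_\lambda)
\]
for a universal function $u_p$ depending only on $p$, $t$, and $\tau_\lambda$, and not on $X$.

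Next I argue that the identity and hyperbolic contributions vanish in even dimensions after the alternating sum and derivative at zero. The identity piece is, up to $\vol(X)$, the $L^2$-torsion, which vanishes in even dimensions because the Plancherel density of $\Spin(2n,1)$ against $\sum_p(-1)^p p\,\tr(\tau_\lambda|_{\Lambda^p\pL^*})$ is odd under the involution $\nu\mapsto-\nu$ on the spectral parameter. For the hyperbolic piece, each closed geodesic contributes through the same universal orbital integral that appears in the compact case, and the triviality $T_Y(\tau_\lambda)=1$ proved in \cite[Proposition~1.7]{MP} is obtained precisely by showing that these orbital integrals vanish term-by-term in the alternating sum, using the same parity symmetry. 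Importing this term-by-term vanishing into our finite-volume decomposition kills the contribution of $H_p$ for $X$ as well.

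Combining these facts, only the cuspidal piece $\kappa(X)u_p$ remains. Defining
\[
\Phi(\lambda) := \frac{1}{2}\sum_{p=1}^{d}(-1)^p p\,\frac{d}{ds}\bigg|_{s=0}\frac{1}{\Gamma(s)}\int_0^\infty t^{s-1}u_p(t,\tau_\lambda)\,dt,
\]
we obtain $\log T_X(\tau_\lambda) = \kappa(X)\Phi(\lambda)$, which is the claim. The main obstacle is organizing the trace-formula decomposition so that the identity and hyperbolic parts reduce to the same universal expressions as in the compact case, allowing the term-by-term vanishing to be imported cleanly; meromorphic continuation and regularity at $s=0$ of each piece must be tracked using the short- and large-time asymptotics of the regularized trace discussed earlier. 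A secondary difficulty is that the cuspidal contributions must combine uniformly across cusps, which is exactly where \eqref{a1} is essential: without it the parabolic terms would depend on finer data than $\Gamma\cap N_P$ and would not organize into the product $\kappa(X)\Phi(\lambda)$.
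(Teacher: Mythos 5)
Your overall strategy (decompose the trace formula, kill everything except a per-cusp term, read off $\Phi$) is the right one, and the paper does follow that outline. But there are two concrete gaps.

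The first and more serious one is your claim that the total cuspidal contribution $U_p$ equals $\kappa(X)\,u_p$ for a universal $u_p$. This is false. The parabolic terms in the trace formula split into the pieces $T$, $\mathcal{I}$, and $J$ of \eqref{trace7}. The term $T$ carries the weight $C_P(\Gamma)\vol(\Gamma\cap N_P\backslash N_P)/\vol(S^{2n-1})$ coming from the Epstein zeta function \eqref{Definition der Ep Zetafunktion}; this depends on the shape of each lattice $\Gamma\cap N_P$ and not merely on the number of cusps. Even under \eqref{a1}, two cusps can have non-isometric cross-section tori, so "after $G$-conjugation every cusp contributes identically" does not hold for $T$. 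Your decomposition therefore cannot produce $\kappa(X)\Phi(\lambda)$ without some further input that eliminates $T$.

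That further input is exactly what the paper supplies: by \cite[Lemma 4.1]{MP}, in even dimensions $\Theta_{\sigma,\lambda}(k_t^\tau)=0$ for every tempered principal series $\pi_{\sigma,\lambda}$. Since the Fourier transforms of $H$ \eqref{Hyperb}, $T$ \eqref{Fouriertrafo T}, and $\mathcal{I}$ (Theorem \ref{FTorbint}) are all supported on the principal series, all three vanish identically when applied to $k_t^\tau$. Only $I$ and $J$ remain, and $J$ is visibly $\kappa(X)\cdot\tilde J$ from \eqref{j1}--\eqref{Definition von J}. Your "parity of the Plancherel density" remark gestures in this direction but is not the same statement, and it is incorrect as you state it for the identity term: $\Spin(2n+2,1)$ has $\rk G=\rk K$, so its Plancherel formula contains a discrete-series part. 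Consequently $I(k_t^\tau)$ does \emph{not} vanish; rather the vanishing of the continuous part leaves only the discrete-series contribution, which (by \cite[Section 5]{MP}) is $t$-independent, hence equal to $h(\tau)$ and killed by the $\frac{d}{ds}\big|_{s=0}\frac{1}{\Gamma(s)}(\cdot)$ regularization. You need that argument, not "the $L^2$-torsion is zero." Finally, your citation of \cite[Proposition 1.7]{MP} for a term-by-term vanishing of orbital integrals is not what that proposition says; the clean route to the vanishing of $H$ is again \eqref{Hyperb} combined with $\Theta_{\sigma,\lambda}(k_t^\tau)=0$.
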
 
The function $\Phi$ can be described as follows. There is a
distribution $J$ which appears on the geometric side of the trace formula.
It is of the form $J=\kappa(X)\cdot \tilde J$, where $\tilde J$ is
defined in terms of weighted characters of principal series representations 
of $G$ (see \eqref{Definition von J}).
Let $k_t^\tau\in\mathcal{C}(G)$ be the function \eqref{ktau6}. 
There is
$c>0$ such that $\tilde J(k_t^\tau)=O(e^{-ct})$ as $t\to\infty$. Moreover 
$\tilde J(k_t^\tau)$
has an asymptotic expansion as $t\to 0$. Thus the Mellin transform 
$\mathcal{M}\tilde{J}(s;\tau)$ of $\tilde J(k_t^\tau)$ is defined for 
$\Re(s)\gg0$ and
admits a meromorphic extension to $\C$ which is regular at $s=0$. Then we have
\[
\Phi(\lambda)=\mathcal{M}\tilde{J}(0;\tau_\lambda)
\]
for all highest weights $\lambda=(k_1,\dots,k_{n+1})$.

Next recall that for a compact manifold $X$, the analytic torsion equals 
the Reidemeister torsion (see \cite{Mu1}). This is the basis 
for the applications of the results of \cite{Mu2} to the cohomology of
arithmetic hyperbolic 3-manifolds in \cite{MaM}. Currently it is not known
if there is an extension of the equality of analytic and Reidemeister torsion
to the noncompact setting. This is an interesting problem and the present 
paper is a first step in this direction. 

We shall now outline our method for the proof of our main result. Let $d=2n+1$.
We assume
that the highest weight of $\tau$ satisfies $\tau_{n+1}\neq0$. Let
\begin{align*}
K(t,\tau):=\sum_{p=0}^{2n+1}(-1)^p
p\Tr_{\rel}(e^{-t\Delta_p(\tau)}).
\end{align*}
By \eqref{speczeta} and \eqref{def-tor3} we need to 
compute the finite part of the Mellin transform of 
$K(t,\tau)$ at 0. Let $\widetilde E_\tau$ be the homogeneous vector bundle over
$\widetilde X=G/K$ associated to $\tau$ and let $\widetilde{\Delta}_p(\tau)$ be
the Laplacian on
$\widetilde E_\tau$-valued $p$-forms on $\widetilde X$. The heat operator
$e^{-t\widetilde{\Delta}_p(\tau)}$ is a convolution operator with kernel
$H_t^{\nu_p(\tau)}\colon G\to \End(\Lambda^p\pL^*\otimes V_\tau)$. Let
$h_t^{\nu_p(\tau)}(g)=\tr H_t^{\nu_p(\tau)}(g)$, $g\in G$, and put
\begin{equation}\label{ktau6}
k_t^\tau=\sum_{p=1}^d (-1)^p p h_t^{\nu_p(\tau)}.
\end{equation}
Let $R_\Gamma$ be the right regular representation
of $G$ on $L^2(\Gamma\backslash G)$. There exists an orthogonal
$R_\Gamma$-invariant
decomposition $L^2(\Gamma\backslash G)=L^2_d(\Gamma\backslash G)\oplus
L^2_c(\Gamma\backslash
G)$. The restriction $R_{\Gamma}^d$ of $R_\Gamma$  to
$L^2_d(\Gamma\backslash G)$ decomposes into
the orthogonal
direct sum of irreducible unitary representations, each of which occurs with
finite multiplicity.
On the other hand, by the theory of Eisenstein series, the restriction 
$R_\Gamma^c$ of $R_\Gamma$ to
$L^2_c(\Gamma\backslash G)$ is isomorphic to the direct integral over all
tempered principle series
representations of $G$. For $\phi\in L^2_d(\Gamma\backslash G)$ let
\begin{align*}
\left(R_{\Gamma}^d(k_{t}^{\tau})\phi\right)(x):=\int_{G}k_{t}^{\tau}
(g)\phi(xg)dg.
\end{align*}
Then $R_{\Gamma}^d(k_{t}^{\tau})$ is a trace class operator and the Selberg
trace formula computes its trace. The right hand side of the trace formula
is the sum of terms associated to the continuous spectrum and orbital
integrals associated to the various conjugacy classes of $\Gamma$. If we move
the spectral terms to the left hand side of the trace formula we end up
with the spectral side $J_{\spec}(k_t^\tau)$ of the trace formula. The key fact
is now that
\[
K(t,\tau)=J_{\spec}(k_t^\tau).
\] 
By the Selberg trace formula, the spectral side equals the geometric side, 
that is, the sum of the orbital integrals. This leads to the following 
fundamental equality:
\begin{equation}\label{trace7}
K(t,\tau)=I(t;\tau)+H(t;\tau)+T(t;\tau)+\mathcal{I}(t;\tau)+J(t;\tau),
\end{equation}
where $I(t;\tau)$ is the contribution of the identity conjugacy class of
$\Gamma$ and $H(t;\tau)$ is the contribution of the hyperbolic conjugacy
classes of
$\Gamma$.
Moreover, $T(t;\tau)$, $\mathcal{I}(t;\tau)$ and $J(t;\tau)$ are
tempered
distributions applied to $k_t^{\tau}$ which
are constructed out of the parabolic conjugacy classes of $\Gamma$. 
Now we evaluate the Mellin transform of each term separately. Here an important
simplification is obtained using a theorem of Kostant on Lie algebra cohomology.

Let $\mathcal{M}I(\tau)$ be the Mellin transform of $I(t;\tau)$ evaluated
at $0$. Then we show that
\[
\log T^{(2)}_X(\tau)=\frac{1}{2}\mathcal{M}I(\tau).
\]
Now consider the representations $\tau(m)$, $m\in\N$. 
Using the results of \cite{MP} we compute $\mathcal{M}I(\tau(m))$ and
prove \eqref{l2tor3} and \eqref{l2tor5}. Thus in order to prove our main
result, we need to show that the Mellin transforms at 0 of all other terms 
are of lower order. It is easy to treat the hyperbolic term and the terms
$T(t;\tau(m))$.  The distribution
$\mathcal{I}(t;\tau(m))$
is invariant and its Fourier transform was computed explicitly by Hoffmann 
\cite{Hoffmann}. Using his results we can estimate the Mellin transform of 
$\mathcal{I}(t;\tau(m))$ at 0.
Finally, the distribution $J(t;\tau(m))$ is non-invariant. However it is 
described in terms of 
Knapp-Stein intertwining operators which are understood completely
in our case. With this information its Mellin transform at 0 can also be
estimated.

In \cite{MP} we have used a different method which does not rely on the
trace formula. It would be interesting to generalize this method to the
finite volume case. Especially the Fourier transform, which we use to deal
with $\mathcal{I}(t;\tau(m))$, is a very heavy machinery and is not
available in the higher rank case. Part of the arguments used in \cite{MP} go
through in the finite volume case as well. The difficult part is to deal with 
the contribution of the parabolic terms. 

This paper is organized as follows. In section \ref{Notations}
we fix notations and collect some basic facts. In section
\ref{secrep} we review some properties of the right regular 
representation of $G$ on $L^2(\Gamma\backslash G)$. In section \ref{secBLO} we
introduce the locally invariant differential operators which act on
locally homogeneous vector bundles over $X$. Section \ref{secrel} is
devoted to the regularized trace which we introduce there and relate it to the 
spectral side of the Selberg trace formula. In section \ref{sectr} we
apply the Selberg trace formula which leads to \eqref{trace7}. Furthermore, we 
study the Fourier transform of the distribution $\mathcal{I}$. Finally we 
derive an asymptotic expansion as $t\to 0$ for the regularized trace of the 
heat operator of a Bochner-Laplace operator. In \ref{sectors} we introduce the
analytic torsion. In section \ref{secvir} we express the test function
$k_t^\tau$
a as combination of functions defined by the heat kernels of certain 
Bochner-Laplace operators. The results of this section are needed to deal with
the Mellin transforms of the various terms on the right hand side of 
\eqref{trace7}. In section \ref{secl2} we study the $L^2$-torsion. 
In the final section \ref{Beweis} we prove the main results.

This paper arose from the PhD-Thesis of the second author under the
supervision of the first author.

\section{Preliminaries}\label{Notations}
\setcounter{equation}{0}

In this section we will establish some notation and recall some basic facts 
about representations of the involved Lie groups.
For $d\in\N$, $d>1$ let  $G:=\Spin(d,1)$. 
Recall that $G$ is the universal covering group 
of $\SO_{0}(d,1)$. Let
$K:=\Spin(d)$. Then $K$ is a maximal compact subgroup of $G$. Put
$\tilde{X}:=G/K$. Let 
\[
G=NAK
\]
 be the standard Iwasawa decomposition of $G$ and let $M$ be the 
centralizer of $A$ in $G$. Then $M=\Spin(d-1)$. 
The Lie algebras of  $G,K,A,M$ and $N$ will be denoted by
$\gL,\kL,\aL,\mL$ and $\nf$, respectively. Define the 
standard Cartan involution $\theta:\gL\rightarrow \gL$ by
\begin{align*}
\theta(Y)=-Y^{t},\quad Y\in\gL. 
\end{align*}
The lift of $\theta$ to $G$ will be denoted by the same letter $\theta$. Let 
\begin{align*}
\mathfrak{g}=\mathfrak{k}\oplus\mathfrak{p}
\end{align*}
be the Cartan decomposition of $\gL$ with respect to $\theta$. Let $x_0=eK\in
\tilde X$. Then we have a canonical isomorphism
\begin{equation}\label{tspace}
T_{x_0}\tilde X\cong \pL.
\end{equation}
Define the symmetric bi-linear form $\langle\cdot,\cdot\rangle$ on $\gL$ by
\begin{equation}\label{killnorm}
\langle Y_1,Y_2\rangle:=\frac{1}{2(d-1)}B(Y_1,Y_2),\quad Y_1,Y_2\in\gL.
\end{equation}
By \eqref{tspace} the restriction of $\langle\cdot,\cdot\rangle$ to $\pL$ 
defines an inner product on $T_{x_0}\tilde X$ and therefore an invariant 
metric on $\tilde X$. This metric has constant curvature $-1$. Then $\tilde X$,
equipped with this metric, is isometric to the hyperbolic space
${\mathbb{H}}^{d}$.

\subsection{}\label{Wurzeln}

Let $\aL$ be the Lie-Algebra of $A$.
Fix a Cartan-subalgebra $\mathfrak{b}$ of $\mathfrak{m}$. 
Then
\begin{align*}
\mathfrak{h}:=\mathfrak{a}\oplus\mathfrak{b}
\end{align*}
is a Cartan-subalgebra of $\mathfrak{g}$. We can identify
$\mathfrak{g}_\C\cong\mathfrak{so}(d+1,\C)$. Let $e_1\in\aL^*$ be the
positive restricted root defining $\mathfrak{n}$.
Then for $d=2n+1$, or $d=2n+2$, we fix $e_2,\dots,e_{n+1}\in
i\mathfrak{b}^*$ such that 
the positive roots $\Delta^+(\mathfrak{g}_\C,\mathfrak{h}_\C)$ are chosen as in
\cite[page 684-685]{Knapp2}
for the root system $D_{n+1}$ resp. $B_{n+1}$. We let
$\Delta^+(\mathfrak{g}_\C,\mathfrak{a}_\C)$ be
the set of roots of $\Delta^+(\mathfrak{g}_\C,\mathfrak{h}_\C)$ which do not
vanish on $\aL_\C$. The positive roots
$\Delta^+(\mathfrak{m}_\C,\mathfrak{b}_\C)$
are chosen such that they are restrictions of elements from
$\Delta^+(\mathfrak{g}_\C,\mathfrak{h}_\C)$.
For $i=1,\dots,n+1$ we let $H_i\in\mathfrak{h}_{\C}$
be such that $e_j(H_i)=\delta_{i,j}$, $j=1,\dots,n+1$.
For $\alpha\in\Delta^{+}(\mathfrak{g}_{\C},\mathfrak{h}_{\C})$ 
there exists a unique $H'_{\alpha}\in\mathfrak{h}_{\C}$ 
such that $B(H,H_{\alpha}^{'})=\alpha(H)$ for all $H\in\mathfrak{h}_{\C}$. 
One has $\alpha(H_{\alpha}^{'})\neq 0$. We let
\begin{align*}
H_{\alpha}:=\frac{2}{\alpha(H_{\alpha}^{'})}H_{\alpha}^{'}.
\end{align*}
One easily sees that 
\begin{align}\label{Explizite Formel fur H(alpha)}
H_{\pm e_{i}\pm e_{j}}=\pm H_{i}\pm H_{j}.
\end{align}
For $j=1,\dots,n+1$ let
\begin{equation}\label{rho}
\rho_{j}:=\begin{cases}n+1-j, & G=\Spin(2n+1,1);\\
n+3/2-j, & G=\Spin(2n+2,1).
\end{cases}.
\end{equation}
Then the half-sum of positive roots $\rho_G$ and $\rho_M$, respectively, are
given by
\begin{align}\label{Definition von rho(G)}
\rho_{G}:=\frac{1}{2}\sum_{\alpha\in\Delta^{+}(\mathfrak{g}_{\mathbb{C}},
\mathfrak{h}_\mathbb{C})}\alpha=\sum_{j=1}^{n+1}\rho_{j}e_{j}
\end{align}
and
\begin{align}\label{Definition von rho(M)}
\rho_{M}:=\frac{1}{2}\sum_{\alpha\in\Delta^{+}(\mathfrak{m}_{\mathbb{C}},
\mathfrak{b}_{\mathbb{C}})}\alpha=\sum_{j=2}^{n+1}\rho_{j}e_{j}.
\end{align}
Let $W_{G}$ be the Weyl-group of $\Delta(\mathfrak{g}_{\C},
\mathfrak{h}_{\mathbb{C}})$. 

\subsection{}

Let ${{\mathbb{Z}}\left[\frac{1}{2}\right]}^{j}$ be the set of all 
$(k_{1},\dots,k_{j})\in\mathbb{Q}^{j}$ such that either all $k_{i}$ are 
integers or all $k_{i}$ are half integers. 
Then the  finite dimensional irreducible representations 
$\tau\in\hat{G}$ of $G$ 
are parametrized by their highest weights
\begin{equation}\label{Darstellungen von G}
\Lambda(\tau)=k_{1}(\tau)e_{1}+\dots+k_{n+1}(\tau)e_{n+1};\:\:
k_{1}(\tau)\geq 
k_{2}(\tau)\geq\dots\geq k_{n}(\tau)\geq \left|k_{n+1}(\tau)\right|,
\end{equation}
if $G=\Spin(2n+1,1)$ resp.
\begin{equation}\label{Darstellungen von G'}
\Lambda(\tau)=k_{1}(\tau)e_{1}+\dots+k_{n+1}(\tau)e_{n+1};\:\:
k_{1}(\tau)\geq 
k_{2}(\tau)\geq\dots\geq k_{n}(\tau)\geq k_{n+1}(\tau)\geq 0,
\end{equation}
if $G=\Spin(2n+2,1)$,
where $(k_{1}(\tau),\dots, k_{n+1}(\tau))\in
{{\mathbb{Z}}\left[\frac{1}{2}\right]}^{n+1}$.\\
Moreover, the finite dimensional representations $\nu\in\hat{K}$ of $K$ are
parametrized by their highest weights
\begin{equation}\label{Darstellungen von K}
\Lambda(\nu)=k_{2}(\nu)e_{2}+\dots+k_{n+1}(\nu)e_{n+1};\:\:
k_{2}(\nu)\geq 
k_{3}(\nu)\geq\dots\geq k_{n}(\nu)\geq k_{n+1}(\nu)\geq 0,
\end{equation} 
if $G=\Spin(2n+1,1)$ resp.
\begin{equation}\label{Darstellungen von K'}
\Lambda(\nu)=k_{1}(\nu)e_{1}+\dots+k_{n+1}(\nu)e_{n+1};\:\:
k_{1}(\nu)\geq 
k_{2}(\nu)\geq\dots\geq k_{n}(\nu)\geq \left|k_{n+1}(\nu)\right|,
\end{equation}
if $G=\Spin(2n+2,1)$,
where $(k_{2}(\nu),\dots, k_{n+1}(\nu)), (k_{1}(\nu),\dots,
k_{n+1}(\nu))\in
{{\mathbb{Z}}\left[\frac{1}{2}\right]}^{n,n+1}$.\\
Finally, the  finite dimensional irreducible representations 
$\sigma\in\hat{M}$ of $M$ 
are parametrized by their highest weights
\begin{equation}\label{Darstellungen von M}
\Lambda(\sigma)=k_{2}(\sigma)e_{2}+\dots+k_{n+1}(\sigma)e_{n+1};\:\:
k_{2}(\sigma)\geq 
k_{3}(\sigma)\geq\dots\geq k_{n}(\sigma)\geq \left|k_{n+1}(\sigma)\right|,
\end{equation}
if $G=\Spin(2n+1,1)$ resp.
\begin{equation}\label{Darstellungen von M'}
\Lambda(\sigma)=k_{2}(\sigma)e_{1}+\dots+k_{n+1}(\sigma)e_{n+1};\:\:
k_{2}(\sigma)\geq\dots\geq k_{n}(\sigma)\geq k_{n+1}(\sigma)\geq 0,
\end{equation}
if $G=\Spin(2n+2,1)$,
where $(k_{2}(\sigma),\dots, k_{n+1}(\sigma))\in
{{\mathbb{Z}}\left[\frac{1}{2}\right]}^{n}$.

\subsection{}
Let $d=2n+1$. For $\tau\in\hat{G}$ let $\tau_{\theta}:=\tau\circ\theta$. Let
$\Lambda(\tau)$ 
denote the highest weight of $\tau$ as in \eqref{Darstellungen von G}.
Then the highest weight $\Lambda(\tau_\theta)$ of $\tau_\theta$ is given by
\begin{equation}\label{Tau theta}
\Lambda(\tau_{\theta})=k_{1}(\tau)e_{1}+\dots+k_{n}(\tau)e_{n}-k_{n+1}(\tau)e_{
n+1}.
\end{equation}
Let $\sigma\in\hat{M}$ with highest weight 
$\Lambda(\sigma)\in\mathfrak{b}_{\C}^{*}$ as in 
\eqref{Darstellungen von M}. By the Weyl dimension formula 
\cite[Theorem 4.48]{Knapp} we have
\begin{equation}\label{Dimension sigma}
\begin{split}
\dim(\sigma)=&\prod_{\alpha\in\Delta^{+}(\mathfrak{m}_{\mathbb{C}},
\mathfrak{b}_{\C})}\frac{\left<\Lambda(\sigma)+\rho_{M},\alpha\right>}
{\left<\rho_{M},\alpha\right>}\\
=&\prod_{i=2}^{n}\prod_{j=i+1}^{n+1}\frac{\left(k_{i}(\sigma)
+\rho_{i}\right)^{2}-\left(k_{j}(\sigma)+\rho_{j}\right)^{2}}
{\rho_{i}^{2}-\rho_{j}^{2}}.
\end{split}
\end{equation}

\subsection{}
Let $M'$ be the normalizer of $A$ in $K$ and let $W(A)=M'/M$ be the 
restricted Weyl-group. It has order two and it acts on the finite-dimensional 
representations of $M$ as follows. Let $w_{0}\in W(A)$ be the non-trivial 
element and let $m_0\in M^\prime$ be a representative of $w_0$. Given 
$\sigma\in\hat M$, the representation $w_0\sigma\in \hat M$ is defined by
\[
w_0\sigma(m)=\sigma(m_0mm_0^{-1}),\quad m\in M.
\] 
If $d=2n+2$ one has $w_0\sigma\cong\sigma$ for every $\sigma\in\hat{M}$. 
Assume that $d=2n+1$. Let
$\Lambda(\sigma)=k_{2}(\sigma)e_{2}+\dots+k_{n+1}(\sigma)e_{n+1}$ be the 
highest weight
of $\sigma$ as in \eqref{Darstellungen von M}. Then the highest weight 
$\Lambda(w_0\sigma)$ of $w_0\sigma$ is given by
\begin{equation}\label{wsigma}
\Lambda(w_0\sigma)=k_{2}(\sigma)e_{2}+\dots+k_{n}(\sigma)e_{n}
-k_{n+1}(\sigma)e_{n+1}.
\end{equation}

\subsection{}
Let $d=2n+1$. 
Let $R(K)$ and $R(M)$ be the representation rings of $K$ and $M$. Let 
$\iota:M\longrightarrow K$ be the inclusion and let $\iota^{*}:R(K)
\longrightarrow R(M)$ be the induced map. If $R(M)^{W(A)}$ is the subring of 
$W(A)$-invariant elements of $R(M)$, then clearly $\iota^{*}$ maps $R(K)$ into 
$R(M)^{W(A)}$. 
\begin{prop}\label{branching}
The map $\iota$ is an isomorphism from $R(K)$ onto $R(M)^{W(A)}$.
Explicitly, let $\sigma\in\hat{M}$ be of highest weight $\Lambda(\sigma)$ as in
\eqref{Darstellungen von M} and assume that $k_{n+1}(\sigma)\geq 0$. Then if
$\nu(\sigma)\in
R(K)$ is such that
\begin{align*}
\iota^*\nu(\sigma)=
\begin{cases} \sigma & \sigma=w_0\sigma\\
\sigma+w_{0}\sigma & \sigma\neq w_0\sigma
\end{cases} 
\end{align*}
one has
\begin{align}\label{BrKM}
\nu(\sigma)=\sum_{\mu\in\{0,1\}^n}(-1)^{c(\mu)}
\nu\left(\Lambda(\sigma)-\mu\right),
\end{align}
where the sum runs over all $\mu\in\{0,1\}^n$ such that $\Lambda(\sigma)-\mu$
is the highest weight of an irreducible representation 
$\nu\left(\Lambda(\sigma)-\mu\right)$ of $K$ and
$c(\mu):=\#\{1\in\mu\}$. 
\end{prop}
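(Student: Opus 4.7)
My plan is to derive the explicit formula \eqref{BrKM} directly from the classical branching rule for $\Spin(2n+1)$ down to $\Spin(2n)$ via a short inclusion–exclusion argument, and then to read off the isomorphism statement as a consequence. First I would recall the $B_n\to D_n$ branching rule: for the irreducible representation of $K$ with highest weight $(\ell_2,\dots,\ell_{n+1})$ one has
\[
\iota^*\nu(\ell_2,\dots,\ell_{n+1})=\bigoplus_\tau \tau,
\]
the sum running over $\tau\in\hat{M}$ whose highest weight $(m_2,\dots,m_{n+1})$ satisfies the interlacing
\[
\ell_2\ge m_2\ge\ell_3\ge m_3\ge\cdots\ge\ell_n\ge m_n\ge\ell_{n+1}\ge|m_{n+1}|,
\]
each such $\tau$ occurring with multiplicity one.

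Next I would fix $\sigma\in\hat{M}$ with highest weight $\Lambda(\sigma)=(k_2(\sigma),\dots,k_{n+1}(\sigma))$ and $k_{n+1}(\sigma)\ge 0$, apply the branching rule to every summand on the right of \eqref{BrKM}, and collect, for a given $\tau\in\hat{M}$ of highest weight $(l_2,\dots,l_{n+1})$, the coefficient
\[
\sum_{\mu}(-1)^{c(\mu)},
\]
where $\mu=(\mu_2,\dots,\mu_{n+1})\in\{0,1\}^n$ ranges over those vectors for which $\Lambda(\sigma)-\mu$ is a $K$-dominant weight and $\tau$ interlaces with $\Lambda(\sigma)-\mu$ in the sense above. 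A short analysis then shows that each coordinate $\mu_i$ is constrained by a two-sided bound of the shape $\max(0,k_i(\sigma)-l_{i-1})\le\mu_i\le\min(1,k_i(\sigma)-l_i)$, with the obvious modifications at the endpoints $i=2$ (no lower constraint) and $i=n+1$ (with $|l_{n+1}|$ in place of $l_{n+1}$). Concretely, $\mu_i$ is forced to $0$ exactly when $k_i(\sigma)=l_i$, forced to $1$ exactly when $k_i(\sigma)=l_{i-1}+1$, free when $l_i+1\le k_i(\sigma)\le l_{i-1}$, and otherwise has empty range.

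Because a free coordinate contributes a factor $1+(-1)=0$ to the alternating sum, only those $\tau$ for which every $\mu_i$ is forced can possibly contribute. The crux is then to propagate the forcing from the start: $\mu_2$ forced to $0$ gives $k_2(\sigma)=l_2$, after which the monotonicity $k_i(\sigma)\ge k_{i+1}(\sigma)$ inductively rules out the ``forced to $1$'' case at every later step, yielding $l_i=k_i(\sigma)$ for $i=2,\dots,n$ and $|l_{n+1}|=k_{n+1}(\sigma)$. The surviving $\tau$ are therefore $\sigma$ and, if $k_{n+1}(\sigma)>0$, also $w_0\sigma$, each entering with coefficient $(-1)^0=1$; this establishes \eqref{BrKM} together with the identity $\iota^*\nu(\sigma)=\sigma$ in the case $\sigma=w_0\sigma$ and $\iota^*\nu(\sigma)=\sigma+w_0\sigma$ otherwise.

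Finally, the overall statement that $\iota^*\colon R(K)\to R(M)^{W(A)}$ is an isomorphism drops out: surjectivity is immediate since the classes $\iota^*\nu(\sigma)$ produced above form a $\Z$-basis of $R(M)^{W(A)}$, and injectivity follows from a triangularity argument based on the fact that the highest-weight $M$-constituent in the branching of $\iota^*\nu(\Lambda)$ is the $M$-representation of highest weight $\Lambda$ itself, so distinct irreducibles of $K$ have distinct restrictions. The main obstacle I foresee is the careful bookkeeping inside the inclusion–exclusion; once the two-sided bounds on each $\mu_i$ are written out, the vanishing of free coordinates together with the monotonicity-driven propagation of the forced values make the remainder essentially automatic.
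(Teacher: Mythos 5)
The paper delegates the proof entirely to a citation (Bunke--Olbrich, Proposition 1.1), so there is no in-text argument to compare against; what you have written is a self-contained derivation. Your route---start from the classical interlacing branching rule for $\Spin(2n+1)\downarrow\Spin(2n)$, apply it to each summand $\nu(\Lambda(\sigma)-\mu)$, and observe that the alternating sum $\sum_\mu(-1)^{c(\mu)}$ factors coordinate-wise so that any coordinate with two allowed values contributes $1+(-1)=0$---is correct and natural. The key propagation step is also right: once $\mu_2$ is forced to $0$ so that $l_2=k_2(\sigma)$, the inequality $k_{i+1}(\sigma)\le k_i(\sigma)$ rules out $\mu_{i+1}$ being forced to $1$ (that would require $k_{i+1}(\sigma)=l_i+1=k_i(\sigma)+1$), so each $\mu_i$ must be forced to $0$, giving $l_i=k_i(\sigma)$ for $i\le n$ and $|l_{n+1}|=k_{n+1}(\sigma)$. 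The only surviving $M$-constituents of $\iota^*\nu(\sigma)$ with nonzero coefficient are therefore $\sigma$ and (when $k_{n+1}(\sigma)>0$) $w_0\sigma$, each with coefficient $+1$, which is exactly \eqref{BrKM} and the displayed formula for $\iota^*\nu(\sigma)$.

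Your treatment of the isomorphism statement is also sound: the elements $\sigma$ (for $w_0\sigma=\sigma$) and $\sigma+w_0\sigma$ (for $w_0\sigma\ne\sigma$) form a $\Z$-basis of $R(M)^{W(A)}$, and you have just produced preimages, so $\iota^*$ is surjective onto $R(M)^{W(A)}$; injectivity by triangularity with respect to the dominance order works, since the branching rule shows that the $M$-constituent with the same highest weight as $\nu$ occurs with multiplicity one in $\iota^*\nu$ and every other constituent is strictly lower in the coordinate-wise order. Two small points worth making explicit in a polished write-up: (i) the interlacing inequalities entail $K$-dominance of $\Lambda(\sigma)-\mu$, so restricting the $\mu$-sum to $K$-dominant shifts costs nothing; and (ii) the argument applies verbatim whether the coordinates are all integers or all half-integers, which matters here since $\tau$, $\nu$, $\sigma$ may have half-integral highest weights.
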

\begin{proof} See
\cite[Proposition 1.1]{Bunke}.
\end{proof}
Let $\sigma\in\hat M$ and assume that $\sigma\neq w_0\sigma$. Then by 
Proposition \ref{branching} there exist unique integers $m_\nu(\sigma)\in
\{-1,0,1\}$, which are zero except for finitely many $\nu\in\hat K$, such that
\begin{equation}\label{multipl1}
\sigma+w_0\sigma=\sum_{\nu\in\hat K}m_\nu(\sigma)i^*(\nu).
\end{equation}

\subsection{}

Measures are normalized as follows.  
Every $a\in A$ can be written 
as $a=\exp{\log{a}}$, where $\log{a}\in\mathfrak{a}$ is unique.
For $t\in\mathbb{R}$, we let 
$a(t):=\exp{(tH_{1})}$. If $g\in G$, we define $n(g)\in N$, $
H(g)\in \R$ and $\kappa(g)\in K$ by 
\begin{align*}
g=n(g)\exp{\left(H(g)e_1\right)}\kappa(g). 
\end{align*}
Normalize the Haar-measure on $K$ such that $K$ has volume 1. We let
\begin{align}\label{Def der Metrik}
 \left<X,Y\right>_{\theta}:=-\frac{1}{2(d-1)}B(X,\theta(Y)).
\end{align}
We fix an isometric identification of  $\mathbb{R}^{d-1}$ with $\mathfrak{n}$ 
with respect to the inner product $\left<\cdot,\cdot\right>_{\theta}$. We give 
$\mathfrak{n}$ the measure induced from the Lebesgue measure under this 
identification. Moreover, we identify $\mathfrak{n}$ and $N$ by the 
exponential map and we will denote by $dn$ the Haar measure on $N$ induced 
from the measure on $\mathfrak{n}$ under this identification. We normalize the 
Haar measure on $G$ by setting
\begin{align}\label{Haarmass auf G}
\int_{G}{f(g)dg}
&=\int_{N}\int_{\mathbb{R}}\int_{K}{e^{-(d-1)t}f(na(t)k)dkdtdn}.
\end{align}
The spaces $\tilde{X}$ and $\Gamma\backslash G$,
$\Gamma$ a discrete subgroup, will be equipped with the induced
quotient-measure.

\subsection{}

We parametrize the principal series as follows. Given $\sigma\in\hat{M}$ with
$(\sigma,V_\sigma) \in \sigma$, let $\mathcal{H}^{\sigma}$ denote the space of
measurable functions $f\colon K\to V_\sigma$ satisfying
\[
f(mk)=\sigma(m)f(k),\quad\forall k\in K,\, \forall m\in M,
\quad\textup{and}\quad
\int_K\parallel f(k)\parallel^2\,dk=\parallel f\parallel^2<\infty.
\]
Then for $\lambda\in\mathbb{C}$ and $f\in H^{\sigma}$ let
\begin{align*}
\pi_{\sigma,\lambda}(g)f(k):=e^{(i\lambda +(d-1)/2)H(kg)}f(\kappa(kg)).
\end{align*}
Recall that the representations $\pi_{\sigma,\lambda}$ are unitary iff 
$\lambda\in\mathbb{R}$. Moreover, for $\lambda\in\mathbb{R}-\{0\}$ and 
$\sigma\in\hat{M}$ the representations $\pi_{\sigma,\lambda}$ are irreducible 
and $\pi_{\sigma,\lambda}$ and $\pi_{\sigma',\lambda'}$, $\lambda,
\lambda'\in\mathbb{C}$ are 
equivalent iff either $\sigma=\sigma'$, $\lambda=\lambda'$ or
$\sigma'=w_{0}\sigma$, 
$\lambda'=-\lambda$.
The restriction of $\pi_{\sigma,\lambda}$ to $K$ coincides with the induced 
representation ${\rm{Ind}}_{M}^{K}(\sigma)$. Hence by Frobenius 
reciprocity \cite[p.208]{Knapp} for every $\nu\in\hat{K}$ one has
\begin{align}\label{Frobeniusrez}
\left[\pi_{\sigma,\lambda}:\nu\right]=\left[\nu:\sigma\right].
\end{align}
\subsection{}\label{secPL}
Assume that $d=2n+1$. For $\sigma\in\hat{M}$ and $\lambda\in\mathbb{R}$ let
$\mu_{\sigma}(\lambda)$ 
be the Plancherel measure associated to $\pi_{\sigma,\lambda}$. Then, since 
${\rm{rk}}(G)>{\rm{rk}}(K)$, $\mu_{\sigma}(\lambda)$ is a polynomial in 
$\lambda$ of degree $2n$. Let $\left<\cdot,\cdot\right>$ be the bi-linear form
defined by 
\eqref{killnorm}. Let $\Lambda(\sigma)\in\mathfrak{b}_{\mathbb{C}}^{*}$ be 
the highest weight of $\sigma$ as in \eqref{Darstellungen von M}.
Then by theorem 13.2 in \cite{Knapp} there exists a constant $c(n)$ such that 
one has
\begin{align*}
\mu_{\sigma}(\lambda)=-c(n)\prod_{\alpha\in\Delta^{+}(\mathfrak{g}_{\mathbb{C}},
\mathfrak{h}_{\mathbb{C}})}\frac{\left<i\lambda e_{1}+\Lambda(\sigma)+\rho_{M},
\alpha\right>}{\left<\rho_{G},\alpha\right>}..
\end{align*}
The constant $c(n)$ is computed in \cite{Mi2}. By \cite{Mi2}, theorem 3.1, one 
has $c(n)>0$. 
For $z\in\mathbb{C}$  let
\begin{align}\label{plancherelmass}
P_{\sigma}(z)=-c(n)\prod_{\alpha\in\Delta^{+}(\mathfrak{g}_{\mathbb{C}},
\mathfrak{h}_{\mathbb{C}})}\frac{\left<z e_{1}+\Lambda(\sigma)+\rho_{M},
\alpha\right>}{\left<\rho_{G},\alpha\right>}.
\end{align}
One easily sees that
\begin{align}\label{P-Polynom is W-inv}
P_{\sigma}(z)=&P_{w_{0}\sigma}(z).
\end{align}

\section{The decomposition of the right regular representation}\label{secrep}
\setcounter{equation}{0}

Let $\Gamma$ be a discrete, torsion free subgroup of $G$ with
$\vol(\Gamma\backslash G)<\infty$.  Let $\mathfrak{P}$ be a fixed
set of representatives of $\Gamma$-nonequivalent proper cuspidal parabolic 
subgroups of
$G$. Then $\mathfrak{P}$ is finite. Let $\kappa:=\#\mathfrak{P}$. Without loss 
of generality we will assume that $P_{0}:=MAN\in\mathfrak{P}$. 
For every $P\in\mathfrak{P}$, there exists a $k_{P}\in K$ such that
\[
P=N_{P}A_{P}M_{P}
\]
with $N_{P}=k_{P}Nk_{P}^{-1}$, $A_{P}=k_{P}Ak_{P}^{-1}$, and
$M_{P}=k_{P}Mk_{P}^{-1}$.
We let $k_{P_{0}}=1$. We will assume that for each
$P\in\mathfrak{P}$ one has 
\begin{equation}\label{assumpt}
\Gamma\cap P=\Gamma\cap N_{P}.
\end{equation}
Since $N_{P}$ is
abelian, we have $\Gamma\cap N_{P}\backslash N_P\cong T^{d-1}$, where $T^{d-1}$
is 
the flat $(d-1)$-torus. For
$P\in\mathfrak{P}$ let $a_{P}(t):=k_{P}a(t)k_{P}^{-1}$. If $g\in G$, we define
$n_{P}(g)\in N_{P}$, $H_{P}(g)\in
\R$ and $\kappa_{P}(g)\in K$ by 
\begin{align}\label{coord}
g=n_{P}(g)a_P(H_{P}(g))k_P^{-1}\kappa_{P}(g). 
\end{align}
For each $P\in\mathfrak{P}$ define 
\[
\iota_P\colon \R^+\to A_P
\]
by $\iota_P(t):=a_P(\log(t))$. For $Y>0$, let 
\[
A^{0}_{P}\left[Y\right]:=\iota_P(Y,\infty).
\]
Then there exists a $Y_{0}>0$ and for every $Y\geq Y_{0}$ a compact connected
subset $C(Y)$ of $G$  such that in the sense of a disjoint union one has
\begin{align}\label{Zerlegung des FB}
G=\Gamma\cdot C(Y)\sqcup\bigsqcup_{P\in\mathfrak{P}}\Gamma\cdot
N_{P}A^{0}_{P}\left[Y\right]K
\end{align}
and such that 
\begin{align}\label{Eigenschaft des FB}
\gamma\cdot N_{P}A^0_{P}\left[Y\right]K\cap N_{P}A_{P}^{0}\left[Y\right]K\neq
\emptyset\Leftrightarrow\gamma\in \Gamma_{N}. 
\end{align}
If for $Y\geq Y_{0}$ one lets 
\begin{align}\label{Definition der Spitze}
F_{P,Y}:=A_{P}\left[Y\right]\times\Gamma\cap N_{P}\backslash N_{P}\cong
[Y,\infty)\times
\Gamma\cap N_{P}\backslash N_{P},
\end{align}
it follows from \eqref{Zerlegung des FB} and \eqref{Eigenschaft des FB} that 
there exists a compact manifold $X(Y)$ with smooth boundary such that $X$
has a decomposition as 
\begin{align}\label{Zerlegung X}
X=X(Y)\cup \bigsqcup_{P\in\mathfrak{P}}F_{P,Y}
\end{align}
with
$X(Y)\cap F_{P,Y}=\partial X(Y)=\partial F_{P,Y}$ and $F_{P,Y}\cap
F_{P',Y}=\emptyset$ if $P\neq P'$. \\
Let $R_{\Gamma}$ be the right-regular representation of $G$ on
$L^{2}(\Gamma\backslash G)$. We shall now describe some basic properties
of $R_{\Gamma}$. The main references are \cite{Langlands},
\cite{Harish-Chandra} \cite{Warner}. There exists an orthogonal
decomposition 
\begin{align}\label{Zerlegung von L2}
L^{2}(\Gamma\backslash G)=L^{2}_{d}(\Gamma\backslash G)\oplus
L^{2}_{c}(\Gamma\backslash G)
\end{align}
of $L^{2}(\Gamma\backslash G)$ into closed $R_{\Gamma}$-invariant subspaces.
The restriction of $R_{\Gamma}$ to $L^{2}_{d}(\Gamma\backslash G)$ decomposes
into the orthogonal direct sum of irreducible unitary representations of $G$ and
the multiplicity of each irreducible unitary representation of $G$ in this
decomposition is finite. 
On the other hand, by the theory of Eisenstein series, the restriction 
$R_\Gamma^c$ of
$R_{\Gamma}$ to $L^{2}_{c}(\Gamma\backslash G)$ is isomorphic to the direct
integral over all unitary principle-series representations of $G$. 

Next we recall the definition and some of the basic properties of the 
Eisenstein series.  For $P=M_PA_pN_P\in\fP$ let 
\[
\cE_{P}=L^2((\Gamma\cap P)N_PA_P\bs G).
\]
For each $\lambda\in\C$ there is a representation $\pi_{P,\lambda}$ of
$G$
on $\cE_P$, defined by
\[
(\pi_{P,\lambda}(y)\Phi)(x)=e^{(\lambda +(d-1)/2)(H_P(xy))}e^{
-(\lambda+(d-1)/2)( H_P(x))}
\Phi(xy).
\]
Given $\Phi\in\cE_P$ and $\lambda\in\C$, put
\[
\Phi_\lambda(x)=e^{(\lambda +(d-1)/2)H_P(x)}\Phi(x).
\]
The action of the representation $\pi_{P,\lambda}$ is then given by
\[
(\pi_{P,\lambda}(y)\Phi)_\lambda(x)=\Phi_\lambda(xy).
\] 
and  $\pi_{P,\lambda}$ is unitary for $\lambda\in i\R$. Let $\cE^0_P$ be
the
subspace of $\cE_P$ consisting of of all right $K$-finite and left
$\mathfrak{Z}_M$-finite functions, where $\mathfrak{Z}_M$ denotes the center 
of the
universal enveloping algebra of $\mL_\C$. For $\Phi\in\cE_P^0$ and 
$\lambda\in\C$ the Eisenstein series $E(P,\phi,\lambda,x)$ is defined 
by
\[
E(P,\Phi,\lambda,x)=\sum_{\gamma\in\Gamma\cap P\bs \Gamma}\Phi_\lambda(\gamma
x).
\]
It converges absolutely and uniformly on compact subsets of 
$\{\lambda\in\C\colon \Re(\lambda)>(d-1)/2\}\times G$, and it has a 
meromorphic
extension to $\C$. Let $P^\prime\in\fP$. The constant term 
$E_{P^\prime}(P,\Phi,\lambda)$ of $E(P,\Phi,\lambda)$ along $P^\prime$ is
defined 
by
\begin{equation}\label{c-term1}
E_{P^\prime}(P,\Phi,\lambda,x):=\frac{1}{\vol(\Gamma\cap N_{P^\prime}\bs
N_{P^\prime})}
\int_{\Gamma\cap N_{P^\prime}\bs N_{P^\prime}}E(P,\Phi,\lambda,n^\prime
x)\;dn^\prime.
\end{equation}
Let $W(A_P,A_{P^\prime})$ be the set of all bijections $w\colon A_P
\to A_{P^\prime}$ for which there exists $x\in G$ such that $w(a)=xax^{-1}$,
$a\in A_P$. Then one can identify $W(A_P,A_{P^\prime})$ with
$k_{P'}W(A)k_P^{-1}$.
Thus $W(A_P,A_{P^\prime})$ has order 2. We let $W(A_P,A_{P^\prime})$ act on $\C$
as follows. For
$w=k_{P'}k_{P}^{-1}$ and $\lambda\in\C$ we put $w\lambda:=\lambda$. Let $w_0$ be the non-trivial element of
$W(A)$. Then for $w=k_{P'}w_0k_{P}^{-1}$ and $\lambda\in\C$ we put $w\lambda:=-\lambda$. 
Then one has
\begin{equation}\label{c-term2}
E_{P^\prime}(P,\Phi,\lambda,x)=\sum_{w\in W(A_P,A_{P^\prime})}
e^{(w\lambda+(d-1)/2)(H_{P^\prime}(x))}
\left(c_{P^\prime|P}(w\colon\lambda)\Phi\right)(x),
\end{equation}
where 
\[
c_{P^\prime|P}(w\colon\lambda)\colon \cE_P\to \cE_{P^\prime}
\]
are linear maps which are meromorphic functions of $\lambda\in\C$. 
Put
\[
\boldsymbol{\cE}=\bigoplus_{P\in\fP}\cE_P,\quad \pi_\lambda=\bigoplus_{P\in\fP}
\pi_{P,\lambda}.
\]
Then $\pi_\lambda$ acts in $\boldsymbol{\cE}$ as induced representation. For
$\mathbf{\Phi}=(\Phi_P)\in\boldsymbol{\cE}$ and $\lambda\in\C$ put
\[
{E}(\mathbf{\Phi},\lambda,x)=\sum_{P\in\fP}E(P,\Phi_P,\lambda,x).
\]
Let $\boldsymbol{\cE}^0=\oplus_{P\in\fP}\cE_P^0$. Let $w_0$ be the nontrivial
element
of $W(A)$. Then the operators 
$c_{P^\prime|P}(k_P'w_0k_P^{-1}:\lambda)$ can be combined into a linear operator
\[
\mathbf{C}(\lambda)\colon \boldsymbol{\cE}^0\to \boldsymbol{\cE}^0,
\]
which is a meromorphic function of $\lambda$. 

The space 
$\boldsymbol{\cE}^0$ decomposes into the direct sum of finite-dimensional
subspaces as follows. Let $P=M_PA_PN_P$ be a $\Gamma$-cuspidal proper parabolic
subgroup.
For $\sigma_P\in\hat M_P$ and 
$\nu\in\hat K$ let $\cE(\sigma_P,\nu)$ be the space of all continuous functions
$\Phi\colon (\Gamma\cap P)A_PN_P\bs G\to \C$ such that for all $x\in G$ the
function $m\in M_P\mapsto \Phi(mx)$ belongs to the $\sigma_P$-isotypical 
subspace of the right regular representation of $M$ and for all $x\in G$ the 
function $k\in K\mapsto \Phi(xk)$ belongs to the $\nu$-isotypical subspace of 
the right regular representation of $K$. For $\sigma\in\hat{M}$ set
\[
\boldsymbol{\cE}(\sigma,\nu):=\bigoplus_{P\in\fP}\cE(\sigma_P,\nu),
\]
where $\sigma_P\in\hat M_P$ is obtained from $\sigma$ by conjugation. 
Each $\boldsymbol{\cE}(\sigma,\nu)$ is finite-dimensional. Furthermore, let
\[
\boldsymbol{\cE}(\sigma):=\bigoplus_{\nu\in\hat K}\boldsymbol{\cE}(\sigma,\nu),
\]
where the direct sum is understood in the algebraic sense. Now
consider an orbit $\vartheta\in W(A)\bs \hat M$. Let $\vartheta=\{\sigma,
w\sigma\}$. 
Put
\[
\boldsymbol{\cE}(\vartheta,\nu):=\begin{cases}\boldsymbol{\cE}(\sigma,\nu),&
w\sigma=\sigma,\\
\boldsymbol{\cE}(\sigma,\nu)\oplus\boldsymbol{\cE}(w\sigma,\nu),&
w\sigma\neq \sigma
\boldsymbol{\cE}(\vartheta,\nu).
\end{cases}
\]
Then it follows that
\begin{equation}\label{decomp3}
\boldsymbol{\cE}^0=\bigoplus_{\vartheta,\nu}\boldsymbol{\cE}
(\vartheta,\nu),
\end{equation}
where $\vartheta$ runs over $W(A)\bs\hat M$ and $\nu$ over $\hat K$.
The operator $\mathbf{C}(\lambda)$ preserves this decomposition. For 
$\vartheta\in W(A)\bs \hat M$, $\nu\in\hat K$ and $\lambda\in\C$ let
\begin{equation}\label{intertwop}
\mathbf{C}(\vartheta,\nu,\lambda)\colon \boldsymbol{\cE}(\vartheta,\nu)\to
\boldsymbol{\cE}(\vartheta,\nu)
\end{equation}
be the restriction of $\mathbf{C}(\lambda)$. We note that for $\vartheta=
\{\sigma,w\sigma\}$, $\mathbf{C}(\vartheta,\nu,\lambda)$ maps 
$\boldsymbol{\cE}(\sigma,\nu)$ into $\boldsymbol{\cE}(w\sigma,\nu)$.
We denote the corresponding operator by
\begin{equation}\label{intertwop1}
\mathbf{C}(\sigma,\nu,\lambda)\colon \boldsymbol{\cE}(\sigma,\nu)\to
\boldsymbol{\cE}(w\sigma,\nu).
\end{equation}
Taking the direct sum with respect to $\nu\in\hat K$, we get operators
\begin{equation}\label{intertwop2}
\mathbf{C}(\sigma,\lambda)\colon \boldsymbol{\cE}(\sigma)\to
\boldsymbol{\cE}(w\sigma).
\end{equation}
Next we recall the functional equations satisfied by $E$ and $C$. 
For $\mathbf{\Phi}\in\boldsymbol{\cE}^0$ and
$\lambda\in\C$ we have
\begin{equation}\label{FE1}
E(\mathbf{\Phi},\lambda)=E(\mathbf{C}(\lambda)\mathbf{\Phi},-\lambda),
\end{equation}
and
\begin{equation}\label{FE2}
\mathbf{C}(\lambda)\mathbf{C}(-\lambda)=\Id.
\end{equation}
Furthermore, let $f\in C^\infty_c(G)$ be right $K$-finite. Then $\pi_\lambda(f)$
acts on $\boldsymbol{\cE}^0$ and we have
\begin{equation}
\mathbf{C}(\lambda)\pi_\lambda(f)=\pi_{-\lambda}(f)
\mathbf{C}(\lambda),\quad \lambda\in\C.
\end{equation}
Thus $\mathbf{C}(\lambda)$ is an intertwining operator for the induced 
representation $\pi_\lambda$. 

Now we come to the relation with the spectral resolution of $R_\Gamma^c$. 
For $P=M_PA_PN_P\in\fP$ let $R_{M_P}$ denote the right regular representation of
$M_P$ on
$L^2(M_P)$. Since $M_P$ is compact, it decomposes discretely as
\begin{equation}\label{regrep}
R_{M_P}=\bigoplus_{\sigma_P\in\hat M_P}d(\sigma_P)\sigma_P,
\end{equation}
where $d(\sigma_P)=\dim(\sigma_P)$. For $\lambda\in\C$ let
$\xi_\lambda:A_P\to\C$ be the quasi-character given by
$\xi_\lambda(a_P(t)):=e^{t\lambda}$.
Let $\Ind_P^G(R_{M_P},\lambda)$, 
be the representation of $G$, induced from
$R_{M_P}\otimes \xi_{\lambda+(d-1)/2}$. Then
we have 
\[
\pi_{P,\lambda}\cong \Ind_P^G(R_{M_P},\lambda).
\]
The theory of Eisenstein series implies that
\[
R_\Gamma^c\cong\bigoplus_{P\in\fP}\int_\R\pi_{P,\lambda}\,d\lambda=
\int_\R\pi_\lambda\,d\lambda.
\]
Using the decomposition \eqref{regrep}, the induced representation decomposes
correspondingly into the direct sum of principal series representations
$\pi_{\sigma,\lambda}$. This gives the spectral resolution of $R_\Gamma^c$
(see \cite[Section 3]{Warner}). 

The spectral resolution of $R_\Gamma^c$ can be described explicitly in terms of
Eisenstein
series as follows. Let $\{e_n\colon n\in I\}$ be an orthonormal basis of
$\boldsymbol{\cE}$ which is adapted to the decomposition \eqref{decomp3}, i.e.,
each $e_n$ belongs to some subspace $\boldsymbol{\cE}(\vartheta,\nu)$.  Then 
$R_\Gamma(\alpha)$ is an integral operator with kernel $K_\alpha(x,y)$. The
following proposition is main result about the spectral resolution of the
kernel.
\begin{prop}\label{specres5}
Let $\alpha$ be a $K$-finite function in $\mathcal{C}^1(G)$. Then 
$R_\Gamma^c(\alpha)$ is an integral operator with kernel $K^c_\alpha(x,y)$
given by
\begin{equation}\label{contkernel}
K^c_\alpha(x,y)=\frac{1}{4\pi}\sum_{m,n\in I}\int_\R 
\langle\pi_\lambda(\alpha)e_m,e_n\rangle E(e_n,i\lambda,x)
\overline{E(e_m,i\lambda,y)}\;d\lambda.
\end{equation}
Furthermore, the kernel
$
K^d_\alpha=K_\alpha-K^c_\alpha
$
is integrable over the diagonal and
\[
\Tr(R_\Gamma^d(\alpha))=\int_{\Gamma\bs G} K_\alpha^d(x,x)\;dx.
\]
\end{prop}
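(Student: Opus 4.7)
My plan is to derive both parts from the Langlands--Harish-Chandra spectral theory of $R_\Gamma$ combined with the intertwining property of Eisenstein series. The overall strategy is first to write down the Plancherel inversion for $L^2_c(\Gamma\bs G)$ in terms of Eisenstein wave packets, apply $R_\Gamma^c(\alpha)$ under the inversion, and read off the kernel; then to treat the discrete part separately using the finiteness of multiplicities.

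For the continuous-kernel formula \eqref{contkernel}, I would start from the Eisenstein Plancherel theorem (Langlands, Cor.~7.25 of \cite{Warner} in this setting): for $f\in L^2_c(\Gamma\bs G)$,
\[
f(x)=\frac{1}{4\pi}\sum_{n\in I}\int_\R
\bigl\langle f,E(e_n,i\lambda,\cdot)\bigr\rangle_{L^2(\Gamma\bs G)}
E(e_n,i\lambda,x)\,d\lambda,
\]
where the factor $1/(4\pi)$ accounts for the Plancherel density $d\lambda/(2\pi)$ together with the identification of $\pi_\lambda$ and $\pi_{-\lambda}$ via $\mathbf C(\lambda)$ (which is why one may sum over a single orthonormal basis of $\boldsymbol{\cE}$ and integrate over all of $\R$). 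The key algebraic step is the intertwining identity
\[
R_\Gamma(g)E(e_m,i\lambda,\cdot)=E(\pi_{i\lambda}(g)e_m,i\lambda,\cdot),\qquad g\in G,
\]
which follows directly from the definition of $E(\Phi,\lambda,x)$ and of $\pi_{P,\lambda}$. Integrating against $\alpha(g)\,dg$ gives $R_\Gamma(\alpha)E(e_m,i\lambda,\cdot)=E(\pi_{i\lambda}(\alpha)e_m,i\lambda,\cdot)$. Expanding $\pi_{i\lambda}(\alpha)e_m=\sum_n\langle\pi_{i\lambda}(\alpha)e_m,e_n\rangle e_n$ and substituting into the Plancherel formula applied to $R_\Gamma^c(\alpha)f$ produces the claimed kernel once one identifies the reproducing expression with an integration against $\overline{E(e_m,i\lambda,y)}$. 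The convergence of the double sum/integral for $\alpha\in\mathcal C^1(G)$ right $K$-finite follows from the fact that only finitely many $K$-types $\nu$ contribute to $\pi_{i\lambda}(\alpha)$ (so only finitely many indices $n$ are non-zero for each fixed $\lambda$) together with Harish-Chandra's standard estimates on $\pi_{i\lambda}(\alpha)$ in $\lambda$ (rapid decay from $\alpha\in\mathcal C^1$).

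For the second part, since $R_\Gamma^d$ decomposes discretely with finite multiplicities, one has
\[
R_\Gamma^d(\alpha)=\bigoplus_{\pi\in\widehat G}m_\Gamma(\pi)\,\pi(\alpha).
\]
The $K$-finiteness of $\alpha$ implies that $\pi(\alpha)$ vanishes on all but finitely many $K$-isotypical components, and on each such component it acts via $\alpha\in\mathcal C^1(G)$, so by the standard trace-class criterion (e.g.\ via Dixmier--Malliavin factorization $\alpha=\alpha_1*\alpha_2$ with $\alpha_i\in\mathcal C^1(G)$ and the Hilbert--Schmidt bound for $\pi(\alpha_i)$) each $\pi(\alpha)$ is trace class, and the total trace converges by the $\mathcal C^1$-estimate of Harish-Chandra combined with the polynomial growth of $\sum_\pi m_\Gamma(\pi)$ with conductor. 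Consequently $R_\Gamma^d(\alpha)$ is of trace class with a continuous integral kernel $K^d_\alpha(x,y)$. Since $K_\alpha=\sum_{\gamma\in\Gamma}\alpha(x^{-1}\gamma y)$ is the kernel of $R_\Gamma(\alpha)$ and $K^c_\alpha$ is the kernel of $R_\Gamma^c(\alpha)$ by the first part, uniqueness of integral kernels gives $K^d_\alpha=K_\alpha-K^c_\alpha$ as operators, and trace class implies
\[
\Tr(R_\Gamma^d(\alpha))=\int_{\Gamma\bs G}K^d_\alpha(x,x)\,dx.
\]

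The main obstacle, and the one that requires the most care, is justifying convergence and exchange of integration in the first part: the continuous kernel is only given as a conditionally convergent spectral expansion, and one must control the tail of the $\lambda$-integral uniformly in $(x,y)$. The assumption $\alpha\in\mathcal C^1(G)$ is precisely what guarantees the rapid decay of $\pi_{i\lambda}(\alpha)$ in the operator norm, and the $K$-finiteness truncates the sum over $n\in I$ at each $\lambda$; together with the maximum-principle-type growth estimates for Eisenstein series on a cusp neighbourhood (from \cite{Langlands}, \cite{Warner}), these make the identification rigorous. Once this is in place, the second assertion follows quickly from the discrete decomposition.
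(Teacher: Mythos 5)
The paper does not give an independent argument here: it simply cites Warner, \cite[Theorem 4.7]{Warner}, for the whole proposition. Your proposal is therefore a self-contained sketch of the result that the paper delegates to the reference, and in broad outline it does match Warner's derivation (Plancherel decomposition of $L^2_c(\Gamma\bs G)$ via Eisenstein wave packets, intertwining of $R_\Gamma$ with $\pi_{i\lambda}$ under $\Phi\mapsto E(\Phi,i\lambda,\cdot)$, and direct treatment of the discrete part).

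Two points deserve more care than your sketch gives them. First, the step ``once one identifies the reproducing expression with an integration against $\overline{E(e_m,i\lambda,y)}$'' is precisely what has to be proven: the Eisenstein series are not in $L^2$, so the pairing $\langle f,E(e_n,i\lambda,\cdot)\rangle$ and the passage from the direct-integral description of $R_\Gamma^c$ to a genuine pointwise kernel need the dominated-convergence estimates and cusp analysis carried out in \cite[Chapter 4]{Warner}; this is the actual content of the theorem, not a formal rearrangement. Second, the claim that $K^d_\alpha=K_\alpha-K^c_\alpha$ is integrable over the diagonal is not an automatic consequence of $R_\Gamma^d(\alpha)$ being trace class: $K_\alpha(x,x)$ and $K^c_\alpha(x,x)$ each fail to be integrable near a cusp, and their difference is integrable only because of a cancellation that Warner establishes using the Maass--Selberg relations (the same mechanism behind the definition \eqref{regtrace2}). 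Your route to trace-class-ness via Dixmier--Malliavin and ``polynomial growth of $\sum_\pi m_\Gamma(\pi)$ with conductor'' is also not quite the relevant statement; what one needs is a Weyl-type bound on the counting function of the $L^2_d$-spectrum (in the paper this is \eqref{Weylsches Gesetz von Donelly}, citing \cite{Mu4} and \cite{Do2}) together with the rapid decay of $\Theta_\pi(\alpha)$ in the Casimir parameter for $\alpha\in\mathcal{C}^1(G)$ and $K$-finite, exactly as packaged in Warner's proof. So the skeleton is right, but the two technical hinges are left at the level of assertion; the cleanest fix is simply to invoke Warner's theorem as the paper does, or to supply the Maass--Selberg cancellation and the Weyl law explicitly.
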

\begin{proof}
See \cite[Theorem 4.7]{Warner}. 
\end{proof}

The Eisenstein series are not square integrable. However, the truncated
Eisenstein series, which are obtained by subtracting the constant terms in
each cups, are square integrable. Their inner product  gives rise to the 
Maass-Selberg relations which we recall next. 

Let $Y_0>0$ be such that \eqref{Zerlegung des FB} holds. Let $Y\geq Y_0$. For 
$P\in\mathfrak{P}$ let $\chi_{P,Y}$ be the characteristic function of
$N_PA_{P}^0\left[Y\right]K\subset G$. Let $\Phi\in\boldsymbol{\mathcal{E}}^0$. 
For $Y\geq Y_0$ put
\begin{align*}
E^Y(\Phi,\lambda,x):=E(\Phi,\lambda,x)
-\sum_{P\in\mathfrak{P}}\frac{1}{\vol\left(\Gamma\cap
N_P\backslash N_P\right)}\sum_{\gamma\in\Gamma\cap
N_P\backslash\Gamma}\chi_{P,Y}
(\gamma g)E_{P}(\Phi,\lambda,\gamma g),
\end{align*}
where $E_{P}(\Phi,\lambda,x)$ is as in \eqref{c-term1}. 
By \eqref{Eigenschaft des FB} at most one summand in this sum is not zero.
By \cite{Harish-Chandra} the function $E^Y(\Phi,\lambda)$ belongs to
$L^2(\Gamma\backslash G)$. 
Now we have the following proposition.
\begin{prop}\label{KoMs}
Let  $\Phi,\Psi\in\boldsymbol{\cE}^0$ and $\lambda\in\aL^*$. Then one has
\begin{align*}
&\int_{\Gamma\bs G}E^Y(\Phi,i\lambda,x)\overline{E^Y}(\Psi,i\lambda,x)\;dx
=-\left<\mathbf{C}(-i\lambda)\frac{d}{dz}\mathbf{C}(i\lambda)\Phi,\Psi\right>
\nonumber\\
&+2\left<\Phi,\Psi\right>\log{Y}+\frac{Y^{2i\lambda}}{2i\lambda}
\left<\Phi,\mathbf{C}(i\lambda)\Psi\right>-\frac{Y^{-2i\lambda}}{2i\lambda}
\left<\mathbf{C}(i\lambda)\Phi,\Psi\right>.
\end{align*}
\end{prop}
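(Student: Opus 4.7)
The strategy is to first derive a relation at two distinct spectral parameters $\mu_1 \ne \mu_2$ and then recover the proposition by taking the limit $\mu_1, \mu_2 \to i\lambda$ via L'H\^opital's rule.

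For generic complex $\mu_1, \mu_2$, I would set
\[
I(Y; \mu_1, \mu_2) := \int_{\Gamma\bs G} E^Y(\Phi, \mu_1, x)\,\overline{E^Y(\Psi, \bar\mu_2, x)}\,dx,
\]
which converges by Harish-Chandra's $L^2$ estimate for truncated Eisenstein series. Since $\Phi \in \cE^0$ is $\mathfrak{Z}_M$-finite, each $E(\Phi, \mu)$ is a Casimir eigenfunction of $\gL$ with eigenvalue a fixed polynomial in $\mu^2$. Applying Green's identity for $\Omega$ on $\Gamma\bs G$, away from the truncation surfaces $\{t = Y\}$ both $E^Y(\Phi, \mu_1)$ and $E^Y(\Psi, \bar\mu_2)$ are genuine eigenfunctions, so the bulk of $\int (\Omega E^Y)\, \overline{E^Y} - E^Y\, \overline{\Omega E^Y}$ evaluates to $(\mu_1^2 - \mu_2^2)\, I(Y;\mu_1, \mu_2)$; the only remaining contributions are the distributional jumps of $E^Y$ across $\{t = Y\}$, where it drops by $-E_P$. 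Hence
\[
(\mu_1^2 - \mu_2^2)\, I(Y; \mu_1, \mu_2) = \text{boundary terms on } \bigsqcup_P \{t = Y\}.
\]

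I would then evaluate these boundary terms using the constant term expansion \eqref{c-term2}: at level $Y$ in cusp $P'$,
\[
E_{P'}(P, \Phi, \mu, \iota_{P'}(Y)k) = Y^{\mu + (d-1)/2}\,\delta_{P,P'}\,\Phi(k) + Y^{-\mu + (d-1)/2}\bigl(c_{P'|P}(w_0 \colon \mu)\Phi\bigr)(k),
\]
using $W(A_P, A_{P'}) = \{k_{P'} k_P^{-1}, k_{P'} w_0 k_P^{-1}\}$ acting on $\C$ by $\pm \Id$ as defined before \eqref{c-term2}. Summed over cusps the operators $c_{P'|P}(w_0 \colon \mu)$ combine into $\mathbf{C}(\mu)$, and pairing against $\overline{E_{P'}(\Psi, \bar\mu_2)}$ followed by integration over $(\Gamma \cap N_{P'} \bs N_{P'}) \times K$ produces the inner products on $\cE$. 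After absorbing the measure factors from \eqref{Haarmass auf G}, I would obtain, up to signs to be verified,
\begin{align*}
I(Y;\mu_1, \mu_2) &= \frac{Y^{\mu_1 - \mu_2}\langle \Phi, \Psi\rangle - Y^{-\mu_1 + \mu_2}\langle \mathbf{C}(\mu_1)\Phi, \mathbf{C}(\mu_2)\Psi\rangle}{\mu_1 - \mu_2}\\
&\quad + \frac{Y^{\mu_1 + \mu_2}\langle \Phi, \mathbf{C}(\mu_2)\Psi\rangle - Y^{-\mu_1 - \mu_2}\langle \mathbf{C}(\mu_1)\Phi, \Psi\rangle}{\mu_1 + \mu_2}.
\end{align*}

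Finally I would take the limit $\mu_1, \mu_2 \to i\lambda$. The second fraction is non-singular and gives directly the two terms $\frac{Y^{2i\lambda}}{2i\lambda}\langle \Phi, \mathbf{C}(i\lambda)\Psi\rangle - \frac{Y^{-2i\lambda}}{2i\lambda}\langle \mathbf{C}(i\lambda)\Phi, \Psi\rangle$ of the proposition. The first fraction is indeterminate; L'H\^opital in $\mu_2 \to \mu_1 = i\lambda$ produces the $2 \log Y\, \langle \Phi, \Psi\rangle$ term from $\partial_{\mu_2} Y^{-\mu_1 + \mu_2}$, together with a derivative contribution $-\langle \mathbf{C}(i\lambda)\Phi, \frac{d}{dz}\mathbf{C}(i\lambda)\Psi\rangle$ from differentiating $\mathbf{C}(\mu_2)\Psi$. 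Using the functional equation $\mathbf{C}(z)\mathbf{C}(-z) = \Id$ from \eqref{FE2} (which on the imaginary axis gives $\mathbf{C}(-i\lambda) = \mathbf{C}(i\lambda)^* = \mathbf{C}(i\lambda)^{-1}$) and moving $\mathbf{C}(i\lambda)$ to the left by adjointness, this rearranges to $-\langle \mathbf{C}(-i\lambda)\frac{d}{dz}\mathbf{C}(i\lambda)\Phi, \Psi\rangle$, matching the first term of the proposition.

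The main obstacle will be the bookkeeping in the boundary computation—correct Haar-measure normalizations from \eqref{Haarmass auf G}, the sign convention of the Weyl group action on $\C$, and summation across all cusps—together with the final algebraic rearrangement, where the derivative must land in the precise form $\mathbf{C}(-i\lambda)\frac{d}{dz}\mathbf{C}(i\lambda)$ rather than a related but inequivalent expression such as $\frac{d}{dz}\mathbf{C}(i\lambda)\,\mathbf{C}(-i\lambda)$ (the $\mathbf{C}$-operators do not commute in general).
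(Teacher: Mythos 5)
The paper does not give a proof of Proposition~\ref{KoMs}; it is quoted as part of the standard spectral theory (cf.\ Harish-Chandra, Langlands, Warner), so there is no ``paper's own proof'' to compare against. Your strategy --- Green's identity for $\Omega$ on $\Gamma\bs G$ giving $(\mu_1^2-\mu_2^2)\,I(Y)$ equal to a boundary term supported on the truncation hypersurfaces, evaluating that boundary term via the constant-term expansion, and then passing to the diagonal by L'H\^opital --- is exactly the classical Maass--Selberg argument, and it is the right one.

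There are, however, two concrete problems in the execution. First, a conjugation mismatch. With your definition
$I(Y;\mu_1,\mu_2)=\int E^Y(\Phi,\mu_1)\overline{E^Y(\Psi,\bar\mu_2)}\,dx$
the specialization $\mu_1=\mu_2=i\lambda$ gives $\int E^Y(\Phi,i\lambda)\overline{E^Y(\Psi,-i\lambda)}$, which is \emph{not} the left side of the proposition (which has $\Psi$ at $+i\lambda$). If you insist on $\bar\mu_2$ inside the Eisenstein series so that $I$ is holomorphic in $\mu_2$, you must specialize at $\mu_2=-i\lambda$, and then it is the $\frac{\cdot}{\mu_1+\mu_2}$ fraction, not the $\frac{\cdot}{\mu_1-\mu_2}$ one, that is $0/0$; your pairing of inner products with denominators in the displayed intermediate formula then no longer reproduces the oscillatory terms of the proposition. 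The cleaner fix is to drop the bar, set $I(Y;\mu_1,\mu_2)=\int E^Y(\Phi,\mu_1)\overline{E^Y(\Psi,\mu_2)}\,dx$ for $\mu_1,\mu_2\in i\R$, and let $\bar\mu_2=-\mu_2$ appear explicitly in the exponents of $Y$ and in the denominators; the singular denominator is then $\mu_1+\bar\mu_2$, which vanishes on the diagonal.

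Second, the final rearrangement is not just ``moving $\mathbf{C}(i\lambda)$ across the inner product.'' Taking adjoints in $\langle\mathbf{C}(i\lambda)\Phi,\frac{d}{dz}\mathbf{C}(i\lambda)\Psi\rangle$ gives $\langle\Phi,\mathbf{C}(-i\lambda)\frac{d}{dz}\mathbf{C}(i\lambda)\Psi\rangle$, with the derivative still acting on $\Psi$, not on $\Phi$. To land it on $\Phi$ you must know that the operator $\mathbf{C}(-i\lambda)\frac{d}{dz}\mathbf{C}(i\lambda)$ is self-adjoint on $\boldsymbol{\cE}$. This does hold, but it requires \emph{differentiating} \eqref{FE2}: from $\mathbf{C}(-z)\mathbf{C}(z)=\Id$ one gets $\mathbf{C}'(-z)\mathbf{C}(z)=\mathbf{C}(-z)\mathbf{C}'(z)$, and together with $\mathbf{C}(i\lambda)^*=\mathbf{C}(-i\lambda)$ and the induced relation $\mathbf{C}'(i\lambda)^*=\mathbf{C}'(-i\lambda)$ this yields self-adjointness. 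You flag this as a potential obstacle but do not close the gap; without that identity the expression $-\langle\mathbf{C}(-i\lambda)\frac{d}{dz}\mathbf{C}(i\lambda)\Phi,\Psi\rangle$ is not obtained.
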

At the end of this section, we remark that the space $L^{2}_{d}(\Gamma\backslash
G)$ admits a further decomposition
\begin{align}\label{Weitere Zerlegung des diskreten Teiles}
L^{2}_{d}(\Gamma\backslash
G)=L^{2}_{{\rm{cusp}}}(\Gamma\backslash G)\oplus
L^{2}_{{\rm{res}}}(\Gamma\backslash G).
\end{align}
Here $L^{2}_{{\rm{cusp}}}(\Gamma\backslash G)$ is the space spanned by the cusp
forms, i.e. the square integrable functions $f$, which for all
$P\in\mathfrak{P}$ satisfy
\begin{align*}
f_{P}^{0}(x):=\int_{\Gamma\cap N_{P}\backslash N_{P}}{f(nx)dn}=0\quad \text{for
almost all $x\in G$}.
\end{align*}
One does not know much about $L^{2}_{{\rm{cusp}}}(\Gamma\backslash G)$ and its
size in general. On the other hand, let
$\Phi\in\boldsymbol{\mathcal{E}}(\boldsymbol{\sigma},\nu)$. Let $s_{0}\in (0,n]$
be a pole of $E(\Phi,s)$. Then the function $x\mapsto
{\rm{Res}}|_{s=s_{0}}E(\Phi,s)$ is square integrable on $\Gamma\backslash G$
and $L^{2}_{{\rm{res}}}(\Gamma\backslash G)$ is spanned by all these residues of
Eisenstein series.

\section{Bochner Laplace operators}\label{secBLO}
\setcounter{equation}{0}
Regard $G$ as a principal $K$-fibre bundle over $\tilde{X}$. By the invariance
 of $\mathfrak{p}$ under $\Ad(K)$, the assignment
\begin{align*}
T_{g}^{{\rm{hor}}}:=\{\frac{d}{dt}\bigr|_{t=0}g\exp{tX}\colon
X\in\mathfrak{p}\} 
\end{align*}
defines a horizontal distribution on $G$. This connection is called the 
canonical connection. 
Let $\nu$ be a finite-dimensional unitary representation of $K$ on 
$(V_{\nu},\left<\cdot,\cdot\right>_{\nu})$. Let
\begin{align*}
\tilde{E}_{\nu}:=G\times_{\nu}V_{\nu}
\end{align*}
be the associated homogeneous vector bundle over $\tilde{X}$. Then 
$\left<\cdot,\cdot\right>_{\nu}$ induces a $G$-invariant metric 
$\tilde{B}_{\nu}$ on $\tilde{E}_{\nu}$. Let $\widetilde{\nabla}^{\nu}$ be the 
connection on $\tilde{E}_{\nu}$ induced by the canonical connection. Then 
$\widetilde{\nabla}^{\nu}$ is $G$-invariant.
Let  
\begin{align*}
E_{\nu}:=\Gamma\backslash(G\times_{\nu}V_{\nu})
\end{align*}
be the associated locally homogeneous bundle over $X$. Since 
$\tilde{B}_{\nu}$ and $\widetilde{\nabla}^{\nu}$ are $G$-invariant, they push 
down to a metric $B_{\nu}$ and a connection $\nabla^{\nu}$ on $E_{\nu}$. Let
\begin{align}\label{globsect}
C^{\infty}(G,\nu):=\{f:G\rightarrow V_{\nu}\colon f\in C^\infty,\;
f(gk)=\nu(k^{-1})f(g),\,\,\forall g\in G, \,\forall k\in K\}.
\end{align}
Let
\begin{align}\label{globsect1}
C^{\infty}(\Gamma\backslash G,\nu):=\left\{f\in C^{\infty}(G,\nu)\colon 
f(\gamma g)=f(g)\:\forall g\in G, \forall \gamma\in\Gamma\right\}.
\end{align}
Let $C^{\infty}(X,E_{\nu})$ denote the space of smooth sections of $E_{\nu}$. 
Then there is a canonical isomorphism
\begin{align*}
A:C^{\infty}(X,E_{\nu})\cong C^{\infty}(\Gamma\backslash G,\nu)
\end{align*}
(see \cite[p. 4]{Mi1}).
There is also a corresponding isometry for the space $L^{2}(X,E_{\nu})$ of 
$L^{2}$-sections of $E_{\nu}$. For every $X\in\mathfrak{g}$, $g\in G$ and 
every $f\in C^{\infty}(X,E_{\nu})$ one has
\begin{align*}
A(\nabla^{\nu}_{L(g)_{*}X}f)(g)=\frac{d}{dt}|_{t=0}Af(g\exp{tX}).
\end{align*}
Let
$\widetilde{\Delta}_{\nu}={\widetilde{\nabla^\nu}}^{*}{\widetilde{\nabla}}^{\nu}
$
be 
the Bochner-Laplace operator of $\widetilde{E}_{\nu}$. Since $\widetilde{X}$ is
complete, 
$\widetilde{\Delta}_{\nu}$ with domain the smooth compactly supported sections
is 
essentially self-adjoint \cite{Chernoff}. Its self-adjoint extension
will be denoted by $\widetilde{\Delta}_\nu$ too. By \cite[Proposition
1.1]{Mi1} it follows that on $C^{\infty}(G,\nu)$
one has
\begin{align}\label{BLO}
\widetilde{\Delta}_{\nu}=-R_\Gamma(\Omega)+\nu(\Omega_K),
\end{align} 
where $\Omega_K$ is the Casimir operator of $\mathfrak{k}$ with respect to the
restriction of the normalized Killing form of $\mathfrak{g}$ to
$\mathfrak{k}$. 
Let $\tilde{A}_{\nu}$ be the differential operator on $E_{\nu}$ which acts as
$-R_\Gamma(\Omega)$ on $C^{\infty}(G,\nu)$.  Then it follows from
\eqref{BLO} that $\tilde{A}_{\nu}$ is bounded from below and essentially
self-adjoint.
Its self-adjoint extension will be denoted by $\tilde{A}_\nu$ too.
Let $e^{-t\tilde{A}_{\nu}}$ be the corresponding heat semigroup on
$L^2(G,\nu)$, where $L^{2}(G,\nu)$ is defined analogously to 
\eqref{globsect}. 
Then the same arguments as in \cite[section1]{Cheeger} imply that there 
exists a function
\begin{align}\label{DefK}
K_t^\nu\in C^{\infty}(G\times G,\End(V_\nu)),
\end{align}
with the following properties: $K_t^\nu(g,g')$ is symmetric in in the 
$G$-variables, for each $g\in G$,  the function $g'\mapsto K_t^\nu(g,g')$
belongs to $L^2(G,\End(V_\nu))$, it satisfies 
\begin{align*}
K_t^\nu(gk,g'k')=\nu(k^{-1})K_t^\nu(g,g')\nu(k'),\: \forall g,g'\in
G,\: \forall k,k'\in K
\end{align*}
and it is the kernel of the heat operator, i.e., 
\begin{align*}
(e^{-t\tilde{A}_\nu}\phi)(g)=\int_{G}K_t^\nu(g,g')\phi(g')dg',\;\forall\phi\in
L^2(G,\nu).
\end{align*}
Since $\Omega$ is $G$-invariant, $K_t^\nu$ is invariant under the diagonal
action
of $G$.
Hence there exists a function 
\begin{align*}
{H}^{\nu}_{t}:G\longrightarrow {\rm{End}}(V_{\nu})
\end{align*}
which satisfies 
\begin{align}\label{propH}
{H}^{\nu}_{t}(k^{-1}gk')=\nu(k)^{-1}\circ {H}^{\nu}_{t}(g)\circ\nu(k'),
\:\forall k,k'\in K, \forall g\in G,
\end{align}
such that
\begin{align}\label{KH}
K_t^\nu(g,g')=H_t^\nu(g^{-1}g'),\quad\forall g,g'\in G.
\end{align}
Thus one has
\begin{align}\label{kernelcov}
(e^{-t\tilde{A}_{\nu}}\phi)(g)=\int_{G}{{H}^{\nu}_{t}(g^{-1}g')\phi(g')dg'}
,
\quad\phi\in  L^{2}(G,\nu),\quad g\in G.
\end{align}
By the  arguments of \cite[Proposition 2.4]{BM},  $H^\nu_t$ belongs to
all 
Harish-Chandra Schwartz spaces 
$(\mathcal{C}^{q}(G)\otimes {\rm{End}}(V_{\nu}))$, $q>0$.\\
Now we pass to the quotient $X=\Gamma\backslash\widetilde{X}$.
Let $\Delta_\nu={\nabla^\nu}^*\nabla^\nu$ the closure of the Bochner-Laplace
operator
with domain the smooth compactly supported sections of $E_\nu$. Then
$\Delta_\nu$ is self-adjoint 
and by \eqref{BLO} it induces the operator
$-R_\Gamma(\Omega)+\nu(\Omega_K)$ on $C^\infty(\Gamma\backslash G,\nu)$. Thus if
we let $A_\nu$ be the operator 
$-R_\Gamma(\Omega)$ on $C_c^\infty(\Gamma\backslash G,\nu)$, then $A_\nu$ is
bounded from
below and essentially self-adjoint. The closure of $A_\nu$ will be
denoted by $A_\nu$ too. 
Let $e^{-tA_{\nu}}$ be the heat-semigroup of $A_\nu$ on $L^{2}(\Gamma\backslash
G,\nu)$. Let
\begin{align}\label{kernelx}
H^{\nu}(t;x,x'):=\sum_{\gamma\in\Gamma}{H}^{\nu}_{t}(g^{-1}\gamma g'),
\end{align}
where $x,x'\in \Gamma\backslash G$, $x=\Gamma g $, $x'=\Gamma g' $. 
By \cite[Chapter 4]{Warner} this series converges absolutely and locally
uniformly. It follows from \eqref{kernelcov} that
\begin{align*}
(e^{-tA_{\nu}}\phi)(x)=\int_{\Gamma\backslash
G}{{H}^{\nu}(t;x,x')\phi(x')dx'}
,
\quad\phi\in  L^{2}(\Gamma\backslash G,\nu),\quad x\in \Gamma\backslash G.
\end{align*}
Put
\begin{align}\label{Deffh}
{h}^{\nu}_{t}(g):=\tr{H}^{\nu}_{t}(g),
\end{align}
where $\tr$ denotes the trace in $\End{V_\nu}$. Define the operator
$R_{\Gamma}(h^{\nu}_{t})$ on $L^{2}(\Gamma\backslash G)$ by
\begin{align*}
R_{\Gamma}(h^{\nu}_{t})f(x):=\int_{G}{h^{\nu}_{t}(g)f(xg)dg}.
\end{align*}
Then $R_{\Gamma}(h^{\nu}_{t})$ is an integral-operator on
$L^{2}(\Gamma\backslash G)$, whose kernel is given by
\begin{align}\label{Defhnut}
h^{\nu}(t;x,x'):=\tr H^{\nu}(t;x,x').
\end{align}
We shall now compute the Fourier transform of $h^{\nu}_{t}$. 
Let $\pi$ be a unitary admissible representation of $G$ on a Hilbert space 
$\mathcal{H}_\pi$. Let $\check{\nu}$ be the contragredient representation of
$\nu$ and let
$P_{\check{\nu}}(\pi)$ be the projection of $\mathcal{H}_\pi$ onto
$\mathcal{H}_\pi^{\check{\nu}}$
, the $\check{\nu}$-isotypical component 
of $\mathcal{H}_\pi$. By assumption $\mathcal{H}_\pi^{\check{\nu}}$ is finite
dimensional.
Furthermore, by \eqref{propH} on has
\begin{align}\label{Proj}
\pi(h^{\nu}_{t})=P_{\check{\nu}}(\pi)\pi(h^{\nu}_t)P_{\check{\nu}}(\pi).
\end{align}
By \cite{Onishchik}, $\S$ 4,
Proposition 4 and $\S$ 7, Proposition 3 we have $\check{\nu}\cong\nu$. The
restriction of 
$\pi(h^{\nu}_{t})$ to $\mathcal{H}_\pi^{\nu}$ will be denoted by 
$\pi(h^{\nu}_{t})$ too. Define a bounded operator on 
$\mathcal{H}_{\pi}\otimes V_{\nu}$ by
\begin{align}\label{Definiton von pi(tilde(H))}
\tilde{\pi}(H^{\nu}_{t}(g)):=\int_{G}{\pi(g)\otimes H^{\nu}_{t}(g)dg}.
\end{align}
Then relative to the splitting 
\begin{align*}
\mathcal{H}_{\pi}\otimes V_{\nu}=\left(\mathcal{H}_{\pi}\otimes 
V_{\nu}\right)^{K}\oplus\left(\left(\mathcal{H}_{\pi}\otimes 
V_{\nu}\right)^{K}\right)^{\bot},
\end{align*}
$\tilde{\pi}(H^{\nu}_{t})$ has the form
\begin{align*}
\begin{pmatrix}
\pi({H}^{\nu}_{t})&0\\0&0
\end{pmatrix},
\end{align*}
where $\pi(H^{\nu}_{t})$ acts on $\left(\mathcal{H}_{\pi}\otimes 
V_{\nu}\right)^{K}$.  It follows as in \cite[Corollary 2.2]{BM} that
\begin{equation}\label{integop}
\pi(H_t^\nu)=e^{t\pi(\Omega)}\Id,
\end{equation}
where $\Id$ is the identity on $\left(\mathcal{H}_{\pi}\otimes 
V_{\nu}\right)^{K}$.
Now let $A:\mathcal{H}_\pi\rightarrow \mathcal{H}_\pi$ be a bounded operator
which is an
intertwining operator for $\pi|_{K}$. Then $A\circ\pi(h_t^\nu)$ is again a
finite rank
operator. Define an operator $\tilde{A}$ on $\mathcal{H}_\pi\otimes V_\nu$ by
$\tilde{A}:=A\otimes\Id$.
Then by the same argument as in \cite[Lemma 5.1]{BM} one has
\begin{equation}\label{equtrace}
\Tr \left(\tilde{A}\circ
\tilde{\pi}(H_t^\nu)\right)=\Tr\left(A\circ\pi(h_t^\nu)\right).
\end{equation}
Together with \eqref{integop} we obtain
\begin{equation}\label{TrFT}
\Tr\left(A\circ\pi(h_t^\nu)\right)=e^{t\pi(\Omega)}\cdot\Tr\tilde{A}|
_{(\mathcal{H}_{\pi}\otimes V_{\nu})^{K}}.
\end{equation}
Let $\pi\in\hat{G}$ and let $\Theta_\pi$ be its global character.
Taking $A=\Id$ in \eqref{TrFT}, one obtains
\begin{equation*}
\Theta_{\pi}(h^{\nu}_{t})=e^{t\pi(\Omega)}\cdot
\dim(\mathcal{H}_{\pi}\otimes V_{\nu})^{K}=e^{t\pi(\Omega)}
\cdot\left[\pi:\check{\nu}\right]=e^{t\pi(\Omega)}\cdot
\left[\pi:\nu\right].
\end{equation*}
By \cite[Theorem 9.16]{Knapp2} we have $\left[\nu:\sigma\right]\leq 1$
for all $\nu\in\hat{K}$ and all $\sigma\in\hat{M}$. Now consider
the principal series representation $\pi_{\sigma,\lambda}$, where
$\sigma\in\hat{M}$ and $\lambda\in\mathbb{R}$. Let 
$\Theta_{\sigma,\lambda}$ be the global character of $\pi_{\sigma,\lambda}$.
By Frobenius reciprocity \cite[p.208]{Knapp} it follows that for all
$\nu\in\hat K$
\[
\left[\pi_{\sigma,\lambda}:\nu\right]=\left[\nu:\sigma\right].
\]
Hence it follows that for $\left[\nu:\sigma\right]\neq 0$
\begin{align*}
\Theta_{\sigma,\lambda}(h^{\nu}_{t})&=e^{t\pi_{\sigma,\lambda}(\Omega)}
\end{align*}
and $
\Theta_{\sigma,\lambda}(h^{\nu}_{t})
=0
$
for $\left[\nu:\sigma\right]=0$.
The Casimir eigenvalue can be computed as follows. For $\sigma\in\hat M$ with
highest weight given by \eqref{Darstellungen von M} resp. \eqref{Darstellungen
von M'}, let
\begin{align}\label{csigma}
c(\sigma):=\sum_{j=2}^{n+1}(k_{j}(\sigma)+\rho_{j})^{2}-\sum_{j=1}^{n+1}
\rho_{j}^{2}.
\end{align}
Then one has
\begin{align}\label{infchps}
\pi_{\sigma,\lambda}(\Omega)=-\lambda^{2}+c(\sigma).
\end{align}
For $G=\Spin(2n+1,1)$ this was proved in \cite[Corollary 2.4]{MP}. For
$G=\Spin(2n+2,1)$,
one can proceed in the same way. 
Thus we obtain the following proposition.
\begin{prop}\label{fouriertrf2}
For $\sigma\in\hat{M}$ and $\lambda\in\mathbb{R}$ let 
$\Theta_{\sigma,\lambda}$ be the global character of $\pi_{\sigma,\lambda}$.
Let $c(\sigma)$ be defined by \eqref{csigma}.
Then one has
\begin{align*}
\Theta_{\sigma,\lambda}(h^{\nu}_{t})&=e^{t(c(\sigma)-\lambda^{2})}
\end{align*} for 
$\left[\nu:\sigma\right]\neq 0$ and $\Theta_{\sigma,\lambda}(h^{\nu}_{t})
=0$ otherwise. 
\end{prop}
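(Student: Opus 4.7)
The plan is to assemble the proposition from three ingredients already developed in this section. First, I specialize the general trace identity \eqref{TrFT} to the intertwining operator $A = \Id$. Using the cited self-duality $\check\nu\cong\nu$, this produces the key formula $\Theta_\pi(h^\nu_t) = e^{t\pi(\Omega)}[\pi:\nu]$ valid for any admissible unitary $\pi\in\hat G$. This step rests on the integral formula \eqref{Definiton von pi(tilde(H))}, the block-diagonal structure recorded after it, and equation \eqref{integop}.

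Second, I specialize $\pi$ to the principal series representation $\pi_{\sigma,\lambda}$. By Frobenius reciprocity \eqref{Frobeniusrez} one has $[\pi_{\sigma,\lambda}:\nu] = [\nu:\sigma]$, and the Knapp branching bound $[\nu:\sigma]\leq 1$ from \cite[Theorem 9.16]{Knapp2} forces this multiplicity to be either $0$ or $1$. In the first case $\Theta_{\sigma,\lambda}(h^\nu_t)$ vanishes, while in the second it reduces to $e^{t\pi_{\sigma,\lambda}(\Omega)}$.

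Third, I substitute the explicit Casimir eigenvalue $\pi_{\sigma,\lambda}(\Omega) = -\lambda^2 + c(\sigma)$ recorded in \eqref{infchps}. For $G=\Spin(2n+1,1)$ this is \cite[Corollary 2.4]{MP}; for $G=\Spin(2n+2,1)$ the identical calculation applies, starting from the root data set up in section \ref{Notations} and the standard action of the universal Casimir on an induced representation.

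No substantial obstacle arises: the proposition is essentially a bookkeeping consequence of the spectral identities established earlier in this section, and the entire argument fits on a few lines once \eqref{TrFT}, \eqref{Frobeniusrez}, the Knapp multiplicity bound, and \eqref{infchps} are cited.
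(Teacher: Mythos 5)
Your proposal is correct and follows the same line of reasoning the paper uses: apply \eqref{TrFT} with $A=\Id$ to obtain $\Theta_\pi(h^\nu_t)=e^{t\pi(\Omega)}[\pi:\nu]$, convert $[\pi_{\sigma,\lambda}:\nu]$ to $[\nu:\sigma]$ via Frobenius reciprocity with the multiplicity-one bound, and substitute the Casimir eigenvalue from \eqref{infchps}. The paper presents this as the running argument in the paragraphs immediately preceding the proposition rather than as a separate proof environment, but the content is identical to yours.
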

Finally, by the definition of $\boldsymbol{\pi}_{\sigma,\lambda}$,
\eqref{infchps} also gives
\begin{align}\label{Infchp}
\boldsymbol{\pi}_{\sigma,\lambda}(\Omega)=\lambda^2+c(\sigma).
\end{align}

\section{The regularized trace}\label{secrel}
\setcounter{equation}{0}
In this section we define the regularized trace of the heat operator. 
The decomposition \eqref{Zerlegung von L2} induces a decomposition of
$L^2(\Gamma\backslash G,\nu)\cong \left(L^2(\Gamma\backslash G,\nu)\otimes
V_\nu\right)^K$ as
\begin{align*}
L^2(\Gamma\backslash
G,\nu)=L^2_d(\Gamma\backslash
G,\nu)\oplus L^2_c(\Gamma\backslash
G,\nu). 
\end{align*}
This decomposition is invariant under $A_\nu$ in the sense of unbounded
operators.
Let $A_\nu^d$ denote the restriction of $A_\nu$ to $L^2_d(\Gamma\backslash
G,\nu)$. Then the spectrum of $A_\nu^d$ is discrete. Let 
$\lambda_{1}\leq\lambda_{2}\leq...$ be the sequence of eigenvalues of 
$A_{\nu}^{d}$, counted with multiplicity. 
This sequence may be finite or infinite. For $\lambda\in\left[0,\infty\right)$ 
let
\begin{align*}
N(\lambda):=\#\{j\colon\lambda_{j}\leq\lambda\}.
\end{align*}
be the counting function of eigenvalues. 
By \cite[Theorem 0.1]{Mu4} there exists  $C>0$ such that
\begin{align}\label{Weylsches Gesetz von Donelly}
N(\lambda)\leq C(1+\lambda^{2d})
\end{align} 
for all $\lambda\ge 0$. In fact, in the present case, the exponent is $d/2$.
This follows from estimation of the counting function of the cuspidal 
eigenvalues \cite[Theorem 9.1]{Do2} together with the fact that the residual 
spectrum is finite in the present case. Hence the sum
$\sum_{j}e^{-t\lambda_{j}}$
converges for all $t>0$, the operator $e^{-tA_{\nu}^{d}}$ is of trace class and 
one has
\begin{align}
\Tr\left(e^{-tA_{\nu}^{d}}\right)=\sum_{j}{e^{-t\lambda_{j}}}.
\end{align}
Let $H^\nu_t$ be the kernel of $e^{-t\tilde A_\nu}$ and let $h_t^\nu=\tr
H^\nu_t$. 
Then $h_t^\nu$ belongs to $\mathcal{C}^1(G)$. Let $h^\nu(t;x,y)$ be the kernel
of $R_\Gamma(h^\nu_t)$. By Proposition \ref{specres5},
the kernel $h^\nu_c(t;x,y)$ of $R_\Gamma^c(h_t^\nu)$ is given by
\begin{equation}\label{contp}
h^{\nu}_{c}(t;x,y)=\frac{1}{4\pi}\sum_{k,l}
\int_{\mathbb{R}}{\left<\pi_{i\lambda}(h_t^\nu)e_l,e_k\right>E
(e_k,i\lambda,x)\overline{E}(e_l,i\lambda,y)\;d\lambda},
\end{equation}
where $\{e_k\colon k\in I\}$ is an orthonormal basis of $\boldsymbol{\cE}$
adapted to the decomposition \eqref{decomp3}. Let
\begin{align}\label{ZerKern}
h^\nu_d(t;x,y)=h^\nu(t;x,y)-h^\nu_c(t;x,y).
\end{align}
By the second part of Proposition \ref{specres5}, $h_d^\nu$ is the kernel of 
$R_\Gamma^d(h_t^\nu)$ and we have
\begin{align}\label{Spurdisk}
\Tr(e^{-tA_{\nu}^{d}})=\Tr(R_{\Gamma}^d(h^{\nu}_t))=\int_{\Gamma\backslash G}
h^{\nu}_d(t;x,x)dx.
\end{align}

Now the argument on page 82 in \cite{Warner} can be extended to 
$h_t^\nu\in\mathcal{C}(G)$ and one has 
\begin{align*}
\int_{\mathbb{R}}\int_{\Gamma\backslash
G}{{\left|\sum_{k,l}\left<\boldsymbol{\pi}_{i\lambda}(h_t^\nu)e_l,e_k\right>
E^Y(e_k,i\lambda,x)\overline{E}^Y(e_l,i\lambda,x)\right|dx}d\lambda}
<\infty.
\end{align*}
Thus one can apply Proposition \ref{KoMs} and interchange the order of
integration. Let $\mathbf{C}(\sigma,\nu,\lambda)$ be the operator 
\eqref{intertwop1}. Arguing now as in \cite[page 82-84]{Warner}
and using Proposition \ref{fouriertrf2} one obtains
\begin{align*}
\int_{X(Y)}h^\nu_c(t;x,x)\;dx&=
\sum_{\substack{\sigma\in\hat{M};\sigma=w_0\sigma\\\left[\nu:\sigma\right]\neq0}
}
\frac{\Tr\left(\boldsymbol{\pi}_{\sigma,0}(h_t^\nu)\mathbf{C}
(\sigma,\nu,0)\right)}{4}+\sum_{\substack{\sigma\in\hat{M}\\
\left[\nu:\sigma\right]\neq
0}}\biggl(\frac{\kappa e^{tc(\sigma)}\log{Y}\dim(\sigma)}{\sqrt{4\pi t}}\\
&-\frac{1}{4\pi}\int_{\R}\Tr\left(\boldsymbol{\pi}_{\sigma,
i\lambda}(h_t^\nu)\mathbf{C}(\sigma,\nu,-i\lambda)\frac{d}{dz}
\mathbf{C}(\sigma,\nu,i\lambda)\right)\,d\lambda
\biggr)+o(1),
\end{align*}
as $Y\to\infty$. Now recall that the restriction of the representation
$\boldsymbol{\pi}_{\sigma,i\lambda}$
to $K$ is independent of the parameter $\lambda$. Let 
\[
\widetilde{\boldsymbol{C}}(\sigma,\nu,\lambda)\colon  
(\boldsymbol{\cE}(\sigma)\otimes V_\nu)^K\to 
(\boldsymbol{\cE}(\sigma)\otimes V_\nu)^K
\]
be the restriction of
$\boldsymbol{C}(\sigma,\lambda)\otimes \Id_{V_\nu}$ to 
$(\boldsymbol{\cE}(\sigma)\otimes V_\nu)^K$, where 
$\boldsymbol{C}(\sigma,\lambda)$ be the operator \eqref{intertwop2}.
Using the intertwining property of
$\boldsymbol{C}(\sigma,\lambda)$, equation \eqref{TrFT} and equation
\ref{Infchp}
one obtains
\begin{align*}
\int_{X(Y)}h^\nu_c(t;x,x)\,dx&=\sum_{\substack{\sigma\in\hat{M}
;\sigma=w_0\sigma\\
\left[\nu:\sigma\right]\neq
0}}e^{tc(\sigma)}\frac{\Tr(\widetilde{\boldsymbol{C}}(\sigma,\nu,0))}{4}
+\sum_{\substack{\sigma\in\hat{M}\\
\left[\nu:\sigma\right]\neq
0}}\biggl(\frac{\kappa e^{tc(\sigma)}\log{Y}\dim(\sigma)}{\sqrt{4\pi t}}\\
&-\frac{1}{4\pi}\int_{\R}e^{-t\left(\lambda^2-c(\sigma)\right)}
\Tr\left(\widetilde{\boldsymbol{C}}(\sigma,\nu,-i\lambda)\frac{
d}{dz}\widetilde{\boldsymbol{C}}(\sigma,\nu,i\lambda)\right)d\lambda
\biggr)+o(1),
\end{align*}
as $Y\to\infty$. Thus together with \eqref{ZerKern}, \eqref{Spurdisk} we obtain
\begin{equation}
\begin{split}
\int_{X(Y)}h^\nu(t;x,x)\,dx&=
\sum_{\substack{\sigma\in\hat{M}\\
\left[\nu:\sigma\right]\neq0}}
\frac{\kappa e^{tc(\sigma)}\dim(\sigma)\log{Y}}{\sqrt{4\pi t}}
+\sum_j e^{-t\lambda_j}\\
&+\sum_{\substack{\sigma\in\hat{M};\sigma=w_0\sigma\\
\left[\nu:\sigma\right]\neq
0}}e^{tc(\sigma)}\frac{\Tr(\widetilde{\boldsymbol{C}}(\sigma,\nu,0))}{4}\\
&-\frac{1}{4\pi}\sum_{\substack{\sigma\in\hat{M}\\
\left[\nu:\sigma\right]\neq
0}}\int_{\R}e^{-t\left(\lambda^2-c(\sigma)\right)}
\Tr\left(\widetilde{\boldsymbol{C}}
(\sigma,\nu,-i\lambda)\frac{
d}{dz}\widetilde{\boldsymbol{C}}(\sigma,\nu,i\lambda)\right)\,d\lambda\\
&+o(1)
\end{split}
\end{equation}
as $Y\to\infty$. It follows that $\int_{X(Y)} \tr h^\nu(t;x,x) dx$ has an 
asymptotic expansion as $Y\to\infty$ and following \cite{Me}, 
we take the constant coefficient as the definition of the regularized trace. 
\begin{defn} The regularized trace of $e^{-tA_\nu}$ is defined as
\begin{equation}\label{regtrace2}
\begin{split}
\Tr_{\rel}\left(e^{-tA_\nu}\right)&=\Tr\left(e^{-tA^d_\nu}\right)+
\sum_{\substack{\sigma\in\hat{M};\sigma=w_0\sigma\\
\left[\nu:\sigma\right]\neq
0}}e^{tc(\sigma)}\frac{\Tr(\widetilde{\boldsymbol{C}}(\sigma,\nu,0))}{4}\\
&-\frac{1}{4\pi}\sum_{\substack{\sigma\in\hat{M}\\
\left[\nu:\sigma\right]\neq
0}}\int_{\R}e^{-t\left(\lambda^2-c(\sigma)\right)}
\Tr\left(\widetilde{\boldsymbol{C}}(\sigma,\nu,-i\lambda)
\frac{d}{dz}\widetilde{\boldsymbol{C}}(\sigma,\nu,i\lambda)\right)\,d\lambda.
\end{split}
\end{equation}
\end{defn}
\begin{bmrk}
The right hand side of \eqref{regtrace2} equals the spectral side
of the Selberg trace formula applied to $\exp(-tA_\nu)$. This follows from
\cite[Theorem 8.4]{Warner}.
\end{bmrk}
\begin{bmrk}
There are slightly  different methods to regularize the trace. One is to  
truncate the zero Fourier coefficients of $h^\nu(t;x,y)$ at level $Y\ge Y_0$.
The resulting kernel $h^\nu_Y(t;x,y)$ is integrable over the diagonal. The
integral $\int_X h^\nu_Y(t;x,x)\;dx$ depends on $Y$ in a simple way. If one
subtracts off the term which contains $Y$, one gets another definition of the
regularized trace which is closely related to \eqref{regtrace2}.
\end{bmrk}

\section{The trace formula}\label{sectr}
\setcounter{equation}{0}

In this section we apply the Selberg trace formula to study the regularized
trace of the heat operator $e^{-tA_\nu}$. To begin with we
briefly recall the Selberg trace formula. First we introduce the
distributions involved. Let $\alpha$ be a K-finite Schwartz function. Let
\begin{align*}
I(\alpha):=\vol(\Gamma\backslash G)\alpha(1).
\end{align*}
By \cite[Theorem 3]{HC}, the Plancherel theorem can be applied to $\alpha$. For
groups of real rank one which do not possess a compact Cartan subgroup it is
stated in \cite[Theorem 13.2]{Knapp}.
Thus if $P_\sigma(z)$ is as in section \ref{secPL}, then for an odd-dimensional
 $X$ one has
\begin{align}\label{Idcontr}
I(\alpha)=\vol(X)\sum_{\substack{\sigma\in\hat{M}\\\left[\nu:\sigma\right]\neq
0}}\int_{\mathbb{R}}{P_{\sigma}(i\lambda)\Theta_{\sigma,\lambda}(\alpha)}
d\lambda,
\end{align}
where the sum is finite since $\alpha$ is $K$-finite. In even dimensions an
additional contribution of the discrete series appears.
Next let $\CC(\Gamma)_{\s}$ be the set of semi-simple conjugacy classes
$\left[\gamma\right]$. Put
\begin{align*}
H(\alpha):=\int_{\Gamma\backslash
G}\sum_{\left[\gamma\right]\in\CC(\Gamma)_{\s}-\left[1\right]}
\alpha(x^{-1}\gamma x)dx.
\end{align*}
By \cite[Lemma 8.1]{Warner} the integral converges absolutely. Its Fourier
transform can be computed as
follows.
Since $\Gamma$ is assumed to be torsion free, every
nontrivial semi-simple element $\gamma$ is conjugate to an element
$m(\gamma)\exp{\ell(\gamma)H_1}$, $m(\gamma)\in M$. By \cite[Lemma
6.6]{Wallach},
$l(\gamma)>0$
is unique and $m(\gamma)$ is determined up to conjugacy in $M$.
Moreover, $\ell(\gamma)$ is the length of the unique closed geodesic
associated to $\left[\gamma\right]$. It follows that $\Gamma_{\gamma}$, the
centralizer of $\gamma$ in $\Gamma$, is infinite cyclic. Let $\gamma_{0}$ denote
its generator which is semi-simple too. For
$\gamma\in\left[\Gamma\right]_{S}-\{\left[1\right]\}$ let
$a_\gamma:=\exp{\ell(\gamma)H_1}$ and let
\begin{align}\label{hyperbcontr}
L(\gamma,\sigma):=\frac{\overline{\Tr(\sigma)(m_{\gamma})}}
{\det\left(\Id-\Ad(m_\gamma a_\gamma)|_{\bar\nf}\right)}e^{-n\ell(\gamma)}.
\end{align}
Assume that $\dim(X)$ is odd. 
Proceeding as in \cite{Wallach} and using \cite[equation 4.6]{Gang}, one obtains
\begin{align}\label{Hyperb}
H(\alpha)=&\sum_{\sigma\in\hat{M}}\sum_{\left[\gamma\right]\in \CC(\Gamma)_{\s}
-\left[1\right]}\frac{l(\gamma_{0})}{2\pi}
L(\gamma,\sigma)\int_{-\infty}^{\infty}{\Theta_{\sigma,\lambda}(\alpha)e^{
-il(\gamma)\lambda}d\lambda},
\end{align}
where the sum is finite since $\alpha$ is $K$-finite.  

Now let $P\in\mathfrak{P}$. For every
$\eta\in\Gamma\cap N_{P}-\{1\}$ let $X_{\eta}:=\log{\eta}$. Write
$\left\|\cdot\right\|$ for the norm induced on $\mathfrak{n}_{P}$ by the
restriction of $\frac{1}{4n}B(\cdot,\theta\cdot)$. Then for $\Real(s)>0$
the Epstein-type zeta function $\zeta_{P}$, defined by
\begin{align}\label{Definition der Ep Zetafunktion}
\zeta_{P}(s):=\sum_{\eta\in\Gamma\cap
N_{P}-\{1\}}{\left\|X_{\eta}\right\|}^{-2n(1+s)},
\end{align}
converges and $\zeta_P$ has a meromorphic continuation to $\C$ with a simple
pole at $0$. 
Let $C_{P}(\Gamma)$ be the constant term of $\zeta_{P}$ at $s=0$. Then put
\begin{align*}
&T_{P}(\alpha):=\int_{K}\int_{N_{P}}{\alpha(kn_{P}k^{-1})dkdn_{P}}=\int_{K}\int_
{N}{\alpha(kn_{0}k^{-1})dn_{0}}\\ 
&T(\alpha):=\sum_{P\in\mathfrak{P}}C_{P}(\Gamma)\frac{\vol(\Gamma\cap
N_{P}\backslash N_{P})}{\vol(S^{2n-1})}T_{P}(\alpha)\\
&T_{P}'(\alpha):=\int_{K}{\int_{N_{P}}{\alpha(kn_{P}k^{-1}){\log\left\|\log{n_{P
}}\right\|}dn_{P}}dk}.
\end{align*} 
Then $T$ and $T_{P'}$ are tempered distributions. 
The distributions $T$ is invariant. Let 
\begin{align*}
C(\Gamma):=\sum_{P\in\mathfrak{P}}C_{P}(\Gamma)\frac{\vol(\Gamma\cap
N_{P}\backslash N_{P})}{\vol(S^{2n-1})}.
\end{align*}
Applying the Fourier inversion formula and
the Peter-Weyl-Theorem to
equation 10.21 in \cite{Knapp}, one obtains 
the Fourier transform of T as:
\begin{align}\label{Fouriertrafo T}
T(\alpha)=\sum_{\sigma\in\hat{M}}\dim(\sigma)\frac{1}{2\pi}C(\Gamma)\int_{
\mathbb{R}}\Theta_{\sigma,\lambda
}(\alpha)d\lambda.
\end{align}
The distributions $T_{P}'$ are not invariant. However, they can be made
invariant using the standard Knapp-Stein intertwining operators. These operators
are defined as follows. Let $\bar{P}_{0}:=\bar{N_0}A_0M_0$ be the
parabolic subgroup opposite to $P_{0}$. Let $\sigma\in\hat{M}$ and let
$(\cH^\sigma)^\infty$ be the subspace of $C^\infty$-vectors in 
$\cH^\sigma$. For $\Phi\in({\cH}^{\sigma})^\infty$ and 
$\lambda\in\C$ define $\Phi_\lambda:G\rightarrow V_\sigma$ by
\begin{align*}
\Phi_{\lambda}(nak):=\Phi(k)e^{(i\lambda e_1+\rho)\log{a}}.
\end{align*}
Then for $\Iim(\lambda)<0$ the integral
\begin{align}\label{Verkettungsop als Integral}
J_{\bar{P}_{0}|P_{0}}(\sigma,\lambda)(\Phi)(k):=
\int_{\bar{N}}{\Phi_{\lambda}(\bar{n}k)d\bar{n}},
\end{align}
is convergent and
$J_{\bar{P}_{0}|P_{0}}(\sigma,\lambda):(\mathcal{H}^{\sigma})^\infty
\longrightarrow (\mathcal{H}^{\sigma})^\infty$ defines an intertwining operator
between $\pi_{\sigma,\lambda}$ and $\pi_{\sigma,\lambda,\bar{P}_{0}}$,
where $\pi_{\sigma,\lambda,\bar{P}_{0}}$ denotes the principal series
representation associated to $\sigma$, $\lambda$ and $\bar{P}_{0}$.
As an operator-valued function,
$J_{\bar{P}_{0}|P_{0}}(\sigma,\lambda)$ has a meromorphic continuation to
$\C$ (see \cite{Knapp Stein}). Let $\nu\in\hat{K}$ be a $K$-type of
$\pi_{\sigma,\lambda}$. Since
$[\nu:\sigma]\leq 1$ for every $\nu\in\hat{K}$, it follows from Frobenius
reciprocity and Schur's lemma
that 
\begin{align}\label{Definition c-Funktion}
J_{\bar{P}_{0}|P_{0}}(\sigma,\lambda)|_{(H^{\sigma})^{\nu}}=c_{\nu}
(\sigma:\lambda)\cdot \Id,
\end{align}
where $c_{\nu}(\sigma:\lambda)\in\C$. 
The function $z\mapsto
c_{\nu}(\sigma:z)$ can be computed explicitly. Assume that $d=2n+1$. Let 
$k_{2}(\sigma)e_{2}+\dots+k_{n+1}(\sigma)e_{n+1}$ be the highest weight
of $\sigma$ as in \eqref{Darstellungen von M} and let
$k_{2}(\nu)e_{2}+\dots+k_{n+1}(\nu)e_{n+1}$ be the highest
weight of $\nu$ as in \eqref{Darstellungen von K}. Then taking the different
parametrization into account, it follows from Theorem 8.2 in \cite{Eguchi} that
there
exists a constant $\alpha(n)$ depending on $n$ such that
\begin{align}\label{cfunktion}
c_{\nu}(\sigma:z)=\alpha(n)\frac{\prod_{j=2}^{n+1}\Gamma(iz-k_{j}(\sigma)-
\rho_{j})\prod_{j=2}^{n+1}\Gamma(iz+k_{j}(\sigma)+\rho_{j})}{
\prod_{j=2}^{n+1}\Gamma(iz-k_{j}(\nu)-\rho_{j})\prod_{j=2}^{n+1}
\Gamma(iz+k_{j}(\nu)+\rho_{j}+1)}.
\end{align}
This formula implies that
\begin{align}\label{Glcfnktn}
c_{\nu}(\sigma:z)^{-1}\frac{d}{dz}c_{\nu}(\sigma
:z)
=\sum_{j=2}^{n+1}\sum_{\substack{\left|k_{j}(\sigma)\right|<l\\\leq
k_{j}(\nu)}}\frac{i}{
iz-l-\rho_{j}}-\sum_{j=2}^{n+1}\sum_{l=\left|k_{j}(\sigma)\right|}^{k_{j
}(\nu)}\frac{i}{iz+l+\rho_{j}}.
\end{align}
Next let $d=2n+2$. Let 
$k_{2}(\sigma)e_{2}+\dots+k_{n+1}(\sigma)e_{n+1}$ be the highest weight
of $\sigma$ as in \eqref{Darstellungen von M'} and let
$k_{1}(\nu)e_{1}+\dots+k_{n+1}(\nu)e_{n+1}$ be the highest
weight of $\nu$ as in \eqref{Darstellungen von K'}. Then by
\cite[Theorem 8.2]{Eguchi}, there exists a constant $\alpha(n)$ depending only on
$n$ such that
\begin{align}\label{cfunktion'}
c_{\nu}(\sigma:z)=\alpha(n)\frac{\Gamma(2iz)\prod_{j=2}^{n+1}
\Gamma(iz-k_j(\sigma)-\rho_j)\prod_{j=2}^{n+1}\Gamma(iz+k_j(\sigma)+\rho_j)}
{2^{2iz} \prod_{j=1}^{n+1}
\Gamma(iz-k_j(\nu)-\rho_j+1)\prod_{j=1}^{n+1}\Gamma(iz+k_j(\nu)+\rho_j)}.
\end{align}
Equation \eqref{cfunktion} and \eqref{cfunktion'} imply that
$J_{\bar{P}_{0}|P_{0}}(\sigma,\lambda)$ has no poles on $\mathbb{R}-\{0\}$ and
is
invertible there and that $J_{\bar{P}_{0}|P_{0}}(\sigma,z)^{-1}$ is defined as a
meromorphic function of $z$. It follows that the weighted character
\begin{equation}\label{weightch}
\Tr\left(J_{\bar{P}_{0}|P_{0}}(\sigma,z)^{-1}\frac{d}{dz}
J_{\bar{P}_{0}|P_{0}}(\sigma,z)\pi_{\sigma,z}(\alpha)\right)
\end{equation}
is regular for $z\in\R-\{0\}$. Let $\epsilon>0$ be
sufficiently small. Let $H_{\epsilon}$ be the
half-circle from $-\epsilon$ to $\epsilon$ in the lower half-plane, oriented
counter-clockwise. Let $D_{\epsilon}$ be the path which is the union of
$\left(-\infty,-\epsilon\right]$, $H_{\epsilon}$ and
$\left[\epsilon,\infty\right)$. 
Using \eqref{cfunktion}, \eqref{cfunktion'} and the fact that the matrix
coefficients of 
$\pi_{\sigma,z}(\alpha)$ are rapidly decreasing, it follows that
\eqref{weightch}
is integrable over $D_{\epsilon}$. Let
\begin{equation}\label{j1}
J_{\sigma}(\alpha):=\frac{\kappa\dim\sigma}{4\pi
i}\int_{D_{\epsilon}}{\Tr\left(J_{\bar{P}_{0}|P_{0}}(\sigma,z)^{-1}\frac{d}{
dz}J_{\bar{P}_{0}|P_{0}}(\sigma,z)\pi_{\sigma,z}(\alpha)\right)dz}.
\end{equation}
The change of contour is only necessary if
$J_{\bar{P}_{0}|P_{0}}(\sigma,s)$ has a pole at $0$. Let
\begin{align}\label{Definition von J}
J(\alpha):=-\sum_{\sigma\in\hat{M}}J_{\sigma}(\alpha).
\end{align}
By \eqref{propH} and Proposition \ref{fouriertrf2} one has
\begin{align}\label{reprJ}
J(h^\nu_t)=-\frac{\kappa}{4\pi
i}\sum_{\sigma\in\hat{M}}\left[\nu:\sigma\right]\dim
(\sigma)\int_{D_{\epsilon}}e^{-t(z^2-c(\sigma))}c_{\nu}(\sigma:z)^{-1}
\frac{d}{dz}c_{\nu}(\sigma:z)dz.
\end{align}
For notational convenience, if $\nu\in\hat{K}$ and $\sigma\in\hat{M}$
with $\left[\nu:\sigma\right]=0$ we let $c_{\nu}(\sigma:z):= 0$.
Now we define a distribution $\mathcal{I}$ by
\begin{align}\label{Definition invariant parabolisch}
&\mathcal{I}(\alpha):=\sum_{P\in\mathfrak{P}}T_{P}'(\alpha)-J(\alpha).
\end{align}
We claim that $\mathcal{I}$ is an invariant distribution. This can be seen as 
follows. Using the formula for $J_M(m,\alpha)$ on p. 92 of \cite{Hoffmann}, 
we get $J_{M_P}(1,\alpha)=T^\prime_P(\alpha)$. Next using the
formula for the invariant distribution $I_P(m,\alpha)$ on p. 93 of 
\cite{Hoffmann} and formula (8) of \cite{Hoffmann}, it follows that 
\[
I_P(1,\alpha)=T^\prime_P(\alpha)+\sum_{\sigma\in \hat
M_0}\frac{\dim(\sigma)}{4\pi i}
\int_{D_\epsilon}\Tr\left(J_{\bar{P}_{0}|P_{0}}(\sigma,z)^{-1}\frac{d}{
dz}J_{\bar{P}_{0}|P_{0}}(\sigma,z)\pi_{\sigma,z}(\alpha)\right)dz.
\]
Summing over $P\in\mathfrak{P}$, we get
\[
\sum_{P\in\mathfrak{P}}I_P(1,\alpha)=\mathcal{I}(\alpha)-J(\alpha),
\]
which proves our claim.
 
\begin{thrm}\label{Spurf}
With the above notations, one has
\begin{align*}
\Tr_{\rel}(e^{-tA_{\nu}})=I(h^{\nu}_{t})+H(h^{\nu}_{t})+T(\tilde{
h}^{\nu}_{t})+\mathcal{I}(h^{\nu}_{t})+J(h^{\nu}_{t}).
\end{align*}
\end{thrm}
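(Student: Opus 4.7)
The plan is to exploit the identification of the regularized trace with the spectral side of the Selberg trace formula and then read off the geometric side. Specifically, by the Remark following Definition \eqref{regtrace2} and \cite[Theorem 8.4]{Warner}, the explicit expression \eqref{regtrace2} for $\Tr_{\rel}(e^{-tA_\nu})$ coincides with the spectral side $J_{\spec}(h_t^\nu)$ of the Selberg trace formula applied to the convolution operator $R_\Gamma(h_t^\nu)$. To invoke the trace formula for $h_t^\nu$ itself, one needs $h_t^\nu$ to lie in a sufficiently nice class: this is provided by the fact that $H_t^\nu \in \mathcal{C}^q(G)\otimes \End(V_\nu)$ for every $q>0$, which was recorded in Section \ref{secBLO}, together with the $K$-biinvariance type property \eqref{propH}. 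An approximation argument by $K$-finite Schwartz functions then extends the validity of the trace formula to $h_t^\nu$.

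Once the trace formula is applicable, the geometric side decomposes along the conjugacy classes of $\Gamma$. Under the assumption \eqref{assumpt}, every element of $\Gamma-\{1\}$ is either semisimple and hyperbolic, or else lies in $\Gamma\cap N_P$ for a unique $P\in\mathfrak{P}$ (up to $\Gamma$-conjugacy), so no mixed terms appear. The identity class gives $\vol(\Gamma\bs G)h_t^\nu(1)=I(h_t^\nu)$, and the sum over the nontrivial hyperbolic classes, after integrating $h_t^\nu$ against the resulting orbital integrals and collapsing using the structure of the centralizers, yields $H(h_t^\nu)$; here the convergence of the hyperbolic term for Schwartz-class test functions follows from \cite[Lemma 8.1]{Warner}. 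The identification of the individual Fourier transforms recorded in \eqref{Idcontr} and \eqref{Hyperb} is then consistent with Proposition \ref{fouriertrf2}.

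The main step, and the main obstacle, is the contribution of the unipotent (parabolic) classes. Organizing these contributions by cusps $P\in\mathfrak{P}$, one obtains naively divergent sums of the shape $\sum_{\eta\in\Gamma\cap N_P-\{1\}}\int h_t^\nu(\cdot)\,d(\cdot)$ which one regularizes following Warner: the radial integral at each cusp is expressed via the Epstein-type zeta function $\zeta_P(s)$ from \eqref{Definition der Ep Zetafunktion}, whose constant term $C_P(\Gamma)$ at $s=0$ produces the invariant distribution $T$ of \eqref{Fouriertrafo T} (acting on the appropriate symmetrization $\tilde h_t^\nu$), while the non-constant remainder produces the distributions $T_P^\prime$. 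The truncation at level $Y$ used in the definition of $\Tr_{\rel}$ matches precisely the truncation used to define these parabolic distributions; in the limit $Y\to\infty$ the logarithmic $Y$-terms cancel against those absorbed into the regularized trace in \eqref{regtrace2}. To reorganize $\sum_P T_P^\prime$ into an invariant piece, one applies Hoffmann's formulas for $J_M(1,\alpha)$ and $I_P(1,\alpha)$ (cf. the discussion preceding the theorem), which identify $\sum_P I_P(1,h_t^\nu)=\mathcal{I}(h_t^\nu)-J(h_t^\nu)$, so that the total parabolic contribution equals $T(\tilde h_t^\nu)+\mathcal{I}(h_t^\nu)+J(h_t^\nu)$, with $J(h_t^\nu)$ encoding precisely the Knapp-Stein weighted characters from \eqref{j1} and \eqref{reprJ}.

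Combining the three types of contributions and equating with the spectral side gives the asserted identity. The delicate point throughout is verifying that all individual terms converge for the non-compactly-supported function $h_t^\nu$; this is where the Harish-Chandra Schwartz decay of $H_t^\nu$, the meromorphic continuation of $\zeta_P$, and the explicit product formulas \eqref{cfunktion}, \eqref{cfunktion'} for the Knapp-Stein $c$-functions (which ensure integrability along $D_\epsilon$) are all used in an essential way.
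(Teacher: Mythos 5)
Your proof takes essentially the same route as the paper: identify $\Tr_{\rel}(e^{-tA_\nu})$ with the spectral side of Warner's trace formula (via \eqref{regtrace2}, Proposition \ref{KoMs}, and \cite[Theorem 8.4]{Warner}), then expand the geometric side as in Osborne--Warner into the identity, hyperbolic, and parabolic contributions, and use Hoffmann's identities to reorganize the parabolic terms into $T+\mathcal{I}+J$. Your extra commentary about matching $\log Y$ truncations in the limit $Y\to\infty$ is slightly off in emphasis (the $\log Y$ subtraction is already built into the definition \eqref{regtrace2}, while the distributions $T$, $T'_P$ arise from analytic continuation of the Epstein zeta function rather than a fresh truncation), but this does not affect the logical structure, which is correct and matches the paper's argument.
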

\begin{proof}
By \eqref{regtrace2}, $\Tr_{\rel}\left(e^{-tA_\nu}\right)$ is the difference of 
$\Tr\left(e^{-tA_\nu^d}\right)$ and the terms in the trace formula which are
associated to the continuous spectrum. These are the last two terms in the
trace formula \cite[Theorem 8.4]{Warner}. Using \cite[Theorem 8.4]{Warner}, 
the Theorem on page 299 in \cite{Osborne}, and taking our normalization of 
measures into account, we obtain the claimed equality.
\end{proof}
The Fourier transform of the distribution $\mathcal{I}$ was computed in
\cite{Hoffmann}.
We shall now state his result.
For $\sigma\in\hat{M}$ with highest weight
$k_{2}(\sigma)e_{2}+\dots+k_{n+1}(\sigma)e_{n+1}$ and
$\lambda\in\mathbb{R}$ define
$\lambda_{\sigma}\in(\mathfrak{h})_{\C}^{*}$ by 
\begin{align*}
\lambda_{\sigma}:=i\lambda
e_{1}+\sum_{j=2}^{n+1}(k_{j}(\sigma)+\rho_{j})e_{j}. 
\end{align*}
Let $S(\mathfrak{b}_{\C})$ be the symmetric algebra of $\mathfrak{b}_{\C}$. 
Define $\Pi\in S(\mathfrak{b}_{\C})$ by
\begin{align}\label{Def Pi}
\Pi:=\prod_{\alpha\in\Delta^{+}(\mathfrak{m}_{\C},\mathfrak{b}_{\C})}
H_{\alpha}.
\end{align}
The restriction of the Killing form to
$\mathfrak{h}_{\C}$ defines a non-degenerate symmetric bilinear
form. We will identify $\mathfrak{h}_{\C}^{*}$ with
$\mathfrak{h}_{\C}$ via this form and denote the induced symmetric
bilinear form on $\mathfrak{h}_{\C}^{*}$ by
$\left<\cdot,\cdot\right>$. Then for
$\alpha\in\Delta^{+}(\mathfrak{g}_{\C},\mathfrak{h}_{\C})$
we denote by
$s_{\alpha}:\mathfrak{h}_{\C}^{*}\rightarrow \mathfrak{h}_{
\C}^{*}$ the reflection
$s_{\alpha}(x)=x-2\frac{\left<x,\alpha\right>}{\left<\alpha,\alpha\right>}
\alpha$. 
Now the Fourier transform of $\mathcal{I}$ is computed as follows. 
\begin{thrm}\label{FTorbint}
For every $K$-finite $\alpha\in C^{2}(G)$ one has
\begin{align*}
\mathcal{I}(\alpha)=\frac{\kappa}{4\pi}\sum_{\sigma\in\hat{M}}\int_{\mathbb{R
}}{\Omega(\check{\sigma},-\lambda)\Theta_{\sigma,\lambda}(\alpha)d\lambda},
\end{align*}
where
\begin{align*}
\Omega(\sigma,\lambda):=-2\dim(\sigma)\gamma-\frac{1}{2}
\sum_{\alpha\in\Delta^{+}(\gL_\C,\aL_\C)}\frac{\Pi(s_{\alpha}\lambda_{\sigma})
}{\Pi(\rho_{M})}\left(\psi(1+\lambda_{\sigma}(H_{\alpha}))+\psi(1-\lambda_{
\sigma}(H_{\alpha}))\right).
\end{align*}
Here $\psi$ denotes the digamma function and $\gamma$ denotes the
Euler-Mascheroni constant. Moreover $\check{\sigma}$ denotes the contragredient
representation of $\sigma$ and $\Pi$ is as in \eqref{Def Pi}. 
\end{thrm}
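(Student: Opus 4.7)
The plan is to reduce Theorem \ref{FTorbint} to Hoffmann's main result by identifying $\mathcal{I}$ with the sum $\sum_{P\in\mathfrak{P}} I_P(1,\cdot)$ of the invariant distributions he attaches to the cuspidal parabolics, and then invoking his explicit Fourier inversion formula. First, I would carry out the identification already sketched in the paragraph preceding the theorem. Using Hoffmann's formula for the weighted orbital integral $J_{M_P}(1,\alpha)$ at the identity, one checks that $J_{M_P}(1,\alpha)=T_P'(\alpha)$; this works because the assumption \eqref{a1} collapses $\Gamma\cap P$ to $\Gamma\cap N_P$, so no contribution from nontrivial classes in $M_P$ survives. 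Hoffmann's invariant distribution $I_P(1,\alpha)$ differs from $J_{M_P}(1,\alpha)$ by a weighted-character contour integral involving $J_{\bar P_0\mid P_0}(\sigma,z)^{-1}\frac{d}{dz}J_{\bar P_0\mid P_0}(\sigma,z)$; summing this identity over $P\in\mathfrak{P}$ and using $\kappa=\#\mathfrak{P}$ produces precisely $-J(\alpha)$ on the right-hand side, so that $\sum_P I_P(1,\alpha)=\mathcal{I}(\alpha)$.

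Next I would apply Hoffmann's Fourier inversion formula for $I_P(1,\cdot)$. The idea is to expand $\alpha$ spectrally against principal series, invert the parabolic integrals $T_P'$ using the Peter--Weyl theorem on $K$ together with Euclidean Fourier inversion on $N_P\simeq \mathbb{R}^{d-1}$ (via the identification fixed in the measure normalizations), and then combine the log-weighted Euclidean transform with the derivative of the Knapp--Stein $c$-function \eqref{cfunktion}, \eqref{cfunktion'}. The logarithm $\log\|\log n_P\|$ in $T_P'$ feeds, after Fourier inversion on $N_P$, a term of the form $-\gamma-\log|\cdot|$; the logarithmic singularity cancels against the pole structure of $\frac{d}{dz}\log c_\nu(\sigma:z)$ appearing in $J_\sigma$, leaving an invariant distribution whose Fourier kernel is expressed in terms of the digamma function. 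The combinatorial reorganization over $\Delta^+(\gL_\C,\aL_\C)$ and the Weyl reflections $s_\alpha$, plus the factor $\Pi(s_\alpha\lambda_\sigma)/\Pi(\rho_M)$ coming from the Weyl dimension formula for $M$ applied to shifted weights, produces exactly the function $\Omega(\sigma,\lambda)$.

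Finally I would reconcile normalizations. The contragredient $\check\sigma$ and the sign flip $\lambda\mapsto-\lambda$ in the final formula arise from the way Hoffmann writes the intertwining operator $J_{\bar P_0\mid P_0}$ versus $J_{P_0\mid\bar P_0}$; here one uses the symmetry \eqref{P-Polynom is W-inv} and the highest-weight relation \eqref{wsigma} between $\sigma$ and $w_0\sigma=\check\sigma$ on $M=\mathrm{Spin}(2n)$. The factor $\kappa/(4\pi)$ collects the $\#\mathfrak{P}$ copies of the Euclidean Fourier normalization on $\aL\simeq\R$ used throughout Section \ref{Notations}.

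The main obstacle is not the structural identification but the Euclidean-Fourier computation with a logarithmic weight on $N_P$, i.e.\ the identification of the singular parts of $\int_{N_P}\alpha(kn_Pk^{-1})\log\|\log n_P\|\,dn_P$ and their cancellation with the weighted-character integral in $J_\sigma$ that produces the digamma terms. Since this is the content of Hoffmann's paper, in practice the plan amounts to verifying that our conventions (Haar measures \eqref{Haarmass auf G}, the inner product \eqref{Def der Metrik} used to identify $\nf\cong\R^{d-1}$, the principal series parametrization, and the sign in $\lambda\mapsto i\lambda$) match Hoffmann's, and then invoking his formula.
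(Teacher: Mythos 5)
Your reduction of $\mathcal{I}$ to $\sum_{P\in\mathfrak{P}}I_P(1,\cdot)$ is correct and is exactly the identification the paper records in the paragraph preceding the theorem; after that, both you and the paper cite Hoffmann's Fourier inversion formula for $I_P(1,\cdot)$, and your closing observation that the work amounts to matching normalizations is the right attitude. The middle paragraph, in which you try to re-derive Hoffmann's formula by combining Euclidean Fourier inversion on $N_P$ with the weighted-character integrals, is not needed for the argument and is speculative in its details, but that is a detour rather than an error.

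The genuine gap is the discrete series. Your sketch expands $\alpha$ only against the unitary principal series, which is adequate for $G=\Spin(2n+1,1)$ where $\operatorname{rk}G>\operatorname{rk}K$ and $\hat{G}_d=\emptyset$. The theorem is also used in the even-dimensional case $G=\Spin(2n+2,1)$ (it is an input to Proposition \ref{toreven}), and there $G$ has a compact Cartan subgroup, so Hoffmann's Fourier inversion formula for $I_P(m,\cdot)$ contains an extra discrete-series term proportional to $\left|D_G(a)\right|^{1/2}\Theta_{\check{\pi}}(a)$ for $\pi\in\hat{G}_d$. Since the right-hand side of the theorem involves only principal-series characters, one must show this term vanishes when $a=1$. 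The paper does this explicitly: by the character formula for discrete series the value at $1$ reduces to $\sum_{w\in W_K}\det(w)$, and this sum vanishes (as in the proof of Lemma 5 of DeGeorge). Your proposal does not address this cancellation at all, so as written it only proves the theorem for odd $d$.
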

\begin{proof}
This follows from \cite[Theorem 5]{Hoffmann}, \cite[Theorem 6]{Hoffmann},
\cite[Corollary on page 96]{Hoffmann}. Here we use that for $d$ even and
$\pi\in\hat G_d$, the 
discrete series of $G$, the
term ${\left|D_G(a)\right|}^{1/2}\Theta_{\check{\pi}}(a)$ occurring in
\cite[Theorem 5]{Hoffmann} vanishes for $a=1$. This can be seen as follows. 
By the formula for the character of the discrete series 
\cite[Theorem 12.7]{Knapp}, \cite[Theorem 10.1.1.1]{Wa2}, one needs to show
that $\sum_{w\in W_K}\det(w)=0$. This has been established in the proof
of Lemma 5 in \cite{DeGeorge}.
\end{proof}
For the applications we have in mind, we shall now
transform the functions $\Omega(\lambda,\sigma)$ a bit. In the rest
of this section we assume that $d=\dim(X)$ is odd, $d=2n+1$. We start
with the following elementary lemma.
\begin{lem}\label{Lemomega}
One has
\begin{align*}
\sum_{\alpha\in\Delta^{+}(\gL_\C,\aL_\C)}\frac{\Pi(s_{\alpha}\lambda_{\sigma})}{
\Pi(\rho_{M})}=2\dim{\sigma}.
\end{align*}
\end{lem}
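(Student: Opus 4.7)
\medskip

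My plan is to reduce the identity to a clean Lagrange-interpolation statement about symmetric polynomials, after making all the reflections and the polynomial $\Pi$ fully explicit.

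First I would pin down the objects. Since $d=2n+1$, one has $\gL_\C\cong\soL(2n+2,\C)$ of type $D_{n+1}$, and the roots in $\Delta^+(\gL_\C,\hL_\C)$ that do not vanish on $\aL_\C=\C H_1$ are precisely $\alpha=e_1\pm e_j$, $j=2,\dots,n+1$, giving $2n$ roots. Using the reflection formula $s_\alpha(\mu)=\mu-\frac{2\langle\mu,\alpha\rangle}{\langle\alpha,\alpha\rangle}\alpha$ applied to $\lambda_\sigma=i\lambda e_1+\sum_{i=2}^{n+1}(k_i(\sigma)+\rho_i)e_i$, a short calculation gives that the $e_j$-component of $s_{e_1\pm e_j}\lambda_\sigma$ is $\mp i\lambda$, while the components along $e_i$ for $i\neq 1,j$ are unchanged. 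Since $\Pi\in S(\bL_\C)$, only the $e_i$-components for $i\ge 2$ are seen by $\Pi$.

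Next I would render $\Pi$ as a concrete polynomial. Using $\Delta^+(\mL_\C,\bL_\C)=\{e_i\pm e_j:2\le i<j\le n+1\}$ and $H_{e_i\pm e_j}=H_i\pm H_j$, one has
\[
\Pi=\prod_{2\le i<j\le n+1}(H_i+H_j)(H_i-H_j)=\prod_{2\le i<j\le n+1}(H_i^2-H_j^2),
\]
so as a polynomial on $\bL_\C^*$ in the coordinates $(c_2,\dots,c_{n+1})$ given by $\mu=\sum c_l e_l$, one has $\Pi(\mu)=\prod_{i<j}(c_i^2-c_j^2)$. Setting $a_i:=k_i(\sigma)+\rho_i$, this is even in each $c_l$, so the two summands coming from $\alpha=e_1+e_j$ and $\alpha=e_1-e_j$ are equal, and the sum reduces to
\[
\sum_{\alpha\in\Delta^+(\gL_\C,\aL_\C)}\Pi(s_\alpha\lambda_\sigma)=2\sum_{j=2}^{n+1}V_j(-\lambda^2),
\]
where $V_j(x)$ is obtained from $\Pi$ by substituting $c_j^2=x$ and $c_i^2=a_i^2$ for $i\neq j$. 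Each $V_j(x)$ is a polynomial of degree $\le n-1$ in $x$.

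The crux is the following Lagrange interpolation observation: for $k\neq j$ the value $V_j(a_k^2)$ contains the factor $(a_k^2-a_k^2)=0$ coming from the pair $(i,j)$ or $(j,l)$ with the other index equal to $k$, so $V_j$ vanishes at $x=a_k^2$ for $k\neq j$, while $V_j(a_j^2)=\Pi(a_2,\dots,a_{n+1})=:V$. Hence $V_j(x)/V$ is precisely the Lagrange basis polynomial at the node $a_j^2$ for the $n$ nodes $a_2^2,\dots,a_{n+1}^2$. Therefore $\sum_{j=2}^{n+1}V_j(x)/V$ is the unique polynomial of degree $\le n-1$ taking the value $1$ at each $a_k^2$, namely the constant $1$. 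Consequently $\sum_{j=2}^{n+1}V_j(x)=V$ for all $x$, and in particular at $x=-\lambda^2$. Combined with the Weyl dimension formula \eqref{Dimension sigma}, which gives $V/\Pi(\rho_M)=\dim\sigma$, this yields
\[
\sum_{\alpha\in\Delta^+(\gL_\C,\aL_\C)}\frac{\Pi(s_\alpha\lambda_\sigma)}{\Pi(\rho_M)}=\frac{2V}{\Pi(\rho_M)}=2\dim\sigma,
\]
which is the claim. The only real obstacle is keeping the signs and index ranges straight in the passage from the abstract $\Pi\in S(\bL_\C)$ to the explicit Vandermonde-type polynomial $\prod(c_i^2-c_j^2)$; once that is in place the identity is essentially Lagrange interpolation and the $\lambda$-dependence cancels automatically.
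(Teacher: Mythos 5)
Your proof is correct and follows essentially the same route as the paper: make $\Pi$ explicit as $\prod_{2\le i<j\le n+1}(c_i^2-c_j^2)$, use evenness to pair the roots $e_1\pm e_j$, and reduce to the partial-fraction/Lagrange identity $\sum_{j}\prod_{p\neq j}\frac{-\lambda^2-\xi_p^2}{\xi_j^2-\xi_p^2}=1$ together with the Weyl dimension formula. The only difference is cosmetic: the paper cites this identity from \cite[Lemma 5.6]{MP}, whereas you prove it on the spot via the Lagrange interpolation observation (your $V_j(x)/V$ being the nodal basis at the distinct nodes $a_k^2$), which makes the argument self-contained.
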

\begin{proof}
This is proved in \cite[page 95]{Hoffmann} but can also be seen as follows.
Let $\xi\in\mathfrak{b}_{\C}^{*}$, $\xi=\xi_{2}e_{2}+\dots+\xi_{n+1}e_{n+1}$. 
Then it follows from \eqref{Explizite Formel fur H(alpha)} that
\begin{align}\label{Explizite Form fur pi}
\Pi(\xi)=\prod_{2\leq i<j\leq n+1}(\xi_{i}-\xi_{j})(\xi_{i}+\xi_{j}).
\end{align}
If $\tau$ is a permutation of $\{2,\dots,n+1\}$ and 
\begin{align*}
\xi_{\tau}:=\xi_{2}e_{\tau(2)}+\dots+\xi_{n+1}e_{\tau(n+1)}
\end{align*}
it follows from \eqref{Explizite Form fur pi} that
\begin{align}\label{Permutationsinvarianz}
\Pi(\xi_{\tau})=\pm\Pi(\xi).
\end{align}
Write
$\Lambda(\sigma)+\rho_{M}=\xi_{2}e_{2}+\dots+\xi_{n+1}e_{n+1}$. Then if
$\alpha=e_{1}\pm e_{j}$, one has
\begin{align}\label{Spiegelung}
s_{\alpha}(\lambda_{\sigma})=\mp
\xi_{j}e_{1}+\xi_{2}e_{2}+\dots+\xi_{j-1}e_{j-1}\mp i\lambda
e_{j}+\xi_{j+1}e_{j+1}+\dots+\xi_{n+1}e_{n+1}.
\end{align}
Using \eqref{wsigma} and \eqref{Explizite Form fur pi} it follows that
\begin{align}\label{GlPi}
\Pi(s_{e_{1}+e_{j}}(\lambda_{\sigma}))=\Pi(s_{e_{1}-e_{j}}(\lambda_{\sigma}
));\quad
\Pi(s_{e_{1}+e_{j}}(\lambda_{\sigma}))=\Pi(s_{e_{1}+e_{j}}(\lambda_{w_{0}\sigma}
)).
\end{align}
Thus by \eqref{Dimension sigma} and \eqref{Explizite Form fur pi} for
$\alpha=e_1\pm e_j$ one gets
\begin{align}\label{Glpj}
\frac{\Pi(s_{\alpha}(\lambda_{\sigma}))}{\Pi(\rho_{M})}=&\frac{(-1)^{j}}{
\Pi(\rho_M)}\prod_{\substack{2\leq k<l\leq n+1\\ k,l\neq
j}}(\xi_{k}^2-\xi_{l}^2)\prod_{\substack{p=2\\ p\neq
j}}^{n+1}\left(-\lambda^2-\xi_{p}^2\right)
\nonumber\\=&\frac{1}{\Pi(\rho_M)}\prod_{2\leq
k<l\leq
n+1}(\xi_{k}^2-\xi_{l}^2)\prod_{\substack{p=2\\ p\neq
j}}^{n+1}\frac{-\lambda^2-\xi_{p}^2}{\xi_{j}^2-\xi_{p}^2}\nonumber\\
=&\dim(\sigma)\prod_{\substack{p=2\\ p\neq
j}}^{n+1}\frac{-\lambda^2-\xi_{p}^2}{\xi_{j}^2-\xi_{p}^2}
\end{align}
Now as in \cite[Lemma 5.6]{MP} one has
\begin{align*}
\sum_{j=2}^{n+1}\prod_{\substack{p=2\\ p\neq
j}}^{n+1}\frac{-\lambda^2-\xi_{p}^2}{\xi_{j}^2-\xi_{p}^2}=1
\end{align*}
for every $\lambda$. This proves the lemma.
\end{proof}
For
$j=2,\dots,n+1$ and $\lambda\in\C$ let
\begin{align}\label{Defpoly}
P_{j}(\sigma,\lambda):=\frac{\Pi(s_{e_{1}+e_{j}}\lambda_{\sigma})}{\Pi(\rho_{M}
)}.
\end{align}
Then if $\sigma$ is of highest weight
$k_2(\sigma)e_2+\dots+k_{n+1}(\sigma)e_{n+1}$ as
in \eqref{Darstellungen von M} it follows from \eqref{Glpj}
that
\begin{align}\label{formpj}
P_j(\sigma,\lambda)=\dim(\sigma)\prod_{\substack{p=2\\ p\neq
j}}^{n+1}\frac{-\lambda^2-(k_p(\sigma)+\rho_p)^2}{
(k_j(\sigma)+\rho_j)^2-(k_p(\sigma)+\rho_p)^2}.
\end{align}
In particular
$P_{j}(\sigma,\lambda)$ is an even polynomial in $\lambda$ of degree $2n-2$.
\begin{prop}\label{propomega}
Let $\sigma\in\hat{M}$ be of highest weight
$k_{2}(\sigma)e_2+\dots+k_{n+1}(\sigma)e_{n+1}$. Assume that all $k_j(\sigma)$
are
integral an that $k_{n+1}(\sigma)>0$.
Then one has
\begin{align*}
\Omega(\sigma,\lambda)=\Omega(w_0\sigma,\lambda);\quad
\Omega(\sigma,\lambda)=\Omega(\check{\sigma},-\lambda).
\end{align*}
Moreover one can write
\begin{align*}
\Omega(\sigma,\lambda)=\Omega_1(\sigma,\lambda)+\Omega_2(\sigma,\lambda),
\end{align*}
where $\Omega_1(\sigma,\lambda)$ and $\Omega_2(\sigma,\lambda)$ are defined as
follows. Let $m_0:=\left|k_{n+1}(\sigma)\right|-1$.
Then one puts
\begin{align*}
\Omega_{1}(\sigma,\lambda):=-\dim{\left(\sigma\right)}
\left(2\gamma+\psi(1+i\lambda)+\psi(1-i\lambda)+\sum_{1\leq l\leq
m_0}\frac{2l}{
l^2+\lambda^2}\right).
\end{align*}
Furthermore for every $j$ let $P_j(\sigma,\lambda)$ be as in \eqref{Defpoly}.
For $m_0\leq l\leq k_j(\sigma)+\rho_j$ define an even polynomial
$Q_{j,l}(\sigma,\lambda)$ by
\begin{align}\label{DefQ}
Q_{j,l}(\sigma,\lambda):=\frac{P_j(\sigma,\lambda)-P_j(\sigma,il)}{
l+i\lambda}
+\frac{P_j(\sigma,\lambda)-P_j(\sigma,il)}{l-i\lambda}.
\end{align}
Then 
\begin{align*}
\Omega_2(\sigma,\lambda):=&-\sum_{j=2}^{n+1}\sum_{\substack{m_0<
l<\\k_{j}(\sigma)+\rho_{j}}}P_j(\sigma,il)\frac{2l}{\lambda^2+l^2}
-\sum_{j=2}^{n+1}\dim(\sigma)\frac{k_j(\sigma)+\rho_j}{
(k_j(\sigma)+\rho_j)^2+\lambda^2}\\
&-\sum_{j=2}^{n+1}\sum_{\substack{m_0<
l<\\k_{j}(\sigma)+\rho_{j}}}Q_{j,l}(\sigma,\lambda)-\frac{1}{2}\sum_{\substack{
l=k_j(\sigma)+\rho_j\\ 2\leq j\leq n+1}}Q_{j,l}(\sigma,\lambda).
\end{align*}
Finally, if $k_{n+1}(\sigma)<0$, one puts
$\Omega_1(\sigma,\lambda)=\Omega_1(w_0\sigma,\lambda)$, 
$\Omega_2(\sigma,\lambda)=\Omega_2(w_0\sigma,\lambda)$.
\end{prop}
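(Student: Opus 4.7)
The plan is to reduce the sum over positive restricted roots in the definition of $\Omega(\sigma,\lambda)$ to elementary expressions by repeatedly applying the digamma recursion $\psi(z+n)-\psi(z) = \sum_{l=0}^{n-1}\frac{1}{z+l}$ for nonnegative integers $n$. The relevant roots are $\alpha = e_1\pm e_j$ with $j=2,\ldots,n+1$, and by \eqref{Explizite Formel fur H(alpha)} one has $\lambda_\sigma(H_{e_1\pm e_j}) = i\lambda\pm r_j$ where $r_j:=k_j(\sigma)+\rho_j$ is a nonnegative integer under the standing assumption. By \eqref{GlPi} the quotient $\Pi(s_\alpha\lambda_\sigma)/\Pi(\rho_M)$ takes the same value $P_j(\sigma,\lambda)$ for both signs of $\alpha$, so the two contributions for each fixed $j$ may be grouped. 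Applying the recursion to the four resulting digammas and cancelling telescoping pieces yields
\begin{align*}
&\psi(1{+}i\lambda{+}r_j) + \psi(1{-}i\lambda{-}r_j) + \psi(1{+}i\lambda{-}r_j) + \psi(1{-}i\lambda{+}r_j) \\
&\qquad= 2\psi(1+i\lambda) + 2\psi(1-i\lambda) + \sum_{l=1}^{r_j-1}\frac{4l}{l^2+\lambda^2} + \frac{2r_j}{r_j^2+\lambda^2}.
\end{align*}

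Multiplying by $-\tfrac{1}{2}P_j(\sigma,\lambda)$, summing over $j$, and invoking Lemma \ref{Lemomega} (which, combined with \eqref{GlPi}, gives $\sum_j P_j(\sigma,\lambda) = \dim\sigma$), the $\psi(1\pm i\lambda)$ contributions combine with the standing $-2\dim(\sigma)\gamma$ to produce the $\gamma$-and-digamma part of $\Omega_1$. Next I would split the residual rational sum at $l=m_0$. For $1\leq l\leq m_0$ the coefficient $\tfrac{2l}{l^2+\lambda^2}$ is independent of $j$, and a further application of Lemma \ref{Lemomega} produces precisely the remaining piece $-\dim(\sigma)\sum_{l=1}^{m_0}\tfrac{2l}{l^2+\lambda^2}$ of $\Omega_1$.

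For the indices $m_0 < l \leq r_j$ the key identity is
\[
P_j(\sigma,\lambda)\frac{2l}{l^2+\lambda^2} = Q_{j,l}(\sigma,\lambda) + P_j(\sigma,il)\frac{2l}{l^2+\lambda^2},
\]
immediate from \eqref{DefQ}, which separates each summand into a polynomial remainder $Q_{j,l}$ plus the scalar $P_j(\sigma,il)$ multiplying the original rational factor. For the boundary index $l=r_j$ a direct evaluation of the product formula \eqref{formpj} yields $P_j(\sigma,ir_j)=\dim(\sigma)$; since only $\tfrac{r_j}{r_j^2+\lambda^2}$ (not $\tfrac{2r_j}{r_j^2+\lambda^2}$) appears from the four-digamma reduction at that index, this produces both the factor $\tfrac{1}{2}Q_{j,r_j}$ and the middle term $-\sum_j\dim(\sigma)\tfrac{r_j}{r_j^2+\lambda^2}$ of $\Omega_2$. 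Assembling the pieces identifies the contribution with the claimed $\Omega_2$.

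The two symmetry statements are then immediate. Since $P_j(\sigma,\lambda)$ is even in $\lambda$ by \eqref{formpj} and the four-digamma sum above is manifestly invariant under $\lambda\mapsto-\lambda$, one has $\Omega(\sigma,\lambda)=\Omega(\sigma,-\lambda)$. The identity $\Omega(\sigma,\lambda)=\Omega(w_0\sigma,\lambda)$ follows from \eqref{GlPi} and \eqref{wsigma}: passing from $\sigma$ to $w_0\sigma$ simply interchanges the contributions of $\alpha=e_1+e_{n+1}$ and $\alpha=e_1-e_{n+1}$ while leaving every $P_j$ unchanged. Since $\check\sigma\in\{\sigma,w_0\sigma\}$, these two combine to give $\Omega(\sigma,\lambda)=\Omega(\check\sigma,-\lambda)$. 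The main obstacle is careful bookkeeping at the boundary $l=r_j$, where the factor $\tfrac{1}{2}$ in front of $Q_{j,r_j}$ and the evaluation $P_j(\sigma,ir_j)=\dim(\sigma)$ must match exactly against the explicit form of $\Omega_2$.
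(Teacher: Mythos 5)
Your proposal is correct and follows essentially the same route as the paper's proof: pair the roots $e_1+e_j$ and $e_1-e_j$ using \eqref{GlPi}, reduce the four digammas by the recursion $\psi(z+1)=\psi(z)+1/z$, invoke Lemma \ref{Lemomega} to pull out $\dim\sigma$ for the $\lambda$-independent-coefficient pieces, and split $P_j(\sigma,\lambda)\tfrac{2l}{l^2+\lambda^2}$ into $Q_{j,l}$ plus a constant via the evenness of $P_j$, with $P_j(\sigma,ir_j)=\dim\sigma$ handling the boundary term. The only presentational difference is that you derive the two symmetry identities at the end (deducing $\Omega(\sigma,\lambda)=\Omega(\check\sigma,-\lambda)$ from evenness in $\lambda$ plus $w_0$-invariance), whereas the paper dispatches them at the outset directly from \eqref{GlPi}, \eqref{GlPi2}, and the fact that $\check\sigma\in\{\sigma,w_0\sigma\}$; both routes are sound.
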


\begin{proof}
Let $j\in\{2,\dots,n+1\}$. We have
\begin{align}\label{GlPi2}
\lambda_{\sigma}(H_{e_{1}\pm e_{j}})=i\lambda \pm
\left(k_{j}(\sigma)+\rho_{j}\right).
\end{align}
Now recall that $\rho_{n+1}=0$ and that the highest weight of $w_{0}\sigma$ is
given by $k_{2}(\sigma)e_{2}+\dots+k_{n}(\sigma)e_{n}-k_{n+1}(\sigma)e_{n+1}$.
Moreover recall that for $M=\Spin(n)$ one has $\check{\sigma}\cong\sigma$ if n
is odd and $\check{\sigma}\cong w_0\sigma$ if $n$ is even.
Thus \eqref{GlPi} and \eqref{GlPi2} imply that
$\Omega(\lambda,\sigma)=\Omega(\lambda,w_{0}\sigma)$ and
$\Omega(\lambda,\sigma)=\Omega(-\lambda,\check{\sigma})$.
Using these equations, we can assume that $k_{n+1}(\sigma)>0$.
Moreover, using $\psi(z+1)=\frac{1}{z}+\psi(z)$ , \eqref{GlPi} and \eqref{GlPi2}
we obtain
\begin{align*}
&\frac{\Pi(s_{e_{1}+e_{j}}\lambda_{\sigma})}{\Pi(\rho_{M})}
\left(\psi(1+\lambda_{\sigma}(H_{e_{1}+e_{j}}))+\psi(1-\lambda_{\sigma}
(H_{e_{1}+e_{j}}))\right)\nonumber\\
+&\frac{\Pi(s_{e_{1}-e_{j}}\lambda_{\sigma})}{\Pi(\rho_{M})}
\left(\psi(1+\lambda_{\sigma}(H_{e_{1}-e_{j}}))+\psi(1-\lambda_{\sigma}
(H_{e_{1}-e_{j}}))\right)\nonumber\\
=&2\frac{\Pi(s_{e_{1}+e_{j}}\lambda_{\sigma})}{\Pi(\rho_{M})}
\biggl(\psi(1+i\lambda)+\psi(1-i\lambda)
+\sum_{1\leq l\leq
m_0}\frac{2l}{l^2+\lambda^2}\\&+\sum_{m_0<l<k_{j}(\sigma)+\rho_
{j}
}\frac{2l}{l^2+\lambda^2}+\frac{\left(k_{j}(\sigma)+\rho_{j}\right)}{\left(k_{
j}(\sigma)+\rho_{j}\right)^2+\lambda^{2}}\biggr).
\end{align*}
Using Lemma \ref{Lemomega} and \eqref{GlPi} we obtain
\begin{align*}
\Omega(\sigma,\lambda)=\Omega_1(\sigma,\lambda)-\sum_{j=2}^{n+1}P_{j}(\sigma,
\lambda)\left(\sum_{
\substack{
m_0< l\\ < k_{j}(\sigma)+\rho_{j}}}\frac{2l}{l^2+\lambda^2}+\frac{\left(k_{j}
(\sigma)+\rho_{j}\right)}{\left(k_{j}(\sigma)+\rho_{j}\right)^2+\lambda^{2}}
\right).
\end{align*}
Since $P_j(\sigma,\lambda)$ is an even polynomial in $\lambda$, for every 
$j=2,\dots,n+1$ and every $l$ with $m_0\leq l\leq
\left|k_{j}(\sigma)\right|+\rho_{j}$ we can write
\begin{align*}
\frac{P_j(\sigma,\lambda)l}{l^2+\lambda^2}=\frac{1}{2}
Q_{j,l}(\sigma,\lambda)+P_j(\sigma,il)\frac{l}{l^2+\lambda^2}.
\end{align*}
Using \eqref{formpj} it follows that
\begin{align*}
P_j(\sigma,i\left(k_j(\sigma)+\rho_j\right))=\dim(\sigma).
\end{align*}
This implies the proposition. 
\end{proof}
\begin{bmrk}\label{halfint}
There is a similar formula for $\sigma\in\hat M$ with half-integer weight.
\end{bmrk}
In order to define the analytic torsion, we need to know that the 
regularized trace of $e^{-t\Delta_p(\tau)}$ admits an asymptotic expansion
as $t\to+0$. We establish this in general for the operators $e^{-tA_\nu}$.
To begin with, we prove some auxiliary lemmas. 
\begin{lem}\label{Lemas1}
Let
$\phi_1(t):=\int_{\mathbb{R}}e^{-t\lambda^2}\frac{1}{\lambda^2+c^2}d\lambda$.
Then there exist $a_j\in\C$ such that
\begin{align*}
\phi_1(t)\sim \sum_{j=0}^\infty a_jt^{\frac{j}{2}}.
\end{align*}
as $t\rightarrow 0$.
\end{lem}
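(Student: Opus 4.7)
The plan is to derive an explicit closed-form expression for $\phi_1(t)$ in terms of $e^{tc^2}$ and the error function, and then read off the expansion from the power series of those elementary functions.

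First I would rewrite the $1/(\lambda^2+c^2)$ factor using the Laplace transform identity $\frac{1}{\lambda^2+c^2} = \int_0^\infty e^{-s(\lambda^2+c^2)}\,ds$. By absolute convergence, Fubini allows interchanging the order of integration, and the Gaussian integral in $\lambda$ yields
\[
\phi_1(t) \;=\; \sqrt{\pi}\int_0^\infty \frac{e^{-sc^2}}{\sqrt{t+s}}\,ds.
\]
Substituting $u=t+s$ and then $u=w^2$ gives the clean formula
\[
\phi_1(t) \;=\; 2\sqrt{\pi}\,e^{tc^2}\!\int_{\sqrt{t}}^{\infty} e^{-c^2 w^2}\,dw.
\]

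Next I would split $\int_{\sqrt{t}}^\infty = \int_0^\infty - \int_0^{\sqrt{t}}$. The first piece is the constant $\frac{\sqrt{\pi}}{2c}$. For the second piece, expanding the Gaussian as a convergent power series and integrating term by term produces
\[
\int_0^{\sqrt{t}} e^{-c^2 w^2}\,dw \;=\; \sum_{n=0}^{\infty} \frac{(-c^2)^n}{n!\,(2n+1)}\, t^{\,n+\tfrac12},
\]
which is an (absolutely) convergent series in $t^{1/2}$. Finally, multiplying by the convergent Taylor series $e^{tc^2}=\sum_{n\ge 0}(c^{2n}/n!)\,t^n$ yields an expression of the form $\sum_{j\ge 0} a_j t^{j/2}$; the constant term is $\pi/c$ (recovering $\phi_1(0)$), the half-integer powers come entirely from the incomplete Gaussian integral, and the integer powers come entirely from $e^{tc^2}\cdot\frac{\sqrt{\pi}}{2c}\cdot 2\sqrt{\pi} = \pi e^{tc^2}/c$.

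There is no real obstacle here beyond bookkeeping: once the formula $\phi_1(t) = 2\sqrt{\pi}\,e^{tc^2}\int_{\sqrt{t}}^\infty e^{-c^2 w^2}dw$ is established, the asymptotic expansion as $t\to 0^+$ follows at once from the analyticity of $e^{tc^2}$ in $t$ and the analyticity of $\int_0^x e^{-c^2 w^2}dw$ in $x=\sqrt{t}$. Indeed, the resulting series is not only asymptotic but convergent for all $t\ge 0$, which gives a stronger conclusion than what the lemma asks for.
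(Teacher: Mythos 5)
Your proof is correct and arrives at the same closed-form expression as the paper, but by a slightly different route. You use the Schwinger-type representation $\frac{1}{\lambda^2+c^2}=\int_0^\infty e^{-s(\lambda^2+c^2)}\,ds$ together with Fubini and a substitution, landing on $\phi_1(t)=2\sqrt{\pi}\,e^{tc^2}\int_{\sqrt t}^\infty e^{-c^2w^2}\,dw$. The paper instead pulls out $e^{tc^2}$, sets $g(t)=\int_{\R}\tfrac{e^{-t(\lambda^2+c^2)}}{\lambda^2+c^2}\,d\lambda$, and differentiates under the integral sign to obtain an ODE for $g$; integrating it back up gives (once the computation is done carefully, i.e.\ $g'(t)=-e^{-tc^2}\sqrt{\pi/t}$, so $g(t)=\tfrac{\pi}{c}-2\sqrt{\pi}\int_0^{\sqrt t}e^{-c^2w^2}\,dw$) precisely the same formula you derived, after which both proofs expand $e^{tc^2}$ and the incomplete Gaussian as power series in $t$ and $\sqrt t$. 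In fact your version is a bit more careful than the paper's write-up, which drops the $e^{-tc^2}$ factor in the derivative and records the antiderivative as $C+\sqrt{\pi t}$ (a coefficient and sign slip that does not affect the qualitative conclusion). You also correctly observe that the resulting expansion is a genuinely convergent series in $t^{1/2}$, not merely an asymptotic one, which is a mild strengthening of the statement. The proof is complete; the only minor caveat worth noting is that the interchange of integration (Fubini) and the termwise integration of the Gaussian series should be justified by absolute convergence, which you flag.
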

\begin{proof}
We have
\begin{align*}
\phi_1(t)=&e^{tc^2}\int_{\mathbb{R}}\frac{e^{-t\left(\lambda^2+c^2\right)}}{
\lambda^2+c^2}d\lambda.
\end{align*}
One has
\begin{align*}
\frac{d}{dt}\int_\mathbb{R}\frac{e^{-t\left(\lambda^2+c^2\right)}}{\lambda^2+c^2
}d\lambda=-\frac{\sqrt{\pi}}{\sqrt{t}}.
\end{align*}
Thus one has
\begin{align*}
\int_\mathbb{R}\frac{e^{-t\left(\lambda^2+c^2\right)}}{\lambda^2+c^2}
d\lambda=C+\sqrt{\pi t}.
\end{align*}
Writing $e^{tc^2}$ as a power series, the proposition follows. 
\end{proof}

\begin{lem}\label{Lemas2}
Let $\phi_2(t):=\int_{\mathbb{R}}{e^{-t\lambda^2}\psi(1+i\lambda)d\lambda}$.
Then
there exist complex coefficients $a_j$, $b_j$, $c_j$ such that as $t\to 0$,
there is an asymptotic expansion 
\begin{align*}
\phi_2(t)\sim \sum_{j=0}^\infty a_jt^{j-1/2}+\sum_{j=0}^\infty b_j t^{j-1/2}\log{t}
+\sum_{j=0}^\infty c_jt^{j}.
\end{align*}
\end{lem}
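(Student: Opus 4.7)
The plan is to reduce $\phi_2(t)$ to a one-dimensional integral over a single variable $s$ and then apply the Mellin-transform technique. I start from Gauss's integral representation
\[
\psi(1+z) = -\gamma + \int_0^\infty \frac{e^{-s}-e^{-(1+z)s}}{1-e^{-s}}\,ds,\qquad \Re z > -1.
\]
Setting $z=i\lambda$, multiplying by $e^{-t\lambda^2}$ and integrating over $\lambda\in\R$, I interchange the order of integration. This is justified by Fubini, because $|1-e^{-i\lambda s}|\le\min(2,|\lambda s|)$ controls the $1/s$-singularity of $(1-e^{-s})^{-1}$ at $s=0$. Using $\int_\R e^{-t\lambda^2-i\lambda s}\,d\lambda = \sqrt{\pi/t}\,e^{-s^2/(4t)}$, one obtains
\[
\phi_2(t) = -\gamma\sqrt{\pi/t} + \sqrt{\pi/t}\,I(t),\qquad I(t) := \int_0^\infty \frac{1-e^{-s^2/(4t)}}{e^s-1}\,ds,
\]
so the problem reduces to showing $I(t) \sim A\log t + \sum_{k\ge 0} C_k t^{k/2}$; multiplication by $\sqrt{\pi/t}$ then produces the three families of terms $t^{j-1/2}$, $t^{j-1/2}\log t$, $t^j$ demanded by the statement (with $b_j=0$ for $j\ge 1$).

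To analyse $I(t)$ I compute its Mellin transform. Interchanging the two integrals in $\int_0^\infty I(t)\,t^{s-1}\,dt$ and substituting $v=u^2/(4t)$ in the inner integral gives, for $0<\Re s<1$,
\[
\mathcal{M}I(s) = -4^{-s}\,\Gamma(2s+1)\,\zeta(2s+1)\,\Gamma(-s),
\]
where I use the familiar identities $\int_0^\infty u^{2s}/(e^u-1)\,du = \Gamma(2s+1)\zeta(2s+1)$ (valid for $\Re s>0$) and $\int_0^\infty (1-e^{-v})v^{-s-1}\,dv = -\Gamma(-s)$ (valid for $0<\Re s<1$). By Mellin inversion,
\[
I(t) = \frac{1}{2\pi i}\int_{c-i\infty}^{c+i\infty}\mathcal{M}I(s)\,t^{-s}\,ds,\qquad c\in(0,1).
\]

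I now shift the contour to the left to $\Re s = -N - \tfrac14$ and collect residues. The pole analysis is transparent: $\Gamma(-s)$ is holomorphic on $\{\Re s \le 0\}$; $\zeta(2s+1)$ has only the simple pole at $s=0$; and $\Gamma(2s+1)$ has simple poles precisely at $s=-k/2$ for $k=1,2,3,\ldots$. Hence $s=0$ is the \emph{only} double pole of $\mathcal{M}I$ in $\{\Re s\le 0\}$ (from $\Gamma(-s)$ and $\zeta(2s+1)$); its residue against $t^{-s}$ produces a $\log t$ contribution plus a constant, while each simple pole at $s=-k/2$ with $k\ge 1$ yields a multiple of $t^{k/2}$. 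A direct Laurent-expansion calculation at $s=0$ gives the coefficient $-\tfrac12$ of $\log t$ (which, after multiplication by $\sqrt{\pi/t}$, becomes the leading $t^{-1/2}\log t$ coefficient of $\phi_2$). Combining with the term $-\gamma\sqrt{\pi/t}$ produces an asymptotic expansion of $\phi_2(t)$ of precisely the required form.

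The main technical obstacle will be controlling the remainder integral along the shifted contour: one needs $|\mathcal{M}I(\sigma+i\tau)|$ to decay fast enough in $|\tau|$ so that the remainder is $O(t^{N+1/4})$. This follows from Stirling's estimate $|\Gamma(\sigma+i\tau)|\sim\sqrt{2\pi}\,|\tau|^{\sigma-1/2}e^{-\pi|\tau|/2}$, applied to both $\Gamma$-factors; this exponential decay easily dominates the polynomial growth of $|\zeta(1+2\sigma+2i\tau)|$ in any vertical strip. The Fubini interchanges above and the bookkeeping of Laurent coefficients at $s=0$ are then routine.
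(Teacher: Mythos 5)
Your proof is correct, and it takes a genuinely different route from the paper's. The paper works with the Stirling-type asymptotic expansion of $\psi(1+z)$ at infinity: it subtracts $\log(i\lambda)$ and a cutoff version of $1/(2i\lambda)$ from $\psi(1+i\lambda)$, feeds the resulting $O(\lambda^{-2})$ remainder into the Handelsman--Lew theorem on Laplace-transform asymptotics, and handles the $\log$ and $1/\lambda$ pieces by explicit elementary computation. You instead express $\psi(1+i\lambda)$ via Gauss's integral representation, integrate in $\lambda$ first to collapse the double integral to $\phi_2(t)=-\gamma\sqrt{\pi/t}+\sqrt{\pi/t}\,I(t)$ with $I(t)=\int_0^\infty(1-e^{-s^2/(4t)})/(e^s-1)\,ds$, compute the Mellin transform $\mathcal{M}I(s)=-4^{-s}\Gamma(2s+1)\zeta(2s+1)\Gamma(-s)$ in closed form, and read off the expansion from the pole structure via contour shifting together with Stirling decay on vertical lines. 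Your route is more self-contained (no external Laplace-transform asymptotics result needed) and actually gives sharper information about which coefficients vanish: not only $b_j=0$ for $j\ge1$, as you note, but also $c_j=0$ for $j\ge1$, because the trivial zeros of $\zeta(2s+1)$ at $s=-3/2,-5/2,\ldots$ cancel the corresponding poles of $\Gamma(2s+1)$, so among the negative half-odd-integers only $s=-1/2$ survives. This last point means your pole-analysis sentence slightly overstates the singular set --- $\mathcal{M}I$ is \emph{not} singular at every $s=-k/2$ with $k\ge1$ --- but since the canceled poles contribute zero residue, the conclusion is unaffected; it would simply be cleaner to note the cancellation. The trade-off is that the paper's approach is more elementary, avoiding contour integration and special-function identities at the cost of citing an external asymptotics theorem.
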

\begin{proof}
The asymptotic behavior of the Laplace transform at 0 of functions which
admit suitable asymptotic expansions at infinity has been treated in
\cite{HL}.\\
Recall that
\begin{align}\label{aspsi}
\psi(z+1)=\log{z}+\frac{1}{2z}-\sum_{k=1}^{N}\frac{B_{2k}}{2k}\cdot\frac{1}{
z^{2k}}+R_{N}(z),\:N\in\mathbb{N},
\end{align}
where $B_{i}$ are the Bernoulli-numbers and
\begin{align*}
R_{N}(z)=O(z^{-2N-2}),\:z\rightarrow\infty.
\end{align*}
uniformly on sectors $-\pi+\delta<\Arg(z)<\pi-\delta$.
Consider
\begin{align*}
\phi_2^+(t):=\int_{0}^\infty{e^{-t\lambda^2}\psi(1+i\lambda)d\lambda}.
\end{align*}
Let $\chi$ be the characteristic function of $\left[1,\infty\right)$. 
Define a function 
\begin{align*}
g(\lambda):=\psi(1+i\lambda)-\log({i\lambda})-\frac{\chi(\lambda)}{2i\lambda}
\end{align*}
and define a function
\begin{align*}
h(\lambda):=\frac{g(\sqrt{\lambda})}{2\sqrt{\lambda}}.
\end{align*}
Then by \eqref{aspsi} there is an asymptotic expansion
\begin{align}\label{ash}
h(\lambda)\sim\sum_{k=1}^{\infty}a_k\lambda^{-k-1/2},\quad \lambda\to\infty.
\end{align}
First define
\begin{align*}
\psi_2^+(t):=\int_0^\infty e^{-t\lambda^2}g(\lambda)d\lambda=\int_0^\infty
e^{-t\lambda}h(\lambda)d\lambda.
\end{align*}
Then by \eqref{ash} and \cite[Corollary 5.2]{HL} one obtains
\begin{align*}
\psi_2^+(t)\sim \sum_{k=0}^\infty a_k' t^{k+1/2}+\sum_{k=0}^\infty c_k' t^k
\end{align*}
for complex $a_k'$, $c_k'$.
Next we have
\begin{align*}
\int_{0}^\infty e^{-t\lambda^2}\log{\lambda}\:d\lambda=t^{-1/2}\int_0^\infty
e^{-\lambda^2}\log{\lambda}d\lambda-\frac{\sqrt{\pi}}{4
}t^{-1/2}\log{t}.
\end{align*}
Finally we have
\begin{align*}
\int_1^\infty
e^{-t\lambda^2}\lambda^{-1}d\lambda=&\int_{\sqrt{t}}^1e^{-\lambda^2}\lambda^{-1}
d\lambda+ \int_1^\infty e^{-\lambda^2}\lambda^{-1}d\lambda\\
=&\int_{\sqrt{t}}^1 \sum_{k=0}^\infty
(-1)^k\frac{\lambda^{2k-1}}{k!}d\lambda+C\\
=&-\log{\sqrt{t}}+\sum_{k=1}^\infty (-1)^{k+1}\frac{t^{k}}{k!2k}+C'.
\end{align*}
Putting everything together, we obtain the desired asymptotic expansion for
$\phi_2^+$. For the integral over $(-\infty,0]$  we proceed similarly. \\
Alternatively, one can also proceed as in \cite[page 156-157, page
165-166]{Koyama}. 
The methods of \cite{HL} and \cite{Koyama} are closely related.
\end{proof}

\begin{lem}\label{Lemas3}
Let $P(z):=\sum_{j=0}^N a_j z^{2j}$ be an even polynomial. Then there exist
$a_j'\in\C$ such that
\begin{align*}
\int_{\mathbb{R}}e^{-t\lambda^2}P(\lambda)d\lambda=\sum_{j=0}^N a_j'
t^{-j-\frac{1}{2}}.
\end{align*}
\end{lem}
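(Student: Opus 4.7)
The plan is to reduce to a single Gaussian moment computation. By linearity of the integral it suffices to evaluate $\int_{\R} e^{-t\lambda^2}\lambda^{2j}\,d\lambda$ for each $j=0,1,\dots,N$ separately and then sum the results with coefficients $a_j$.

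For each fixed $j$, I would perform the substitution $u=\sqrt{t}\,\lambda$, which gives
\[
\int_{\R} e^{-t\lambda^2}\lambda^{2j}\,d\lambda = t^{-j-\frac{1}{2}}\int_{\R} e^{-u^2} u^{2j}\,du.
\]
The remaining integral is a standard Gaussian moment; using the substitution $v=u^2$ one recognizes it as $\Gamma\!\left(j+\tfrac{1}{2}\right)$, which in particular is an explicit positive real constant depending only on $j$. Setting $a_j' := a_j\,\Gamma(j+\tfrac{1}{2})$ and summing over $j$ yields the asserted identity
\[
\int_{\R} e^{-t\lambda^2}P(\lambda)\,d\lambda = \sum_{j=0}^{N} a_j'\, t^{-j-\frac{1}{2}}.
\]

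There is no real obstacle here; the lemma is essentially a bookkeeping statement about Gaussian moments, and the only thing to check is convergence of each integral (immediate since $P$ is a polynomial and $e^{-t\lambda^2}$ decays super-polynomially for $t>0$), together with the fact that the substitution $u=\sqrt{t}\lambda$ factors out exactly the power $t^{-j-1/2}$. The formula is in fact an exact identity rather than merely an asymptotic expansion, which is consistent with the role this lemma plays alongside Lemmas \ref{Lemas1} and \ref{Lemas2} in building up the full small-$t$ expansion of the regularized heat trace.
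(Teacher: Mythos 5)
Your proof is correct and matches the paper's approach: the paper simply states ``This follows by a change of variables,'' which is exactly the substitution $u=\sqrt{t}\,\lambda$ you carry out term by term, identifying the resulting Gaussian moment as $\Gamma(j+\tfrac12)$.
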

\begin{proof}
This follows by a change of variables.
\end{proof}
\begin{prop}\label{AsEnt}
Assume that $\dim(X)$ is odd.
There exist coefficients $a_{j}$, $b_{j}$, $c_j$, $j\in\mathbb{N}$, such
that one has
\begin{equation}\label{asympexp11}
\Tr_{\rel}(e^{-tA_{\nu}})\sim
\sum_{j=0}^\infty a_{j}t^{j-\frac{d}{2}}+\sum_{j=0}^\infty b_{j}t^{
j-\frac{1}{2}}\log{t}+\sum_{j=0}^\infty c_j t^j
\end{equation}
as $t\to+0$. 
\end{prop}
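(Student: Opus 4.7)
The strategy is to apply the trace formula of Theorem \ref{Spurf},
\[
\Tr_{\rel}(e^{-tA_\nu})=I(h_t^\nu)+H(h_t^\nu)+T(\tilde h_t^\nu)+\mathcal{I}(h_t^\nu)+J(h_t^\nu),
\]
extract the small-time expansion of each of the five summands, and add them up. The common link is Proposition \ref{fouriertrf2}, which gives $\Theta_{\sigma,\lambda}(h_t^\nu)=[\nu:\sigma]\,e^{t(c(\sigma)-\lambda^2)}$ and so turns every distribution into a Gaussian integral in $\lambda$; the required small-$t$ expansions of the three kinds of $\lambda$-integrand that will show up are then exactly what Lemmas \ref{Lemas1}, \ref{Lemas2}, and \ref{Lemas3} provide. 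Throughout, any factor $e^{tc(\sigma)}$ is analytic in $t$ and therefore does not affect the shape of the resulting expansion.

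For the identity contribution, \eqref{Idcontr} reduces $I(h_t^\nu)$ to a finite sum of Gaussian integrals of the even polynomial $P_\sigma(i\lambda)$, which has degree $2n=d-1$; Lemma \ref{Lemas3} then contributes exactly the terms $\sum_ja_jt^{j-d/2}$. The hyperbolic term $H(h_t^\nu)$ is $O(e^{-c/t})$ by \eqref{Hyperb}, because the lengths $\ell(\gamma)$ of closed geodesics are bounded below, so this term contributes nothing polyhomogeneous. The distribution $T(\tilde h_t^\nu)$, after using the Fourier transform \eqref{Fouriertrafo T}, reduces to pure Gaussian integrals of the form $\sqrt{\pi/t}$, which are absorbed into the first and third sums in \eqref{asympexp11}. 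For the weighted parabolic term $J(h_t^\nu)$, formulas \eqref{reprJ} and \eqref{Glcfnktn} allow one to pair the $\pm l$ summands into rational kernels of the form $-2i(l+\rho_j)/(z^2+(l+\rho_j)^2)$; since none of their poles lies at $z=0$, the contour $D_\epsilon$ can be deformed to $\R$, and Lemma \ref{Lemas1} yields an expansion in powers $t^{j/2}$, which splits into the half-integer and integer sums in \eqref{asympexp11}.

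The bulk of the work lies in the invariant parabolic term $\mathcal{I}(h_t^\nu)$, which by Theorem \ref{FTorbint} equals
\[
\mathcal{I}(h_t^\nu)=\frac{\kappa}{4\pi}\sum_\sigma[\nu:\sigma]\,e^{tc(\sigma)}\int_\R e^{-t\lambda^2}\Omega(\check\sigma,-\lambda)\,d\lambda.
\]
Using the decomposition $\Omega=\Omega_1+\Omega_2$ of Proposition \ref{propomega} (together with Remark \ref{halfint} in the half-integer case), the $\Omega_2$-piece is by construction a finite linear combination of an even polynomial in $\lambda$ and of rational functions $\ell/(\lambda^2+\ell^2)$; these are handled by Lemmas \ref{Lemas3} and \ref{Lemas1} respectively, and produce only powers $t^{j/2}$. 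The $\Omega_1$-piece carries the digamma factor $\psi(1+i\lambda)+\psi(1-i\lambda)$, and Lemma \ref{Lemas2} shows that the corresponding Gaussian integral admits precisely the mixed expansion $\sum_ja_jt^{j-1/2}+\sum_jb_jt^{j-1/2}\log t+\sum_jc_jt^j$. Summing the five contributions and regrouping terms of equal type yields the proposition. The main conceptual ingredient is Proposition \ref{propomega}, which isolates the logarithmic behavior in the single scalar function $\Omega_1$, so that all $\log t$-terms in the regularized trace originate from a single source; the only other mildly delicate analytic point is the harmless deformation of the contour $D_\epsilon$ used in treating the $J$-term.
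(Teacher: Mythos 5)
Your proposal follows the paper's proof term by term: apply Theorem \ref{Spurf}, use Proposition \ref{fouriertrf2} to reduce $I$, $T$, $\mathcal{I}$, $J$ to Gaussian integrals in $\lambda$, note that $H(h_t^\nu)=O(e^{-c/t})$, and then invoke Proposition \ref{propomega} and Lemmas \ref{Lemas1}--\ref{Lemas3} exactly as the paper does. One small justification in your treatment of the $J$-term is off: the two inner sums in \eqref{Glcfnktn} do not run over the same range of $l$ (the first requires $|k_j(\sigma)|<l$, the second allows $l=|k_j(\sigma)|$), so after pairing there is always a leftover term $-i/(iz+|k_j(\sigma)|+\rho_j)$, and for $j=n+1$ with $\sigma=w_0\sigma$ one has $|k_{n+1}(\sigma)|+\rho_{n+1}=0$, giving a pole of $c_\nu(\sigma:z)^{-1}\tfrac{d}{dz}c_\nu(\sigma:z)$ exactly at $z=0$. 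In that case $D_\epsilon$ cannot be shrunk to $\R$; instead one computes $\int_{D_\epsilon}e^{-tz^2}(iz)^{-1}\,dz=\pi$ directly (as in the proof of Lemma \ref{Ratcontr}), which contributes a constant to the $\sum_jc_jt^j$ part of \eqref{asympexp11}. So your conclusion is correct, but the stated reason for the contour deformation does not cover this edge case.
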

\begin{proof}
We use Theorem \ref{Spurf} and derive an asymptotic expansion of each term
on the right hand side.  We can
always ignore additional factors of the form $e^{-tc}$, $c>0$ by expanding this
term in a power series.
The term $I(h^\nu_t)$ has the desired asymptotic expansion by Proposition
\ref{fouriertrf2}, equation
\eqref{Idcontr} and Lemma \ref{Lemas3} . 
Second using \cite[ Proposition 5.4]{GaWa} one obtains
$H(h^\nu_t)=O(e^{-\frac{c}{t}})$ for a constant $c>0$.
By Proposition \ref{fouriertrf2} and equation \eqref{Fouriertrafo T}, the term
$T(h^\nu_t)$ has an asymptotic expansion
starting with $t^{-\frac{1}{2}}$. 
For every $\sigma\in\hat{M}$ with $\left[\nu:\sigma\right]\neq 0$ we write
$\Omega(\lambda,\sigma)$ as in Proposition \ref{propomega}. 
Then by Proposition \ref{fouriertrf2}, Proposition \ref{propomega} together
with remark \ref{halfint}, Lemma
\ref{Lemas1}, Lemma \ref{Lemas2} and Lemma \ref{Lemas3} it follows
that the term $\mathcal{I}(h^\nu_t)$ has the claimed asymptotic expansion in
$t$. The term $J(h^\nu_t)$ has the claimed asymptotic expansion by
equation \eqref{reprJ} and Lemma \ref{Lemas1}.
\end{proof}
\begin{bmrk}
The proposition remains true in even dimensions. The proof, however,
 would require more work due to the discrete series. This is not needed for our
purpose. 
\end{bmrk}
\begin{bmrk}
The asymptotic expansion \eqref{asympexp11} has been established under the
assumption that $\Gamma$ satisfies \eqref{a1}. We want to point out that it is 
expected to hold in much greater generality. For example, Albin and Rochon 
\cite[Theorem A.1]{AR} have shown that for a Dirac type operator 
$\slashed \partial$ on a manifold with fibred cusps, the regularized trace
of the heat operator $e^{-t\slashed{\partial}^2}$ admits an asymptotic expansion as 
$t\to 0$. The method can be modified so that it works for the Laplacian on 
$p$-forms twisted by a flat bundle. 
\end{bmrk}

\section{The analytic torsion}\label{sectors}
\setcounter{equation}{0}
Let $\tau$ be an irreducible finite dimensional representation of $G$ on
$V_{\tau}$. Let $E'_{\tau}$ be the flat
vector bundle associated to the
restriction of $\tau$ to $\Gamma$. Then $E'_{\tau}$ is canonically isomorphic to
the locally homogeneous vector bundle $E_{\tau}$ associated to $\tau|_{K}$. By
\cite{MM}, there exists an inner product $\left<\cdot,\cdot\right>$ on
$V_{\tau}$ such that
\begin{enumerate}
\item $\left<\tau(Y)u,v\right>=-\left<u,\tau(Y)v\right>$ for all
$Y\in\mathfrak{k}$, $u,v\in V_{\tau}$
\item $\left<\tau(Y)u,v\right>=\left<u,\tau(Y)v\right>$ for all
$Y\in\mathfrak{p}$, $u,v\in V_{\tau}$.
\end{enumerate}
Such an inner product is called admissible. It is unique up to scaling. Fix an
admissible inner product. Since $\tau|_{K}$ is unitary with respect to this
inner product, it induces a metric on $E_{\tau}$ which will be called admissible
too. Let $\Lambda^{p}(E_{\tau})$ be the bundle of $E_{\tau}$ valued p-forms on
$X$. Let 
\begin{equation}\label{nutau}
\nu_{p}(\tau):=\Lambda^{p}\Ad^{*}\otimes\tau:\:K\rightarrow\Gl(\Lambda^{p}
\mathfrak{p}^{*}\otimes V_{\tau}).
\end{equation}
There is a canonical isomorphism
\begin{align}\label{p Formen als homogenes Buendel}
\Lambda^{p}(E_{\tau})\cong\Gamma\backslash(G\times_{\nu_{p}(\tau)}(\Lambda^{p}
\mathfrak{p}^{*}\otimes V_{\tau})).
\end{align}
If $\Lambda^{p}(X,E_{\tau})$ are the smooth $E_{\tau}$-valued $p$-forms on $X$,
the isomorphism \eqref{p Formen als homogenes Buendel} induces an isomorphism
\begin{align}\label{Beschreibung der Schnitte}
\Lambda^{p}(X,E_{\tau})\cong C^{\infty}(\Gamma\backslash G,\nu_{p}(\tau)),
\end{align}
A corresponding isomorphism also holds for the $L^{2}$-spaces. Let
$\Delta_{p}(\tau)$ be the Hodge-Laplacian on $\Lambda^{p}(X,E_{\tau})$ with
respect to the admissible inner product. By (6.9) in \cite{MM}, on
$C^{\infty}(\Gamma\backslash G,\nu_{p}(\tau))$ one has
\begin{align}\label{kuga}
\Delta_{p}(\tau)=-\Omega+\tau(\Omega)\Id.
\end{align}
If $\Lambda(\tau)=k_1(\tau)e_1+\dots k_{n+1}(\tau)e_{n+1}$ is the highest weight
of $\tau$, we have
\begin{equation}\label{tauomega}
\tau(\Omega)=\sum_{j=1}^{n+1}\left(k_j(\tau)+\rho_j\right)^2-\sum_{j=1}^{n+1}
\rho_j^2.
\end{equation} 
For $G=\Spin(2n+1,1)$ this was proved in \cite[sect. 2]{MP}. For
$G=\Spin(2n+2,1)$,
one can proceed in the same way.
Let $0\le\lambda_1\le\lambda_2\le
\cdots$ be the eigenvalues of $\Delta_p(\tau)$. By \eqref{kuga} and
\eqref{regtrace2} we have
\begin{equation}\label{regtrace3}
\begin{split}
\Tr_{\rel}\left(e^{-t\Delta_p(\tau)}\right)&=\sum_{j}e^{-t\lambda_j}+
\sum_{\substack{\sigma\in\hat{M};\sigma=w_0\sigma\\
\left[\nu_p(\tau):\sigma\right]\neq 0}}
e^{-t(\tau(\Omega)-c(\sigma))}\frac{\Tr(\widetilde{\boldsymbol{C}}(\sigma,
\nu_p(\tau),0))}{4
}\\
&-\frac{1}{4\pi}\sum_{\substack{\sigma\in\hat{M}\\
\left[\nu_p(\tau):\sigma\right]\neq
0}}e^{-t(\tau(\Omega)-c(\sigma))}\\
&\hskip20pt\cdot\int_{\R}e^{-t\lambda^2}
\Tr\left(\widetilde{\boldsymbol{C}}(\sigma,\nu_p(\tau),-i\lambda)\frac{d}{dz}
\widetilde{\boldsymbol{C}}(\sigma,\nu_p(\tau),i\lambda)\right)\,d\lambda.
\end{split}
\end{equation}
Let
\begin{equation}\label{ktau5}
K(t,\tau):=\sum_{p=0}^{d}(-1)^{p}p\Tr_{\rel}(e^{-t\Delta_{p}(\tau)}).
\end{equation}
Then the analytic torsion is defined in terms of the Mellin transform of 
$K(t,\tau)$. 
For every $p=0,\dots,d$, let $\nu_p(\tau)$ be the representation
\eqref{nutau} and let $h_t^{\nu_p(\tau)}$
be defined by \eqref{Deffh}. Put
\begin{equation}\label{kernel5}
k_t^{\tau}:=e^{-t\tau(\Omega)}\sum_{p=0}^{d}(-1)^{p}ph_t^{\nu_p(\tau)}.
\end{equation}
By Theorem \ref{Spurf} we have
\begin{align}\label{FTK}
K(t,\tau)=I(k_t^\tau)+H(k_t^\tau)+T(k_t^\tau)+\mathcal{I}
(k_t^\tau)+J(k_t^\tau).
\end{align}
This equality will be used in section \ref{Beweis} to study the Mellin 
transform of $K(t,\tau)$. 

To define the analytic torsion, we need to determine 
the asymptotic behavior of the regularized trace of $e^{-t\Delta_p(\tau)}$ as 
$t\to\infty$. 
To begin with we estimate the exponential factors occurring on the right hand
side of \eqref{regtrace3}.
\begin{lem}\label{lowerbd1} 
\begin{enumerate}
\item \label{dfkd} Let $G=\Spin(2n+2,1)$. Let $\tau$ be an irreducible
representation
of $G$. Then
\[
\tau(\Omega)-c(\sigma)\ge \frac{1}{4}
\]
for all $\sigma\in\hat M$ with $[\nu_p(\tau)\colon\sigma]\neq0$.
\item\label{erer}
Let $G=\Spin(2n+1,1)$.
Let $\tau$ be an irreducible representation of
$G$ with highest weight $\tau_1e_1+\cdots+\tau_{n+1}e_{n+1}$ as in
\eqref{Darstellungen von G}. Then
\[
\tau(\Omega)-c(\sigma)\ge \tau_{n+1}^2
\]
for all $\sigma\in\hat M$ with $[\nu_p(\tau)\colon\sigma]\neq0$. Moreover
assume that $\sigma\in\hat{M}$ is such that $[\nu_p(\tau)\colon\sigma]\neq0$
and such that $\sigma=w_0\sigma$. Then one has
\[
 \tau(\Omega)-c(\sigma)\ge \left(\tau_n+1\right)^2+\tau_{n+1}^2\geq
1+\tau_{n+1}^2.
\]
\end{enumerate}
\end{lem}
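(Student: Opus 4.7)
The starting point is to combine the explicit Casimir formulas \eqref{tauomega} and \eqref{csigma}. Since $\rho_G = \rho_1 e_1 + \rho_M$ and $\Lambda(\sigma)$ has no $e_1$-component, the constant terms $\sum_j \rho_j^2$ cancel and one obtains
\begin{align*}
\tau(\Omega) - c(\sigma) = \|\Lambda(\tau)+\rho_G\|^2 - \|\Lambda(\sigma)+\rho_M\|^2,
\end{align*}
so the lemma reduces to an upper bound on $\|\Lambda(\sigma)+\rho_M\|^2$ valid for every $M$-type $\sigma$ appearing in $\nu_p(\tau)|_M = (\Lambda^p\pL^*\otimes\tau|_K)|_M$.

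The main structural tool is Kostant's theorem on $\nf$-cohomology, applied through the $M$-module decomposition $\pL\cong\aL\oplus\nf$, which yields $\Lambda^p\pL^*\cong\Lambda^p\nf^*\oplus\Lambda^{p-1}\nf^*$ and thus places every $M$-type of $\nu_p(\tau)|_M$ inside $\Lambda^q\nf^*\otimes V_\tau$ for some $q\in\{p-1,p\}$. Kostant identifies the harmonic representatives of $H^q(\nf,V_\tau)$ as the $M$-types with highest weight $\pi(w(\Lambda(\tau)+\rho_G))-\rho_M$, where $\pi\colon\hL^*\to\bL^*$ is the orthogonal projection and $w$ ranges over the minimal-length coset representatives of $W_M\backslash W_G$. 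Since every such $w$ is a signed permutation of the coordinates of $\Lambda(\tau)+\rho_G$,
\begin{align*}
\|\Lambda(\sigma)+\rho_M\|^2 = \|\Lambda(\tau)+\rho_G\|^2 - (k_j(\tau)+\rho_j)^2
\end{align*}
for some $j\in\{1,\ldots,n+1\}$, and the minimum over $j$ is attained at $j=n+1$. This yields $\tau_{n+1}^2$ when $d=2n+1$ (where $\rho_{n+1}=0$) and $\geq (1/2)^2 = 1/4$ when $d=2n+2$ (where $\rho_{n+1}=1/2$).

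For the non-harmonic $M$-types in $\Lambda^q\nf^*\otimes V_\tau$, one extends the bound using that the Kostant Laplacian $\Box=dd^*+d^*d$ is $M$-equivariant and non-negative; a Parthasarathy-type computation shows that $\Box$ acts on a $(\sigma,\xi)$-isotypical component by a non-negative multiple of $\|\Lambda(\tau)+\rho_G\|^2 - \|\Lambda(\sigma)+\rho_M\|^2$ minus a controllable $\xi$-dependent contribution, so the inequality $\Box\geq 0$ propagates the same upper bound for $\|\Lambda(\sigma)+\rho_M\|^2$ to all $M$-summands.

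The sharper second claim of \eqref{erer} requires additional work. The condition $\sigma=w_0\sigma$ is equivalent to $k_{n+1}(\sigma)=0$ by \eqref{wsigma}, hence the $e_{n+1}$-coordinate of $\Lambda(\sigma)+\rho_M$ vanishes. In the Kostant picture this happens only when some coordinate $(k_j(\tau)+\rho_j)$ of $\Lambda(\tau)+\rho_G$ is itself zero, which forces $\tau_{n+1}=0$; the smallest remaining coordinate available for the $e_1$-slot is then $k_n(\tau)+\rho_n=\tau_n+1$, yielding the stated bound. When $\tau_{n+1}\neq 0$ no harmonic $\sigma$ with $\sigma=w_0\sigma$ exists, and the extra $\tau_{n+1}^2$ must be produced from a direct weight analysis of $\Lambda^p\pL^*\otimes V_\tau$ using the constraint $\mu_{n+1}+\eta_{n+1}=0$ on any contributing weight decomposition $\Lambda(\sigma)=\pi(\mu)+\eta$. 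Combining this constraint with the non-harmonic analysis to produce the full bound $(\tau_n+1)^2+\tau_{n+1}^2$ is the main technical obstacle.
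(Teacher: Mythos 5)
Your proposal takes a genuinely different route from the paper's, and it has gaps that you yourself flag; the most serious one is the treatment of the non-harmonic $M$-types. Kostant's theorem describes the cohomology $H^q(\bar\nf,V_\tau)$, not the full $M$-module $\Lambda^q\nf^*\otimes V_\tau$, so for the lemma---which needs an upper bound on $c(\sigma)$ for \emph{every} $\sigma$ occurring in $\nu_p(\tau)|_M$, not just the harmonic pieces---you must control the rest. The Parthasarathy/Kostant-Laplacian relation on a $\sigma$-isotypic piece with $A$-weight $\xi$ reads, up to normalization,
\begin{equation*}
\tau(\Omega)-c(\sigma)-\xi^2=2\Box,
\end{equation*}
so $\Box\geq0$ gives only $\tau(\Omega)-c(\sigma)\geq\xi^2$. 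On the harmonic pieces $\xi=\pm\lambda_{\tau,k}$ with $|\lambda_{\tau,k}|\geq\tau_{n+1}$, which indeed recovers the first inequality; but on non-harmonic pieces $\xi$ ranges over the full set of $A$-weights of $\Lambda^q\nf^*\otimes V_\tau$ and can vanish, so positivity of $\Box$ gives only the trivial bound $\tau(\Omega)-c(\sigma)\geq0$. Your ``controllable $\xi$-dependent contribution'' is exactly the term that cannot be controlled from $\Box\geq0$ alone. You also explicitly leave the second, sharper claim of item \eqref{erer} unproved. So the proposal is genuinely incomplete, not merely unpolished.

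The paper's argument is elementary and avoids Kostant altogether: it never needs a decomposition of $\Lambda^q\nf^*\otimes V_\tau$. Writing $\nu_p(\tau)=\tau|_K\otimes\Lambda^p\Ad^*_\pL$, by \cite[Proposition 9.72]{Knapp} any $\nu\in\hat K$ occurring in $\nu_p(\tau)$ has highest weight of the form $\Lambda(\nu')+\mu$, where $[\tau:\nu']\neq0$ and $\mu$ is a weight of $\Lambda^p\pL^*$. The branching rules $\hat G\to\hat K$ of \cite[Theorems 8.1.3, 8.1.4]{GW} give $k_j(\nu')\leq\tau_{j-1}$, while the entries of $\mu$ lie in $\{0,\pm1\}$, so $k_j(\nu)\leq\tau_{j-1}+1$. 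A further application of $\hat K\to\hat M$ branching yields $|k_j(\sigma)|\leq\tau_{j-1}+1$ for every $\sigma$ with $[\nu_p(\tau):\sigma]\neq0$, and, using $\rho_{j-1}=\rho_j+1$ together with \eqref{tauomega} and \eqref{csigma}, one gets $c(\sigma)\leq\tau(\Omega)-(\tau_{n+1}+\rho_{n+1})^2$, which gives item \eqref{dfkd} (since $\rho_{n+1}=\tfrac12$ for $d=2n+2$) and the first claim of item \eqref{erer} (since $\rho_{n+1}=0$ for $d=2n+1$). Finally, $\sigma=w_0\sigma$ forces $k_{n+1}(\sigma)=0$, so the $j=n+1$ term in $c(\sigma)$ drops out and the same pointwise bound applied to the remaining terms gives $c(\sigma)\leq\tau(\Omega)-(\tau_n+1)^2-\tau_{n+1}^2$ immediately---no separate weight analysis is needed. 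This component-by-component bound is what makes both claims, including the sharp refinement under $\sigma=w_0\sigma$, follow in a few lines.
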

\begin{proof}
For $p=0,...,d$ let 
\[
\nu_p:=\Lambda^p\Ad^*_\pL\colon K\to \Gl(\Lambda^p\pL^*).
\]
Recall that $\nu_p(\tau)=\tau|_{K}\otimes\nu_p$. 
Let $\nu\in\hat{K}$ with
$\left[\nu_p(\tau):\nu\right]\neq 0$. Then by \cite[Proposition
9.72]{Knapp},
there exists a $\nu^\prime\in\hat{K}$ with
$\left[\tau:\nu^\prime\right]\neq 0$ of 
highest weight $\Lambda(\nu^\prime)\in\mathfrak{b}_{\C}^*$ and a
$\mu\in\mathfrak{b}_{\C}^*$
which is a weight of $\nu_p$ such that the highest weight $\Lambda(\nu)$ of
$\nu$ is given
by $\mu+\Lambda(\nu^\prime)$. 
Now let $\nu^\prime\in\hat{K}$ be such that
$\left[\tau:\nu^\prime\right]\neq 0$. Let
$\Lambda(\nu^\prime)$ be the highest weight of $\nu^\prime$ as in
\eqref{Darstellungen von K} resp. \eqref{Darstellungen von K'}. Then by
\cite[Theorem 8.1.3, Theorem 8.1.4]{GW} we
have
\[
\tau_{j-1}\geq k_j(\nu^\prime)\geq 0,\quad j=2,\dots,n+1,
\]
if $d=2n+1$ and
\[
\tau_j\geq \left|k_j(\nu^\prime)\right|, \quad j=1,\dots,n+1,
\]
if $d=2n+2$.
Moreover, every weight 
$\mu\in\mathfrak{b}_{\C}^*$ of $\nu_p$ is given as
\begin{align*}
\mu=\pm e_{j_{1}}\pm \dots\pm e_{j_{p}},\quad j_{1}<j_2<\dots<j_{p}\leq n+1.
\end{align*}
Thus, if $\nu\in\hat{K}$ is such that $\left[\nu_p(\tau):\nu\right]\neq 0$,
the highest weight $\Lambda(\nu)$ of $\nu$, given as in 
\eqref{Darstellungen von K} resp. \eqref{Darstellungen von K'}, satisfies
\begin{align*}
\tau_{j-1}+1\geq k_j(\nu)\geq 0,\quad j\in\{2,\dots,n+1\},
\end{align*}
if $d=2n+1$ and 
\begin{align*}
\tau_{j}+1\geq \left|k_j(\nu)\right|\geq 0,\quad j\in\{1,\dots,n+1\},
\end{align*}
if $d=2n+2$. Let $\sigma\in\hat{M}$ be  such that
$\left[\nu_p(\tau):\sigma\right]\neq 0$. 
Then using \cite[Theorem 8.1.3, Theorem 8.1.4]{GW} it follows that
\begin{align*}
\tau_{j-1}+1\geq \left|k_j(\sigma)\right|
\end{align*}
for every $j\in\{2,\dots,n+1\}$, where the $k_j(\sigma)$ are as in
\eqref{Darstellungen von M} resp. \eqref{Darstellungen von M'}. Furthermore note
that by \eqref{rho} we have
$\rho_{j-1}=\rho_j+1$. Using \eqref{tauomega} and \eqref{csigma} we get
\[
c(\sigma)=\sum_{j=2}^{n+1}(k_j(\sigma)+\rho_j)^2-\sum_{j=1}^{n+1}\rho_j^2\le 
\sum_{j=2}^{n+1}(\tau_{j-1}+\rho_{j-1})^2-\sum_{j=1}^{n+1}\rho_j^2=\tau(\Omega)
-\left(\tau_{n+1}+\rho_{n+1}\right)^2.
\]
If $G=\Spin(2n+2,2)$, we have $\rho_{n+1}=1/2$ and thus item \eqref{dfkd} and
the first statement
of item \eqref{erer} are proved.\\
Now assume that $G=\Spin(2n+1,1)$. Assume that $\sigma$ additionally satisfies
$\sigma=w_0\sigma$. This is
equivalent to 
$k_{n+1}(\sigma)=0$ by \eqref{wsigma}. Thus since $\rho_{n+1}=0$, $\rho_n=1$ we
get
\[
c(\sigma)=\sum_{j=2}^{n}(k_j(\sigma)+\rho_j)^2-\sum_{j=1}^{n+1}\rho_j^2\le 
\sum_{j=2}^{n}(\tau_{j-1}+\rho_{j-1})^2-\sum_{j=1}^{n+1}\rho_j^2=\tau(\Omega)
-\left(\tau_n+1\right)^2-\tau_{n+1}^2.
\]
Finally by \eqref{Darstellungen von G} we have $\tau_n\geq 0$. This proves
the lemma.
\end{proof}

The next two lemmas are also needed to determine the  behavior of 
the regularized trace as $t\to\infty$.
\begin{lem}\label{asympexp3}
There is an asymptotic expansion
\[
\int_{\R}e^{-t\lambda^2}
\Tr\left(\widetilde{\boldsymbol{C}}(\sigma,\nu_p(\tau),-i\lambda)\frac{
d}{dz}\widetilde{\boldsymbol{C}}(\sigma,\nu_p(\tau),i\lambda)\right)d\lambda
\sim \sum_{j=1}^\infty b_jt^{-j/2}
\]
as $t\to\infty$.
\end{lem}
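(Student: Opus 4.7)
Set $g(\lambda):=\Tr\bigl(\widetilde{\boldsymbol{C}}(\sigma,\nu_p(\tau),-i\lambda)\frac{d}{dz}\widetilde{\boldsymbol{C}}(\sigma,\nu_p(\tau),i\lambda)\bigr)$ and $F(t):=\int_\R g(\lambda)e^{-t\lambda^2}\,d\lambda$. The plan is to show that $g$ is a smooth, even function on $\R$ with at most logarithmic growth at infinity, and then to read off the asymptotic expansion of $F(t)$ as $t\to\infty$ by Laplace's method at $\lambda=0$.

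For smoothness, the functional equation \eqref{FE2} gives $\widetilde{\boldsymbol{C}}(-i\lambda)=\widetilde{\boldsymbol{C}}(i\lambda)^{-1}$, so $g(\lambda)=\frac{d}{dz}\log\det\widetilde{\boldsymbol{C}}(z)\big|_{z=i\lambda}$. For $\lambda\in\R$ the operator $\widetilde{\boldsymbol{C}}(\sigma,\nu_p(\tau),i\lambda)$ is the restriction to the finite-dimensional $K$-invariant subspace of an intertwining operator for the unitary principal series $\pi_{\sigma,\lambda}$; it is therefore unitary, hence invertible, and its logarithmic derivative is smooth on $\R$. Evenness $g(-\lambda)=g(\lambda)$ follows from differentiating \eqref{FE2} at $\lambda$ and $-\lambda$, and the explicit formula \eqref{Glcfnktn} for the logarithmic derivative of the $c$-function yields the bound $g(\lambda)=O(\log|\lambda|)$ as $|\lambda|\to\infty$.

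Next, split $F(t)=\int_{|\lambda|\le 1}g(\lambda)e^{-t\lambda^2}\,d\lambda+\int_{|\lambda|>1}g(\lambda)e^{-t\lambda^2}\,d\lambda$. On $|\lambda|>1$ one has $e^{-t\lambda^2}\le e^{-t/2}e^{-\lambda^2/2}$, so this part is bounded by $e^{-t/2}\int_\R|g(\lambda)|e^{-\lambda^2/2}\,d\lambda=O(e^{-t/2})$ and can be absorbed into any remainder term. On $|\lambda|\le 1$, Taylor-expand the even function $g$ to order $2N$: $g(\lambda)=\sum_{k=0}^N a_k\lambda^{2k}+R_N(\lambda)$ with $|R_N(\lambda)|\le C|\lambda|^{2N+2}$. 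Using $\int_\R \lambda^{2k}e^{-t\lambda^2}\,d\lambda=\Gamma(k+\tfrac12)t^{-k-1/2}$ and an exponentially small correction from restoring the tail, one obtains $F(t)=\sum_{k=0}^N a_k\Gamma(k+\tfrac12)\,t^{-k-1/2}+O(t^{-N-3/2})$. Relabelling $j=2k+1$ this is the expansion claimed in the lemma, with $b_j=0$ for even $j$.

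The main obstacle is the smoothness of $g$ on $\R$. The individual $c$-function \eqref{cfunktion} has real poles coming from the $\Gamma$-factors, and the intertwining operator $\widetilde{\boldsymbol{C}}$ is a priori only meromorphic in the spectral parameter. The key point is that along the integration axis $\lambda\in\R$ the operator $\widetilde{\boldsymbol{C}}(\sigma,\nu_p(\tau),i\lambda)$ is unitary (in particular at $\lambda=0$, where \eqref{FE2} forces $\widetilde{\boldsymbol{C}}(0)^2=\Id$), so it has neither zeros nor poles there, and its trace-log-derivative $g$ is smooth. Once this smoothness is established, the rest of the argument is a routine Laplace-type asymptotic computation.
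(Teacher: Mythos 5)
Your overall strategy coincides with the paper's: isolate a neighbourhood of $\lambda=0$, use a (power-series/Taylor) expansion of the integrand there together with the Gaussian moments $\int_\R\lambda^{2k}e^{-t\lambda^2}\,d\lambda=\Gamma(k+\tfrac12)t^{-k-1/2}$, and observe that the remainder of the integral is exponentially small in $t$. The paper does this with the analyticity of $\widetilde{\boldsymbol{C}}(\sigma,\nu_p(\tau),\cdot)$ near $0$ and a one-line statement that the tail is exponentially decreasing; your evenness observation is a correct but unnecessary refinement, since the odd Taylor coefficients are killed by the symmetric Gaussian integral anyway.

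Two of your supporting claims, however, are not correct as stated. First, the bound on $g(\lambda)$ at infinity: you invoke \eqref{Glcfnktn}, but that formula computes the logarithmic derivative of the Knapp--Stein $c$-function $c_\nu(\sigma:z)$, the scalar by which the \emph{local} intertwining operator $J_{\bar P_0|P_0}(\sigma,\lambda)$ acts. The operator $\widetilde{\boldsymbol{C}}(\sigma,\nu_p(\tau),\lambda)$ in the lemma is the restriction of the \emph{Eisenstein-series} constant-term operator $\boldsymbol{C}(\sigma,\lambda)$, a global object attached to $\Gamma$ and its cusps; it is not governed by \eqref{Glcfnktn}. (Also, \eqref{Glcfnktn} is a finite sum of terms each tending to $0$, so it would give $O(1/|\lambda|)$, not $O(\log|\lambda|)$.) What is actually needed — and tacitly used in the paper — is the standard polynomial bound on the scattering operator's logarithmic derivative coming from the theory of Eisenstein series; any polynomial growth suffices for the tail to be $O(e^{-t/2})$. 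Second, your argument for unitarity of $\widetilde{\boldsymbol{C}}(\sigma,\nu_p(\tau),i\lambda)$ is too quick: being an intertwining operator for a unitary representation does not by itself force unitarity (Schur gives only ``scalar times unitary'' on irreducibles, and here the space is a finite direct sum). Unitarity of $\boldsymbol{C}(i\lambda)$ for real $\lambda$ is a consequence of the Maass--Selberg relations (Proposition \ref{KoMs}) combined with $\boldsymbol{C}(\lambda)\boldsymbol{C}(-\lambda)=\Id$, and should be cited from there. With those two corrections your argument is sound and matches the paper's proof; as presented, both steps would need to be re-justified.
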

\begin{proof}
Since $\widetilde{\boldsymbol{C}}(\sigma:\nu_p(\tau):i\lambda)$ is analytic 
near $\lambda=0$, we have a power series expansion
\[
\Tr\left(\widetilde{\boldsymbol{C}}(\sigma,\nu_p(\tau),-i\lambda)\frac{d}{dz}
\widetilde{\boldsymbol{C}}(\sigma,\nu_p(\tau),i\lambda)\right)=
\sum_{j=0}^\infty a_j\lambda^{j}
\]
which converges for $|\lambda|\le 2\varepsilon$. Hence we get an asymptotic
expansion
\[
\int_{-\varepsilon}^{\varepsilon}e^{-t\lambda^2}
\Tr\left(\widetilde{\boldsymbol{C}}(\sigma,\nu_p(\tau),-i\lambda)\frac{d}{dz}
\widetilde{\boldsymbol{C}}(\sigma,\nu_p(\tau),i\lambda)\right)\,d\lambda
\sim\sum_{j=1}^\infty b_jt^{-j/2}.
\]
The integral over $(-\infty,-\varepsilon/2]\cup[\varepsilon/2,\infty)$ is
exponentially decreasing. This proves the lemma.
\end{proof}

\begin{lem}\label{lowerbd2}
Let $G=\Spin(2n+1,1)$. 
Let $\tau\in\hat{G}$ and assume that $\tau\neq\tau_{\theta}$. For
$p\in\{0,\dots,d\}$ let $\lambda_0\in\R^+$ be an eigenvalue of $\Delta_p(\tau)$.
Then one has $\lambda_0>1/4$.
\end{lem}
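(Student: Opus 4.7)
The plan is to interpret the $L^2$-eigenvalue $\lambda_0>0$ of $\Delta_p(\tau)$ representation-theoretically. Via the isomorphism \eqref{Beschreibung der Schnitte} and the identity \eqref{kuga}, there is an irreducible unitary subrepresentation $(\pi,\cH_\pi)\subseteq L^2_d(\Gamma\bs G)$ with $[\pi:\nu_p(\tau)]\neq 0$ and
\[
\pi(\Omega)=\tau(\Omega)-\lambda_0.
\]
It therefore suffices to show that every such $\pi$ satisfies $\pi(\Omega)<\tau(\Omega)-\tfrac14$. Since $\rk(G)>\rk(K)$, the group $G=\Spin(2n+1,1)$ has no discrete series, so its irreducible unitary dual consists of the trivial representation, the unitary principal series $\pi_{\sigma,\nu}$ ($\sigma\in\hat M$, $\nu\in\R$), and the complementary series $\pi_{\sigma,i\mu}$ with $\mu$ in a bounded admissible interval depending on $\sigma$.

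I would then go through this list. The trivial representation occurs only if $(\Lambda^p\pL^*\otimes V_\tau)^K\neq 0$, i.e.\ if $H^p(\gL,K;V_\tau)\neq 0$; by the Borel--Wallach vanishing theorem this is impossible for every $p$ as soon as $\tau\neq\tau_\theta$. For a unitary principal series $\pi_{\sigma,\nu}$, Frobenius reciprocity \eqref{Frobeniusrez} gives $[\nu_p(\tau):\sigma]\neq 0$, and combining \eqref{infchps} with Lemma~\ref{lowerbd1}(\ref{erer}) yields
\[
\lambda_0=\tau(\Omega)+\nu^2-c(\sigma)\;\geq\;\nu^2+\tau_{n+1}^2\;\geq\;\tfrac14,
\]
using that $\tau\neq\tau_\theta$ forces $\tau_{n+1}\in\tfrac12\Z\setminus\{0\}$. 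For a complementary series $\pi_{\sigma,i\mu}$ one has $\pi(\Omega)=\mu^2+c(\sigma)$, so $\lambda_0=\tau(\Omega)-c(\sigma)-\mu^2$; the admissible range for $\mu$, read off from the $c$-functions \eqref{cfunktion} and positivity of the Knapp--Stein intertwining operator, together with Lemma~\ref{lowerbd1}(\ref{erer}), should give $\mu^2<\tau(\Omega)-c(\sigma)-\tfrac14$ and hence $\lambda_0>\tfrac14$.

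The main obstacle is upgrading the chain $\lambda_0\geq\tfrac14$ to strict inequality in the borderline principal-series case. Equality would force $\nu=0$, $|\tau_{n+1}|=\tfrac12$ and $c(\sigma)=\tau(\Omega)-\tfrac14$ saturated; tracing the equality conditions in the proof of Lemma~\ref{lowerbd1} then requires $|k_j(\sigma)|=\tau_{j-1}+1$ for every $j=2,\dots,n+1$, in particular $|k_{n+1}(\sigma)|=\tau_n+1\geq\tfrac32$, so $\sigma\neq w_0\sigma$ and $\pi_{\sigma,0}$ is irreducible tempered. To exclude such a $\pi_{\sigma,0}$ from $L^2_d(\Gamma\bs G)$, I would argue that the extremal $K$-type constraint together with $[\pi_{\sigma,0}:\nu_p(\tau)]\neq 0$ at the saturating $\sigma$ would realise an $(\gL,K)$-cohomology class of $V_\tau$, again contradicting Borel--Wallach vanishing since $\tau\neq\tau_\theta$; equivalently, no embedded $L^2$-eigenvalue can occur at the bottom of the continuous spectrum of $\Delta_p(\tau)$, which in this borderline situation sits exactly at $\tfrac14$. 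The hypothesis $\tau\neq\tau_\theta$ is thus used twice, once to eliminate the trivial representation and once to rule out the embedded principal series, which is precisely where the strict inequality is bought.
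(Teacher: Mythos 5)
Your overall approach is the paper's: translate the $L^2$-eigenvalue $\lambda_0$ into the existence of $\pi\in\hat G$ occurring in $L^2_d(\Gamma\backslash G)$ with $[\pi:\nu_p(\tau)]\neq0$ and $\pi(\Omega)=\tau(\Omega)-\lambda_0$, then run through the unitary dual of $\Spin(2n+1,1)$ (which has no discrete series since $\rk G>\rk K$), using Frobenius reciprocity and Lemma~\ref{lowerbd1} for the unitary principal series, and the constraints $\sigma=w_0\sigma$, $0<\lambda<1$ together with the sharper second estimate of Lemma~\ref{lowerbd1} for the complementary series. This is exactly the paper's proof.

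The problem is your strictness paragraph, which introduces an argument the paper does not make and which does not work. You correctly notice that for the principal series the chain $\lambda_0=\lambda^2+\tau(\Omega)-c(\sigma)\geq\tau_{n+1}^2\geq\tfrac14$ is not strict at the saturating configuration $\lambda=0$, $|\tau_{n+1}|=\tfrac12$, $\sigma=\sigma_{\tau,n}$; the paper's own proof in fact only writes $\geq\tfrac14$ at that point, so you have located a genuine imprecision. But the fix you propose is not valid. Borel--Wallach vanishing concerns the relative Lie algebra cohomology $H^{*}(\gL,K;V_\tau\otimes\cH_\pi)$, and by Wigner's lemma this vanishes automatically whenever $\pi$ and $\tau_\theta$ have different infinitesimal characters, in particular whenever $\pi(\Omega)\neq\tau(\Omega)$. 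In your borderline case $\pi(\Omega)=\tau(\Omega)-\tfrac14\neq\tau(\Omega)$, so the cohomology is zero for trivial numerical reasons and says nothing about whether $\pi_{\sigma,0}$ can occur in $L^2_d$; no amount of ``realising a cohomology class'' is available to you. The same confusion appears in your treatment of the trivial representation: $(\Lambda^p\pL^*\otimes V_\tau)^K=0$ is not equivalent to $H^p(\gL,K;V_\tau)=0$ (the latter is the cohomology of a complex built from the former), so Borel--Wallach does not rule out $[\mathbf 1:\nu_p(\tau)]\neq0$. What saves you there is a plain computation: $\pi=\mathbf 1$ would give $\lambda_0=\tau(\Omega)=\sum_j\bigl(\tau_j^2+2\tau_j\rho_j\bigr)\geq\sum_j\tau_j^2\geq(n+1)\tau_{n+1}^2>\tfrac14$, so no cohomological input is needed. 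For the complementary series you should not leave the parameter range at ``should give''; the paper's citation of Knapp--Stein yields $\sigma=w_0\sigma$ and $0<\lambda<1$, and then the second inequality of Lemma~\ref{lowerbd1}(\ref{erer}) gives the strict bound $\tau(\Omega)-c(\sigma)-\lambda^2>1+\tau_{n+1}^2-1=\tau_{n+1}^2\geq\tfrac14$.
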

\begin{proof}
If $\tau\neq\tau_{\theta}$ one has $\left|\tau_{n+1}\right|\geq 1/2$. 
Let $\hat{G}$ be the unitary dual of $G$. 
Recall that if $\lambda_0$ is an eigenvalue of $\Delta_p(\tau)$, there exists
a $\pi\in\hat{G}$ with
$[\pi:\check{\nu}_p(\tau)]=[\pi:\nu_p(\tau)]\neq 0$
such that 
\begin{align*}
\lambda_0=-\pi(\Omega)+\tau(\Omega).
\end{align*}
Since $\rk(G)>\rk(K)$, it follows from \cite[Theorem 8.54]{Knapp} and
\cite[Corollary 6.2]{Trombi} that $\hat{G}$ consist of
the unitary principle series representations $\pi_{\sigma,\lambda}$,
$\sigma\in\hat{M}$,
$\lambda\in\R$ and the complementary series representations
$\pi^c_{\sigma,\lambda}$,
$\sigma\in\hat{M}$, $\lambda\in\R$.
First consider a unitary principle series representation $\pi_{\sigma,\lambda}$.
Then by Frobenius reciprocity \cite[page 208]{Knapp},
$[\pi_{\sigma,\lambda}:\nu_p(\tau)]$ is non zero
iff $\left[\nu_p(\tau):\sigma\right]$ is non zero. Thus together with
\eqref{infchps} and Lemma \ref{lowerbd1}, for every $\lambda\in\R$ one has 
\begin{align*}
-\pi_{\sigma,\lambda}(\Omega)+\tau(\Omega)=-c(\sigma)+\lambda^2+\tau(\Omega)\geq
1/4.
\end{align*}
Next consider a complementary series representation $\pi^c_{\sigma,\lambda}$.
Again it follows from Frobenius reciprocity that
$[\pi_{\sigma,\lambda}:\nu_p(\tau)]$ is non
zero iff $\left[\nu_p(\tau):\sigma\right]$ is non zero.
Moreover by \cite[Proposition 49, Proposition 53]{Knapp Stein}, if
$\pi^c_{\sigma,\lambda}$
belongs to the complementary series one has $\sigma=w_0\sigma$ and
$0<\lambda<1$. Recall that by \eqref{infchps} one has
\begin{align*}
\pi^c_{\sigma,\lambda}(\Omega)=c(\sigma)+\lambda^2. 
\end{align*}
Thus together with Lemma \ref{lowerbd1} one gets
\begin{align*}
-\pi^c_{\sigma,\lambda}
(\Omega)+\tau(\Omega)=-c(\sigma)-\lambda^2+\tau(\Omega)\geq \tau_{n+1}^2\geq
1/4. 
\end{align*}
\end{proof}

We are now ready to introduce the analytic torsion. We distinguish between 
the odd- and even-dimensional case. The reason is that the even-dimensional
case can be treated more elementary. 

First assume that $d=2n+1$. 
Let $h_p(\tau):=\dim(\ker\Delta_p(\tau)\cap L^2)$. 
Using \eqref{regtrace3}, Lemma \ref{lowerbd1} and Lemma \ref{asympexp3},
it follows that there is an asymptotic expansion
\begin{equation}\label{asympexp4}
\Tr_{\rel}\left(e^{-t\Delta_p(\tau)}\right)\sim h_p(\tau)+\sum_{j=1}^\infty c_j 
t^{-j/2},\quad t\to\infty.
\end{equation}
On the other hand, by Proposition \ref{AsEnt}, 
$\Tr_{\rel}\left(e^{-t\Delta_p(\tau)}\right)$ has also an asymptotic expansion
as
$t\to0$. Thus we can define the spectral zeta function by
\begin{equation}\label{speczeta1}
\begin{split}
\zeta_p(s;\tau)&:=\frac{1}{\Gamma(s)}\int_0^1 t^{s-1}\left(\Tr_{\rel}\left(
e^{-t\Delta_p(\tau)}\right)-h_p(\tau)\right)\,dt\\
&+\frac{1}{\Gamma(s)}\int_1^\infty t^{s-1}\left(\Tr_{\rel}\left(
e^{-t\Delta_p(\tau)}\right)-h_p(\tau)\right)\,dt.
\end{split}
\end{equation}
By Proposition \ref{AsEnt}, the first integral on the right converges in
the half-plane $\Re(s)>d/2$ and admits a meromorphic extension to $\C$ which
is holomorphic at $s=0$. By \eqref{asympexp4}, the second integral converges
in the half-plane $\Re(s)<1/2$ and also admits a meromorphic extension to $\C$
which is holomorphic at $s=0$.

Now assume that $\tau\neq\tau_{\theta}$. This is equivalent to
$\tau_{n+1}\neq 0$. 
Then by \eqref{Darstellungen von G} and Lemma \ref{lowerbd1} we have
$\tau(\Omega)
-c(\sigma)>1/4$ for all $\sigma\in\hat M$ with $[\nu_p(\tau)\colon\sigma]\neq0$
and $p=0,\dots,d$.  Furthermore by Lemma \ref{lowerbd2} we have
$\ker(\Delta_p(\tau)\cap L^2)=0$, $p=0,\dots,d$.
By \eqref{regtrace3} it follows that there exist $C,c>0$ such
that for all $p=0,\dots,d$:
\begin{equation}
\Tr_{\rel}\left(e^{-t\Delta_p(\tau)}\right)\le C e^{-ct},\quad t\ge 1.
\end{equation}
Using Proposition \ref{AsEnt}, it follows that 
$\zeta_p(s;\tau)$ can be defined as in the compact case by
\begin{equation}\label{speczeta2}
\zeta_p(s;\tau):=\frac{1}{\Gamma(s)}\int_0^\infty t^{s-1}
\Tr_{\rel}\left(e^{-t\Delta_p(\tau)}\right)\;dt.
\end{equation}
The integral converges absolutely and uniformly on compact subsets of 
$\Re(s)>d/2$ and admits a meromorphic extension to $\C$ which is holomorphic 
at $s=0$. We define the regularized determinant of $\Delta_p(\tau)$ as in the 
compact case by
\begin{equation}\label{regdet}
\det \Delta_p(\tau):=\exp\left(-\frac{d}{ds}\zeta_p(s;\tau)\big|_{s=0}
\right).
\end{equation}
In analogy to the compact case we now define the analytic torsion 
$T_X(\tau)\in\R^+$ associated to the the flat bundle
$E_{\tau}$, equipped with the admissible metric, by
\begin{equation}\label{anator3}
T_{X}(\tau):=\prod_{p=0}^{d}\det\Delta_{p}(\tau)^{(-1)^{p+1}p/2}.
\end{equation}
Let $K(t,\tau)$ be defined by \eqref{ktau5}. 
If $\tau\not\cong\tau_\theta$, then $K(t,\tau)=O(e^{-ct})$ as $t\to\infty$, and
the analytic torsion is given by
\begin{align}\label{Def Tor}
\log{T_{X}(\tau)}=\frac{1}{2}\frac{d}{ds}\biggr|_{s=0}\left(\frac{1}{\Gamma(s)}
\int_{0}^{\infty}t^{s-1}K(t,\tau)\,dt\right),
\end{align}
where  the right hand side is defined near $s=0$ by analytic continuation. 

Now assume that  $d=2n+2$. We use \eqref{Def Tor} as the definition 
of $T_X(\tau)$. Let $h_p(\tau):=\dim(\ker\Delta_p(\tau)\cap L^2)$ and let
\begin{align*}
h(\tau):=\sum_{p=0}^d(-1)^pp h_p(\tau).
\end{align*}
Then it follows from \eqref{regtrace3} and Lemma \ref{lowerbd1} that there
exists a constant $c>0$ such that
\begin{align}\label{decK}
K(t,\tau)-h(\tau)=O(e^{-ct}),\quad t\to\infty. 
\end{align}
Next we use \eqref{FTK} to determine the short-time asymptotics of $K(t,\tau)$
and to prove Proposition \ref{toreven}.
To compute the terms on the right hand side of \eqref{FTK}, we note that by
\cite[Lemma 4.1]{MP} 
we have
\begin{equation}\label{vanish}
\Theta_{\sigma,\lambda}(k_t^\tau)=0,\quad \forall \sigma\in\hat
M,\,\lambda\in\R.
\end{equation}
This result immediately implies $H(k_t^\tau)=0$ by \eqref{Hyperb}, 
$T(k_t^\tau)=0$ by \eqref{Fouriertrafo T},  and $\mathcal{I}(k_t^\tau)=0$ by
Theorem \ref{FTorbint}. The identity contribution is given by
\[
I(k_t^\tau)=\vol(X)k_t^\tau(1).
\]
Since $k_t^\tau$ is a $K$-finite function in $\mathcal{C}(G)$, the Plancherel
Theorem can be
applied to $k_t^\tau$ by \cite[Theorem 3]{HC}. Thus by \cite[Theorem
13.5]{Knapp} and \eqref{vanish} we have
\[
k_t^\tau(1)=\sum_{\pi\in\hat G_d}a(\pi)\Theta_\pi(k_t^\tau),
\]
where $\hat G_d$ denotes the discrete series and $a(\pi)\in\C$. Since
$k_t^\tau$ is $K$-finite, the sum is finite. In \cite[Section 5]{MP} it was
shown that for each $\pi\in\hat G_d$, $\Theta_\pi(k_t^\tau)$ is independent
of $t>0$. This implies that $I(k_t^\tau)$ is independent of $t$. Summarizing,
it follows from \eqref{FTK} that there exists $c(\tau)\in\C$ such that
\begin{align}\label{KJ}
K(t,\tau)=c(\tau)+J(k_t^\tau).
\end{align}
Next we investigate $J(k_t^\tau)$. Using \eqref{kernel5} and \eqref{reprJ},
we have
\begin{equation}\label{jtau}
\begin{split}
J(k_t^\tau)=-\frac{\kappa(X)}{4\pi i}\sum_{p=1}^d(-1)^p p 
\sum_{\substack{\nu\in\hat{K}\\\left[\nu_p(\tau):\nu\right]\neq0}}&\sum_{
\sigma\in\hat M}
[\nu:\sigma]\dim(\sigma)e^{-t(\tau(\Omega)-c(\sigma))}\\
&\cdot\int_{D_\epsilon}e^{-tz^2}c_\nu(\sigma:z)^{-1}\frac{d}{dz}
c_\nu(\sigma:z)\;dz.
\end{split}
\end{equation}
Thus by Lemma \ref{lowerbd1} one has
\begin{align*}
J(k_t^\tau)=O(e^{-ct}),\:t\to\infty
\end{align*}
for some constant $c>0$.
Using  \eqref{decK} and \eqref{KJ} it follows that $c(\tau)=h(\tau)$ 
and we get
\begin{align}\label{KeqJ}
K(t,\tau)-h(\tau)=J(k_t^\tau).
\end{align}
For the short-time asymptotic of $K(t,\tau)$, we use
equation \eqref{cfunktion'}, Lemma \ref{Lemas1}, Lemma \ref{Lemas2} and
\eqref{KeqJ}.
This implies that there exist $a_j,b_j\in\C$ such that
\begin{align*}
K(t,\tau)\sim \sum_{j=0}^\infty a_jt^{j-1/2}+\sum_{j=0}^\infty b_j
t^{j-1/2}\log{t}
+\sum_{j=0}^\infty c_jt^{j}
\end{align*}
as $t\to 0$. Together with \eqref{decK} it follows that the integral
\begin{align*}
\int_0^\infty t^{s-1}\left(K(t,\tau)-h(\tau)\right)dt
\end{align*}
converges for $\Real(s)\gg 0$ and admits a meromorphic continuation to $s\in\C$
with at most a simple pole at $s=0$. Then in analogy with \eqref{Def Tor}, we 
define the analytic torsion $T_{X}(\tau)\in\R^+$ of $E_\tau$ with respect to 
the admissible metric by 
\begin{align*}
T_{X}(\tau)=\exp{\left(\frac{1}{2}\frac{d}{ds}\left(\frac{1}{
\Gamma(s)}
\int_{0}^{\infty}t^{s-1}K(t,\tau)\,dt\right)\bigg|_{s=0}\right)}.
\end{align*}
Let $\tau=\tau_\lambda$ be an irreducible finite-dimensional representation of
$G$
with highest weight $\lambda\in\Lambda(G)$. Using \eqref{jtau}
it follows that there exist a function 
$\psi\colon \R^+\times\Lambda(G)\to \R$
such that 
\[
J(k_t^{\tau_\lambda})=\kappa(X)\psi(t,\lambda)
\]
for all even-dimensional $X$ and $\lambda\in\Lambda(G)$. For $\lambda\in
\Lambda(G)$ let
\[
\Phi(\lambda):=\frac{1}{2}\frac{d}{ds}\left(\frac{1}{\Gamma(s)}
\int_0^\infty \psi(t,\lambda)t^{s-1}\;dt\right)\bigg|_{s=0},
\]
where the value at $s=0$ is defined by analytic continuation. Then by
the definition of $T_X(\tau)$ we have
\[
\log T_X(\tau_\lambda)=\kappa(X)\Phi(\lambda)
\]
for all even-dimensional $X$ and $\lambda\in\Lambda(G)$. This proves Proposition
\ref{toreven}.

\section{Virtual heat kernels}\label{secvir}
\setcounter{equation}{0}

In order to deal with the Mellin transform of the terms on the right hand side
of \eqref{Spurf} we express $k_t^\tau$ in terms of certain auxiliary 
heat kernels which are easier to handle.
These functions first occurred in \cite{Bunke} in a different context. To begin
with, we need some preparation. In this section we assume that  $d=2n+1$.

Let $\tau\in\hat{G}$ and let  $\Lambda(\tau)=\tau_{1}e_{1}+\dots
+\tau_{n+1}e_{n+1}$ be its highest weight. For $w\in W$ let $l(w)$ denote its 
length with respect to 
the simple roots which define the positive roots above. Let 
\begin{align*}
W^{1}:=\{w\in W_{G}\colon w^{-1}\alpha>0\:\forall \alpha\in
\Delta(\mathfrak{m}_{\C},\mathfrak{b}_{\C})\}
\end{align*}
Let $V_{\tau}$ be the representation space of $\tau$. For $k=0,\dots,2n$ let 
$H^{k}(\overline{\mathfrak{n}},V_{\tau})$ be the cohomology of 
$\overline{\mathfrak{n}}$ with coefficients in $V_{\tau}$. Then 
$H^{k}(\overline{\mathfrak{n}},V_{\tau})$ is an $MA$ module. 
For $w\in W^1$ let $V_{\tau(w)}$ be the $MA$ module of highest weight 
$w(\Lambda(\tau)+\rho_{G})-\rho_{G}$. By a theorem of Kostant 
(see \cite[Theorem III.3]{Borel}), it follows that as $MA$-modules one has
\begin{align*}
H^{k}(\overline{\mathfrak{n}};V_{\tau})\cong\sum_{\substack{w\in W^{1}\\ 
l(w)=k}}V_{\tau(w)},
\end{align*}
Note that $\bar\nf\cong \nf^*$ as $MA$-modules. Using the Poincare 
principle \cite[(7.2.3)]{Kostant}, we get
\begin{equation}\label{kostant1}
\sum_{k=0}^{2n}(-1)^{k}\Lambda^{k}\nf^*
\otimes V_\tau=\sum_{w\in W^{1}}(-1)^{l(w)}V_{\tau(w)}.
\end{equation}
as $MA$-modules.

For $w\in W^{1}$ let $\sigma_{\tau,w}$ be the representation of $M$ with 
highest weight 
\begin{equation}\label{sigmatauw}
\Lambda(\sigma_{\tau,w}):=w(\Lambda(\tau)
+\rho_{G})|_{\mathfrak{b}_{\mathbb{C}}}-\rho_{M}
\end{equation}
 and let  $\lambda_{\tau,w}\in\mathbb{C}$ such that 
\begin{equation}\label{lambdatauw}
w(\Lambda(\tau)+\rho_{G})|_{\mathfrak{a}_{\mathbb{C}}}=\lambda_{\tau,w}e_{1}.
\end{equation}
For $k=0,\dots n$ let
\begin{align}\label{lambdatau}
\lambda_{\tau,k}=\tau_{k+1}+n-k
\end{align}
and $\sigma_{\tau,k}$ be the representation of $G$ with highest weight
\begin{align}\label{sigmatau}
\Lambda_{\sigma_{\tau}^{k}}:=(\tau_{1}+1)e_{2}+\dots+(\tau_{k}+1)e_{k+1}
+\tau_{k+2}e_{k+2}+\dots+\tau_{n+1}e_{n+1}.
\end{align}
Then by the computations in \cite[Chapter VI.3]{Borel} one has
\begin{equation}\label{lambdadecom}
\begin{split}
\{(\lambda_{\tau,w},\sigma_{\tau,w},l(w))\colon w\in W^{1}\}
&=\{(\lambda_{\tau,k},\sigma_{\tau,k},k)\colon k=0,\dots,n\}\\
&\sqcup\{(-\lambda_{\tau,k},w_{0}\sigma_{\tau,k},2n-k)\colon k=0,\dots,n\}.
\end{split}
\end{equation}

We will also need the following lemma.
\begin{lem}\label{Aussage uber Casimirew}
For every $w\in W^{1}$ one has
\begin{align*}
\tau(\Omega)=\lambda_{\tau,w}^{2}+c(\sigma_{\tau,w}).
\end{align*}
\end{lem}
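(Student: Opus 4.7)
The plan is a direct computation based on rewriting $\tau(\Omega)$ and $c(\sigma_{\tau,w})$ as squared norms of weights in $\mathfrak{h}_\C^*$ and then invoking $W_G$-invariance of the inner product. Throughout, I equip $\mathfrak{h}_\C^*$ with the symmetric bilinear form $\langle\cdot,\cdot\rangle$ from \eqref{killnorm}, under which $\{e_1,\dots,e_{n+1}\}$ is orthonormal, so that $\|\cdot\|^2 := \langle\cdot,\cdot\rangle$ is preserved by the Weyl group $W_G$ (which acts by permutations and sign changes of the $e_j$).

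First, I rewrite the Casimir eigenvalue. By \eqref{tauomega},
\[
\tau(\Omega) = \sum_{j=1}^{n+1}(\tau_j+\rho_j)^2 - \sum_{j=1}^{n+1}\rho_j^2 = \|\Lambda(\tau)+\rho_G\|^2 - \|\rho_G\|^2,
\]
and since the right-hand side is $W_G$-invariant, for any $w\in W^1\subset W_G$ we have
\[
\tau(\Omega) = \|w(\Lambda(\tau)+\rho_G)\|^2 - \|\rho_G\|^2.
\]

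Next, I decompose $w(\Lambda(\tau)+\rho_G)$ along $\mathfrak{h}_\C = \mathfrak{a}_\C \oplus \mathfrak{b}_\C$. By \eqref{lambdatauw} the $\mathfrak{a}_\C$-component equals $\lambda_{\tau,w}e_1$, contributing $\lambda_{\tau,w}^2$ to $\|w(\Lambda(\tau)+\rho_G)\|^2$, while by \eqref{sigmatauw} the $\mathfrak{b}_\C$-component equals $\Lambda(\sigma_{\tau,w})+\rho_M$. Thus
\[
\|w(\Lambda(\tau)+\rho_G)\|^2 = \lambda_{\tau,w}^2 + \|\Lambda(\sigma_{\tau,w})+\rho_M\|^2.
\]

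Finally, from \eqref{rho} one has $\rho_G = n\,e_1 + \rho_M$ with $e_1 \perp \mathfrak{b}^*$, so $\|\rho_G\|^2 = \rho_1^2 + \|\rho_M\|^2 = \sum_{j=1}^{n+1}\rho_j^2$. Combining this with the formula \eqref{csigma} for $c(\sigma_{\tau,w})$ in terms of the coordinates $k_j(\sigma_{\tau,w})+\rho_j$ of $\Lambda(\sigma_{\tau,w})+\rho_M$ with respect to the orthonormal basis $\{e_2,\dots,e_{n+1}\}$, gives
\[
\|\Lambda(\sigma_{\tau,w})+\rho_M\|^2 - \sum_{j=1}^{n+1}\rho_j^2 = c(\sigma_{\tau,w}).
\]
Assembling the three displays yields $\tau(\Omega) = \lambda_{\tau,w}^2 + c(\sigma_{\tau,w})$. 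There is no real obstacle here — the content is purely bookkeeping — but one should be careful that the inner product used in the Casimir eigenvalue formula \eqref{tauomega} is the same standard one in which $\{e_j\}$ is orthonormal, so that the $W_G$-invariance step is legitimate.
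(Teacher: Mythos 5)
Your proof is correct and follows essentially the same route as the paper: the paper invokes the Harish-Chandra isomorphism (with $\gamma(\Omega)=\sum H_j^2-\sum\rho_j^2$) and the Weyl-invariance $\chi_\Lambda=\chi_{w\Lambda}$, which is precisely the statement that the functional $\Lambda\mapsto\|\Lambda\|^2-\|\rho_G\|^2$ you compute with is $W_G$-invariant. Your version just spells out the same calculation in coordinate form rather than in Harish-Chandra homomorphism language.
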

\begin{proof}
Let $I(\hL_\C)$ be the Weyl group invariant elements of the symmetric algebra
$S(\hL_\C)$ of $\hL_\C$. Let
\[
\gamma\colon Z(\gL_\C)\to I(\hL_\C)
\]
be the Harish-Chandra isomorphism \cite[Section VIII,5]{Knapp}. A standard 
computation gives
\begin{equation}\label{gammacas}
\gamma(\Omega)=\sum_{j=1}^{n+1} H_j^2-\sum_{j=1}^{n+1} \rho_j^2.
\end{equation}
Each $\Lambda\in
\hL_\C^*$ defines a homomorphism $\chi_\Lambda\colon Z(\gL_\C)\to \C$ by
\[
\chi_\Lambda(Z):=\Lambda(\gamma(Z)).
\]
Then we have
\[
\tau(\Omega)=\chi_{\Lambda(\tau)+\rho_G}(\Omega)
\]
(see \cite[Section VIII,6]{Knapp}). Using that 
$\chi_\Lambda=\chi_{w\Lambda}$, $w\in W$
and \eqref{gammacas}, we get
\begin{align*}
\tau(\Omega)=&\chi_{\Lambda(\tau)+\rho_{G}}(\Omega) 
=\chi_{w(\Lambda(\tau)+\rho_{G})}(\Omega)
=\chi_{\Lambda(\sigma_{\tau,w})+\rho_{M}+\lambda_{\tau}(w)e_{1}}(\Omega)
=\lambda_{\tau,w}^{2}+c(\sigma_{\tau,w}).
\end{align*}
\end{proof}

Fix $\sigma\in\hat{M}$ and assume that $\sigma\neq w_{0}\sigma$. For $\nu\in
\hat K$ let $m_\nu(\sigma)\in\{-1,0,1\}$ be defined by \eqref{multipl1}.
Let $H^\nu_t$ be the kernel of $e^{-t\tilde A_\nu}$ as in \eqref{kernelcov} and 
let $h_t^\nu:=\tr H_t^\nu$. Put
\begin{align}\label{Defh}
h^{\sigma}_{t}(g):=e^{-tc(\sigma)}\sum_{\substack{\nu\\
m_{\nu}(\sigma)\neq 0}}m_\nu(\sigma)h_t^\nu(g).
\end{align}

\begin{prop}\label{auxk}
For $k=0,\dots,n$ let $\sigma_{\tau,k}$ and $\lambda_{\tau,k}$ be as in
\eqref{lambdadecom}. Then one has
\begin{align*}
k_t^\tau=\sum_{k=0}^{n}(-1)^{k+1}e^{-t\lambda_{\tau,k}^2}h_t^{\sigma_{\tau,k}}
.
\end{align*}
\end{prop}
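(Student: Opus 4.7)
The plan is to expand both sides as $\mathbb{C}$-linear combinations of the functions $h_t^\nu$, $\nu\in\hat K$, and then match coefficients. Since $h_t^{\nu_p(\tau)} = \sum_{\nu\in\hat K}[\nu_p(\tau):\nu]\,h_t^\nu$ (the heat kernel is additive on direct sums of $K$-representations), and since Lemma \ref{Aussage uber Casimirew} gives $\lambda_{\tau,k}^2+c(\sigma_{\tau,k})=\tau(\Omega)$ so that the factor $e^{-t\tau(\Omega)}$ appears on both sides, the proposition reduces to the identity
\[
\sum_{p=0}^{d}(-1)^{p}p\,\nu_{p}(\tau)=\sum_{k=0}^{n}(-1)^{k+1}\,\nu(\sigma_{\tau,k})\qquad\text{in }R(K),
\]
where $\nu(\sigma_{\tau,k})\in R(K)$ is the virtual $K$-representation supplied by Proposition \ref{branching}. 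The tacit hypothesis $\tau_{n+1}\neq 0$ under which $h_t^{\sigma_{\tau,k}}$ is defined via \eqref{Defh} guarantees $\sigma_{\tau,k}\neq w_{0}\sigma_{\tau,k}$ for each $k$ (indeed, \eqref{sigmatau} gives $k_{n+1}(\sigma_{\tau,k})=\tau_{n+1}$, and \eqref{wsigma} then shows the two representations differ), so the second branch of Proposition \ref{branching} applies and $\iota^{*}\nu(\sigma_{\tau,k})=\sigma_{\tau,k}+w_{0}\sigma_{\tau,k}$.

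Next I apply the injection $\iota^{*}\colon R(K)\to R(M)^{W(A)}$ from Proposition \ref{branching}. Restrictions of $K$-representations to $M$ are automatically $W(A)$-invariant, because the non-trivial element $w_{0}$ is realized by an element of $K$. Consequently, proving the identity in $R(K)$ amounts to proving
\[
\sum_{p=0}^{d}(-1)^{p}p\,\nu_{p}(\tau)\big|_{M}=\sum_{k=0}^{n}(-1)^{k+1}\bigl(\sigma_{\tau,k}+w_{0}\sigma_{\tau,k}\bigr)\qquad\text{in }R(M).
\]

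The heart of the argument uses Kostant's theorem. The standard $M$-module isomorphism $\pL\cong\aL\oplus\nf$ (via the projection $X\mapsto\tfrac{1}{2}(X-\theta X)$ of $\nf$ into $\pL$), together with $\dim\aL=1$, gives $\Lambda^{p}\pL^{*}|_{M}\cong\Lambda^{p}\nf^{*}\oplus\Lambda^{p-1}\nf^{*}$. A short reindexing then telescopes the weighted Euler sum into an unweighted one,
\[
\sum_{p=0}^{d}(-1)^{p}p\,\Lambda^{p}\pL^{*}\big|_{M}=-\sum_{q=0}^{2n}(-1)^{q}\Lambda^{q}\nf^{*}\quad\text{in }R(M).
\]
Tensoring with $\tau|_{M}$ and invoking Kostant's identity \eqref{kostant1} (restricted to $M$) rewrites the left hand side as $-\sum_{w\in W^{1}}(-1)^{l(w)}\sigma_{\tau,w}$. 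Finally, the parametrization \eqref{lambdadecom} groups the elements of $W^{1}$ into pairs so that $(\sigma_{\tau,w},l(w))$ runs through $(\sigma_{\tau,k},k)$ and $(w_{0}\sigma_{\tau,k},2n-k)$ for $k=0,\dots,n$; since $(-1)^{2n-k}=(-1)^{k}$, this collapses to $\sum_{k=0}^{n}(-1)^{k+1}(\sigma_{\tau,k}+w_{0}\sigma_{\tau,k})$, as required.

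The only non-routine step is the telescoping that converts the weighted alternating sum $\sum_{p}(-1)^{p}p\,\Lambda^{p}\pL^{*}|_{M}$ into the unweighted Euler characteristic of the Koszul-type complex $\Lambda^{\bullet}\nf^{*}$; once this is in place, Kostant's theorem and the pairing in \eqref{lambdadecom} finish the identification, and no analytic input is needed beyond the linearity of $\nu\mapsto h_{t}^{\nu}$ and the Casimir computation in Lemma \ref{Aussage uber Casimirew}.
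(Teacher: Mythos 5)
Your proof is correct and follows essentially the same route as the paper's: factor out $e^{-t\tau(\Omega)}$ via Lemma \ref{Aussage uber Casimirew}, reduce to an identity in $R(K)$ by the injectivity of $\iota^*\colon R(K)\to R(M)^{W(A)}$, and establish the corresponding $R(M)$ identity by telescoping $\sum_{p}(-1)^{p}p\,\Lambda^{p}\pL^{*}|_{M}$ down to $\sum_{q}(-1)^{q+1}\Lambda^{q}\nf^{*}$ and then invoking Kostant's theorem \eqref{kostant1} together with the pairing \eqref{lambdadecom}. The only superficial difference is the order of the reductions (the paper derives the $R(M)$ identity \eqref{Altsum} first and then lifts it to $R(K)$ via injectivity of $\iota^*$, whereas you state the target $R(K)$ identity up front and descend), which does not affect the substance of the argument.
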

\begin{proof}
It is easy to see that as $M$-modules $\mathfrak{p}$ and
$\mathfrak{a}\oplus\mathfrak{n}$ are equivalent. 
Thus in the sense of $M$-modules one has
\begin{equation}\label{alter2}
\sum_{p=0}^{d}(-1)^p p\,\Lambda^p\mathfrak  p^*
=\sum_{p=0}^{d}(-1)^{p}p\left(\Lambda^p\mathfrak n^*
+\Lambda^{p-1}\mathfrak n^*\right)
=\sum_{p=0}^{d-1}(-1)^{p+1}\Lambda^p\mathfrak n^*.
\end{equation}
Let $i^*\colon R(K)\to R(M)^{W(A)}$ be the restriction map. 
Then it follows from \eqref{alter2}, \eqref{kostant1} and
\eqref{lambdadecom} that we have
\begin{equation}\label{Altsum}
\sum_{p=0}^{d}(-1)^{p}p\hskip2pt i^*(\nu_{p}(\tau))=
\sum_{k=0}^n(-1)^{k+1}(\sigma_{\tau,k}+w_0\sigma_{\tau,k}).
\end{equation}
Since $\tau\neq \tau_{\theta}$ we have $\sigma_{\tau,k}\neq w_0\sigma_{\tau,k}$
for
all $k$ by \eqref{Tau theta}, \eqref{wsigma} and \eqref{sigmatau}.
Thus as in \eqref{multipl1} we can write
\begin{equation*}
\sigma_{\tau,k}+w_0\sigma_{\tau,k}=\sum_{\nu\in\hat
K}m_\nu(\sigma_{\tau,k})i^*(\nu).
\end{equation*}
Moreover, the restriction map $i^*$ is injective. 
Therefore  the following equality holds in $R(K)$ :
\begin{align*}
\sum_{p=0}^{d}(-1)^{p}p\nu_{p}(\tau)=
\sum_{k=0}^n(-1)^{k+1}\sum_{\nu\in\hat{K}}m_{\nu}(\sigma_{\tau,k})\nu.
\end{align*}
Since $R(K)$ is a free abelian group generated by the representations
$\nu\in\hat{K}$,
it follows that for every $\nu\in\hat{K}$ one has
\begin{align}\label{kequ}
\sum_{p=0}^{d}(-1)^p
p\left[\nu_p(\tau):\nu\right]=\sum_{k=0}^n(-1)^{k+1}m_\nu(\sigma_{\tau,k}).
\end{align}
Moreover let us remark that if $\nu,\nu_1,\nu_2$ are finite dimensional unitary
representations
of $K$ with $\nu=\nu_1\oplus\nu_2$ one has 
\begin{align}\label{Keradd}
h_t^\nu=h_t^{\nu_1}+h_t^{\nu_2}.
\end{align}
Thus we obtain
\begin{align*}
k_t^\tau=\sum_{p=0}^{d}(-1)^ppe^{-t\tau(\Omega)}h_t^{\nu_p(\tau)}=&\sum_
{
p=0
} ^{d}(-1)^pp\sum_{\nu\in\hat{K}
}\left[\nu_p(\tau):\nu\right]e^{-t\tau(\Omega)}h_t^\nu\\
=&\sum_{\nu\in\hat{K}}\sum_{p=0}^{d}(-1)^p
p\left[\nu_p(\tau):\nu\right]e^{-t\tau(\Omega)}h_t^\nu\\
=&\sum_{\nu\in\hat{K}}\sum_{k=0}^n(-1)^{k+1}m_\nu(\sigma_{\tau,k})e^{
-t\left(\tau(\Omega)\right)}h_t^\nu\tag{+}\\
=&\sum_{k=0}^n(-1)^{k+1}\sum_{\nu\in\hat{K}}m_\nu(\sigma_{\tau,k})e^{
-t\left(\tau(\Omega)\right)}h_t^\nu\\
=&\sum_{k=0}^n(-1)^{k+1}\sum_{\nu\in\hat{K}}m_\nu(\sigma_{\tau,k})e^{
-t\left(\lambda_{\tau,k}^2+c(\sigma_{\tau,k})\right)}h_t^\nu\tag{++}\\
=&\sum_{k=0}^n(-1)^{k+1}e^{-t\lambda_{\tau,k}^2}h_t^{\sigma_{\tau,k}}\tag{+++}.
\end{align*}
Here the second equality in the first line follows from \eqref{Keradd}, $(+)$ is
\eqref{kequ}, $(++)$ follows from Lemma \ref{Aussage uber Casimirew} and
$(+++)$ follows 
from \eqref{Defh}.
\end{proof}

Finally we compute the Fourier transform of $h_t^{\sigma}$,  
$\sigma\in\hat{M}$. Using \eqref{multipl1} 
and Proposition \ref{fouriertrf2}, it follows that for a principal series 
representation $\pi_{\sigma',\lambda}$, $\lambda\in\mathbb{R}$ we have
\begin{align}\label{FTSup}
\Theta_{\sigma',\lambda}(h^{\sigma}_{t})=e^{-t\lambda^{2}} \quad
\text{for $\sigma'\in\{\sigma, w_{0}\sigma\}$};\qquad
\Theta_{\sigma',\lambda}(h^{\sigma}_{t})=0, \quad\text{otherwise}. 
\end{align}

\section{$L^2$-torsion}\label{secl2}
\setcounter{equation}{0}

In this section we briefly discuss the $L^2$-torsion $T^{(2)}_X(\tau)$. 
We assume that  $d=2n+1$. For
the trivial representation, the $L^2$-torsion of complete hyperbolic manifolds
of finite volume has been studied in \cite{LS}. Although $X$ is
not compact, the $L^2$-torsion can be defined as in the compact case \cite{Lo}.
This follows from the fact that $\widetilde X$ is homogeneous. We assume
that the highest weight of $\tau$ satisfies $\tau_{n+1}\neq0$. Let 
$\widetilde\Delta_p(\tau)$ be the Laplace operator on $\widetilde E_\tau$-valued
$p$-forms on $\widetilde X$. By \eqref{kuga} the kernel of 
$e^{-t\widetilde\Delta_p(\tau)}$ is given by 
$e^{-t\tau(\Omega)}H^{\nu_p(\tau)}_t$ where $H^{\nu_p(\tau)}_t$ is the kernel of
the
operator induced by $-\Omega$ in the homogeneous bundle attached to 
$\nu_p(\tau)$ (see \eqref{kernelx}). Then the $\Gamma$-trace 
of $e^{-t\widetilde\Delta_p(\tau)}$ (see \cite{Lo} for its definition) is given
by
\begin{equation}\label{gammatr0}
\Tr_\Gamma\left(e^{-t\widetilde\Delta_p(\tau)}\right)=\vol(X)
e^{-t\tau(\Omega)}h^{\nu_p(\tau)}_t(1).
\end{equation}
Applying the Plancherel theorem to $h^{\nu_p(\tau)}_t(1)$ and using Proposition 
\ref{fouriertrf2}, we get
\begin{equation}\label{gammatr1}
\Tr_\Gamma\left(e^{-t\widetilde\Delta_p(\tau)}\right)
=\vol(X)\sum_{\substack{\sigma\in\hat{M}\\
\left[\nu_p(\tau):\sigma\right]\neq0}}
e^{-t(\tau(\Omega)-c(\sigma))}\int_{\mathbb{R}}e^{-t\lambda^2}P_{\sigma}
(i\lambda)\;d\lambda.
\end{equation}
Since $P_\sigma(z)$ is an even polynomial of degree $d-1$, we get an asymptotic
expansion 
\begin{equation}\label{gammatr2}
\Tr_\Gamma\left(e^{-t\widetilde\Delta_p(\tau)}\right)\sim\sum_{k=0}^\infty a_j
t^{j-d/2},
\quad t\to 0.
\end{equation}
Since we are assuming that the highest weight of $\tau$ satisfies
$\tau_{n+1}\neq0$, it follows from Lemma \eqref{lowerbd1} and \eqref{gammatr1} 
there exists $c>0$ such that
\begin{equation}\label{gammatr3}
\Tr_\Gamma\left(e^{-t\widetilde\Delta_p(\tau)}\right)=O\left(e^{-ct}\right)
\end{equation}
as $t\to\infty$. Therefore the Mellin transform
\[
\int_0^\infty\Tr_\Gamma\left(e^{-t\widetilde\Delta_p(\tau)}\right)t^{s-1}\;dt
\]
converges absolutely and uniformly on compact subsets of $\Re(s)>d/2$ and
admits a meromorphic extension to $\C$. Moreover, since the asymptotic expansion
\eqref{gammatr2} has no constant 
term, $\mathcal{M}I(s,\tau)$ is regular at $s=0$.
So we can define the $L^2$-torsion $T^{(2)}_X(\tau)\in\R^+$ by
\begin{equation}\label{def-l2tor}
\log
T^{(2)}_X(\tau)=\frac{1}{2}\frac{d}{ds}\left(\frac{1}{\Gamma(s)}\sum_{p=1}^d
(-1)^p p
\int_\R\Tr_\Gamma\left(e^{-t\widetilde\Delta_p(\tau)}\right)\;t^{s-1}\,dt
\right)\bigg|_{s=0}.
\end{equation}
Now recall that the contribution of the identity $I(k_t^\tau)$ to the right 
hand side of \eqref{FTK} is given by 
\[
I(t,\tau):=\vol(X)k_t^\tau(1).
\]
Let 
\[
\mathcal{M}I(s,\tau):=\int_0^\infty I(t,\tau)t^{s-1}\;dt
\]
be the Mellin transform. 
Using \eqref{kernel5} and the considerations above, it follows that the 
integral converges for $\Re(s)>d/2$ and has a meromorphic extension to $\C$ 
which is regular at $s=0$. Let $\mathcal{M}I(\tau)$
be its value at $s=0$. Then by \eqref{kernel5}, \eqref{gammatr0}, and
\eqref{def-l2tor} we have
\begin{equation}\label{mellin}
\log T^{(2)}_X(\tau)=\frac{1}{2}\mathcal{M}I(\tau).
\end{equation}
Our next goal is to compute $\mathcal{M}I(\tau)$. 
Let $\sigma_{\tau,k}$ and $\lambda_{\tau,k}$, $k=0,\dots,n$, be defined by
\eqref{lambdatau} and \eqref{sigmatau}, respectively. Then for every $k$ we 
have $\sigma_{\tau,k}\neq w_0\sigma_{\tau,k}$. Let $P_{\sigma_{\tau},k}$ be the 
Plancherel polynomial. Using 
Proposition \ref{auxk}, the Plancherel theorem, \eqref{FTSup} and 
\eqref{P-Polynom is W-inv}, we obtain
\begin{equation}\label{identcontr2}
I(t,\tau)=2\vol(X)\sum_{k=0}^n(-1)^{k+1}e^{-t\lambda_{\tau,k}^2}
\int_\R e^{-t\lambda^2} P_{\sigma_{\tau},k}(i\lambda)\,d\lambda.
\end{equation}
To evaluate the Mellin transform of $I(t,\tau)$ at $s=0$, we use the following
elementary lemma.
\begin{lem}\label{l2tor2}
Let $P$ be an even polynomial.
Let $c>0$ and $\sigma\in\hat{M}$. 
For $\Real(s)>\frac{d}{2}$ let 
\begin{align*}
E(s):=\int_{0}^{\infty}{t^{s-1}e^{-tc^{2}}
\int_{\mathbb{R}}{e^{-t\lambda^{2}}P(i\lambda)d\lambda} dt}.
\end{align*}
Then $E(s)$ has a meromorphic continuation to $\C$. Moreover $E(s)$
is regular at zero and
\begin{align*}
E(0)=-2\pi\int_{0}^{c}P(\lambda)d\lambda.
\end{align*}
\end{lem}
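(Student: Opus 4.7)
The plan is to reduce to the case of a single even monomial and then carry out the two integrations explicitly. Writing $P(z)=\sum_{j=0}^{N}a_{j}z^{2j}$, we have $P(i\lambda)=\sum_{j=0}^{N}(-1)^{j}a_{j}\lambda^{2j}$, so by linearity it suffices to treat $P(z)=z^{2j}$ for a single $j\ge 0$ and then sum. For such a $P$ the inner Gaussian integral is standard:
\[
\int_{\R}e^{-t\lambda^{2}}(i\lambda)^{2j}\,d\lambda=(-1)^{j}\,\Gamma(j+\tfrac{1}{2})\,t^{-j-\frac{1}{2}}.
\]
Substituting this into the definition and interchanging the order of integration (justified by absolute convergence for $\Re(s)$ sufficiently large), one gets
\[
E(s)=(-1)^{j}\,\Gamma(j+\tfrac{1}{2})\int_{0}^{\infty}t^{s-j-\frac{3}{2}}e^{-tc^{2}}\,dt=(-1)^{j}\,\Gamma(j+\tfrac{1}{2})\,\Gamma(s-j-\tfrac{1}{2})\,c^{2j+1-2s}.
\]

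This expression already provides the meromorphic continuation to all of $\C$, since the right-hand side is a product of a polynomial in $c^{-2s}$ and the Gamma factor $\Gamma(s-j-\frac{1}{2})$, which has simple poles only at $s=j+\frac{1}{2}-k$ for $k\in\N_{0}$. In particular $s=0$ is not a pole (for integer $j\ge 0$ the quantity $-j-\frac{1}{2}$ is never a non-positive integer), so $E(s)$ is regular at $s=0$ as claimed.

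To evaluate $E(0)$ I would use the reflection formula $\Gamma(z)\Gamma(1-z)=\pi/\sin(\pi z)$ with $z=-j-\frac{1}{2}$, giving
\[
\Gamma(-j-\tfrac{1}{2})=\frac{\pi}{\sin(\pi(-j-\tfrac{1}{2}))\Gamma(j+\tfrac{3}{2})}=\frac{(-1)^{j+1}\pi}{(j+\tfrac{1}{2})\Gamma(j+\tfrac{1}{2})},
\]
so that
\[
(-1)^{j}\Gamma(j+\tfrac{1}{2})\Gamma(-j-\tfrac{1}{2})=-\frac{2\pi}{2j+1}.
\]
Thus for the monomial $P(z)=z^{2j}$ we obtain $E(0)=-\tfrac{2\pi}{2j+1}c^{2j+1}=-2\pi\int_{0}^{c}\lambda^{2j}\,d\lambda$. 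Summing over $j$ with coefficients $a_{j}$ gives the desired identity $E(0)=-2\pi\int_{0}^{c}P(\lambda)\,d\lambda$.

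There is no real obstacle here; the only point requiring a little care is justifying the Fubini swap in the regime $\Re(s)\gg 0$ (which follows at once from $e^{-tc^{2}}t^{\Re(s)-j-3/2}$ being integrable on $(0,\infty)$ for $\Re(s)>j+\tfrac{1}{2}$) and then appealing to the identity theorem to transport the closed-form evaluation to the meromorphic continuation at $s=0$.
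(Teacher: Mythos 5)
Your computation is correct, and it gives a self-contained proof where the paper merely cites Lemmas 2 and 3 of Fried's article on analytic torsion and closed geodesics. The reduction to a single even monomial, the evaluation of the Gaussian moment $\int_{\R}e^{-t\lambda^2}\lambda^{2j}\,d\lambda=\Gamma(j+\tfrac12)t^{-j-1/2}$, the Mellin integral giving $\Gamma(s-j-\tfrac12)c^{2j+1-2s}$, and the reflection-formula evaluation at $s=0$ are all carried out correctly, and the Fubini justification for $\Re(s)\gg0$ is adequate. The closed form $E(s)=(-1)^{j}\Gamma(j+\tfrac12)\Gamma(s-j-\tfrac12)c^{2j+1-2s}$ makes both the meromorphic continuation and the regularity at $s=0$ transparent (the poles sit at half-integers $s=j+\tfrac12-k$, never at $0$), which is arguably cleaner than chasing the statements in Fried; the trade-off is that the paper's reference handles the analogous computation in a broader framework and so avoids redoing the Gamma-function bookkeeping. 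Your argument is a good explicit substitute and adds nothing that would need to be checked beyond routine classical identities.
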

\begin{proof}
This follows from Lemma 2 and Lemma 3 in \cite{Fried}. 
\end{proof}
We have $\lambda_{\tau,k}>0$ for every $k$. Applying Lemma \eqref{l2tor2} to the
right hand side of \eqref{identcontr2}
we obtain
\begin{align*}
\mathcal{M}I(\tau)=4\pi\vol(X)\sum_{k=0}^{n}(-1)^k\int_{0}^{\lambda_{\tau,
k}}P_{\sigma_{\tau,k}}(\lambda)d\lambda.
\end{align*}
Together with \eqref{mellin} we get the following proposition.
\begin{prop}\label{l2tor4}
Let $\tau$ be such that $\tau_{n+1}\neq0$. Then we have
\[
\log T^{(2)}_X(\tau)=2\pi\vol(X)\sum_{k=0}^n(-1)^k\int_0^{\lambda_{\tau,k}}
P_{\sigma_{\tau,k}}(\lambda)\;d\lambda.
\]
\end{prop}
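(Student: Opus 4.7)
The plan is to combine the identity $\log T^{(2)}_X(\tau)=\tfrac{1}{2}\mathcal{M}I(\tau)$ from \eqref{mellin} with the explicit expression \eqref{identcontr2} for $I(t,\tau)$ and invoke Lemma \ref{l2tor2} term by term. Since all essential ingredients have already been assembled in the paragraphs preceding the proposition, the proof amounts to a short computation followed by bookkeeping.

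First I would substitute \eqref{identcontr2} into $\mathcal{M}I(s,\tau)=\int_{0}^{\infty}I(t,\tau)\,t^{s-1}\,dt$. The sum over $k$ is finite, so for $\Re(s)>d/2$ it can be pulled outside the integral, giving
\begin{equation*}
\mathcal{M}I(s,\tau)=2\vol(X)\sum_{k=0}^{n}(-1)^{k+1}\int_{0}^{\infty}t^{s-1}e^{-t\lambda_{\tau,k}^{2}}\int_{\R}e^{-t\lambda^{2}}P_{\sigma_{\tau,k}}(i\lambda)\,d\lambda\,dt.
\end{equation*}
Each summand is precisely the function $E(s)$ of Lemma \ref{l2tor2}, with $c=\lambda_{\tau,k}$ and $P=P_{\sigma_{\tau,k}}$; note that $P_{\sigma_{\tau,k}}$ is indeed an even polynomial, as follows from \eqref{plancherelmass} combined with the structure of $\Delta^{+}(\gL_{\C},\hL_{\C})$. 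The lemma supplies the meromorphic continuation of each term, shows regularity at $s=0$, and evaluates the value there as $-2\pi\int_{0}^{\lambda_{\tau,k}}P_{\sigma_{\tau,k}}(\lambda)\,d\lambda$. Assembling the contributions yields
\begin{equation*}
\mathcal{M}I(\tau)=4\pi\vol(X)\sum_{k=0}^{n}(-1)^{k}\int_{0}^{\lambda_{\tau,k}}P_{\sigma_{\tau,k}}(\lambda)\,d\lambda,
\end{equation*}
and dividing by $2$ using \eqref{mellin} gives the claimed identity.

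The only point to verify carefully is that Lemma \ref{l2tor2} is applicable, i.e.\ that $\lambda_{\tau,k}>0$ for every $k$. By \eqref{lambdatau}, $\lambda_{\tau,k}=\tau_{k+1}+n-k$; for $k=0,\dots,n-1$ one has $\tau_{k+1}\geq 0$ and $n-k\geq 1$, whence $\lambda_{\tau,k}\geq 1$. For $k=n$ we have $\lambda_{\tau,n}=\tau_{n+1}$, which is nonzero by the standing hypothesis $\tau_{n+1}\neq 0$; using the symmetry $\tau\mapsto \tau_{\theta}$ from \eqref{Tau theta} if necessary (which replaces $\tau_{n+1}$ by $-\tau_{n+1}$ and, in view of \eqref{wsigma}, merely permutes the indexing data $(\sigma_{\tau,k},\lambda_{\tau,k})$ by the action of $w_{0}$), we may assume $\lambda_{\tau,n}>0$. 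This is the only mild technical issue; everything else is a routine invocation of Lemma \ref{l2tor2} together with \eqref{mellin}.
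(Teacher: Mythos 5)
Your proof follows exactly the same route as the paper: substitute the explicit formula \eqref{identcontr2} for $I(t,\tau)$ into the Mellin transform, apply Lemma \ref{l2tor2} term by term with $c=\lambda_{\tau,k}$ and $P=P_{\sigma_{\tau,k}}$, and divide by two via \eqref{mellin}. The extra care you take about $\lambda_{\tau,k}>0$ is reasonable (the paper simply asserts it), though the remark that $\tau\mapsto\tau_\theta$ ``merely permutes the indexing data'' is slightly imprecise -- for $k<n$ it sends $(\sigma_{\tau,k},\lambda_{\tau,k})$ to $(w_0\sigma_{\tau,k},\lambda_{\tau,k})$, while for $k=n$ it fixes $\sigma_{\tau,n}$ and negates $\lambda_{\tau,n}$; this still leaves the formula unchanged by \eqref{P-Polynom is W-inv} and the evenness of $P_{\sigma}$, but it is not a permutation in the literal sense.
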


\section{Proof of the main results}\label{Beweis}
\setcounter{equation}{0}
In this section we assume that $d=\dim(X)$ is odd. Let $d=2n+1$. 
We fix natural numbers $\tau_{1},\dots,\tau_{n+1}$ with $\tau_{1}\geq\tau_{2}
\geq\dots\geq\tau_{n+1}$. For $m\in\mathbb{N}$ we let $\tau(m)$ be the
representation of $G$ with highest weight
$(m+\tau_{1})e_{1}+\dots+(m+\tau_{n+1})e_{n+1}$. Then $\tau(m)$ satisfies
$\tau(m)\circ\theta\not\sim
\tau$. Hence the analytic torsion $T_{X}(\tau(m))$ is defined by 
\eqref{Def Tor}.

Our goal is to study the asymptotic behavior of $\log T_{X}(\tau(m))$ as
$m\rightarrow \infty$.
To begin with, for $k\in\{0,\dots,n\}$ let $\lambda_{\tau(m),k}\in\R$ and
$\sigma_{\tau(m),k}\in\hat{M}$ with highest 
weight $\Lambda(\sigma_{\tau(m),k})$ be defined as in \eqref{lambdatau} resp.
\eqref{sigmatau}. One has
\begin{equation}\label{sigmaneu}
\begin{split}
\Lambda(\sigma_{\tau(m),k})=&(m+\tau_{1}+1)e_{2}+\dots+(m+\tau_{k}+1)e_{k+1}
\\
&+(m+\tau_{k+2})e_{k+2}+\dots+(m+\tau_{n+1})e_{n+1}
\end{split}
\end{equation}
and
\begin{align}\label{lambdaneu}
\lambda_{\tau(m),k}=m+\tau_{k+1}+n-k.
\end{align}
We use the decomposition \eqref{FTK} of  $K(t,\tau(m))$  and study the
Mellin transform of each term on the right hand side separately. 
First we consider the identity contribution which is given by
\[
I(t,\tau(m)):=\vol(X)k_t^{\tau(m)}(1).
\]
Its Mellin transform $\mathcal{M}I(\tau(m))$ has been computed in the
previous section and the contribution to $\log T_X(\tau(m))$ equals
\[
\frac{1}{2}\mathcal{M}I(\tau(m))=\log T^{(2)}_X(\tau(m)).
\]  
In order to study the asymptotic behavior of $\log T^{(2)}_X(\tau(m))$ as
$m\to\infty$, we use Proposition \ref{l2tor4}.  
Let
\[
P_\tau(m):=2\pi\sum_{k=0}^{n}(-1)^k\int_{0}^{\lambda_{\tau(m),
k}}P_{\sigma_{\tau(m),k}}(\lambda)d\lambda.
\]
Using \eqref{lambdaneu} and the explicit form of the Plancherel polynomial
$P_{\sigma_{\tau(m),k}}(\lambda)$, it follows that $P_\tau(m)$ is a polynomial
in $m$
of degree $n(n+1)/2+1$. The coefficient of the leading power has been determined
at the end of section 5 of \cite{MP}. Let $C(n)$ be constant given by 
\eqref{cn}.
Combining the results above with the computations of the leading 
coefficient of $P_\tau(m)$ in \cite{MP}, we get
\begin{prop}\label{Idkontr}
We have
\begin{align*}
\log T^{(2)}_X(\tau(m))=-C(n)\vol(X)m\dim{\tau(m)}+O(m^{\frac{n(n+1)}{2}}),
\end{align*}
as $m\to\infty$. 
\end{prop}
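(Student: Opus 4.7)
The plan is to combine Proposition \ref{l2tor4} with the explicit form of the Plancherel polynomial \eqref{plancherelmass} and then identify the leading term of the resulting polynomial in $m$ by reducing to the computation already performed in \cite{MP}.

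First I would apply Proposition \ref{l2tor4} to $\tau(m)$, which gives
\[
\log T^{(2)}_X(\tau(m)) = \vol(X)\, P_\tau(m),
\]
with $P_\tau(m) = 2\pi\sum_{k=0}^n (-1)^k \int_0^{\lambda_{\tau(m),k}} P_{\sigma_{\tau(m),k}}(\lambda)\,d\lambda$. Inserting \eqref{sigmaneu} and \eqref{lambdaneu} into \eqref{plancherelmass}, each $P_{\sigma_{\tau(m),k}}(\lambda)$ is an even polynomial in $\lambda$ of degree $2n$ whose coefficients are explicit polynomials in $m$: the factors coming from roots $\alpha = e_i \pm e_j$ with $i \geq 2$ contribute a product that is polynomial in $m$ (linear in $m$ for the $e_i + e_j$ factors, constant for the $e_i - e_j$ factors), while the factors coming from $\alpha = e_1 \pm e_j$ contribute the $\lambda$-dependent part. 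Since $\lambda_{\tau(m),k} = m + \tau_{k+1} + n-k$ is linear in $m$, integration in $\lambda$ produces a polynomial in $m$, and hence $P_\tau(m)$ is polynomial in $m$.

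Next I would control the degree. A term-by-term bound gives degree at most $n(n+3)/2 + 1$ in each summand, but the alternating sum over $k \in \{0,\dots,n\}$ together with the structure of $\sigma_{\tau(m),k}$ produces substantial cancellation. The key observation is that the right-hand side of Proposition \ref{l2tor4} is precisely the object analyzed at the end of Section 5 of \cite{MP}, where the analogous identity contribution is examined in the compact case. There the alternating sum is collapsed using the Weyl denominator identity together with the dimension formula, yielding a polynomial of degree exactly $n(n+1)/2 + 1$ whose leading monomial matches the leading monomial of $-C(n)\,m\dim(\tau(m))$ with $C(n)$ as in \eqref{cn}. Since \eqref{weyldim} gives $\dim \tau(m) = C m^{n(n+1)/2} + O(m^{n(n+1)/2 - 1})$, the subleading piece of $P_\tau(m)$ is of order $m^{n(n+1)/2}$, which yields the claimed asymptotic.

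The main obstacle is the explicit computation of the top-degree coefficient: one must verify that after the alternating sum, the ratio of the leading coefficient of $P_\tau(m)$ to that of $m\dim\tau(m)$ is exactly $-C(n)$. This is the content of the end of Section 5 of \cite{MP}, and no modification is needed here since $P_\tau(m)$ is built from exactly the same Plancherel data as in the cocompact setting; the non-compactness of $X$ enters only through the appearance of $\vol(X)$ as an overall factor via \eqref{identcontr2} and \eqref{mellin}. Once this leading-order identification is in place, subtracting off the main term yields the error of order $m^{n(n+1)/2}$, completing the proof.
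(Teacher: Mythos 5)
Your proof follows exactly the paper's route: apply Proposition \ref{l2tor4} to $\tau(m)$, observe via \eqref{plancherelmass}, \eqref{sigmaneu}, and \eqref{lambdaneu} that $P_\tau(m)$ is polynomial in $m$, and invoke the leading-coefficient computation at the end of Section 5 of \cite{MP} to identify the top term as $-C(n)\,m\dim\tau(m)$. The extra remarks about the term-by-term degree bound $n(n+3)/2+1$ and the cancellation down to $n(n+1)/2+1$ are consistent with what the paper delegates to \cite{MP}, so the argument is correct and the same in substance.
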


Thus to prove our main results we have to show that the Mellin transform of the
terms in \eqref{FTK} which are different from the identity contribution are of
lower order as $m\to\infty$.
We begin with the contribution of the hyperbolic term to the analytic torsion.
For $[\gamma]\in C(\Gamma)_s-[1]$ and $\sigma\in\hat M$ let 
$L(\gamma,\sigma)$ be defined by \eqref{hyperbcontr}. Put
\begin{equation}\label{syml}
L_{\sym}(\gamma;\sigma):=L(\gamma;\sigma)+L(\gamma;w_0\sigma).
\end{equation}
Using \eqref{Hyperb}, Proposition \ref{auxk} and \eqref{FTSup}, it follows that
the hyperbolic contribution is given by
\begin{equation}\label{hyperbcont4}
H(t,\tau(m)):=\sum_{k=0}^n (-1)^{k+1}
e^{-t\lambda_{\tau(m),k}^2}\sum_{[\gamma]\in
C(\Gamma)_s-[1]}\frac{\ell(\gamma)}{n_\Gamma(\gamma)} 
L_{\sym}(\gamma;\sigma_{\tau(m),k})\frac{e^{-\ell(\gamma)^2/4t}}{(4\pi
t)^{\frac{1}{2}}}.
\end{equation}
In order to study the Mellin transform of $H(t,\tau(m))$, we use the following
proposition.
\begin{prop}\label{hyperbcontr3}
Let $\lambda>\sqrt{2}n$ and $\sigma\in\hat{M}$. 
For every 
$s\in\mathbb{C}$ the integral
\begin{align}\label{Integral}
G(s,\lambda;\sigma):=\int_{0}^{\infty}t^{s-1}e^{-t\lambda^2}\sum_{
\left[
\gamma\right]\in\CC(\Gamma)_{\s}-\left[
1\right]}
\frac{\ell(\gamma)}{n_{\Gamma}(\gamma)}L(\gamma;\sigma)
\frac{e^{-\ell(\gamma)^{2}/4t}}{(4\pi t)^{\frac{1}{2}}}dt
\end{align}
converges absolutely and is an entire function of $s$. There exists a constant
$C_0$ which is independent of $\sigma$ and $\lambda$ such that
\begin{align}\label{estimat5}
\left|G(0,\lambda;\sigma)\right|\leq C_0 \dim(\sigma).
\end{align}
\end{prop}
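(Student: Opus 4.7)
The plan is to first estimate $|L(\gamma;\sigma)|$ uniformly in $t$, then reduce absolute convergence to counting closed geodesics via Fubini and a modified Bessel integral, and finally obtain the claimed bound at $s=0$ by explicit evaluation. Since $\sigma$ is unitary, $|\Tr\sigma(m_\gamma)|\leq\dim(\sigma)$. Because all eigenvalues of $\Ad(a_\gamma)$ on $\bar{\nf}$ equal $e^{-\ell(\gamma)}$ while those of $\Ad(m_\gamma)$ on $\bar{\nf}_{\C}$ have modulus one,
\[
\bigl|\det\bigl(\Id-\Ad(m_\gamma a_\gamma)|_{\bar{\nf}}\bigr)\bigr|\geq(1-e^{-\ell(\gamma)})^{2n}\geq(1-e^{-\ell_0})^{2n},
\]
where $\ell_0>0$ is a uniform lower bound on the lengths of closed geodesics (which exists because $\Gamma$ is discrete and torsion free). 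Together with \eqref{hyperbcontr} this yields a constant $C_1$ depending only on $n$ and $\ell_0$ such that $|L(\gamma;\sigma)|\leq C_1\dim(\sigma)\,e^{-n\ell(\gamma)}$ for every $[\gamma]\in\CC(\Gamma)_{\s}-[1]$.

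I would then pass the absolute value inside the integrand of \eqref{Integral} and exchange sum with integral, so that absolute convergence reduces to bounding
\[
\dim(\sigma)\sum_{[\gamma]}\frac{\ell(\gamma)}{n_\Gamma(\gamma)}e^{-n\ell(\gamma)}\int_0^\infty t^{\Real(s)-3/2}e^{-t\lambda^2-\ell(\gamma)^2/4t}\,\frac{dt}{\sqrt{4\pi}}.
\]
The inner $t$-integral is a classical modified Bessel integral equal to $2(\ell(\gamma)/2\lambda)^{\Real(s)-1/2}K_{\Real(s)-1/2}(\lambda\ell(\gamma))$, whose asymptotic $K_\nu(z)\sim\sqrt{\pi/(2z)}\,e^{-z}$ reduces the $\gamma$-sum to one of the form $\sum_\gamma\ell(\gamma)^{A}e^{-(\lambda+n)\ell(\gamma)}$. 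The prime geodesic upper bound $\#\{[\gamma]\colon\ell(\gamma)\leq R\}\leq Ce^{2nR}$, which holds for finite-volume hyperbolic $(2n+1)$-manifolds satisfying \eqref{a1}, implies convergence of these series whenever $\lambda+n>2n$, which is amply satisfied by the hypothesis $\lambda>\sqrt{2}\,n$. Since the bound is locally uniform in $s$ on every vertical strip, Fubini is legitimate and $G(s,\lambda;\sigma)$ is entire in $s$.

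At $s=0$ the identity $K_{-1/2}(z)=\sqrt{\pi/(2z)}\,e^{-z}$ gives the elementary evaluation
\[
\int_0^\infty t^{-3/2}e^{-t\lambda^2-\ell(\gamma)^2/4t}\,dt=\frac{2\sqrt{\pi}}{\ell(\gamma)}e^{-\lambda\ell(\gamma)},
\]
so the factor $\ell(\gamma)$ in the numerator cancels and one obtains the closed form
\[
G(0,\lambda;\sigma)=\sum_{[\gamma]\in\CC(\Gamma)_{\s}-[1]}\frac{L(\gamma;\sigma)}{n_\Gamma(\gamma)}e^{-\lambda\ell(\gamma)}.
\]
Inserting the estimate on $|L(\gamma;\sigma)|$ and using $\lambda>\sqrt{2}\,n$ one gets
\[
|G(0,\lambda;\sigma)|\leq C_1\dim(\sigma)\sum_{[\gamma]}\frac{e^{-(\lambda+n)\ell(\gamma)}}{n_\Gamma(\gamma)}\leq C_1\dim(\sigma)\sum_{[\gamma]}e^{-(\sqrt{2}+1)n\ell(\gamma)},
\]
and since the summand is monotone decreasing in $\lambda$, the last series is a finite constant $C_0$ independent of both $\sigma$ and $\lambda$. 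The main obstacle lies in the second step: one must apply the prime geodesic upper bound cleanly in the noncompact, finite-volume setting and verify that the dominating estimate is uniform in $s$ on vertical strips so that analyticity on $\C$ is clean; the rest of the argument is essentially the Bessel-integral identity and the length spectrum lower bound $\ell_0>0$.
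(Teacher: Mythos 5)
Your proof is correct, and it takes a genuinely different route from the one in the paper. The paper splits the $t$-integral at $t=1$: for $t\leq 1$ it bounds the series directly using the length gap $\ell(\gamma)\geq c>0$ and the counting estimate $\#\{[\gamma]:\ell(\gamma)\leq R\}\leq Ce^{2nR}$ from \cite[Proposition~5.4]{GaWa}; for $t\geq 1$ it compares the series $f(t)$ with the Selberg trace formula applied to the scalar Laplacian (trivial $\sigma$), isolating the hyperbolic term and using that the remaining spectral and identity terms are bounded for $t\geq 1$, which yields $|f(t)|\leq C\dim(\sigma)e^{tn^2}$. You instead estimate $|L(\gamma;\sigma)|\leq C\dim(\sigma)e^{-n\ell(\gamma)}$ uniformly and carry out the $t$-integral exactly as a modified Bessel function $K_{s-1/2}(\lambda\ell(\gamma))$, then feed its large-argument asymptotics into the same geodesic counting bound; at $s=0$ the identity $K_{-1/2}(z)=\sqrt{\pi/(2z)}e^{-z}$ gives the closed form $G(0,\lambda;\sigma)=\sum_{[\gamma]}\frac{L(\gamma;\sigma)}{n_\Gamma(\gamma)}e^{-\lambda\ell(\gamma)}$, which is just the logarithmic derivative data of a Ruelle-type zeta function. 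What you gain is a more transparent, single-step argument with an explicit closed form at $s=0$ (and the bound $e^{-(\sqrt 2+1)n\ell}$ makes the independence of $C_0$ from $\lambda$ immediate since $\sqrt 2+1>2$); what the paper's route buys is that it never needs the Bessel machinery or a uniform-in-$s$ interchange of sum and integral, at the cost of invoking the trace formula with trivial coefficients as a black box for the $t\geq 1$ regime. One small cosmetic remark: you write the lower bound $\bigl|\det(\Id-\Ad(m_\gamma a_\gamma)|_{\bar\nf})\bigr|\geq(1-e^{-\ell(\gamma)})^{2n}$ with exponent $2n=\dim\bar\nf$, while the paper states exponent $n$; your exponent is the one that follows directly from pairing conjugate eigenvalues, and in any case both are bounded below by a positive constant once $\ell(\gamma)\geq\ell_0>0$, so this has no effect on the argument.
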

\begin{proof}
Let
\begin{align*}
f(t):=\sum_{\left[\gamma\right]\in\CC(\Gamma)_{\s}-\left[
1\right]}\frac{\ell(\gamma)}{n_{\Gamma}(\gamma)}
L(\gamma;\sigma)\frac{e^{-\ell(\gamma)^{2}/4t}}{(4\pi
t)^{\frac{1}{2}}}.
\end{align*}
We have
\begin{align*}
|f(t)|\leq \dim(\sigma)\sum_{\left[\gamma\right]\in\CC(\Gamma)_{\s}-\left[
1\right]}\frac{\ell(\gamma)}
{n_{\Gamma}(\gamma)}L(\gamma;1)
\frac{e^{-\ell(\gamma)^{2}/4t}}{(4\pi t)^{\frac{1}{2}}},
\end{align*}
where $1$ stands for the trivial representation of $M$. Now let $\Delta_{0}$ 
be the Laplace operator acting on $C^{\infty}(X)$ and let $\Delta_{0}^d$ be 
its restriction to the point spectrum. Then the right hand side is exactly
the hyperbolic contribution to the Selberg trace formula for 
$\Tr(e^{-t\Delta_0^d})$. So we can apply the trace formula to
estimate the right hand side. Denote the trivial representation of $K$ by $1$
too.
Then if we apply the trace formula \cite[Theorem 8.4, Theorem 9.3]{Warner}
and use
equation \eqref{csigma}, Proposition \ref{fouriertrf2}, 
equation \eqref{Idcontr}
and equation \eqref{Hyperb}, it follows that there
exist constants $c_1(\Gamma), c_2(\Gamma)$ such that
\begin{align*}
&e^{-tn^2}\sum_{\left[\gamma\right]\in\CC(\Gamma)_{\s}-\left[
1\right]}\frac{\ell(\gamma)}{n_{\Gamma}(\gamma)}L(\gamma;1)
\frac{e^{-\ell(\gamma)^{2}/4t}}{(4\pi
t)^{\frac{1}{2}}}=\Tr\left(e^{-t\Delta_{0}^d}
\right)+\int_{\R}e^{-t(\lambda^{2}+n^2)}\vol(X)P_1(i\lambda)d\lambda\\
-&\int_{\R}e^{-t(\lambda^{2}+n^2)}\left(\psi(1+i\lambda)+c_2(\Gamma)+
\Tr\left(\widetilde{\boldsymbol{C}}
(1,1,-i\lambda)\frac{
d}{dz}\widetilde{\boldsymbol{C}}(1,1,i\lambda)\right)
\right)d\lambda+c_1(\Gamma)e^{-tn^2}.
\end{align*}
The right hand side of this equation
is bounded for $t\ge1$. Thus there exists a constant $C_{1}$ which is 
independent of $\sigma$ such that 
\begin{equation}\label{estim10}
|f(t)|\leq C_{1}\dim(\sigma)\,e^{tn^2},\quad t\ge 1.
\end{equation}
For $\lambda>n$ and $s\in\C$ put 
\[
G_0(s,\lambda;\sigma):=\int_1^\infty t^{s-1}e^{-t\lambda^2}f(t)\;dt.
\]
Then it follows from \eqref{estim10} that  
$G_0(s,\lambda;\sigma)$ is an entire function of $s$ and that for
$\lambda>\sqrt{2}n$
we can estimate 
\begin{equation}\label{estim1}
|G_0(0,\lambda;\sigma)|\le\int_{1}^{\infty}t^{-1}e^{-t\lambda^{2}}|f(t)|\;dt
\le C_1\dim(\sigma)\, 
e^{-\frac{\lambda^2}{4}},\quad \lambda>\sqrt{2}n.
\end{equation}
Next we consider the integral from 0 to 1. To begin with, we need to estimate
$L(\gamma,\sigma)$. By \cite[ Proposition 5.4]{GaWa} there exist a constant
$C_2>0$ such that for $R>0$ one
has
\begin{align}\label{esthypc}
\#\{\left[\gamma\right]\in\CC(\Gamma)_{\s}\colon \ell(\gamma)\leq
R\}\leq C_2e^{2nR}.
\end{align}
Thus if we let
\begin{align}\label{infspec}
c:=\min\{\ell(\gamma)\colon\left[\gamma\right]\in\CC(\Gamma)_{\s}-\left[1\right]
\}
\end{align}
we have $c>0$. Moreover one has
\begin{align*}
\det{\left(\Id-\Ad(m_\gamma
a_\gamma)|_{\bar{\mathfrak{n}}}\right)}\geq\left(1-e^{-\ell(\gamma)}
\right)^{n}.
\end{align*}
Hence there exists a constant $C_3$ such that for all 
$\left[\gamma\right]\in\CC(\Gamma)_{\s}-\left[1\right]$ one has
\begin{align*}
\frac{1}{\det{\left(\Id-\Ad(m_\gamma
a_\gamma)|_{\bar{\mathfrak{n}}}\right)}}\leq C_3.
\end{align*}
It follows that
there exists a constant
$C_4$ which is independent of $\sigma$ such that for every
$\left[\gamma\right]\in\CC(\Gamma)_{\s}-\left[
1\right]$ one has
\begin{equation}\label{estim2}
\frac{\ell(\gamma)}{n_{\Gamma}(\gamma)}|L(\gamma;\sigma)|
\le \frac{\dim(\sigma)\ell(\gamma)e^{-n\ell(\gamma)}}{\det(\Id
-\Ad(m_{\gamma}a_{\gamma})|_{{\bar{\nf}}})}
\leq C_4\dim(\sigma).
\end{equation}
Let $c$ be as in \eqref{infspec}.
Then $c>0$ by \eqref{esthypc}. Using \eqref{esthypc} and \eqref{estim2}, it
follows that there exists 
$C_5>0$ which is independent of $\sigma$ such that 
\begin{equation}\label{estim3}
|f(t)|\leq C_5\dim(\sigma) e^{-\sqrt{c}/t},\quad 0<t\le 1.
\end{equation}
For $\lambda\geq 0$ and $s\in\C$ put
\[
G_1(s,\lambda;\sigma)=\int_0^1 t^{s-1}e^{-t\lambda^2}f(t)\,dt.
\]
By \eqref{estim3}, $G_1(s,\lambda;\sigma)$ is an entire function of $s$ 
and its value at zero can be estimated as follows
\begin{equation*}
|G_1(0,\lambda;\sigma)|\le 
\int_{0}^{1}t^{-1}e^{-t\lambda^2}|f(t)|\,dt\leq C_{6}\dim(\sigma)
\int_{0}^{1}e^{-t\lambda^{2}}e^{-\frac{\sqrt{c}}{2t}}\,dt\leq C_{6}\dim(\sigma).
\end{equation*}
Together with \eqref{estim1} the proposition follows. 
\end{proof}
Now let $m>\sqrt{2}n$. Then by \eqref{lambdaneu} one has
$\lambda_{\tau(m),k}>\sqrt{2}n$ for
every $k$. Thus by \eqref{hyperbcont4} and 
Proposition \ref{hyperbcontr3} the integral
\[
\mathcal{M}H(s,\tau(m)):=\int_0^\infty t^{s-1} H(t,\tau(m))\;dt
\]
absolutely and uniformly on compact subsets of $\C$ and defines an entire
function of $s$. Denote by $\mathcal{M}H(\tau(m))$ its value at zero. It
can be estimated as follows.
\begin{prop}\label{PropH}
There exists a constant $C$ such that for every $m>\sqrt{2}n$ one has
\begin{align*}
|\mathcal{M}H(\tau(m))|\leq Cm^{\frac{n(n-1)}{2}}.
\end{align*}
\end{prop}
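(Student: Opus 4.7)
The plan is to apply Proposition \ref{hyperbcontr3} directly, after checking that the exponential parameters $\lambda_{\tau(m),k}$ are large enough to invoke it, and then to control the resulting sum by the Weyl dimension formula for $M$.

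First, from \eqref{hyperbcont4}, the hyperbolic contribution is a finite linear combination of terms of the form
\[
e^{-t\lambda_{\tau(m),k}^2}\sum_{[\gamma]\in C(\Gamma)_s-[1]}\frac{\ell(\gamma)}{n_\Gamma(\gamma)}L_{\sym}(\gamma;\sigma_{\tau(m),k})\frac{e^{-\ell(\gamma)^2/4t}}{(4\pi t)^{1/2}}.
\]
By \eqref{lambdaneu} we have $\lambda_{\tau(m),k}=m+\tau_{k+1}+n-k$, so for $m>\sqrt{2}n$ we certainly have $\lambda_{\tau(m),k}>\sqrt{2}n$ for every $k=0,\dots,n$. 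Thus Proposition \ref{hyperbcontr3} applies to $\sigma=\sigma_{\tau(m),k}$ and also to $\sigma=w_0\sigma_{\tau(m),k}$ (cf. \eqref{syml}), and the integrals defining $\mathcal{M}H(s,\tau(m))$ converge absolutely and extend to entire functions of $s$. Evaluating at $s=0$ and using $\dim(w_0\sigma)=\dim(\sigma)$, the bound \eqref{estimat5} yields
\[
|\mathcal{M}H(\tau(m))|\le 2C_0\sum_{k=0}^{n}\dim(\sigma_{\tau(m),k}).
\]

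It remains to estimate $\dim(\sigma_{\tau(m),k})$ as $m\to\infty$. By the explicit description \eqref{sigmaneu}, each coordinate of $\Lambda(\sigma_{\tau(m),k})+\rho_M$ is of the form $m+O(1)$. Inserting this into the Weyl dimension formula \eqref{Dimension sigma}, each of the $n(n-1)/2$ factors
\[
\frac{(k_i(\sigma_{\tau(m),k})+\rho_i)^2-(k_j(\sigma_{\tau(m),k})+\rho_j)^2}{\rho_i^2-\rho_j^2}
\]
is equal to $(a_{ij}-b_{ij})(2m+a_{ij}+b_{ij})/(\rho_i^2-\rho_j^2)=O(m)$ for fixed $i<j$. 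Therefore
\[
\dim(\sigma_{\tau(m),k})=O\bigl(m^{n(n-1)/2}\bigr),\quad m\to\infty,
\]
uniformly in $k\in\{0,\dots,n\}$.

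Combining the two estimates gives
\[
|\mathcal{M}H(\tau(m))|\le 2C_0\sum_{k=0}^{n}\dim(\sigma_{\tau(m),k})\le Cm^{n(n-1)/2}
\]
for all $m>\sqrt{2}n$, which is the claim. There is no real obstacle here, since the hard analytic work has been packaged in Proposition \ref{hyperbcontr3}; the only thing to check carefully is the degree count in the Weyl dimension formula, which reflects the fact that the highest weights of the auxiliary Kostant representations $\sigma_{\tau(m),k}$ differ from a fixed dominant weight only by the shift $m(e_2+\cdots+e_{n+1})$, making the dimension polynomial in $m$ of degree $n(n-1)/2$ rather than $n(n+1)/2$.
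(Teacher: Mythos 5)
Your proof is correct and follows the same route as the paper: apply Proposition \ref{hyperbcontr3} (twice, to $\sigma_{\tau(m),k}$ and $w_0\sigma_{\tau(m),k}$, using $\dim(w_0\sigma)=\dim(\sigma)$) after verifying $\lambda_{\tau(m),k}>\sqrt 2 n$, and then bound $\dim(\sigma_{\tau(m),k})=O(m^{n(n-1)/2})$ from \eqref{Dimension sigma} and \eqref{sigmaneu}. The paper's own proof is terser but records the dimension estimate as \eqref{dimstau} and cites exactly the same two ingredients.
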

\begin{proof}
By \eqref{Dimension sigma} and \eqref{sigmaneu} there exists a constant $C$ such
that for
every $m\in\N$ one has
\begin{align}\label{dimstau}
\dim(\sigma_{\tau(m),k})\leq C m^{\frac{n(n-1)}{2}}.
\end{align}
The proposition follows from Proposition \ref{hyperbcontr3}.
\end{proof}
The contribution of the distribution $T$ can be treated without difficulty. 
\begin{prop}\label{propT}
For $\Real(s)>>0$ let
\begin{align*}
\mathcal{M}T(s,\tau(m)):=\int_{0}^{\infty}t^{s-1}T(k_t^{\tau(m)})dt.
\end{align*}
Then $\mathcal{M}T(s,\tau(m))$ has a meromorphic continuation to $\C$
and is regular at $s=0$. 
Let $\mathcal{M}T(\tau(m)$ denote its value at $s=0$. Then there exists a
constant $C$ which is independent of $m$ such that
\begin{align*}
|\mathcal{M}T(\tau(m))|\leq Cm^{\frac{n(n+1)}{2}}.
\end{align*}
\end{prop}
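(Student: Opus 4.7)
The plan is to combine the explicit Fourier formula for $T$ with the expression for $k_t^{\tau(m)}$ from Proposition \ref{auxk} to reduce the computation of $T(k_t^{\tau(m)})$ to elementary Gaussian integrals, and then estimate the resulting Mellin transform crudely using the Weyl dimension bound for the $\sigma_{\tau(m),k}$. The key point is that because the global character $\Theta_{\sigma,\lambda}(h_t^{\sigma_{\tau(m),k}})$ vanishes unless $\sigma\in\{\sigma_{\tau(m),k},w_0\sigma_{\tau(m),k}\}$ by \eqref{FTSup}, the formally infinite sum over $\hat M$ appearing in the Fourier expansion \eqref{Fouriertrafo T} of $T$ collapses to finitely many terms even though $h_t^{\sigma_{\tau(m),k}}$ is not $K$-finite in the strict sense required by \eqref{Fouriertrafo T}; a short separate argument (either a limiting procedure or direct reference to \eqref{Defh}) justifies applying \eqref{Fouriertrafo T} to $h_t^{\sigma_{\tau(m),k}}$.

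Concretely, I will first substitute the decomposition of Proposition \ref{auxk} to get
\[
T(k_t^{\tau(m)})=\sum_{k=0}^n(-1)^{k+1}e^{-t\lambda_{\tau(m),k}^2}\,T(h_t^{\sigma_{\tau(m),k}}).
\]
Next I will apply \eqref{Fouriertrafo T} together with \eqref{FTSup} and the equality $\dim(\sigma_{\tau(m),k})=\dim(w_0\sigma_{\tau(m),k})$ to obtain
\[
T(h_t^{\sigma_{\tau(m),k}})=\frac{C(\Gamma)\,\dim(\sigma_{\tau(m),k})}{\pi}\int_{\R}e^{-t\lambda^2}\,d\lambda=\frac{C(\Gamma)\,\dim(\sigma_{\tau(m),k})}{\sqrt{\pi\,t}}.
\]
Then I will compute the Mellin transform termwise, using $\int_0^\infty t^{s-3/2}e^{-t\lambda_{\tau(m),k}^2}dt=\Gamma(s-\tfrac12)\lambda_{\tau(m),k}^{1-2s}$. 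This yields
\[
\mathcal{M}T(s,\tau(m))=\frac{C(\Gamma)}{\sqrt{\pi}}\,\Gamma(s-\tfrac12)\sum_{k=0}^n(-1)^{k+1}\dim(\sigma_{\tau(m),k})\,\lambda_{\tau(m),k}^{1-2s},
\]
which is meromorphic on $\C$ with possible poles only at $s=\tfrac12+\Z_{\le0}$, and in particular is regular at $s=0$.

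Setting $s=0$ and using $\Gamma(-\tfrac12)=-2\sqrt{\pi}$ gives
\[
\mathcal{M}T(\tau(m))=-2C(\Gamma)\sum_{k=0}^n(-1)^{k+1}\dim(\sigma_{\tau(m),k})\,\lambda_{\tau(m),k}.
\]
Finally, I will invoke the dimension bound \eqref{dimstau} (already used in the proof of Proposition \ref{PropH}), which gives $\dim(\sigma_{\tau(m),k})=O(m^{n(n-1)/2})$, together with \eqref{lambdaneu}, which gives $\lambda_{\tau(m),k}=O(m)$. Since $n(n-1)/2+1\le n(n+1)/2$, this produces $|\mathcal{M}T(\tau(m))|\le C\,m^{n(n+1)/2}$, as claimed. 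No genuine obstacle appears in this argument; the only minor subtlety is justifying \eqref{Fouriertrafo T} for the non-$K$-finite function $h_t^{\sigma_{\tau(m),k}}$, which is handled by the vanishing property \eqref{FTSup} that truncates the sum over $\hat M$ to finitely many terms.
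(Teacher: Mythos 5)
Your proof follows the same route as the paper: substitute the decomposition of Proposition \ref{auxk}, apply \eqref{Fouriertrafo T} together with \eqref{FTSup} to get an explicit Gaussian expression for $T(k_t^{\tau(m)})$, take the Mellin transform termwise via $\Gamma(s-\tfrac12)$, and conclude with \eqref{lambdaneu} and \eqref{dimstau}. Two small remarks: your concern about the $K$-finiteness of $h_t^{\sigma_{\tau(m),k}}$ is unnecessary, since by \eqref{Defh} together with Proposition \ref{branching} and \eqref{BrKM} it is a finite $\Z$-linear combination of the $K$-finite functions $h_t^\nu$, hence already $K$-finite; and your explicit constant $\frac{C(\Gamma)}{\sqrt{\pi}}$ differs from the paper's $\frac{C(\Gamma)}{2\sqrt{\pi}}$ by a factor of $2$ (your version looks right, since both $\sigma_{\tau(m),k}$ and $w_0\sigma_{\tau(m),k}$ contribute to the sum over $\hat M$ in \eqref{Fouriertrafo T}), but this is immaterial for the claimed estimate.
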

\begin{proof}
By Proposition \ref{auxk}, equation \eqref{Fouriertrafo T} and equation
\eqref{FTSup} we have
\begin{align*}
\mathcal{M}T(s,\tau(m))=\frac{C(\Gamma)}{2\sqrt{\pi}}\sum_{k=0}^{n}
(-1)^{k+1}\dim(\sigma_{\tau(m),k})\left(\lambda_{\tau(m),k}\right)^{-2s+1}
\Gamma\left(s-\frac{1}{2}\right).
\end{align*}
The proposition follows from \eqref{lambdaneu} and \eqref{dimstau}.
\end{proof}
To treat the remaining terms, we need the following two auxiliary lemmas.
\begin{lem}\label{Ratcontr}
For $c\in(0,\infty)$, $s\in\C$, $\Real(s)>0$,
$j\in\left[0,\infty\right)$
let
\begin{align*}
\zeta_{c}(s):=\frac{1}{\pi}\int_{0}^{\infty}{t^{s-1}e^{-tc^2}\int_{D_{\epsilon}}
{\frac{e^{
-tz^{2}}}{iz+j}dz}dt},
\end{align*}
where $D_\epsilon$ is the same contour as in \eqref{j1}. Then
$\zeta_{c}(s)$ has a meromorphic continuation to
$\C$ with a simple pole at 0. Moreover, one has
\begin{align*}
\frac{d}{ds}\biggr|_{s=0}\frac{\zeta_c(s)}{\Gamma(s)}=-2\log{\left(c+j\right)}.
\end{align*}
\end{lem}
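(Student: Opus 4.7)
The pole of $\frac{1}{iz+j}$ at $z=ij$ lies in the closed upper half-plane for every $j\geq 0$, while $D_\epsilon$ stays in the closed lower half-plane, so one may deform $D_\epsilon$ to $\R$ and, writing $\frac{1}{iz+j}=\frac{j-iz}{j^2+z^2}$, drop the odd $iz$-term. The classical identity $\int_\R\frac{e^{-tz^2}}{j^2+z^2}\,dz=\frac{\pi}{j}e^{tj^2}\operatorname{erfc}(j\sqrt t)$, which follows from the ODE $f'(t)-j^2 f(t)=-\sqrt{\pi/t}$ with $f(0)=\pi/j$, yields
\[
\int_{D_\epsilon}\frac{e^{-tz^2}}{iz+j}\,dz \;=\; \pi e^{tj^2}\operatorname{erfc}(j\sqrt t),
\]
and the $j=0$ case is consistent with the direct semicircle contribution $\pi$. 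Hence $\zeta_c(s)=\int_0^\infty t^{s-1}e^{-t(c^2-j^2)}\operatorname{erfc}(j\sqrt t)\,dt$; the behaviors $\operatorname{erfc}(j\sqrt t)=1+O(\sqrt t)$ as $t\to 0$ and $\operatorname{erfc}(j\sqrt t)\sim e^{-tj^2}/(j\sqrt{\pi t})$ as $t\to\infty$ (together with $c>0$) give absolute convergence for $\Re(s)>0$ and, by subtracting the leading constant in the small-$t$ expansion, a meromorphic continuation to $\C$ with a simple pole at $s=0$ of residue $1$.

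For $\Re(s)>1/2$, Fubini gives $\frac{\zeta_c(s)}{\Gamma(s)}=\frac{1}{\pi}g(s,c)$ with $g(s,c):=\int_{D_\epsilon}\frac{dz}{(iz+j)(c^2+z^2)^s}$. The pole of $\zeta_c(s)$ at $s=0$ cancels that of $\Gamma(s)$, so $g(s,c)$ extends holomorphically to a neighborhood of $s=0$ with $g(0,c)=\pi$ (immediate from the first paragraph at $t=0$, or by a direct evaluation). The plan is to compute $\partial_s g(0,c)$ by differentiating first in $c$: exchanging the two derivatives,
\[
\partial_c\,\partial_s g(s,c)\big|_{s=0} \;=\; -2c\int_{D_\epsilon}\frac{dz}{(iz+j)(c^2+z^2)}.
\]
The remaining integrand decays like $|z|^{-3}$, so one closes $D_\epsilon$ by a large semicircle in the upper half-plane and sums the residues at the enclosed poles $z=ij$ and $z=ic$; a short partial-fraction calculation yields $\int_{D_\epsilon}\frac{dz}{(iz+j)(c^2+z^2)}=\frac{\pi}{c(c+j)}$. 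Thus $\partial_c\partial_s g(0,c)=-\frac{2\pi}{c+j}$, and integrating in $c$ gives $\partial_s g(0,c)=-2\pi\log(c+j)+K(j)$ for some constant $K(j)$.

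To pin down $K(j)$, substitute $t=u/c^2$ in $\zeta_c(s)$ and expand $\operatorname{erfc}(j\sqrt u/c)=1-\frac{2j\sqrt u}{c\sqrt\pi}+O(c^{-2})$ to obtain
\[
\frac{\zeta_c(s)}{\Gamma(s)} \;=\; c^{-2s}\!\left(1-\frac{2j}{c\sqrt\pi}\cdot\frac{\Gamma(s+1/2)}{\Gamma(s)}+O(c^{-2})\right).
\]
Since $\frac{d}{ds}\big|_{s=0}\frac{\Gamma(s+1/2)}{\Gamma(s)}=\sqrt\pi$, we get $\frac{d}{ds}\big|_{s=0}\frac{\zeta_c(s)}{\Gamma(s)}=-2\log c-\frac{2j}{c}+O(c^{-2})$, which matches $-2\log(c+j)=-2\log c-\frac{2j}{c}+O(c^{-2})$; comparison forces $K(j)=0$ and proves the identity. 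The main technical subtlety is the joint justification of the meromorphic continuation of $g(s,c)$ past $\Re(s)=1/2$ and the exchange of $\partial_s$ with $\partial_c$ at $s=0$; the cleanest route is to work throughout with the avatar $\pi\zeta_c(s)/\Gamma(s)$, whose joint regularity in $(s,c)$ near $s=0$ follows from the pole cancellation established in the first paragraph.
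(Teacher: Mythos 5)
Your argument is correct and follows the same basic strategy as the paper: reduce the $D_\epsilon$-integral to a real integral, compute $\partial_c$ of (the value at $s=0$ of) the relevant quantity by a residue calculation, integrate back in $c$, and fix the constant of integration by a second piece of information. The residue computation $\int_{D_\epsilon}\frac{dz}{(iz+j)(c^2+z^2)}=\frac{\pi}{c(c+j)}$ is right, the pole cancellation $\zeta_c(s)/\Gamma(s)\to 1$ at $s=0$ is right, and the $c\to\infty$ expansion does give $K(j)=0$.

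The two differences from the paper are worth noting. First, you pass through the identity $\int_\R\frac{e^{-tz^2}}{z^2+j^2}dz=\frac{\pi}{j}e^{tj^2}\operatorname{erfc}(j\sqrt t)$ and then Mellin-invert against $\Gamma(s)^{-1}$; the paper keeps the raw double integral and differentiates $\phi(z,s):=\zeta(z,s)-\zeta(j,s)$ directly under the integral sign, which sidesteps erfc entirely. This is a matter of taste. Second, and more substantially, to pin down the constant of integration the paper simply evaluates $\zeta(j,s)$ in closed form, obtaining $\zeta(j,s)=\frac{j^{-2s}}{\sqrt\pi\,s}\Gamma\bigl(s+\tfrac12\bigr)$, and reads off the value at $z=j$ directly; you instead match $c\to\infty$ asymptotics, which works but buries the constant in an expansion whose differentiability in $s$ at $s=0$ has to be controlled uniformly in $c$. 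The closed-form evaluation at $c=j$ is the cleaner way to close the argument — in your framework it is the (immediate) identity $\int_0^\infty t^{s-1}\operatorname{erfc}(j\sqrt t)\,dt=\frac{j^{-2s}\Gamma(s+1/2)}{\sqrt\pi\,s}$, obtained by substituting $v=j\sqrt t$ and integrating by parts — and I would recommend it over the large-$c$ matching. You have also correctly flagged the one genuine technicality, namely that the interchange of $\partial_s$ and $\partial_c$ is best justified by computing $\partial_c g(s,c)=-2sc\int_{D_\epsilon}\frac{dz}{(iz+j)(c^2+z^2)^{s+1}}$ for $\Re(s)>0$, noting the extra factor of $s$ makes the integral absolutely convergent near $s=0$, and only then differentiating in $s$ and setting $s=0$; stated this way the exchange is unproblematic.
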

\begin{proof}
The statement about the convergence of the integral and the meromorphic
continuation
follows from Lemma \ref{Lemas1} and standard methods. Note that 
\[
\int_{D_{\epsilon}}\frac{e^{-tz^{2}}}{iz}\;dz
=\frac{1}{2}\int_{|z|=\epsilon}\frac{e^{-tz^{2}}}{iz}\;dz=\pi .
\]
Hence, for $j=0$ we have
\begin{align*}
\zeta_{c}(s)=c^{-2s}\Gamma(s) 
\end{align*}
and the claim follows in this case.
Assume that $j>0$. Then one has
\begin{equation}\label{zetaneu}
\zeta_{c}(s)=\frac{j}{\pi}\int_{0}^{\infty}t^{s-1}e^{-tc^2}\int_{\mathbb{R}}
\frac{e^{-t\lambda^2}}{\lambda^2+j^2}d\lambda dt.
\end{equation}
For $\Real(z^2)>0$, $\Real(z)>0$ define a function $\zeta(z,s)$ by
\begin{align*}
\zeta(z,s):=\frac{j}{\pi}\int_{0}^{\infty}{t^{s-1}e^{-tz^2}\int_{\mathbb{R}}{
\frac{e^{
-t\lambda^{2}}}{\lambda^{2}+j^{2}}d\lambda}dt}.
\end{align*}
Then it is easy to see that $\zeta(z,s)$ is holomorphic in $z$.
Let
\begin{align*}
\phi(z,s):=\frac{j}{\pi}\int_{0}^{\infty}{t^{s-1}\int_{\mathbb{R}}{\frac{e^{
-t\left(\lambda^{2}+z^{2}\right)}}{\lambda^{2}+j^{2}}d\lambda}dt}-\frac{j}{\pi}
\int_{0}^{\infty}{t^{s-1}\int_{\mathbb{R}}{\frac{e^{-t\left(\lambda^{2}+j^{2}
\right)}}{\lambda^{2}+j^{2}}d\lambda}dt}.
\end{align*}
Then, since $e^{-tz^{2}}-e^{-tj^{2}}=O(t),\:t\rightarrow 0$,
the integral converges for $\Real(s)>-1$. One has
\begin{align*}
\frac{d}{dz}\phi(z,0)=-\frac{2jz}{\pi}\int_{0}^{\infty}{\int_{\mathbb{R}}{\frac{
e^{-t\left(\lambda^{2}+z^{2}\right)}}{\lambda^{2}+j^{2}}d\lambda}dt}=\frac{-2}{
z+j}.
\end{align*}
Since $\phi(j,0)=0$, one has
\begin{align}\label{PF von phi}
\phi(z,0)=-2\log{\left(z+j\right)}+2\log{2j}.
\end{align}
On the other hand, one has
\begin{align*}
\zeta(j,s)=\frac{j}{\pi
s}\int_{0}^{\infty}{\left(\frac{d}{dt}t^{s}\right)\int_{\mathbb{R}}{\frac{e^{
-t\left(\lambda^{2}+j^{2}\right)}}{\lambda^{2}+j^{2}}d\lambda}dt}=\frac{j^{-2s}}
{\sqrt{\pi}\:s}\Gamma(s+\frac{1}{2}).
\end{align*}
Hence for $s\rightarrow 0$ one has
\begin{align*}
\zeta(j,s)=\frac{1}{s}-2\log{j}+\frac{\Gamma'(\frac{1}{2})}{\sqrt{\pi}}
+O(s)=\frac{1}{s}-2\log{j}+\psi\left(\frac{1}{2}\right)+O(s).
\end{align*}
Together with \eqref{PF von phi} this gives for $s\rightarrow 0$: 
\begin{align*}
\zeta(z,s)=&\frac{1}{s}-2\log{j}+\psi\left(\frac{1}{2}\right)-2\log{
\left(z+j\right)}+2\log{2j}+O(s)\\=
&\frac{1}{s}-2\log{\left(z+j\right)}-\gamma + O(s),
\end{align*}
where we used $\psi(\frac{1}{2})=-2\log{2}-\gamma$. Since for $s\to 0$ one has
\begin{align}\label{Gammaent}
\frac{1}{\Gamma(s)}=s+\gamma s^2+O(s^3),
\end{align}
the proposition follows.
\end{proof}

\begin{lem}\label{Gammacontr}
Let $c\in\R^+$, $s\in\C$, $\Real(s)>1/2$.
Define
\begin{align*}
\tilde{\zeta}_c(s):=\frac{1}{\pi}\int_{0}^{\infty}{t^{s-1}e^{-tc^2}\int_{
\mathbb{R}}{e^{
-t\lambda^{2
}}\psi\left(1+i\lambda\right)d\lambda}dt}.
\end{align*}
Then 
$\tilde{\zeta}_c(s)$ has a meromorphic continuation to
$s\in\C$ with at most a simple pole at $s=0$. Moreover there exist a constant
$C(\psi)$ which is independent of $c$ such that
\begin{align*}
\frac{d}{ds}\biggr|_{s=0}\frac{\tilde{\zeta}_c(s)}{\Gamma(s)}=-2\log{
\Gamma\left(1+c\right)}+C(\psi).
\end{align*}
\end{lem}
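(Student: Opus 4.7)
The plan is to prove meromorphic continuation via the asymptotic expansion already at hand, then pin down the special value via the parameter $c$: differentiate in $c$, evaluate the resulting explicit Laplace-type integral, and integrate back up.

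First, for the meromorphic continuation, write $G(t):=\int_{\mathbb{R}}e^{-t\lambda^{2}}\psi(1+i\lambda)\,d\lambda$. By Lemma \ref{Lemas2}, $G(t)$ admits an asymptotic expansion of the form $\sum_{j\geq 0}a_{j}t^{j-1/2}+\sum_{j\geq 0}b_{j}t^{j-1/2}\log t+\sum_{j\geq 0}c_{j}t^{j}$ as $t\to 0$. For $t\to\infty$ the damping $e^{-tc^{2}}$ makes $e^{-tc^{2}}G(t)$ exponentially small (here $G(t)=O(t^{-1/2}\log t)$ suffices, which follows from the logarithmic growth $|\psi(1+i\lambda)|=O(\log(2+|\lambda|))$). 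Splitting the $t$-integral at $t=1$ gives the usual meromorphic extension to $\C$ with at most a simple pole at $s=0$, contributed only by the $\log t$ terms and the constant term of the small-$t$ expansion.

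Next, to compute $F(c):=\frac{d}{ds}\bigl|_{s=0}\tilde{\zeta}_{c}(s)/\Gamma(s)$, I differentiate in $c$ under the integral. Setting $\tilde\eta_{c}(s):=\int_{0}^{\infty}t^{s-1}e^{-tc^{2}}G(t)\,dt$, one has $\frac{d}{dc}\tilde\zeta_{c}(s)=-\frac{2c}{\pi}\tilde\eta_{c}(s+1)$. Since $\tilde\eta_{c}(s+1)$ is holomorphic near $s=0$, the expansion $1/\Gamma(s)=s+\gamma s^{2}+O(s^{3})$ yields
\begin{equation*}
F'(c)=-\frac{2c}{\pi}\,\tilde\eta_{c}(1)=-\frac{2c}{\pi}\int_{0}^{\infty}e^{-tc^{2}}G(t)\,dt.
\end{equation*}
By the logarithmic bound on $\psi(1+i\lambda)$, the double integral converges absolutely, so Fubini gives $\tilde\eta_{c}(1)=\int_{\mathbb{R}}\psi(1+i\lambda)/(\lambda^{2}+c^{2})\,d\lambda$.

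The key identity to establish is then
\begin{equation*}
\int_{\mathbb{R}}\frac{\psi(1+i\lambda)}{\lambda^{2}+c^{2}}\,d\lambda=\frac{\pi\,\psi(1+c)}{c}.
\end{equation*}
I would prove this by symmetrizing ($\lambda\mapsto-\lambda$), using the series representation $\psi(1+i\lambda)+\psi(1-i\lambda)=-2\gamma+2\sum_{n\geq 1}\lambda^{2}/(n(n^{2}+\lambda^{2}))$, and applying partial fractions $\lambda^{2}/[(\lambda^{2}+n^{2})(\lambda^{2}+c^{2})]=(n^{2}-c^{2})^{-1}[n^{2}/(\lambda^{2}+n^{2})-c^{2}/(\lambda^{2}+c^{2})]$ termwise. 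Each term gives $2\pi/[n(n+c)]$, so the total is $-2\pi\gamma/c+(2\pi/c)\sum_{n\geq 1}c/(n(n+c))=2\pi\psi(1+c)/c$, whence the claim. (Alternatively, one can close the contour in a half-plane, using radii chosen between the poles $\lambda=in$ of $\psi(1+i\lambda)$; the semicircle contribution vanishes by logarithmic growth, and the residue at $\lambda=ic$ combined with the poles of $\psi(1+i\lambda)$ reproduces the same answer via $\psi(1+c)-\psi(1-c)=2c\sum_{n\geq 1}1/(n^{2}-c^{2})$.) Plugging back,
\begin{equation*}
F'(c)=-\frac{2c}{\pi}\cdot\frac{\pi\,\psi(1+c)}{c}=-2\,\psi(1+c)=-2\,\frac{d}{dc}\log\Gamma(1+c),
\end{equation*}
so integration in $c$ gives $F(c)=-2\log\Gamma(1+c)+C(\psi)$ where $C(\psi):=F(0)$ depends only on the function $\psi$ and not on $c$, as required.

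The main obstacle is a cluster of interchange-of-limits issues: differentiating $F(c)$ under the $s$-derivative at $s=0$, swapping sum and integral in the series manipulation of $\psi(1\pm i\lambda)$, and applying Fubini to produce $\int_{\mathbb{R}}\psi(1+i\lambda)/(\lambda^{2}+c^{2})\,d\lambda$. All of these hinge on the single analytic input $|\psi(1+i\lambda)|=O(\log(2+|\lambda|))$, which produces absolute integrability and uniform convergence of the tail of $\sum 1/(n(n^{2}+\lambda^{2}))$ on compacta, so the technicalities should all yield to standard dominated convergence arguments.
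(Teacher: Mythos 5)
Your proof is correct and follows essentially the same route as the paper: differentiate in the parameter, reduce the resulting explicit Laplace integral to $\int_{\R}\frac{\psi(1+i\lambda)}{\lambda^2+c^2}\,d\lambda = \frac{\pi\psi(1+c)}{c}$, and integrate back up. The only structural difference is the bookkeeping around $s=0$: the paper subtracts off a regularizing copy, defining $\tilde\phi_c(s,z)$ which is holomorphic in $s$ at $s=0$, so that evaluating at $s=0$ and then differentiating in $z$ requires no further justification; you instead differentiate in $c$ first and then pass to $s=0$, which implicitly invokes commuting $\frac{d}{dc}$ with the analytic continuation and $\frac{d}{ds}\big|_{s=0}$ — a routine consequence of joint analyticity of $\tilde\zeta_c(s)/\Gamma(s)$ in $(c,s)$, but one you should state explicitly. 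On the other hand you supply a proof of the Poisson-type integral identity (via symmetrization, the series for $\psi(1+i\lambda)+\psi(1-i\lambda)$, and partial fractions), which the paper asserts without justification; that is a genuine addition.
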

\begin{proof}
The convergence of the integral and the statement about the meromorphic
continuation follow from Lemma \ref{Lemas2}
and standard methods. Fix $c_{0}\in\R^+$. Since
$e^{-tz^{2}}-e^{-tc_{0}^{2}}=O(t)$ as $t\to 0$,
it follows from Lemma \ref{Lemas2} that the integral
\begin{align*}
&\tilde{\phi}_c(s,z):=\int_{0}^{\infty}{t^{s-1}\int_{\mathbb{R}}{\left(e^{
-t\left(\lambda^{2}+z^{2}\right)}-e^{-t\left(\lambda^{2}+z_{0}^{2}\right)}
\right)\psi\left(1+i\lambda\right)d\lambda}dt}
\end{align*}
converges for $\Real(s)>-\frac{1}{2}$ and is holomorphic in $z\in\C$,
$\Real(z)>0$, $\Real(z^2)>0$. One has
\begin{align*}
\frac{\partial}{\partial z}\tilde{\phi}_c(0,z)&=-2z\int_{\mathbb{R}}{\frac{
\psi\left(1+i\lambda\right)}{\lambda^{2}+z^{2}}d\lambda}=-2\pi\psi(1+z).
\end{align*}
This proves the lemma.
\end{proof}
Next we treat the contribution of the distribution $\mathcal{I}$ to the 
analytic torsion. By Theorem \ref{FTorbint}, Proposition \ref{auxk} and 
\eqref{FTSup} we have
\begin{equation}
\mathcal{I}(k_t^{\tau(m)})=\frac{\kappa}{\pi}
\sum_{k=0}^n(-1)^{k+1}e^{-t\lambda_{\tau(m),k}^2}
\int_\R\Omega(\sigma_{\tau(m),k},\lambda)e^{-t\lambda^2}\;d\lambda.
\end{equation}
By Proposition \ref{propomega} we have the decomposition
\[
\Omega(\sigma_{\tau(m),k},\lambda)=\Omega_1(\sigma_{\tau(m),k},\lambda)+
\Omega_2(\sigma_{\tau(m),k},\lambda).
\]
Using the description of $\Omega_1$ and $\Omega_2$ together with
Lemma \ref{Lemas1}, Lemma \ref{Lemas2} and Lemma \ref{Lemas3}, it follows that
$\mathcal{I}(k_t^{\tau(m)})$ admits an asymptotic expansion
\begin{align*}
\mathcal{I}(k_t^{\tau(m)})\sim \sum_{k=0}^\infty a_kt^{k-(d-2)/2}
+\sum_{k=0}^{\infty}b_kt^{k-1/2}\log{t}+c_0
\end{align*}
as $t\to 0$. Moreover, since $\lambda_{\tau(m),k}>m$ for every $k$, it follows
that $\mathcal{I}(k_t^{\tau(m)})=O(e^{-tm^2})$ as $m\to\infty$.
Thus for $s\in\C$ with $\Real(s)>(d-2)/2$ the
integral
\begin{align*}
\mathcal{M}\mathcal{I}(s;\tau(m)):=\int_{0}^{\infty}t^{s-1}\mathcal{I}(k_t^{
\tau(m)})dt
\end{align*}
converges and 
has a meromorphic continuation to $\C$ with  at most a simple 
pole at $s=0$. Let 
\begin{align*}
\mathcal{M}\mathcal{I}(\tau(m)):=\frac{d}{ds}\biggr|_{s=0}\frac{\mathcal{M}
\mathcal{I}(s;\tau(m))}{\Gamma(s)}.
\end{align*}
Next we will estimate $\mathcal{M}\mathcal{I}(\tau(m))$ as $m\rightarrow
\infty$. To this end we establish some auxiliary lemmas.
\begin{lem}\label{Lemii} 
There exists a constant $C$ such that for every $m$ one has
\begin{align}\label{Gammaterm}
\sum_{k=0}^{n}(-1)^k\dim(\sigma_{\tau(m),k}
)\left(\log\Gamma(m+\lambda_{
\tau(m),k})+\gamma\lambda_{\tau(m),k}+C(\psi)\right)\leq
Cm^{\frac{n(n+1)}{2}},
\end{align}
where $C(\psi)$ is as in Lemma \ref{Gammacontr}. 
\end{lem}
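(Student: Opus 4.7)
The plan is to exploit the Vandermonde structure hidden in $\dim(\sigma_{\tau(m),k})$ coming from Weyl's dimension formula \eqref{Dimension sigma}, recast the alternating sum as a scaled $n$-th divided difference, and then bound that divided difference using Stirling asymptotics. Writing $L_k:=\lambda_{\tau(m),k}=m+\tau_{k+1}+n-k$, a direct inspection of \eqref{sigmaneu} together with $\rho_i=n+1-i$ shows that the multiset $\{k_i(\sigma_{\tau(m),k})+\rho_i:2\le i\le n+1\}$ coincides with $\{L_j:0\le j\le n,\,j\neq k\}$. Substituting into \eqref{Dimension sigma} and tracking signs (the $k$ factors $L_a^2-L_k^2$ with $a<k$ each contribute a minus sign), one obtains
\[
(-1)^k\dim(\sigma_{\tau(m),k})=\frac{V(L)}{D\,\prod_{a\neq k}(L_k^2-L_a^2)},
\]
where $V(L):=\prod_{0\le a<b\le n}(L_a^2-L_b^2)$ and $D:=\prod_{2\le i<i'\le n+1}(\rho_i^2-\rho_{i'}^2)$ is a positive constant depending only on $n$.

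Setting $g(\lambda):=\log\Gamma(m+\lambda)+\gamma\lambda+C(\psi)$ and $h(y):=g(\sqrt{y})$, the sum in the lemma takes the form
\[
\frac{V(L)}{D}\sum_{k=0}^{n}\frac{h(L_k^2)}{\prod_{a\neq k}(L_k^2-L_a^2)}=\frac{V(L)}{D}\,h[L_0^2,\dots,L_n^2],
\]
which I recognize as the standard $n$-th divided difference of $h$ at the pairwise distinct positive nodes $L_k^2\asymp m^2$. The classical mean value theorem for divided differences yields $h[L_0^2,\dots,L_n^2]=h^{(n)}(\xi)/n!$ for some $\xi$ in the convex hull of the nodes, which is of size $\Theta(m^2)$. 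For the Vandermonde prefactor, the estimates $L_a-L_b=O(1)$ and $L_a+L_b=O(m)$ give $|V(L)|=O(m^{n(n+1)/2})$.

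The main technical step is then to bound $h^{(n)}(\xi)$ for $\xi\asymp m^2$. Stirling's formula and the asymptotics of the polygamma functions give $g'(\lambda)=\psi(m+\lambda)+\gamma=O(\log m)$ and $g^{(j)}(\lambda)=\psi^{(j-1)}(m+\lambda)=O(m^{-(j-1)})$ for $j\ge 2$, uniformly for $\lambda\asymp m$. Applying Fa\`a di Bruno to $h(y)=g(\sqrt{y})$ together with $(\sqrt{y})^{(j)}=O(m^{1-2j})$ for $y\asymp m^2$, the contribution of a set partition of $\{1,\dots,n\}$ into $k$ blocks is $O(g^{(k)}(m)\,m^{k-2n})$, which is $O(m^{1-2n}\log m)$ in the worst case (namely $k=1$). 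Combining this with the Vandermonde bound produces the overall estimate $O(m^{n(n+1)/2-2n+1}\log m)$, comfortably below $Cm^{n(n+1)/2}$. The crucial insight, and the only real obstacle to clear, is the divided-difference identification in the first step; once that is in place the remaining estimates are routine Stirling asymptotics.
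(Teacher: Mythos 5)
Your proof is correct, and it takes a genuinely different route from the paper's. The paper observes (via Kostant's theorem and the Poincar\'e principle, see \eqref{altdim}) that $\sum_{k=0}^n(-1)^k\dim(\sigma_{\tau(m),k})=0$, which allows it to replace $\log\Gamma(m+\lambda_{\tau(m),k})+C(\psi)$ by $\log\bigl(\Gamma(m+\lambda_{\tau(m),k})/\Gamma(2m)\bigr)$ at no cost; the latter is $O(\log m)$ because the arguments differ by a bounded amount, and together with $\lambda_{\tau(m),k}=O(m)$ and $\dim(\sigma_{\tau(m),k})=O(m^{n(n-1)/2})$ this yields the bound $O(m^{n(n-1)/2+1})$. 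You instead recognize the full Vandermonde structure in $\dim(\sigma_{\tau(m),k})$ --- your identity
\[
(-1)^k\dim(\sigma_{\tau(m),k})=\frac{V(L)}{D\,\prod_{a\neq k}(L_k^2-L_a^2)}
\]
is correct (the multiset identification $\{k_i(\sigma_{\tau(m),k})+\rho_i\}=\{L_j:j\neq k\}$ checks out, as does the sign bookkeeping) --- and rewrite the whole alternating sum as $\tfrac{V(L)}{D}\,h[L_0^2,\dots,L_n^2]$ with $h(y)=g(\sqrt y)$. The mean-value theorem for divided differences plus Fa\`a di Bruno and the polygamma asymptotics then give $h^{(n)}(\xi)=O(m^{1-2n}\log m)$, so your total estimate is $O(m^{n(n+1)/2-2n+1}\log m)$, which is considerably sharper than both the paper's bound and what the lemma actually requires. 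The paper's approach is shorter and uses only the vanishing of the alternating dimension sum (essentially one degree of cancellation), whereas your divided-difference argument captures the full $n$-fold cancellation inherent in the alternating sum; the price is the heavier machinery (divided differences, Fa\`a di Bruno) and a careful bookkeeping of the Vandermonde factors. Both are valid; yours is in a sense the ``optimal'' way to exploit the structure, but is more than is needed here.
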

\begin{proof}
By \eqref{lambdadecom} and \eqref{kostant1} one has
\begin{align}\label{altdim}
2\sum_{k=0}^n(-1)^k\dim(\sigma_{\tau(m),k})=\dim(\tau)\sum_{p=0}^{2n}
(-1)^p\dim\Lambda^p\mathfrak{n}^*=0.
\end{align} 
Thus the sum on the left hand side of \eqref{Gammaterm} equals
\begin{align*}
\sum_{k=0}^{n}(-1)^k\dim(\sigma_{\tau(m),k}
)\left(\log\frac{\Gamma(m+\lambda_{
\tau(m),k})}{\Gamma(2m)}+\gamma\lambda_{\tau(m),k}\right)
\end{align*}
It follows from \eqref{lambdaneu} that there exists a constant
$C$ which is independent of $m$ such that
\begin{align*}
\log{\frac{
\Gamma(m+\lambda_{\tau(m),k})}{\Gamma(2m)}}\leq C \log{m}.
\end{align*}
Using \eqref{lambdaneu} and \eqref{dimstau} the proposition is proved.
\end{proof}
The next two lemmas are concerned with the polynomials $P_j(\sigma,\lambda)$,
$j=2,\dots,n+1$, which are defined by \eqref{Defpoly}.
\begin{lem}\label{Llemii}
Let $k\in\{0,\dots,n\}$ and let
$j\in\{2,\dots,n+1\}$.
Then there exists a constant $C$ such that for every $m$ one has
\begin{align}\label{Polyeins}
\left|P_{j}(\sigma_{\tau(m),k},\lambda)\right|\leq C m^{\frac{(n-1)(n-2)}{2}}
\sum_{i=0}^{2(n-1)}(1+\left|\lambda\right|)^im^{2(n-1)-i}
\end{align}
and such that
\begin{align}\label{Polyzwei}
\left|\frac{d}{d\lambda}P_{j}(\sigma_{\tau(m),k},\lambda)\right|\leq C
m^{\frac{(n-1)(n-2)}{2}}
\sum_{i=0}^{2(n-1)-1}(1+\left|\lambda\right|)^im^{2(n-1)-i}
\end{align}
for all $\lambda\in\C$. 
\end{lem}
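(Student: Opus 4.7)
The approach is to work directly from the explicit product formula \eqref{formpj} for $P_{j}(\sigma,\lambda)$, specialized to $\sigma=\sigma_{\tau(m),k}$. By \eqref{sigmaneu} together with \eqref{rho}, each quantity $k_p(\sigma_{\tau(m),k})+\rho_p$ has the form $m+a_p(\tau,k)$, where the constant $a_p(\tau,k)$ is independent of $m$. A short case analysis (distinguishing the three regimes $p,q\le k+1$, $p,q>k+1$, and $p\le k+1<q$) using the ordering $\tau_1\ge\tau_2\ge\cdots\ge\tau_{n+1}\ge 0$ and the strict decrease of $\rho_p$ in $p$ shows that the integers $\{a_p(\tau,k)\}$ are pairwise distinct. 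Consequently, for $p\neq j$ the difference $(k_j+\rho_j)-(k_p+\rho_p)=a_j-a_p$ is a nonzero $m$-independent constant, while the sum $(k_j+\rho_j)+(k_p+\rho_p)=2m+a_j+a_p$ grows linearly in $m$.

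From this it follows that each factor of the denominator in \eqref{formpj} is bounded below in absolute value by a positive multiple of $m$ for all sufficiently large $m$, so the full denominator is of order $m^{n-1}$. For the numerator, $\prod_{p\neq j}(-\lambda^2-(k_p+\rho_p)^2)$ is an even polynomial in $\lambda$ of degree $2(n-1)$; its coefficient of $\lambda^{2i}$ is, up to sign, the $(n-1-i)$-th elementary symmetric function in the $n-1$ numbers $(k_p+\rho_p)^2 = O(m^2)$, and hence is of size $O(m^{2(n-1-i)})$. Combining with the bound $\dim(\sigma_{\tau(m),k})\le Cm^{n(n-1)/2}$ from \eqref{dimstau} and using the identity $n(n-1)/2-(n-1)=(n-1)(n-2)/2$, one obtains
\begin{equation*}
|P_j(\sigma_{\tau(m),k},\lambda)|\le C'\,m^{(n-1)(n-2)/2}\sum_{i=0}^{n-1}|\lambda|^{2i}\,m^{2(n-1-i)}
\end{equation*}
for $m$ large. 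Enlarging $C'$ to absorb small $m$ and bounding $|\lambda|^{2i}\le(1+|\lambda|)^{2i}$, the right-hand side is dominated by $C''\,m^{(n-1)(n-2)/2}\sum_{i=0}^{2(n-1)}(1+|\lambda|)^i m^{2(n-1)-i}$, which gives \eqref{Polyeins}. The derivative estimate \eqref{Polyzwei} follows by term-by-term differentiation of the expansion: each $\lambda^{2i}$ produces $2i\,\lambda^{2i-1}$, so the maximal $\lambda$-degree drops from $2(n-1)$ to $2(n-1)-1$ while the bounds on the coefficient sizes in $m$ are preserved up to the harmless factor $2i$.

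The main technical point is the uniform lower bound on the denominator, which relies on the combinatorial fact that $a_p(\tau,k)\neq a_q(\tau,k)$ whenever $p\neq q$. In each of the three regimes above, writing out $a_p-a_q$ explicitly reduces the putative equality $a_p=a_q$ to an identity of the form $\tau_i-\tau_j=\text{(a strictly negative integer)}$, which contradicts the weak monotonicity of the $\tau_\ell$. Everything else in the argument is mechanical bookkeeping of polynomial degrees and coefficient magnitudes, so this combinatorial verification is the only real content.
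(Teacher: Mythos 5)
Your proof is correct and follows the same route the paper intends: the paper's proof is a single sentence pointing at \eqref{formpj}, \eqref{sigmaneu}, and \eqref{dimstau}, and your argument is exactly the elaboration of that sentence, with the (correct) combinatorial verification that the shifted weights $k_p(\sigma_{\tau(m),k})+\rho_p=m+a_p$ are pairwise distinct in $p$, which keeps the $m$-independent denominator factors away from zero.
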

\begin{proof}
If we use the explicit formula \eqref{formpj} for the polynomials 
$P_j(\sigma,\lambda)$, combined with \eqref{sigmaneu} and \eqref{dimstau}, 
the lemma follows.
\end{proof}
\begin{lem}\label{lemii}
Let $k\in\{0,\dots,n\}$ and let
$j\in\{2,\dots,n+1\}$.
For $l\in\N$ with $m\leq l\leq
k_j(\sigma_{\tau(m),k})+\rho_j$
let the even polynomial $Q_{j,l}(\sigma_{\tau(m),k},\lambda)$ be defined by
\eqref{DefQ}.
Then there exists a constant $C$ such that for every $m$ one has
\begin{align*}
\bigg|\int_{0}^{\lambda_{\tau(m),k}}Q_{j,l}(\sigma_{\tau(m),k}
,i\lambda)\;d\lambda\bigg|\leq Cm^{\frac{n(n+1)}{2}}.
\end{align*}
\end{lem}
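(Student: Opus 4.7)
The plan is to write $Q_{j,l}(\sigma_{\tau(m),k}, i\lambda)$ out explicitly as a polynomial in $\lambda$, then estimate the resulting integral term by term using the coefficient-wise bound for $P_j$ that follows from \eqref{formpj}.

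First I would simplify $Q_{j,l}$. Substituting $\lambda \mapsto i\lambda$ in \eqref{DefQ} and using that $P_j(\sigma,\cdot)$ is even by \eqref{formpj} yields
$$Q_{j,l}(\sigma, i\lambda) = \bigl(P_j(\sigma, i\lambda) - P_j(\sigma, il)\bigr)\left(\frac{1}{l-\lambda} + \frac{1}{l+\lambda}\right) = \frac{2l\bigl(P_j(\sigma, i\lambda) - P_j(\sigma, il)\bigr)}{l^2 - \lambda^2}.$$
Writing $P_j(\sigma_{\tau(m),k}, i\lambda) = \sum_{r=0}^{n-1} a_r \lambda^{2r}$ and applying the factorisation
$$\lambda^{2r} - l^{2r} = (\lambda^2 - l^2)\sum_{s=0}^{r-1}\lambda^{2(r-1-s)}\, l^{2s}$$
cancels $l^2-\lambda^2$ and gives the polynomial expression
$$Q_{j,l}(\sigma_{\tau(m),k}, i\lambda) = -2l \sum_{r=1}^{n-1} a_r \sum_{s=0}^{r-1}\lambda^{2(r-1-s)}\, l^{2s}.$$

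Second I would bound the coefficients $a_r=a_r(m,k)$. By \eqref{formpj}, \eqref{sigmaneu}, and \eqref{dimstau}, the denominator $\prod_{p\neq j}((k_j+\rho_j)^2 - (k_p+\rho_p)^2)$ in \eqref{formpj} is $O(m^{n-1})$ (since each sum $k_j+k_p+\rho_j+\rho_p$ is $O(m)$ while the difference $k_j-k_p+\rho_j-\rho_p$ is a nonzero $O(1)$); the coefficient of $\lambda^{2r}$ in the numerator is the elementary symmetric polynomial of degree $n-1-r$ in the $(k_p+\rho_p)^2$'s, hence $O(m^{2(n-1-r)})$; and $\dim(\sigma_{\tau(m),k}) = O(m^{n(n-1)/2})$. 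Therefore
$$|a_r| \leq C\, m^{(n-1)(n-2)/2 + 2(n-1-r)}.$$

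Third, since $l \leq k_j(\sigma_{\tau(m),k})+\rho_j = O(m)$ and $\lambda_{\tau(m),k} = O(m)$ by \eqref{lambdaneu}, I integrate term by term. Each monomial contribution equals
$$2l\int_0^{\lambda_{\tau(m),k)}} \lambda^{2(r-1-s)}\,d\lambda\cdot l^{2s} = \frac{2\,\lambda_{\tau(m),k}^{2(r-s)-1}}{2(r-s)-1}\,l^{2s+1},$$
which has absolute value at most $Cm^{2r}$. Combined with the bound on $|a_r|$, this gives $|a_r|\cdot m^{2r} \leq C\, m^{(n-1)(n-2)/2 + 2(n-1)}$ uniformly in $r,s$. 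Summing over the $O(n^2)$ pairs $(r,s)$ yields
$$\left|\int_0^{\lambda_{\tau(m),k}} Q_{j,l}(\sigma_{\tau(m),k}, i\lambda)\,d\lambda\right| \leq C\,m^{(n-1)(n+2)/2} = C\,m^{n(n+1)/2 - 1},$$
which is in fact strictly stronger than the claim. The only delicate point is the coefficient-wise bound on $P_j$, which however follows routinely from the explicit product formula \eqref{formpj} once \eqref{sigmaneu} has been inserted, so no serious obstacle arises.
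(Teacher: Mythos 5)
Your proof is correct and gives, in fact, the sharper bound $O(m^{n(n+1)/2-1})$, but it takes a different route from the paper's. The paper keeps $Q_{j,l}$ in difference-quotient form and invokes the mean value theorem on the interval $|\xi|\le l+\lambda_{\tau(m),k}$, so that the whole integral is bounded by $2\lambda_{\tau(m),k}\max_{|\xi|\le l+\lambda_{\tau(m),k}}\left|\frac{d}{d\lambda}P_j(\sigma_{\tau(m),k},i\lambda)\right|_{\lambda=\xi}$; the derivative bound it then plugs in is precisely \eqref{Polyzwei} from Lemma~\ref{Llemii}, which the paper has already established, so the argument is two lines. You instead clear the denominator $l^2-\lambda^2$ explicitly using $\lambda^{2r}-l^{2r}=(\lambda^2-l^2)\sum_s\lambda^{2(r-1-s)}l^{2s}$, expand $Q_{j,l}$ as a polynomial, bound the coefficients $a_r$ of $P_j(\sigma_{\tau(m),k},i\lambda)$ directly from the product formula \eqref{formpj} together with \eqref{sigmaneu} and \eqref{dimstau}, and integrate term by term. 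This is more work up front but avoids a mean-value argument, avoids reliance on \eqref{Polyzwei}, and buys you the extra power $m^{-1}$. One imprecision worth fixing: you write that the denominator $\prod_{p\neq j}((k_j+\rho_j)^2-(k_p+\rho_p)^2)$ ``is $O(m^{n-1})$,'' but what you actually need and use is the lower bound $\gtrsim m^{n-1}$ (each factor $(\xi_j-\xi_p)(\xi_j+\xi_p)$ has $|\xi_j-\xi_p|$ a fixed nonzero constant and $\xi_j+\xi_p\gtrsim m$), so that $1/|\text{denominator}|=O(m^{-(n-1)})$; the parenthetical shows you have the right picture, but the phrasing states the wrong direction of the estimate.
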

\begin{proof}
By \eqref{DefQ} we have
\[
Q_{j,l}(\sigma_{\tau(m),k},i\lambda)=\frac{P_j(\sigma_{\tau(m),k},i\lambda)-
P_j(\sigma_{\tau(m),k},il)}{l-\lambda}
+\frac{P_j(\sigma_{\tau(m),k},i\lambda)-P_j(\sigma_{\tau(m),k},il)}{l+\lambda}.
\]
Using the fact that $P_j(\sigma,z)$ is an even polynomial, together with
equations \eqref{lambdaneu} and \eqref{Polyzwei}, we obtain
\begin{align*}
\int_{0}^{\lambda_{\tau(m),k}}Q_{j,l}(\sigma_{\tau(m),k},i\lambda)d\lambda\leq 
2\lambda_{\tau(m),k}\max_{\left|\xi\right|\leq l+\lambda_{\tau(m),k}}
\left|\frac{d}{d\lambda}\biggr|_{\lambda=\xi}
P_{j}(\sigma,i\lambda)\right|\leq Cm^{\frac{n(n+1)}{2}}.
\end{align*}
\end{proof}
Now we can estimate $\mathcal{M}\mathcal{I}(\tau(m))$ as $m\to\infty$. 
\begin{prop}\label{propI}
There exists a constant $C$ such that for every $m$ one has
\begin{align*}
\left|\mathcal{M}\mathcal{I}(\tau(m))\right|\leq Cm^{\frac{n(n+1)}{2}}.
\end{align*}
\end{prop}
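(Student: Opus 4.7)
The plan is to apply Proposition~\ref{propomega} and decompose
\[
\mathcal{I}(k_t^{\tau(m)})=\frac{\kappa}{\pi}\sum_{k=0}^{n}(-1)^{k+1}
e^{-t\lambda_{\tau(m),k}^{2}}\int_{\R}\bigl(\Omega_{1}(\sigma_{\tau(m),k},\lambda)
+\Omega_{2}(\sigma_{\tau(m),k},\lambda)\bigr)e^{-t\lambda^{2}}\,d\lambda,
\]
then take the Mellin transform at $s=0$ term by term using Lemmas~\ref{Ratcontr},
\ref{Gammacontr}, and~\ref{l2tor2}, and finally estimate the resulting
alternating sum over $k$ by means of the auxiliary Lemmas~\ref{Lemii},
\ref{Llemii}, and~\ref{lemii}.

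For the $\Omega_{1}$-contribution, each rational term $\frac{2l}{l^{2}+\lambda^{2}}$
(with $1\le l\le m_{0}$, and $m_{0}$ of size $m$) contributes
$-2\log(\lambda_{\tau(m),k}+l)$ by Lemma~\ref{Ratcontr} applied with
$c=\lambda_{\tau(m),k}$, $j=l$, while the pair $\psi(1+i\lambda)+\psi(1-i\lambda)$
contributes $-4\log\Gamma(1+\lambda_{\tau(m),k})+2C(\psi)$ by
Lemma~\ref{Gammacontr}. The constant $-2\gamma\dim(\sigma_{\tau(m),k})$ combines
with the Gaussian integral $\int_{\R}e^{-t\lambda^{2}}\,d\lambda$ to produce a
multiple of $\gamma\lambda_{\tau(m),k}\dim(\sigma_{\tau(m),k})$ after taking the
derivative at $s=0$. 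Telescoping
$\log\Gamma(1+\lambda_{\tau(m),k})+\sum_{l=1}^{m_{0}}\log(l+\lambda_{\tau(m),k})
=\log\Gamma(m_{0}+1+\lambda_{\tau(m),k})$ and using that $m_{0}+1=m+O(1)$,
the $\Omega_{1}$-piece is exactly of the shape controlled by Lemma~\ref{Lemii},
whence it is $O(m^{n(n+1)/2})$.

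For the $\Omega_{2}$-contribution, the residual rational terms
$P_{j}(\sigma_{\tau(m),k},il)\frac{2l}{\lambda^{2}+l^{2}}$ and
$\dim(\sigma_{\tau(m),k})\frac{k_{j}+\rho_{j}}{(k_{j}+\rho_{j})^{2}+\lambda^{2}}$
are again treated by Lemma~\ref{Ratcontr}; the number of values of $l$ in the
range $m_{0}<l<k_{j}(\sigma_{\tau(m),k})+\rho_{j}$ is bounded independently of
$m$, and Lemma~\ref{Llemii} applied with $|\lambda|=l=O(m)$ gives
$|P_{j}(\sigma_{\tau(m),k},il)|\le Cm^{n(n+1)/2-1}$, so that after multiplying
by the $O(\log m)$ logarithms produced by Lemma~\ref{Ratcontr} one obtains a
bound of order $m^{n(n+1)/2}\log m$; the desired power follows once more after
using \eqref{altdim} to exploit cancellation of the leading terms in the
alternating sum over $k$. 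The genuinely polynomial pieces
$Q_{j,l}(\sigma_{\tau(m),k},\lambda)$ are even polynomials in $\lambda$, so the
Mellin transform at $s=0$ of their contribution is computed by
Lemma~\ref{l2tor2} and equals (up to a universal constant)
$\int_{0}^{\lambda_{\tau(m),k}}Q_{j,l}(\sigma_{\tau(m),k},i\lambda)\,d\lambda$,
which Lemma~\ref{lemii} already bounds by $Cm^{n(n+1)/2}$.

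Combining the above, summation over $k=0,\ldots,n$ and over the finitely many
indices $j$ and $l$ produces the required estimate. The main obstacle is
\emph{not} any single Mellin computation, which is routine, but rather the
bookkeeping needed to group the various $\log$-contributions coming from
Lemmas~\ref{Ratcontr} and \ref{Gammacontr} into precisely the combination
$\log\Gamma(m+\lambda_{\tau(m),k})+\gamma\lambda_{\tau(m),k}+C(\psi)$ that
appears in Lemma~\ref{Lemii}: individually each such term contributes to order
$m^{n(n-1)/2+1}\log m$ after being multiplied by $\dim(\sigma_{\tau(m),k})$, and
only the cancellation forced by the vanishing alternating sum
$\sum_{k=0}^{n}(-1)^{k}\dim(\sigma_{\tau(m),k})=0$ brings the final bound down
to $m^{n(n+1)/2}$.
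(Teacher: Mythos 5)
Your overall strategy is exactly the paper's: decompose via Proposition~\ref{auxk} and Proposition~\ref{propomega} with $m_0=m-1$, compute the Mellin transforms of the $\Omega_1$- and $\Omega_2$-pieces term by term using Lemmas~\ref{Ratcontr}, \ref{Gammacontr}, \ref{l2tor2}, telescope the logarithms from the rational terms together with the $\log\Gamma(1+\lambda_{\tau(m),k})$ coming from the digamma functions to obtain $\log\Gamma(m+\lambda_{\tau(m),k})$, and invoke Lemmas~\ref{Lemii}, \ref{Llemii}, \ref{lemii} and \eqref{dimstau} to bound the result. The bookkeeping that isolates the combination controlled by Lemma~\ref{Lemii} is indeed the heart of the matter, and you have it right.

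Two inaccuracies in your treatment of the $\Omega_2$-contribution are worth flagging, though neither endangers the result. First, the arithmetic: the residual rational terms contribute $|P_j(\sigma_{\tau(m),k},il)|\cdot O(\log m)=O\bigl(m^{\frac{n(n+1)}{2}-1}\log m\bigr)$, not $O\bigl(m^{\frac{n(n+1)}{2}}\log m\bigr)$ as you write; the $2l$ in $\frac{2l}{\lambda^2+l^2}$ is already absorbed in the Mellin computation of Lemma~\ref{Ratcontr} and must not be multiplied in again. Second, with the corrected exponent this piece is already $o\bigl(m^{n(n+1)/2}\bigr)$, so there is no need to appeal to \eqref{altdim} here; and indeed the cancellation argument you sketch would not transfer, because the $\Omega_2$-terms carry the $k$-dependent factors $P_j(\sigma_{\tau(m),k},il)$ with $k$-dependent ranges of $l$, not a fixed scalar times $\dim(\sigma_{\tau(m),k})$, so $\sum_k(-1)^k\dim(\sigma_{\tau(m),k})=0$ does not by itself produce the cancellation. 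The alternating-sum cancellation \eqref{altdim} is genuinely needed only inside Lemma~\ref{Lemii}, for the $\Omega_1$-piece (and is strictly necessary there only when $n=1$, since for $n\ge2$ the individual terms $\dim(\sigma_{\tau(m),k})\cdot O(m\log m)$ are already $o\bigl(m^{n(n+1)/2}\bigr)$).
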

\begin{proof}
Let 
\[
\mathcal{M}\mathcal{I}(s;\sigma_{\tau(m),k})=\int_0^\infty t^{s-1} 
e^{-t\lambda_{\tau(m),k}^2}\mathcal{I}(h_t^{\sigma_{\tau(m),k}})\;dt.
\]
As in the case of $\mathcal{M}\mathcal{I}(s;\tau(m))$ it follows 
that the integral converges for $\Re(s)>(d-2)/2$ and
admits a meromorphic continuation to $\C$ with at most a simple
pole  at $s=0$. By Proposition \ref{auxk} we have
\begin{align*}
\mathcal{M}\mathcal{I}(\tau(m))=\sum_{k=0}^n(-1)^{k+1}\frac{d}{ds}\biggr|_{s=0}
\frac{\mathcal{M}\mathcal{I}
(s;\sigma_{\tau(m),k})}{\Gamma(s)}.
\end{align*}
Let $k\in\{0,\dots,n\}$. Then by \eqref{sigmaneu} one has
$k_{n+1}(\sigma)\geq m$. Thus we can apply Proposition \ref{propomega} with
$m_0=m-1$.
Using Lemma \ref{Ratcontr} together with \eqref{zetaneu}, Lemma 
\ref{Gammacontr} and Lemma \ref{l2tor2} we obtain
\begin{align*}
&\frac{d}{ds}\biggr|_{s=0}\frac{\mathcal{M}\mathcal{I}(s;\sigma_{\tau(m),k})}{
\Gamma(s)}
=2\kappa\dim(\sigma_{\tau(m),k})\left(\log\Gamma(m+\lambda_{\tau(m),k}
)+\gamma\lambda_{\tau(m),k}+C(\psi)\right)
\\ &+\kappa\sum_{j=2}^{n+1}
\sum_{m\leq l<
k_j(\sigma_{\tau(m),k})+\rho_j}\left(2P_{j}(\sigma_{\tau(m),k},il)
\log{\left(l+\lambda_{\tau(m),k}\right)}
+\int_{0}^{\lambda_{\tau(m),k}}
Q_{j,l}(\sigma_{\tau(m),k},i\lambda)d\lambda\right)\\
&+\kappa\:\sum_{\substack{j=2\\ l=k_{j}(\sigma_{\tau(m),k})+\rho_j
}}^{n+1}\left(\dim(\sigma_{\tau(m),k})\log{\left(l+\lambda_{\tau(m),k}\right)}
+\frac{1}{2}\int_{0}^{\lambda_{\tau(m),k}}Q_{j,l}
(\sigma_{\tau(m),k},i\lambda)d\lambda\right).
\end{align*}
By \eqref{sigmaneu} we have
$k_j(\sigma_{\tau(m),k})+\rho_j\leq m+\tau_1+n$ for every $j=2,\dots,n+1$, 
and by \eqref{lambdaneu} we have
$\lambda_{\tau(m),k}\leq m+\tau_1+n$. Thus if we apply Lemma \ref{Lemii}, Lemma
\ref{Llemii}, Lemma \ref{lemii} and \eqref{dimstau},
the proposition follows.  
\end{proof}

Finally we consider the asymptotic behavior of the contribution of the 
non-invariant distribution $J$ to $\log T_X(\tau(m))$. For 
$k\in\{0,\dots,n\}$ let $h_t^{\sigma_{\tau(m),k}}$ be as in \eqref{Defh}, and
for 
$\nu\in\hat K$ let
\[
m_\nu(\sigma_{\tau(m),k})\in\{-1,0,1\}
\]
be defined by \eqref{multipl1}. By \eqref{reprJ} we have
\begin{equation}\label{FormJ}
\begin{split}
J(h_{t}^{\sigma_{\tau(m),k}})=&e^{-tc(\sigma_{\tau(m),k})}\sum_{\nu\in\hat{K}}m_
{\nu}(\sigma_{\tau(m),k})J(h_{t}^{\nu})\\
=&-\frac{\kappa e^{-tc(\sigma_{\tau(m),k})}}{4\pi
i}\sum_{\sigma\in\hat{M}}\dim(\sigma)\sum_{\nu\in\hat{K}}m_{\nu}(\sigma_{\tau(m)
,k})\left[\nu:\sigma\right
]\\
&\times\int_{D_{\epsilon}}{c_{\nu}(\sigma:z)^{-1}\frac{d}{dz}c_{\nu}
(\sigma:z)e^{-t(z^{2}-c(\sigma))}dz}.
\end{split}
\end{equation}
To continue with the investigation of the right hand side, we 
need the following lemma.
\begin{lem}\label{lemc}
Let $k=0,\dots,n$. For $\sigma\in\hat{M}$ let
\begin{align}\label{Def f}
f_{k,m}(z,\sigma):=\sum_{\nu\in\hat{K}}m_{\nu}(\sigma_{
\tau(m),k})\left[\nu:\sigma\right]c_{\nu}(\sigma:z)^{-1}\frac{d}{dz}c_{\nu}
(\sigma:z).
\end{align}
Then one has
\begin{align*}
f_{k,m}(z,\sigma)=&\sum_{\nu\in\hat{K}}m_{\nu}(\sigma_{\tau(m),k})\left[
\nu:\sigma\right]\sum_{j=2}^{n+1}\left(\sum_{\substack{m\leq l\leq k_j(\nu)
\\ 
\left|k_{j}(\sigma)\right|<l}}\frac{i}{iz-l-\rho_{j}}-\sum_{\substack
{m\leq l\leq k_j(\nu)\\ 
\left|k_{j}(\sigma)\right|\leq l}}\frac{i}{iz+l+\rho_{j}}\right)
\end{align*}
\end{lem}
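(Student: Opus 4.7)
The plan is to substitute the explicit expression \eqref{Glcfnktn} for $c_\nu(\sigma:z)^{-1}\frac{d}{dz}c_\nu(\sigma:z)$ into the definition \eqref{Def f} of $f_{k,m}(z,\sigma)$, and then show that imposing the additional constraint $l\geq m$ in each inner summation over $l$ does not alter the value. The crucial input is the lower bound $k_j(\sigma_{\tau(m),k})\geq m$ for every $j\in\{2,\dots,n+1\}$, which is immediate from \eqref{sigmaneu}.

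As a preliminary step, observe that since $k_{n+1}(\sigma_{\tau(m),k})\geq m\geq 1$, we have $\sigma_{\tau(m),k}\neq w_0\sigma_{\tau(m),k}$, so by Proposition \ref{branching} the coefficient $m_\nu(\sigma_{\tau(m),k})$ in \eqref{multipl1} vanishes unless the highest weight of $\nu$ has the form $\Lambda(\sigma_{\tau(m),k})-\mu$ for some $\mu\in\{0,1\}^n$. In particular, for each such $\nu$ and every $j$, one has $k_j(\nu)\geq k_j(\sigma_{\tau(m),k})-1\geq m-1$. Now the discrepancy between the formula obtained from \eqref{Glcfnktn} and the desired one consists, for each $j$, of contributions indexed by $l$ in $\{|k_j(\sigma)|+1,\dots,m-1\}$ (for the first inner sum) or $\{|k_j(\sigma)|,\dots,m-1\}$ (for the second), subject to $l\leq k_j(\nu)$. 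For any such $l$ one has $l\leq m-1\leq k_j(\nu)$ whenever $m_\nu(\sigma_{\tau(m),k})\neq 0$, so the constraint $l\leq k_j(\nu)$ is vacuous and can be dropped. Each such term therefore factors as a rational function of $z$ times
\[
\sum_{\nu\in\hat K}m_\nu(\sigma_{\tau(m),k})[\nu:\sigma]=[\sigma_{\tau(m),k}:\sigma]+[w_0\sigma_{\tau(m),k}:\sigma],
\]
where the equality uses \eqref{multipl1}. This multiplicity is zero unless $\sigma\in\{\sigma_{\tau(m),k},w_0\sigma_{\tau(m),k}\}$; but in that case $|k_j(\sigma)|=k_j(\sigma_{\tau(m),k})\geq m$, so the range of $l$ in question is empty. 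Either way the discrepancy vanishes termwise, which proves the claim.

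The proof is essentially bookkeeping, combining the explicit formula \eqref{Glcfnktn} with the Kostant-type identity of Proposition \ref{branching} and the lower bound \eqref{sigmaneu}; the only point requiring care is the case split on whether $\sigma$ is one of $\sigma_{\tau(m),k}$, $w_0\sigma_{\tau(m),k}$ or not, which is precisely where the identity $\sum_\nu m_\nu(\sigma_{\tau(m),k})[\nu:\sigma]=[\sigma_{\tau(m),k}+w_0\sigma_{\tau(m),k}:\sigma]$ is applied. No analytical obstacle is expected.
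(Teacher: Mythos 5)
Your proposal is correct and follows essentially the same route as the paper's proof: substitute the explicit formula \eqref{Glcfnktn}, use Proposition \ref{branching} together with \eqref{sigmaneu} to show $k_j(\nu)\geq m-1$ whenever $m_\nu(\sigma_{\tau(m),k})\neq0$ so that the constraint $l\leq k_j(\nu)$ is vacuous on the "tail" range $l\leq m-1$, then factor out $\sum_\nu m_\nu(\sigma_{\tau(m),k})[\nu:\sigma]$ and appeal to the freeness of $R(M)$ over $\hat M$ via \eqref{multipl1}, handling the cases $\sigma\in\{\sigma_{\tau(m),k},w_0\sigma_{\tau(m),k}\}$ separately via $|k_j(\sigma)|\geq m$. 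This is precisely the argument given in the paper, with the same two case split on $\sigma$.
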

\begin{proof}
By Proposition \ref{branching} and equation \eqref{sigmaneu}, it follows that
for every $\nu\in\hat{K}$
with $m_{\nu}(\sigma_{\tau(m),k})\neq 0$ and every $j=2,\dots,n+1$ we have
\begin{align*}
m-1\leq k_{j}(\nu), 
\end{align*}
where $(k_2(\nu),\dots,k_{n+1}(\nu))$ is the highest weight of $\nu$. 
Thus using \eqref{Glcfnktn} one can write
\begin{align}\label{Glc}
f_{k,m}(z,\sigma)=&\sum_{\nu\in\hat{K}}m_{\nu}(\sigma_{\tau(m),k})\left[
\nu:\sigma\right]\sum_{j=2}^{n+1}\left(\sum_{\substack{m\leq l\leq k_j(\nu)
\\ 
\left|k_{j}(\sigma)\right|<l}}\frac{i}{iz-l-\rho_{j}}-\sum_{\substack{m\leq
l\leq k_j(\nu)
\\ 
\left|k_{j}(\sigma)\right|\leq l}}\frac{i}{iz+l+\rho_{j}}
\right)\nonumber\\
+&\sum_{\nu\in\hat{K}}m_{\nu}(\sigma_{\tau(m),k})\left[\nu:\sigma\right]\sum_{
j=2}^{n+1}
\left(\sum_{\substack{l=1\\
l>\left|k_{j}(\sigma)\right|}}^{m-1}\frac{i}{iz-l-\rho_{j}}-\sum_{\substack{
l=0\\
l\geq\left|k_{j}(\sigma)\right|}}^{m-1}\frac{i}{iz+l+\rho_{j}}\right).
\end{align}
Now if $\sigma=\sigma_{\tau(m),k}$ or $\sigma=w_{0}\sigma_{\tau(m),k}$ the sum
in the second row of
\eqref{Glc} is
zero by \eqref{sigmaneu} and \eqref{wsigma}. On the other hand, assume that 
$\sigma\neq \sigma_{\tau(m),k}$,
$\sigma\neq w_{0}\sigma_{\tau(m),k}$. Since $R(M)$ is the free abelian
group generated by  $\sigma\in\hat{M}$, it follows from \eqref{multipl1} that
\begin{align*}
\sum_{\nu\in\hat{K}}m_{\nu}(\sigma_{\tau(m),k})\left[\nu:\sigma\right]=0.
\end{align*}
Thus in this case the sum in the second row of \eqref{Glc} is zero too.
This proves the proposition. 
\end{proof}

\begin{prop}\label{propJ}
For $s\in\C$, $\Real(s)>0$ let
\begin{align*}
\mathcal{M}J(s;\sigma_{\tau(m),k}):=\int_{0}
^\infty t^{s-1}e^{-t\lambda_{\tau(m),k}^2} 
J(h^{\sigma_{\tau(m),k}}_t)dt.
\end{align*}
Then $\mathcal{M}J(s;\sigma_{\tau(m),k})$ has a meromorphic continuation to
$\C$ with at most a simple pole at $0$ and we have
\[
\begin{split}
\frac{d}{ds}\biggr|_{s=0}\frac{\mathcal{M}J(s;\sigma_{\tau(m),k})}{\Gamma(s)}&=
-\kappa\sum_{\sigma\in\hat{M}}\sum_{\nu\in\hat{K}}m_{\nu}(\sigma_{\tau(m),k}
)\left[
\nu:\sigma\right]\dim(\sigma)\\
&\hskip30pt\cdot\sum_{j=2}^{n+1}\sum_{\substack{m\leq l\leq k_j(\nu)\\
l>\left|k_j(\sigma)\right|}}\log\left(\sqrt{\lambda_{\tau(m),k}
^2+c(\sigma_{\tau(m),k})-c(\sigma)
}+l+\rho_j\right)\\
&-\frac{\kappa}{2}\sum_{\sigma\in\hat{M}
}\sum_{\nu\in\hat{K}}m_{\nu}(\sigma_{\tau(m),k})\left[
\nu:\sigma\right]\dim(\sigma)\\
&\hskip30pt\cdot\sum_{\substack{j=2\\ \left|k_j(\sigma)\right|\geq
m}}^{n+1}\log\left(\sqrt{\lambda_{\tau(m),k}
^2+c(\sigma_{\tau(m),k})-c(\sigma)}
+\left|k_j(\sigma)\right|+\rho_j\right).
\end{split}
\]
\end{prop}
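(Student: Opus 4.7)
The plan is to substitute the description of $f_{k,m}(z,\sigma)$ from Lemma \ref{lemc} into \eqref{FormJ}. This rewrites $J(h_t^{\sigma_{\tau(m),k}})$ as a finite sum, indexed by quadruples $(\sigma,\nu,j,l)$ subject to the constraints of Lemma \ref{lemc}, of terms of the shape $e^{-t(c(\sigma_{\tau(m),k})-c(\sigma))}\int_{D_\epsilon}(iz-(l+\rho_j))^{-1}e^{-tz^2}\,dz$, or with $iz+(l+\rho_j)$ in the denominator, multiplied by constant prefactors. The sum is finite because $m_\nu(\sigma_{\tau(m),k})\neq 0$ restricts $\nu$ to the finite branching set produced by Proposition \ref{branching}, and for each such $\nu$ only finitely many $\sigma$ satisfy $[\nu:\sigma]\neq0$. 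After multiplying by $t^{s-1}e^{-t\lambda_{\tau(m),k}^2}$, the Mellin integration in $t$ can be exchanged with the finite sum.

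Next I would pair the two terms appearing in Lemma \ref{lemc} with the same $(j,l)$. When $l>|k_j(\sigma)|$ both $i/(iz-(l+\rho_j))$ and $-i/(iz+(l+\rho_j))$ are present; their sum equals $-2i(l+\rho_j)/(z^2+(l+\rho_j)^2)$, and since the poles $\pm i(l+\rho_j)$ lie off the contour $D_\epsilon$ for $l+\rho_j>\epsilon$, we may deform $D_\epsilon$ to $\mathbb{R}$. When $l=|k_j(\sigma)|$ only the second term is present; deforming to $\mathbb{R}$ and discarding the odd part of the integrand by symmetry again yields the integral of $1/(\lambda^2+(l+\rho_j)^2)$ against $e^{-t\lambda^2}$, now multiplied by $-i(l+\rho_j)$. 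Either way the remaining $t$-integral is of the form evaluated in Lemma \ref{Ratcontr}, with parameters $j\mapsto l+\rho_j$ and $c^2\mapsto \lambda_{\tau(m),k}^2+c(\sigma_{\tau(m),k})-c(\sigma)$. To apply that lemma we need $c>0$; this follows from $|k_j(\sigma)|\le k_j(\nu)\le k_j(\sigma_{\tau(m),k})$, obtained from Frobenius reciprocity and Proposition \ref{branching} combined with the explicit weights in \eqref{sigmaneu}, which gives $c(\sigma)\le c(\sigma_{\tau(m),k})$.

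Finally, combining constants: in the paired case the prefactor $-\kappa/(4\pi i)$ from \eqref{FormJ}, multiplied by $-2i(l+\rho_j)$ and by the factor $\pi/(l+\rho_j)$ from Lemma \ref{Ratcontr}, collapses to $\kappa/2$; the derivative formula $-2\log(c+l+\rho_j)$ of Lemma \ref{Ratcontr} then produces $-\kappa\,\dim(\sigma)m_\nu(\sigma_{\tau(m),k})[\nu:\sigma]\log(\sqrt{\lambda_{\tau(m),k}^2+c(\sigma_{\tau(m),k})-c(\sigma)}+l+\rho_j)$, which summed over $(\sigma,\nu,j,l)$ is the first sum in the proposition. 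In the boundary case $l=|k_j(\sigma)|$ the same combination gives $\kappa/4$ instead, yielding exactly the second sum with the explicit factor $1/2$. The meromorphic continuation and the at-most-simple-pole assertion at $s=0$ transfer directly from Lemma \ref{Ratcontr} since only finitely many model integrals contribute. The main obstacle is the sign- and constant-tracking through the two separate cases, together with recognizing that the factor of $1/2$ discrepancy between the two sums in the statement is precisely the consequence of whether both $\pm$ terms of Lemma \ref{lemc} pair up or only the $-$ term survives.
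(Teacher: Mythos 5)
Your argument is correct and matches the paper's route: both establish $c(\sigma_{\tau(m),k})-c(\sigma)\ge 0$ via the branching constraints so that Lemma \ref{Ratcontr} applies with $c=\sqrt{\lambda_{\tau(m),k}^2+c(\sigma_{\tau(m),k})-c(\sigma)}>0$, and then combine \eqref{FormJ} with Lemma \ref{lemc}. The paper compresses the bookkeeping into one sentence; you have simply written out the contour deformation, the pairing of the $(iz\mp(l+\rho_j))^{-1}$ terms, and the constant cancellation $-\tfrac{\kappa}{4\pi i}\cdot(-2i(l+\rho_j))\cdot\tfrac{\pi}{l+\rho_j}=\tfrac{\kappa}{2}$ that the paper leaves implicit, correctly locating the origin of the $\tfrac12$ in the boundary case $l=|k_j(\sigma)|$.
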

\begin{proof}
Let $\sigma\in\hat M$. By \eqref{BrKM} the highest weights of $\nu\in\hat K$ 
with $m_{\nu}(\sigma_{\tau(m),k})\neq 0$ are of the form
$\Lambda(\sigma_{\tau(m),k})
-\mu$, where $\mu\in\{0,1\}^n$. Now assume that also $[\nu\colon\sigma]\ne0$.  
Then by \cite[Theorem 8.1.4]{Knapp2} we have $k_j(\sigma_{\tau(m),k})\ge 
k_j(\sigma)$. Hence if $\sigma\in\hat M$ is such that $\left[\nu:\sigma\right]
m_{\nu}(\sigma_{\tau(m),k})\neq 0$ for some $\nu\in\hat K$, 
it follows from \eqref{csigma} that 
\begin{equation}\label{estim11}
c(\sigma_{\tau(m),k})-c(\sigma)\geq 0.
\end{equation}
Thus the proposition follows from Lemma
\ref{Ratcontr}, equation \eqref{FormJ} and Lemma \ref{lemc}.
\end{proof}

\begin{prop}\label{MJ}
Let $k\in \{0,\dots,n\}$.
There exists a constant $C$ such that for every $m$ one has
\begin{align*}
\left|\frac{d}{ds}\biggr|_{s=0}\frac{\mathcal{M}J(s;\sigma_{\tau(m),k})}{
\Gamma(s)}\right| \leq Cm^{\frac{n(n+1)}{2}}\log{m}.
\end{align*}
\end{prop}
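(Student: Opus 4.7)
The plan is to start from the explicit formula for
$\frac{d}{ds}|_{s=0}\mathcal{M}J(s;\sigma_{\tau(m),k})/\Gamma(s)$ provided by Proposition \ref{propJ} and bound each factor and each sum by elementary means. So first I would write out that formula and observe that it is a finite sum: by Proposition \ref{branching}, the number of $\nu\in\hat K$ with $m_\nu(\sigma_{\tau(m),k})\neq 0$ is at most $2^n$; for each such $\nu$, Frobenius reciprocity together with $[\nu:\sigma]\le 1$ (cf.\ \cite[Theorem 9.16]{Knapp2}) ensures that only finitely many (indeed, boundedly many, uniformly in $m$) $\sigma\in\hat M$ contribute.

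Next I would estimate the size of the terms. For every $\nu$ with $m_\nu(\sigma_{\tau(m),k})\neq 0$, Proposition \ref{branching} combined with \eqref{sigmaneu} gives $|k_j(\nu)|\le m+\tau_1+1$ for all $j$, so the inner sum over $l$ runs over an interval $[\max(m,|k_j(\sigma)|+1),\,k_j(\nu)]$ of length bounded by a constant depending only on $\tau$. For any $\sigma\in\hat M$ appearing in the formula, branching forces $|k_j(\sigma)|\le k_j(\nu)\le m+\tau_1+1$, and the Weyl dimension formula \eqref{Dimension sigma} then yields $\dim(\sigma)\le C\,m^{n(n-1)/2}$, exactly as in \eqref{dimstau}.

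The key step is to control the logarithms. The non-negativity $c(\sigma_{\tau(m),k})-c(\sigma)\ge 0$ was already established in \eqref{estim11}. On the other hand, the bound $|k_j(\sigma)|\le m+\tau_1+1$ combined with \eqref{csigma} gives $c(\sigma_{\tau(m),k})-c(\sigma)\le C m^2$; together with $\lambda_{\tau(m),k}\le m+\tau_1+n$ from \eqref{lambdaneu} and $l+\rho_j\le m+\tau_1+n+1$, the argument of every logarithm in Proposition \ref{propJ} is bounded by $C m$, so each logarithm contributes at most $C\log m$.

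Collecting: each summand is bounded by $\dim(\sigma)\cdot C\log m\le C\,m^{n(n-1)/2}\log m$, and the number of summands is bounded independently of $m$. This yields $|\frac{d}{ds}|_{s=0}\mathcal{M}J(s;\sigma_{\tau(m),k})/\Gamma(s)|\le C\,m^{n(n-1)/2}\log m$, which is stronger than the claimed bound $C\,m^{n(n+1)/2}\log m$. The main obstacle is really bookkeeping: making sure the interval of $l$'s, the set of admissible $\sigma$, and the dimension bound are all uniform in $m$; once the uniformity of these three finiteness statements is in hand, the estimate is immediate. No delicate cancellation is needed because the logarithmic growth and the polynomial dimension bound are already compatible with the target.
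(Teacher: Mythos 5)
Your proposal contains two related, fatal errors, both in the estimate of the sum over $\sigma\in\hat M$.

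First, the claim that ``only finitely many (indeed, boundedly many, uniformly in $m$) $\sigma\in\hat M$ contribute'' is false. Fix $\nu\in\hat K$ with $m_\nu(\sigma_{\tau(m),k})\neq0$; its highest weight components $k_j(\nu)$ are all within $O(1)$ of $m$ by \eqref{BrKM} and \eqref{sigmaneu}. The branching rule from $K=\Spin(2n+1)$ to $M=\Spin(2n)$ (\cite[Theorem 8.1.4]{GW}) says $[\nu:\sigma]\neq0$ iff $k_2(\nu)\ge k_2(\sigma)\ge k_3(\nu)\ge\cdots\ge k_{n+1}(\nu)\ge|k_{n+1}(\sigma)|$. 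The components $k_j(\sigma)$ for $j\le n$ are confined to intervals of bounded length, but $k_{n+1}(\sigma)$ ranges over an interval of length about $2k_{n+1}(\nu)\approx2m$. So the number of contributing $\sigma$ grows like $m$, not like a constant. (Already for $n=1$, where $M=\Spin(2)$, one has $[\nu:\sigma]\ne0$ for all $|k_2(\sigma)|\le k_2(\nu)\approx m$, i.e.\ about $2m$ values.)

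Second, the assertion $\dim(\sigma)\le Cm^{n(n-1)/2}$ for every contributing $\sigma$ is also false. That bound holds for $\sigma_{\tau(m),k}$ precisely because in \eqref{sigmaneu} all components $k_j(\sigma_{\tau(m),k})$ are within $O(1)$ of $m$, so every factor $(k_i+\rho_i)^2-(k_j+\rho_j)^2$ in \eqref{Dimension sigma} factors as (difference)$\times$(sum)$=O(1)\cdot O(m)=O(m)$. For a general $\sigma$ appearing here, $k_{n+1}(\sigma)$ can be anywhere in $[-m,m]$, so the $n-1$ factors involving $j=n+1$ are of size $O(m^2)$; then $\dim(\sigma)$ can be as large as $Cm^{(n-1)(n-2)/2+2(n-1)}=Cm^{(n-1)(n+2)/2}=Cm^{n(n+1)/2-1}$.

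Because both of these are wrong, your conclusion $|\cdot|\le Cm^{n(n-1)/2}\log m$ --- which you yourself observe is strictly stronger than the proposition asserts --- is unjustified. In fact, since $\dim(\nu)\asymp m^{n(n+1)/2}$, the proposition's bound is sharp up to constants, so any bound of lower order in $m$ cannot be correct. The paper avoids your bookkeeping entirely by observing the identity
\[
\sum_{\sigma\in\hat M}[\nu:\sigma]\dim(\sigma)=\dim(\nu),
\]
which packages the $O(m)$-many $\sigma$'s and their varying dimensions into the single quantity $\dim(\nu)$, and then bounds $\dim(\nu)\le Cm^{n(n+1)/2}$ using \eqref{BrKM}, \eqref{sigmaneu} and the Weyl dimension formula for $K$. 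If you combine my two corrected estimates (number of $\sigma$'s $=O(m)$, each $\dim(\sigma)=O(m^{n(n+1)/2-1})$), you recover the same exponent $n(n+1)/2$, but the paper's identity is the clean way to see it. Your treatment of the finiteness of the $\nu$-sum, the interval of $l$'s, and the $O(\log m)$ bound on each logarithm are correct and match the paper.
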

\begin{proof}
Let $\nu\in\hat{K}$ such that $m_{\nu}(\sigma_{\tau(m),k})\neq 0$. 
Let $\sigma\in\hat{M}$ such that $\left[\nu:\sigma\right]\neq 0$. 
Then \eqref{estim11} holds as shown in the proof of the previous proposition.
Hence
\begin{align*}
m\leq\sqrt{\lambda_{\tau(m),k}^2+c(\sigma_{\tau(m),k})-c(\sigma)}\leq
\sqrt{\lambda_{\tau(m),k}^2+c(\sigma_{\tau(m),k})}.
\end{align*}
By \eqref{csigma}, \eqref{sigmaneu} and \eqref{lambdaneu} there exists a
constant $C_1$ which is independent of $\nu$ and $\sigma$ such that
for every $m$ we have
\begin{align*}
m\leq\sqrt{\lambda_{\tau(m),k}^2+c(\sigma_{\tau(m),k})-c(\sigma)}\leq C_1 m.
\end{align*}
For $\nu\in\hat K$ as above, it follows from \eqref{BrKM} and \eqref{sigmaneu} 
that for every $j\in\{2,\dots,n+1\}$ one has
\begin{align*}
k_{j}(\nu)\leq m+\tau_1.
\end{align*}
Thus there exists a constant $C_2$ which is independent of $\nu$ and $\sigma$
such that for every $m$ we have
\begin{align*}
\sum_{j=2}^{n+1}\sum_{m\leq l\leq
k_j(\nu)}\left|\log{\left(\sqrt{\lambda_{\tau(m),k}
^2+c(\sigma_{\tau(m),k})-c(\sigma)
}+l+\rho_j\right)}\right|\leq C_2\log{m}.
\end{align*}
By Proposition \ref{propJ} it follows that there exists a constant $C_3$ such 
that for every $m\in\N$ we have
\begin{align*}
\left|\frac{d}{ds}\biggr|_{s=0}\frac{\mathcal{M}J(s;\sigma_{\tau(m),k})}{
\Gamma(s)}\right|\leq & C_3
\log{m}\sum_{\nu\in\hat{K}}\left|m_{\nu}(\sigma_{\tau(m),k})\right|\sum_{
\sigma\in\hat{M}}\left
[\nu:\sigma\right]\dim(\sigma)\\=
&C_3\log{m}\sum_{\nu\in\hat{K}}\left|m_{\nu}(\sigma_
{\tau(m),k})\right|\dim(\nu).
\end{align*}
Now by \eqref{BrKM} the number of $\nu\in\hat{K}$ with
$m_{\nu}(\sigma_{\tau(m),k})\neq 0$ is bounded by $2^n$ and one has
$\left|m_{\nu}(\sigma_{\tau(m),k})\right|\leq 1$ for every $\nu\in\hat{K}$.
Let $\Lambda(\nu)\in\mathfrak{b}_{\C}^*$ be the highest weight of $\nu$
as in \eqref{Darstellungen von K}
Then by Weyl's dimension formula \cite[Theorem 4.48]{Knapp} we have 
\begin{align}\label{Dimension nu}
\dim(\nu)=&\prod_{\alpha\in\Delta^{+}(\mathfrak{k}_{\mathbb{C}},
\mathfrak{b}_{\C})}\frac{\left<\Lambda(\nu)+\rho_{K},\alpha\right>}
{\left<\rho_{K},\alpha\right>}\\
=&\prod_{i=2}^{n+1}(k_i(\nu)+\rho_i+1/2)\prod_{j=i+1}^{n+1}\frac{\left(k_{i}
(\nu)
+\rho_{i}+1/2\right)^{2}-\left(k_{j}(\nu)+\rho_{j}+1/2\right)^{2}}
{(\rho_{i}+1/2)^{2}-(\rho_{j}+1/2)^{2}}.
\end{align}
By\eqref{BrKM} the highest weights of $\nu\in\hat K$ with 
$m_{\nu}(\sigma_{\tau(m),k})\neq 0$ are of the form 
$\Lambda(\sigma_{\tau(m),k})-\mu$, where
$\mu\in\{0,1\}^n$.  Using  \eqref{sigmaneu} it follows that there exists
$C_4>0$, which is independent of $m$, such that for each
$\nu\in\hat{K}$ with $m_{\nu}(\sigma_{\tau(m),k})\neq 0$ one has
\begin{align*}
\dim(\nu)\leq C_4m^{\frac{n(n+1)}{2}}.
\end{align*}
This proves the proposition.
\end{proof}
Summarizing, we have proved the following proposition.
\begin{prop}\label{PropJ}
For $s\in\C$, $\Real(s)>0$ the integral
\begin{align*}
\mathcal{M}J(s;\tau(m)):=\int_{0}^{\infty}t^{s-1}J(k_t^{\tau(m)})dt
\end{align*}
converges and $\mathcal{M}J(s;\tau(m))$ admits a meromorphic continuation to 
$\C$ with at most simple a pole at $0$. Let
\begin{align*}
\mathcal{M}J(\tau(m)):=\frac{d}{ds}\biggr|_{s=0}\frac{\mathcal{M}J(s;\tau(m))}{
\Gamma(s)}.
\end{align*}
Then there exists a constant $C$ such that for every $m\in\N$ one has
\begin{align*}
|\mathcal{M}J(\tau(m))|\leq Cm^{\frac{n(n+1)}{2}}\log{m}.
\end{align*}
\end{prop}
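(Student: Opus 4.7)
The plan is to reduce the statement to the per-$k$ estimates already established in Propositions~\ref{propJ} and~\ref{MJ}. First I would apply Proposition~\ref{auxk} to decompose
\[
k_t^{\tau(m)}=\sum_{k=0}^{n}(-1)^{k+1}e^{-t\lambda_{\tau(m),k}^2}h_t^{\sigma_{\tau(m),k}}.
\]
Since $J$ is a (linear) distribution on $\mathcal{C}(G)$ and each factor $e^{-t\lambda_{\tau(m),k}^{2}}$ is a scalar in the $G$-variable, linearity gives
\[
J(k_t^{\tau(m)})=\sum_{k=0}^{n}(-1)^{k+1}e^{-t\lambda_{\tau(m),k}^{2}}J(h_t^{\sigma_{\tau(m),k}}).
\]
Inserting this into the defining integral for $\mathcal{M}J(s;\tau(m))$ and interchanging the finite sum with the integral yields
\[
\mathcal{M}J(s;\tau(m))=\sum_{k=0}^{n}(-1)^{k+1}\mathcal{M}J(s;\sigma_{\tau(m),k}),
\]
where each term is the integral considered in Proposition~\ref{propJ}.

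Convergence of the original integral for $\Re(s)>0$ then follows from the corresponding statement for each $\mathcal{M}J(s;\sigma_{\tau(m),k})$ in Proposition~\ref{propJ}, together with the explicit representation \eqref{reprJ}, which shows that $J(h_t^{\sigma_{\tau(m),k}})$ is bounded as $t\to 0$ (the contour $D_\epsilon$ keeps $|z|\geq\epsilon$, so no singularity of $c_\nu^{-1}\frac{d}{dz}c_\nu$ is hit) and is exponentially small as $t\to\infty$ once combined with $e^{-t\lambda_{\tau(m),k}^{2}}$. By the same proposition, each $\mathcal{M}J(s;\sigma_{\tau(m),k})$ admits a meromorphic continuation to $\C$ with at most a simple pole at $s=0$, so the finite linear combination $\mathcal{M}J(s;\tau(m))$ has the same property. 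Differentiating $\mathcal{M}J(s;\tau(m))/\Gamma(s)$ at $s=0$ and using the factorization gives
\[
\mathcal{M}J(\tau(m))=\sum_{k=0}^{n}(-1)^{k+1}\frac{d}{ds}\bigg|_{s=0}\frac{\mathcal{M}J(s;\sigma_{\tau(m),k})}{\Gamma(s)},
\]
and Proposition~\ref{MJ} bounds each of the $n+1$ summands by $Cm^{n(n+1)/2}\log m$. The triangle inequality, applied to this fixed finite sum, gives the claimed bound. The genuine work has already been done upstream in Propositions~\ref{propJ} and~\ref{MJ}; here there is no real obstacle, only bookkeeping of the decomposition of $k_t^{\tau(m)}$ and extraction of the Mellin value at~$0$.
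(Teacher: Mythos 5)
Your argument is essentially identical to the paper's: decompose $k_t^{\tau(m)}$ via Proposition~\ref{auxk}, pass through the Mellin transform to get $\mathcal{M}J(s;\tau(m))=\sum_{k=0}^n(-1)^{k+1}\mathcal{M}J(s;\sigma_{\tau(m),k})$, and then invoke Propositions~\ref{propJ} and~\ref{MJ} for the continuation and the $O(m^{n(n+1)/2}\log m)$ bound on each summand. Your extra remarks on convergence near $t=0$ and $t=\infty$ via \eqref{reprJ} are correct and just make explicit what the paper leaves implicit.
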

\begin{proof}
By Proposition \ref{auxk} one has
\begin{align*}
\mathcal{M}J(s;\tau(m))=
\sum_{k=0}^n(-1)^{k+1}\mathcal
{M}J(s;\sigma_{\tau(m),k}).
\end{align*}
The Proposition follows from Proposition \ref{propJ} and Proposition \ref{MJ}. 
\end{proof}
Now by equation \ref{Def Tor}, equation \ref{FTK} and Proposition \ref{auxk}
we have
\[
\begin{split}
\log{T_X(\tau(m))}
=\frac{1}{2}\bigl(\mathcal{M}I(\tau(m))&+\mathcal{M}H(\tau(m))
+\mathcal{M}T(\tau(m))\\
&+\mathcal{M}\mathcal{I}(\tau(m))+\mathcal{M}
J(\tau(m))\bigr).
\end{split}
\]
Combining equation \ref{mellin} and Propositions \ref{Idkontr}, \ref{PropH},
\ref{propT},  \ref{propI} 
and \ref{PropJ}, Theorem \ref{theo1} and Theorem \ref{Theorem2} follow.

\end{document}